\numberwithin{equation}{section}
\newcommand{\R}{\ensuremath{\mathbb{R}}}
\newcommand{\C}{\ensuremath{\mathbb{C}}}
\newcommand{\N}{\ensuremath{\mathbb{N}}}
\newcommand{\Z}{\ensuremath{\mathbb{Z}}}
\newcommand{\TT}{\ensuremath{\mathbb{T}}}
\newcommand{\s}{\Sigma}
\newcommand{\al}{\alpha}
\newcommand{\e}{\varepsilon}
\newcommand{\eps}{\varepsilon}
\newcommand{\La}{\Lambda}
\newcommand{\V}{\mathcal{V}}
\newcommand{\GG}{\mathcal{G}}
\newcommand{\LL}{\mathcal{L}}
\newcommand{\FF}{\mathcal{F}}
\newcommand{\PP}{\mathcal{P}}
\newcommand{\HH}{\mathcal{H}}
\newcommand{\er}{\mathcal{O}}
\newcommand{\cZ}{\mathcal{Z}}
\newcommand{\p}{\varphi}
\newcommand{\ag}{\alpha}
\newcommand{\bg}{\beta}
\newcommand{\cg}{\gamma}
\newcommand{\osc}{\textrm{osc}}
\newcommand{\n}{\ell_1}
\newcommand{\wt}{\widetilde}
\newcommand{\BB}{\mathcal{B}}
\newcommand{\EE}{\mathcal{E}}
\newcommand{\pa}{\partial}
\newcommand{\OO}{\mathcal{O}}
\newcommand{\inn}{\mathrm{in}}
\newcommand{\BFX}{\mathbf{X}}
\newcommand{\BFY}{\mathbf{Y}}
\newcommand{\CT}{\mathcal{T}}
\newtheorem {theorem} {Theorem}[section]
\newtheorem {prop}[theorem]{Proposition}
\newtheorem {corollary}[theorem]{Corollary}
\newtheorem {lemma}[theorem]{Lemma}
\newtheorem {definition}[theorem]{Definition}
\newtheorem {remark}[theorem]{Remark}
\newtheorem*{conjecture}{Conjecture}
\DeclareFontFamily{U}{tipa}{}
\DeclareFontShape{U}{tipa}{m}{n}{<->tipa10}{}
\newcommand{\arc@char}{{\usefont{U}{tipa}{m}{n}\symbol{62}}}
\newcommand{\arc}[1]{\mathpalette\arc@arc{#1}}
\newcommand{\arc@arc}[2]{
	\sbox0{$\m@th#1#2$}
	\vbox{
		\hbox{\resizebox{\wd0}{\height}{\arc@char}}
		\nointerlineskip
		\box0
	}
}
\DeclareMathOperator{\sech}{sech}
\DeclareMathOperator{\Rp}{Re}
\DeclareMathOperator{\Ip}{Im}  
\definecolor{verde}{rgb}{0.0,0.5,0.0}
\definecolor{azul}{rgb}{0,0,128}
\definecolor{roxo}{rgb}{0.44,0.16,0.39}
\definecolor{vinho}{rgb}{0.5,0.0,0.13}
\definecolor{lilas1}{rgb}{0.6,0.33,0.73}
\definecolor{rosa}{rgb}{0.84,0.04,0.33}
\definecolor{mostarda}{rgb}{0.91,0.41,0.17}
\definecolor{mostarda2}{rgb}{1.0,0.66,0.07}
\newcommand{\RED}[1]{\textcolor{red}{#1}}
\begin{document}

\title[Small breathers of  nonlinear Klein-Gordon equations]{On small breathers of nonlinear Klein-Gordon equations via exponentially small homoclinic splitting}

\author[O. M. L. Gomide]{Ot\'avio M. L. Gomide}
\address[OMLG]{Department of Mathematics, UFG, IME\\ Goi\^ania-GO, 74690-900, Brazil}
\email{otaviomarcal@ufg.br}

\author[M. Guardia]{Marcel Guardia}
\address[MG]{ Departament de Matem\`atiques, Universitat Polit\`ecnica de Catalunya, Diagonal 647, 08028 Barcelona, Spain}
\email{marcel.guardia@upc.edu}

\author[T. M. Seara]{Tere M. Seara}
\address[TS]{Departament de Matem\`atiques, Universitat Polit\`ecnica de Catalunya, Diagonal 647, 08028 Barcelona, Spain}
\email{tere.m-seara@upc.edu }

\author[C. Zeng]{Chongchun Zeng}
\address[CZ]{School of Mathematics, Georgia Institute of Technology, Atlanta, GA 30332, USA}
\email{zengch@math.gatech.edu}

\maketitle

\begin{abstract}

Breathers are nontrivial time-periodic and spatially localized solutions of 
nonlinear dispersive partial differential equations (PDEs). Families of 
breathers have been found for certain integrable PDEs but are believed to be 
rare in non-integrable ones such as nonlinear Klein-Gordon equations. 
In this paper we show that small amplitude breathers of \emph{any} temporal frequency  do not exist  for semilinear  Klein-Gordon equations with generic analytic odd nonlinearities. 

A breather with small amplitude exists only when its temporal 
frequency  is close to be resonant with the linear Klein-Gordon dispersion relation. Our main result is that, 
for such frequencies, we rigorously identify the leading order term in the exponentially 
small (with respect to the small amplitude) obstruction to the existence of 
small breathers in terms of the so-called \emph{Stokes constant}, which depends on the nonlinearity analytically, but is independent of the frequency. This gives a rigorous justification of a formal asymptotic argument by Kruskal and Segur \cite{KS87} in the analysis of small breathers. 

We rely on the spatial dynamics approach where breathers can be seen as homoclinic orbits. The birth of such small homoclinics is analyzed via a singular perturbation setting  where a Bogdanov-Takens type bifurcation is coupled to infinitely many rapidly oscillatory directions. The leading order term of the exponentially small splitting between the stable/unstable invariant manifolds is obtained through a careful analysis of the analytic continuation of their parameterizations.  This requires the study of another limit equation in the complexified evolution variable, the so-called \emph{inner equation}.

\end{abstract}
\tableofcontents

\section{Introduction}

Breathers are nontrivial time-periodic and spatially localized solutions of nonlinear dispersive partial differential equations (PDEs). This kind of solutions play an important role in physical applications and the interest in their existence or breakdown gives rise to a fundamental problem in the mathematical study of the dynamics of such PDEs. 

So far breathers have been constructed mostly for completely integrable PDEs. As far as the authors know, the \textit{sine-Gordon equation}
\begin{equation}
\label{sinegordon} \tag{sG}
\partial_t^2u-\partial_x^2u+\sin(u)=0,
\end{equation}
is one of the first PDEs found to admit a family of breathers (see e.~g.~\cite{AKNS74}), which is given explicitly by
\begin{equation}
\label{breatherssine} 
u^{\omega}(x,t)=4\arctan\left(\dfrac{m}{\omega}\dfrac{\sin(\omega t)}{\cosh(m x)}\right), \quad m,\omega>0,\ m^2+\omega^2=1.
\end{equation}
They are viewed as the locked states of a kink and an anti-kink in the  integrable theory. Along with spatial and temporal translation, the breathers form a 3-dim surface in the infinite dimensional phase space of \eqref{sinegordon}.

\subsection{Non-existence of small amplitude breathers} \label{SS:non-existence}

The sine-Gordon equation \eqref{sinegordon} is a particular case of the family of \textit{nonlinear Klein-Gordon equations} in one space dimension. In this paper, we study the existence/non-existence of {\it small} breathers of a class of nonlinear  Klein-Gordon equations 
\begin{equation}\label{kleingordonrev}
\partial_{t}^2u-\partial_x^2u+u-\dfrac{1}{3}u^3-f(u)=0,
\end{equation}
where the nonlinearity $f$ satisfies 
\begin{equation} \label{E:f-Assump} 
f(u) \text{ is a real-analytic odd function and } f(u)=\er(u^5) \text{ near } 0.
\end{equation} 
While their signs are natural restrictions, the coefficients $1$ and $\tfrac 13$ in the above equation are not. In fact, given any nonlinear Klein-Gordon equation $(\pa_T^2 v - \pa_X^2) v + F(v)=0$ with a smooth real valued odd function $F(v)$ with $F'(0)>0$ and $F'''(0)<0$, it is always possible to rescale $v(X, T) = A u(aX, aT)$ so that $u(x, t)$ satisfies \eqref{kleingordonrev}. 

Let $\omega>0$ denote the temporal frequency of a possible breather $u(x, t)$ of \eqref{kleingordonrev}. 
A solution $u(x,t)$ of \eqref{kleingordonrev} is a breather of temporal frequency $\omega$ if $u(x,t)$ is $\tfrac{2\pi}\omega$-periodic in $t$ and 
\[
\lim_{x\rightarrow\pm\infty}u(x,\cdot)=0
\]
in some appropriate metric. Due to the Lagrangian structure of \eqref{kleingordonrev}, 
\begin{equation} \label{E:BH}
\mathbf{H} = \int_{-\frac {\pi}\omega}^{\frac {\pi}\omega}\left( \frac 12 (\pa_x u)^2 + \frac 12 (\pa_t u)^2 - \frac 12 u^2 + \frac 1{12} u^4 + F(u) \right)dt, \quad \text{ where } \;  F(u) = \int_0^u f(s) ds = \OO(|u|^6), 
\end{equation}
is a constant in $x$ for any $\tfrac{2\pi}\omega$-periodic-in-$t$ solutions of  \eqref{kleingordonrev}, which vanishes for any breather of temporal frequency $\omega$. 

Any real valued function $\tfrac{2\pi}\omega$-periodic in $t$ can be expressed as a Fourier series
\begin{equation} \label{E:Fourier-1}
u(t) = \sum_{n=-\infty}^{+\infty} \left(- \frac i2\right) u_n e^{in\omega t}, \quad u_{-n} = - \overline {u_n},
\end{equation}   
where the factor $-\tfrac i2$ is purely for the technical convenience when the problem is reduced to functions odd in $t$ represented in Fourier sine series. We denote 
\begin{equation} \label{E:norm-1}
\Pi_n [u] = u_n = \frac {i\omega}\pi \int_{-\frac \pi\omega}^{\frac \pi\omega} u(t) e^{-i n\omega t} dt, \qquad \| u\|_{\n}=  \sum_{n=-\infty}^{+\infty} |u_n| =  \sum_{n=-\infty}^{+\infty} |\Pi_n [u]|. 
\end{equation} 
Sometimes, with slight abuse of the notation, we also use $\Pi_n [u]$ to denote 
the mode $-\tfrac i2 u_n e^{in\omega t}$. Just like $\|\cdot\|_{L_t^\infty}$, the above norm $\|\cdot \|_{\n}$ is invariant under the rescaling in $t$, but controls the former. As we shall also take the advantage of the 
conservation (in the variable $x$) of $\mathbf{H}$, 
Sobolev norms like $\|\cdot \|_{H_t^k \big( (-\frac \pi\omega, \frac \pi\omega)\big)}$ will be involved as well. 

To state the first main result of the paper, we  introduce the function space $\FF_r$ of the nonlinearity $f$ under consideration and the definition of  single bump (in $x$) breathers (see Figure \ref{fig:bumps}). 

\begin{equation}\label{def:Banach:f} \begin{split}
\mathcal{F}_r=\Big\{f:  \{u \in \C: |u|<r\} & \to\C,\,f \text{ odd and real-analytic}, \\ 
& f(u)=\sum_{k\geq 2}f_k u^{2k+1},\ f_k \in \R, \ \|f\|_r := \sum_{k\geq 2}|f_k|r^{2k+1}<\infty\Big\},
\end{split} \end{equation}
which is equivalent to the Banach space of real valued sequences $(f_{k})_{k=2}^\infty$ with the above weighted $\ell_1$ norm. 

\begin{definition} \label{D:bump}
Let $\sigma\in (0, 1)$ and $\omega>0$. We say that a $\tfrac {2\pi}\omega$-periodic-in-$t$ function $u(x, t)$ is $\sigma$-multi-bump in $x$ in the $\ell_1$ norm  
if there exist $x_1<x_2<x_3<x_4<x_5$ such that 
\[
\max \{\|u(x_{j_1}, \cdot)\|_{\ell_1} \mid j_1\in 1,3,5\} \le \sigma \min \{ \|u(x_{j_2}, \cdot)\|_{\ell_1} \mid j_2 \in 2, 4\}.
\]
A function $u(x, t)$ is said to be $\sigma$-single-bump if it is not 
$\sigma$-multi-bump. 
\end{definition}

\begin{figure}[!]	
	\centering
	\begin{overpic}[width=6cm]{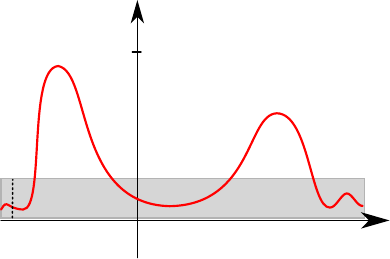}	
    \put(103,9){{\footnotesize$x$}}	
        \put(-5,18){{\footnotesize$\sigma$}}	
        \put(39,61){{\footnotesize$\|u\|_{\ell_1}$}}		
                \put(37,51){{\footnotesize$1$}}	
\end{overpic}\hspace*{0.7cm}
    \begin{overpic}[width=6cm]{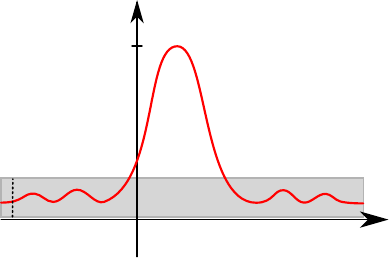}					
	    \put(103,9){{\footnotesize$x$}}	
        \put(-5,18){{\footnotesize$\sigma$}}	
	\put(39,61){{\footnotesize$\|u\|_{\ell_1}$}}	
	                \put(31,53){{\footnotesize$1$}}			
		\end{overpic}
	\caption{Multi-bump (left) and single-bump (right) functions according to Definition \ref{D:bump}.}	
	\label{fig:bumps}
\end{figure} 

Here $x_2$ and $x_4$ can be viewed as two bumps separated by a trough at $x_3$. The following theorem is the first main result of this paper. It will be a consequence of the more detailed Theorems \ref{T:main} and \ref{prop:Stokesconstant} below. 

\begin{theorem} \label{T:1}
Fix $r>0$. Then there exists an open and dense set $\mathcal{U}\subset\FF_r$ such that for any $f\in\mathcal{U}$ the following holds. 
For any $\sigma \in (0, 1)$, there exists $\rho^*>0$ such that there does not exist any solution $u(x, t)$ to \eqref{kleingordonrev} which:
\begin{enumerate}
\item is $\frac {2\pi}\omega$-periodic in $t$ for some $\omega>0$,
\item is $\sigma$-single-bump  in the $\ell_1$ norm  in the sense of Definition \ref{D:bump},
\item satisfies that, as $|x| \to +\infty$,   
\begin{equation} \label{E:decay-E}
		 \| u(x, \cdot)\|_{H_t^1 \big( (-\frac \pi\omega, \frac \pi\omega)\big)} + \| \pa_x u(x, \cdot)\|_{L_t^2 \big( (-\frac \pi\omega, \frac \pi\omega)\big)} \to 0, 
		\end{equation}
\item satisfies
\begin{equation} \label{E:smallness-T}
\sup_{x\in \R}  \,  \| u(x, \cdot)\|_{\ell_1} 
<\min\{1, \rho^* \omega^{\frac 12}\}.
\end{equation} 
\end{enumerate}
\end{theorem}

Some comments on Theorem \ref{T:1} are in order.
\begin{enumerate}
\item {\it About $\mathcal{U}$}: in the more detailed Theorems \ref{T:main} and \ref{prop:Stokesconstant} below we give a more precise characterization of the set $\mathcal{U}\subset\mathcal{F}_r$ where Theorem \ref{T:1} holds. Indeed, the complement of $\mathcal{U}$ in $\mathcal{F}_r$ is the preimage of zero of a certain analytic non-trivial function $C_\mathrm{in}:\FF_r\to \mathbb{C}$.

\item {\it Regarding the smallness}: it is worth pointing out that, for a function $\frac {2\pi}\omega$-periodic in $t$, while the norm $\|  \cdot\|_{\ell_1}$ is at the same level as $\|\cdot\|_{L_t^\infty}$ in scaling and controls the latter, the smallness in the theorem (see \eqref{E:smallness-T}) is measured uniformly in $\omega$ in terms of $\omega^{-\frac 12} \| u(x, \cdot)\|_{\ell_1}$. Even though this quantity looks to depend on $\omega \in \R^+$, in scaling it is comparable to $\|\cdot\|_{L_t^2 (-\frac {\pi}\omega, \frac {\pi}\omega)}$ involved in the conserved ($\mathbf{H}$). This norm is   weaker than $\|\cdot\|_{H_t^1}$, clearly the above theorem also implies the nonexistence of single-bump breather solutions, small in the energy norm (as in \eqref{E:decay-E}).

\item Theorem \ref{T:1} is only concerned with small {\it single-bump-in-$x$} breathers and it does not rule out possible small 
periodic-in-$t$ solutions which decay as $|x| \to \infty$ but with multiple bumps.
Clearly if $u(x, t)$ is $\sigma$-multi-bump, then it is also multi-bump for any $\sigma' \in (\sigma, 1)$. Hence the constant $\rho_*$ of the smallness increases as $\sigma$ increases. 

\item {\it Breathers with exponentially small tails}: while small single-bump breathers are not expected for \eqref{kleingordonrev} for most nonlinearities $f$, a  more generic phenomenon is the existence of small breathers with exponentially small (with respect to the amplitude), but non-vanishing, tails for certain values of $\omega$. 
Such solutions are usually called \emph{generalized breathers} (see \cite{L14}). Proposition \ref{prop:generalized} below  gives precise estimates on the tails of 
those generalized breathers.
\end{enumerate}

Before stating the more detailed Theorems \ref{T:main} and \ref{prop:Stokesconstant}, let us put our result into some context.

Breather type solutions represent important structures in high energy physics {\it etc.} Moreover,  they are of fundamental  importance  since  they serve as building blocks organizing the infinite dimensional dynamics of the underlying evolutionary PDE. In \cite{CLL20}, Chen, Liu, and Lu proved the soliton resolution of \eqref{sinegordon} using the integrable theory. Namely, in certain weighted Sobolev norm, solutions to \eqref{sinegordon} decay (at an algebraic rate in $t$) to a finite superposition of kinks, anti-kinks, and breathers, where breathers are the only spatially localized class. Therefore breather type structures could play a crucial role in the asymptotic dynamics of the nonlinear Klein-Gordon equations. In particular, unlike relative equilibria such as kinks, standing waves, {\it etc.}, breathers may be of arbitrarily small amplitudes and energy and thus give rise to obstacles to possible nonlinear dispersive decay or scattering of small energy solutions. (Small amplitude breathers become large in certain weighted norms adopted in some literatures, e.g. \cite{HN08, De16, CLL20} {\it etc}.)

The \eqref{sinegordon} breathers \eqref{breatherssine} are obtained based on the complete integrability of \eqref{sinegordon}. However, for non-integrable Klein-Gordon equations, the existence of (small amplitude) breathers is a completely different  problem due to the lack of effective tools such as the inverse scattering method. 
It is   a fundamental question to assert whether the  existence of breathers is a special phenomenon due to the integrability or it occurs more generally. 
In fact,  the existence of breathers for non-integrable nonlinear wave equations is expected to be rare\footnote{On the contrary breathers are more likely to exist in Hamiltonian systems on lattices, see for instance \cite{AubryM94,McKay00,Yuan02,PelinovskyPP16,PelinovskyPP20}.} (see \cite{D93,S93, BMW94}).

In the  seminal work \cite{KS87} from 1987, Kruskal and Segur used an ingenious formal asymptotic expansion to show the nonexistence of small  $\OO(\e)$ amplitude breathers in a class of nonlinear Klein-Gordon equations for $\omega$ which is $\eps^2$-close to the resonant frequency $\omega=1$. The obstacle to solving for breathers is exponentially small in $\e\ll1$. For the past more than thirty years, as far as the authors know, no rigorous justification 
of their leading order exponentially small asymptotics had been given for such nonlinear PDEs. A fundamental part of the proof of Theorem \ref{T:1} is to 
 provide a rigorous proof of  Kruskal and Segur's 
formal argument (for odd analytic nonlinearities) as well as rule out the existence of breathers for other frequencies (either close to other resonances or away from resonances).  This is stated more precisely in Theorem \ref{T:main} below. 

Among other works on small breathers, 
in \cite{KMM17}, the authors proved that small breathers odd in $x$ do not exist for \eqref{kleingordonrev} by establishing certain asymptotic stability in the phase space of odd-in-$x$ functions. The oddness is, however, contrary to the well-known examples -- the \eqref{sinegordon} breathers \eqref{breatherssine} are even in $x$.  Breathers have also been proven not to exist for some generalized  KdV equations and the Benjamin-Ono equation, see \cite{MunozP19, MunozP19b}.
In \cite{BCLS11}, small breathers of a 1-dim nonlinear wave equation in periodic (in $x$) media were obtained. They play an important role in theoretical scenarios where photonic crystals are used as optical storage. In this model, the periodic media causes the spectra of the linearized problem to be rather different, and this makes the existence of small breathers possible.  

For breathers of $\OO(1)$ amplitude, in the classical works \cite{D93, De95, BMW94}, Denzler and Birnir-McKean-Weinstein studied the rigidity of breathers, namely, the persistence of (infinite subfamilies of) breathers \eqref{breatherssine} when \eqref{sinegordon} is perturbed as 
\[
\partial^2_t u-\partial^2_x u+ \sin(u)=\e\Delta(u)+\er(\e^2),
\]
where $\Delta$ is an analytic function in a small neighborhood of $u=0$. They proved that breathers corresponding to infinitely many amplitudes $m = \sqrt {1-\omega^2}$ persist only if $\Delta (u)$ results from a trivial rescaling of \eqref{sinegordon}.
In \cite{Ki91}, \eqref{sinegordon} was also singled out as the only 1-dim nonlinear Klein-Gordon equation admitting breathers in certain form (see also \cite{Mandel21}). These rigidity results are consistent with the generic non-existence of breathers. 
Even though for small amplitude, \eqref{kleingordonrev} might also be viewed as close to \eqref{sinegordon} in the $C^\infty$ class, 
it does not help much in the analysis of the exponential small obstruction to the existence of breathers (see Theorem \ref{T:main} below) since the nonlinearity of the former is not a perturbation to that of the latter in the analytic function class. 

The nonlinear Klein-Gordon equation \eqref{kleingordonrev} is also quite different from the one studied in \cite{SW99} (see also \cite{BambusiC11}). The spatial variable in  \cite{SW99,BambusiC11} is taken in $\mathbb{R}^3$ (which gives stronger dispersion than in $\mathbb{R}$) and  an extra potential term $V(x) u$ is added. This term creates an isolated oscillatory eigenvalue of the linear problem whose interaction with the continuous spectra leads to slow radiation. In  contrast,  \eqref{kleingordonrev} does not contain a potential term and its small breathers have temporal frequencies slightly less than $1$, which is the end point of the continuous spectrum of \eqref{kleingordonrev} linearized at $0$.
In \cite{Sc20}, temporally periodic and spatially decaying solutions were found for the nonlinear Klein-Gordon equation with cubic nonlinearity, i.e. $f(u)=0$, for $x \in \R^3$. These solutions are close to some steady solutions (not necessarily small) with $\er(1/|x|)$ spatial decay. Such decay, which is too slow for the solutions to be in the energy space, is due to the 3-dim Helmholtz equation, whose solutions would only be in $L^\infty$ and oscillate if $x \in \R^1$. Hence these solutions are more analogous to the breathers with tails constructed in \cite{SZ03} in 1-dim.

\subsection{Main quantitative results: leading order of the exponentially small obstruction}
Theorem \ref{T:1} is a consequence of the more detailed 
Theorems \ref{T:main} and \ref{prop:Stokesconstant} below. In seeking small breathers, which are conceptually born from the end point of the continuous spectra of the linear Klein-Gordon equation, it is essential that the temporal frequency $\omega$ is close to  resonant. Hence, to state the more detailed theorem,  we divide $\omega \in \R^+$ into two primary classes 
\begin{equation}\label{E:intervals}
		I_k(\e_0)=\left[\sqrt{\frac{1}{k(k+\e_0^2)}}, \frac{1}{k}\right), \; k \in \N, \qquad \text{and}
		\qquad J_k(\e_0)=\left[ \frac{1}{k+1}, \sqrt{\frac{1}{k(k+\e_0^2)}}\right),\ 
		k\in\N \cup \{0\},
	\end{equation}
where $0< \eps_0 \le 1/2 $ is a parameter to be determined later. Note $J_0(\e_0) = [1, \infty)$ and  $(0,\infty)=\left(\cup_{k\in\N}I_k \right)\bigcup \left( \cup_{k \ge 0} J_k\right)$. We shall comment more on these sets in the context of spatial dynamics in Section \ref{SS:spa-dyn}. 

\begin{theorem} \label{T:main}
Fix $r>0$ and consider $f\in\FF_r$ (see \eqref{def:Banach:f}), then the following statements hold.
\begin{enumerate}
		\item There exists $\rho_1^*>0$ such that for any $\e_0 \in (0, 1/2]$, $\omega \in J_k(\e_0)$, $k \in\N \cup \{0\}$, if $u(x, t)$ is a  $\frac {2\pi}\omega$-periodic-in-$t$ solution to \eqref{kleingordonrev} satisfying 
		\begin{equation} \label{E:decay-E-1}
		 \| u(x, \cdot)\|_{H_t^1 \big( (-\frac \pi\omega, \frac \pi\omega)\big)} + \| \pa_x u(x, \cdot)\|_{L_t^2 \big( (-\frac \pi\omega, \frac \pi\omega)\big)} \to 0, 
		\end{equation}
		as $x \to +\infty$ or $-\infty$, then 
		\begin{equation} \label{E:small-0}
		\sup_{x\in \R}  
		\| u(x, \cdot)\|_{\ell_1} 
		\ge \rho^*_1 \min\{1,\, \e_0 \omega^{\frac 12}\}. 
		\end{equation}
		
		\item There exists $C_\mathrm{in}\in \C$ and  $\rho_2^*>0$ depending on $f$ such that, for any $y_0>0$, there exist 
		$\e_0, M>0$ such that for any 
		\begin{equation} \label{E:k-omega}
		\omega = \sqrt{\frac{1}{k(k+\e^2)}} \in I_k (\e_0), \;\; \forall \, k\ge 1, 
		\end{equation}
		there exist unique $\tfrac {2\pi}{k\omega}$-periodic and odd in $t$ solutions $u_{\mathrm{wk}}^\star (x, t)$, $\star=s, u$, to \eqref{kleingordonrev}, only containing Fourier modes $n \in k\Z$ with odd $\tfrac nk$ in \eqref{E:Fourier-1}, such that 
		
		\begin{enumerate} 
		\item For  $x\ge - \frac {y_0}{\e \sqrt{k} \omega}$ for $\star=s$ and $x\le  \frac  {y_0}{\e \sqrt{k} \omega}$ for $\star=u$, they can be approximated as  
		\begin{equation} \label{E:u-wk}
                 \left\| \big(1-\frac 1{(k\omega)^2}\pa_t^2\big) \left( \begin{pmatrix} u_{\mathrm{wk}}^\star (x, t) \\ \frac {\pa_x u_{\mathrm{wk}}^\star (x, 		t)}{\e \sqrt{k} \omega} \end{pmatrix} - \e \sqrt{k} \omega\begin{pmatrix} v^h (\e \sqrt{k}			\omega x) \\ (v^h)' (\e \sqrt{k}\omega x)  \end{pmatrix} \sin k\omega t \right)\right\|_{\ell_1}\le M k^{-\frac 	32} \e^3 v^h (\e \sqrt{k}\omega x),     
		\end{equation}
		where  $v^h (y) = \frac{2\sqrt{2}}{\cosh y}$;  
		
		\item They also satisfy $\Pi_k \big[\pa_x u_{\mathrm{wk}}^\star (0, \cdot)\big]=0$, $\star=s, u$, and 
		\begin{equation}\label{def:splittingmaintheorem}
\left\| 
\big( |-\pa_t^2 - 1|^{\frac 12} (u_{\mathrm{wk}}^u - u_{\mathrm{wk}}^s )+ i 
\pa_x (u_{\mathrm{wk}}^u - u_{\mathrm{wk}}^s)\big) (0,t
) - 4\sqrt{2} 
C_{\mathrm{in}}e^{-\frac {\sqrt{2k} \pi} \e} \sin 3k\omega t  \right\|_{\ell_1}
\le  \frac {M e^{-\frac {\sqrt{2k} \pi} \e}}{\tfrac 12 \log k - \log \e}.   
\end{equation} 
		
		\item A $\frac {2\pi}\omega$-periodic-in-$t$ solution $u(x, t)$ to \eqref{kleingordonrev} satisfies 
		\begin{equation} \label{E:small-1}
		\sup_{x\in \R} \|u(x, \cdot)\|_{\ell_1} \le \rho_2^*\omega^{\frac 12}
		\end{equation}
		and \eqref{E:decay-E-1} as $x \to -\infty$ (or as $x \to +\infty$) iff $u_{\mathrm{wk}}^u$ satisfies \eqref{E:small-1} and $u(x, t)= u_{\mathrm{wk}}^u (x+x_0, t+t_0)$ (or $u(x,t)= u_{\mathrm{wk}}^s (x+x_0, t+t_0)$) for some $x_0, t_0 \in \R$. Consequently, there exists a solution $u(x,t)$  to \eqref{kleingordonrev} satisfying \eqref{E:small-1} and \eqref{E:decay-E-1} as $|x| \to \infty$ iff there exists $r\in \R$ such that $u_{\mathrm{wk}}^u (x+r, t) = u_{\mathrm{wk}}^s (x, t)$ for all $x$ and $t$ and satisfies \eqref{E:small-1}. 
		\end{enumerate}\end{enumerate} 		
\end{theorem}

In this theorem, while the non-existence of small breathers is confirmed in the case of temporal frequency $\omega \in J_k(\e_0)$, the only possible (up to translations) candidates $u_{\mathrm{wk}}^\star (x, t)$, $\star=s, u$, of small breathers with $\omega \in I_k(\e_0)$ are identified along with optimal estimates. In particular, it gives a necessary and sufficient condition (in statement (2c)) on the existence of small breathers that $u_{\mathrm{wk}}^\star$, $\star=s, u$, must remain small for all $x\in \R$ and coincide after a translation. {\it The most important result of the theorem  is statement (2b)} which rigorously identifies the exponentially small leading order term and its coefficient $C_\mathrm{in}$ in the splitting of $u_{\mathrm{wk}}^\star$, $\star=s, u$, when they get close in an infinite dimensional space (of periodic functions of the variable $t$) in their first opportunity in $x$. 

From Theorem \ref{T:main}(2ab), one may verify that $x=0$ is the only critical point of $\Pi_k [u_{\mathrm{wk}}^\star (x, \cdot)]$ for $\pm x \le \frac 1{\e \sqrt{k} \omega}$, $\star = u, s$. Therefore, if $C_\mathrm{in} \ne 0$, then there does not exist $|r| \le \frac 1{\e \sqrt{k} \omega}$ such that $u_{\mathrm{wk}}^s (x, t) \equiv u_{\mathrm{wk}}^u (x +r, t)$. Hence $C_\mathrm{in} \ne 0$  excludes the existence of small single bump breathers due to \eqref{E:u-wk} and \eqref{def:splittingmaintheorem}, which are the simplest and the most natural class of small breathers including those given in \eqref{breatherssine} for \eqref{sinegordon} (see Figure \ref{fig:bumps}). 

Note then that  Theorem \ref{T:main} is conditional, since it proves the nonexistence of single-bump small amplitude  breathers \emph{provided} the constant $C_\mathrm{in}=C_\mathrm{in}(f)\in \C$ satisfies $C_\mathrm{in}\neq 0$. In particular, {\it it proves Theorem \ref{T:1}} as long as $C_\mathrm{in}(f)\neq 0$ for an open and dense set of $f\in\FF_r$.  Next theorem, proven in Section  \ref{sec:Stokes}, shows that  this is indeed  the case. In fact, we also give an explicit family of nonlinearities $f(\mu, u)$, involving a parameter $\mu$, such that $C_\mathrm{in} (f) \ne 0$ for all but a discrete set of $\mu\in\mathbb{R}$.

\begin{theorem}\label{prop:Stokesconstant}
Fix $r>0$. The map $C_\mathrm{in}:\mathcal{F}_r\to \C$ introduced in Theorem \ref{T:main}(2) is analytic and non-constant. Moreover, the set $\mathcal{U}=\mathcal{F}_r\setminus C_\mathrm{in}^{-1}(0)$ is open and dense.
\end{theorem}

As already mentioned, even if small single-bump breathers are not expected  to exist for \eqref{kleingordonrev} with most $f$, the  more generic phenomenon is the existence of small breathers with non-vanishing tails which are exponentially small with respect to the amplitude.  This is stated in the next proposition.

\begin{prop} \label{prop:generalized}
Fix $r>0$ and consider $f\in\FF_r$, then the following holds. 
There exist $\e_0, M>0$ such that  for any  $k\in \N$ and 
		$\omega =\sqrt{\frac{1}{k(k+\e^2)}}  \in I_k (\e_0)$:
\begin{enumerate} 		
		\item There always exist  $\frac {2\pi}\omega$-periodic-in-$t$ solutions $u(x, t)$ 
		such that 
		\begin{align*}
& \big\| |-\pa_t^2 - 1|^{\frac 12} (u - u_{\mathrm{wk}}^\star)(x,\cdot) 
\big\|_{L_t^2 (-\frac \pi\omega,\frac \pi\omega)} + \big\| \pa_x (u - 
u_{\mathrm{wk}}^\star)(x,\cdot)\big\|_{L_t^2 (-\frac \pi\omega,\frac \pi\omega)} 
\le  
M k^{\frac 12} e^{-\frac {\sqrt{2k} \pi} \e},   
\end{align*} 
		for both all $x\ge 0$ with $\star=s$ and $x\le 0$ with $\star=u$. 
		
\item Suppose that the constant $C_{\mathrm{in}}$ introduced in Theorem \ref{T:main} satisfies $C_{\mathrm{in}} \ne0$. Then, the  breather  with tails $u(x, t)$ given by item  (1) also satisfies
		\[
		\big\| |-\pa_t^2 - 1|^{\frac 12} (u - 
u_{\mathrm{wk}}^\star)(x,\cdot) \big\|_{L_t^2 (-\frac \pi\omega,\frac 
\pi\omega)} + \big\| \pa_x (u - u_{\mathrm{wk}}^\star)(x,\cdot)\big\|_{L_t^2 
(-\frac \pi\omega,\frac \pi\omega)} \ge  
\tfrac {|C_{\mathrm{in}}|}M k^{\frac 12} e^{-\frac {\sqrt{2k} \pi} \e},   
		\] 
		for both all $x\ge 0$ with $\star=s$ and $x\le 0$ with $\star=u$. 
\end{enumerate} 
\end{prop}

We give several remarks on Theorems \ref{T:main}, \ref{prop:Stokesconstant} and Proposition \ref{prop:generalized}.
\begin{enumerate}
 \item The constant $C_\mathrm{in}$ introduced in Theorem \ref{T:main} is often referred to as the {\it Stokes 
constant} in the literature, which is the coefficient of the leading order term 
in the  exponentially small obstruction in \eqref{def:splittingmaintheorem}. 
We emphasize that the non-existence of small single bump amplitude breathers holds for 
 {\it all} frequencies under the {\it single} condition $C_\mathrm{in}\ne 0$. This constant depends on the full jet of the real analytic nonlinearity $f$, but is independent of $k$ or $\omega$. No simple closed formula has been identified for $C_\mathrm{in}$ in the literature. We expect that one should be able to develop a computer assisted proof to check the nonvanishing of $C_\mathrm{in}$ for given nonlinearities (following the ideas developed in \cite{BCGS21}  for a 3-dimensional Hopf-zero bifurcation). In Section \ref{SS:Stokes} below, we conjecture a formula of the Stokes constant in terms of a series. 

\item  The relative scale between $u$ and $\pa_x u$ in the estimates in Theorem \ref{T:main}(2) is consistent with the quadratic part of the Hamiltonian $\mathbf{H}$, where $|-\pa_t^2 - 1|$ is somewhat degenerate of the order $\er(k^{-1} \e^2)$ when applied to the $k$-th Fourier mode $e^{ik \omega t}$. 

\item The generalized small amplitude breathers given by Proposition \ref{prop:generalized} have frequency $\omega$ slightly smaller than each $\tfrac 1k$. This $\frac 1k$ is consistent with the fact that small \eqref{sinegordon} breathers \eqref{breatherssine} have periods slightly greater than $2k\pi$.
Each of these breather-like solutions  to \eqref{kleingordonrev} with exponentially small tails is the superposition of a small exponentially localized-in-$x$ wave of order $\er(\e k^{-\frac 12} e^{- \e k^{-\frac 12} |x|})$ with an $L^\infty_{xt}$ correction up to  order $\er(k^{\frac 12} \e^{-1} e^{-\frac {\sqrt{2k} \pi} \e})$. In the generic non-degenerate case of $C_{\mathrm{in}}\ne 0$, the infimum of the tails of such generalized breathers is also bounded below by this exponential order.

\item In contrast to the fact that the breathers of \eqref{sinegordon} form a 3-dim manifold in the infinite dimensional space of solutions, the breathers with exponentially small tails actually form a family of finite codimension (see Proposition \ref{prop:FirstBif:Generalized} below). 

\item As $u_{\mathrm{wk}}^\star$, $\star=u, s$, are special solutions of high regularity, the norm in Theorem \ref{T:main}(2ab) 
actually could be refined to be $H^n_t$ for any $n\ge 0$. In  contrast, since the set of breathers with exponentially small tails is of finite codimension in the energy space of the spatial dynamics, the norms in Proposition \ref{prop:generalized} arising from the quadratic part of the Hamiltonian $\mathbf H$ are not expected to be improved.    

\item In the generic case of $C_\mathrm{in}\neq 0$ provided by Theorem \ref{prop:Stokesconstant} which implies that \eqref{kleingordonrev} does not have small breathers, the asymptotic behavior of small solutions in the energy space $H_x^1 (\R) \times L_x^2 (\R)$  is a natural but intriguing question. 
Even though the breathers with exponentially small tails $u(x, t)$ obtained in Proposition \ref{prop:generalized} are not in the energy space $H_x^1 (\R) \times L_x^2 (\R)$, they still shed some light on the dynamics of \eqref{kleingordonrev}. Take $k=1$ for simplicity. Let $\gamma \in C_0^\infty (\R, \R)$ be a cut-off function satisfying $\gamma(s) =1$ for $|s|\le 1$ and $u(x, t)$ be a sufficiently smooth  breather like solution with exponentially small tails (see Proposition \ref{prop:generalized}). Consider the solution $\tilde u (x, t)$ to \eqref{kleingordonrev} with initial value $\gamma (\tfrac 1{\e^3} e^{-\frac {\sqrt{2} \pi}\e} x) \big( u(x, 0), \pa_t u(x, 0)\big)$. Its $H_x^1 \times L_x^2$ norm is of the order $\er (\sqrt{\e})$.
The propagation speed of  \eqref{kleingordonrev} being equal to $1$ implies that $\tilde u$ is periodic in $t$ for $|x|, |t|\le \er (\e^{-3} e^{\frac {\sqrt{2} \pi}\e})$. Hence, this exponential time scale has to be relevant in studying the asymptotic dynamics of small energy solutions of \eqref{kleingordonrev}.  
\end{enumerate}

Breathers with small tails as well as some other similar types of solutions had already been 
obtained, but often with only exponentially small {\it upper bound estimates} on the tails, instead of their precise orders or lower bounds (and without  the explicit exponent $-\frac {\sqrt{2k} \pi}\e$). 
In \cite{L14}, Lu derived breathers with tails bounded by 
$\er(e^{-\frac c\e})$ for some unspecified $c>0$. 
In a sequence 
of papers, Groves and Schneider considered small amplitude 
modulating pulse solutions to a class of semilinear \cite{Gr01} and quasilinear 
\cite{Gr05, Gr08} reversible wave  equations. These are solutions consisting of 
pulse-like spatially localized envelopes advancing in the laboratory frame and 
modulating an underlying wave-train of a fixed wave number $\xi_0>0$, which are 
time-periodic in a moving frame of reference. They would become breathers if 
$\xi_0=0$. For quasilinear reversible wave equations, Groves and Schneider 
constructed solutions $u(x, t)$ of this type  with tails bounded by $\er 
(e^{-\frac c{\sqrt{\e}}})$ but only defined  for 
$|x| \le \er (e^{\frac c{\sqrt{\e}}})$. The finite length of the domain in $x$ 
was mainly due to difficulties arising in quasilinear PDEs. In the semilinear 
case, such solutions could be derived globally in $x\in \R$ with the same $\er 
(e^{-\frac c{\sqrt{\e}}})$ 
estimates on the tails. The upper bounds of the tails in these papers 
were obtained by making the error terms small through consecutive applications 
of partial normal forms, e.~g.~as in \cite{Ne84, IL05}. 

The proof of Theorems \ref{T:1}, \ref{T:main} and Proposition \ref{prop:generalized} rely on the spatial dynamics method (see, e.g. \cite{Ki82, We85}). This method is often an effective approach in constructing certain coherent structures for nonlinear PDEs where a spatial variable $x$ plays a distinct role. In such a framework, the desired solutions are sought as special solutions in an evolutionary system where this $x$ is treated as the dynamic variable.  
 
We fix a temporal frequency $\omega>0$ and consider in the rescaled variable $\tau =\omega t$, 
\begin{equation}\label{eq:KLGtau}
\omega^2\partial_{\tau}^2u-\partial_x^2u+u-\dfrac{1}{3}u^3-f(u)=0.
\end{equation}
Considering $x$ as the evolutionary variable, it is a globally well-posed infinite dimensional Hamiltonian system  in appropriate spaces of $2\pi$-periodic-in-$\tau$ functions  where its Hamiltonian can be derived from $\mathbf{H}$. 

Breathers of \eqref{kleingordonrev} correspond to $2\pi$-periodic-in-$\tau$ solutions of \eqref{eq:KLGtau} which decay to $0$ as $|x| \to \infty$, namely, orbits homoclinic to the equilibrium $0$ due to the intersection of its stable and unstable manifolds. Note that, on the one hand, the notions of single-bump and multi-bump breathers (see Definition \ref{D:bump})
get translated to homoclinics as in  Figure \ref{fig:bumpshomo}. On the other hand, small amplitude breathers correspond to small homoclinic loops. 
From this point of view, the proof of Theorem \ref{T:main} will rely on analyzing the (finite dimensional) stable and unstable invariant manifolds of $u=0$ and on whether their  intersections lead to \emph{small} homoclinic loops. Proposition \ref{prop:generalized} will rely on analyzing the center-stable and center-unstable invariant manifolds and constructing small homoclinic loops to the center manifold.

\begin{figure}[!]	
	\centering
	\begin{overpic}[width=5cm]{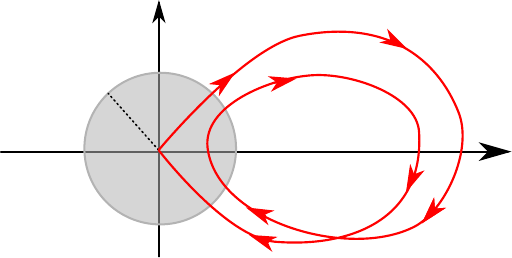}	
		\put(16,34){{\footnotesize$\sigma$}}	
	\end{overpic}\hspace*{0.7cm}
	\begin{overpic}[width=5cm]{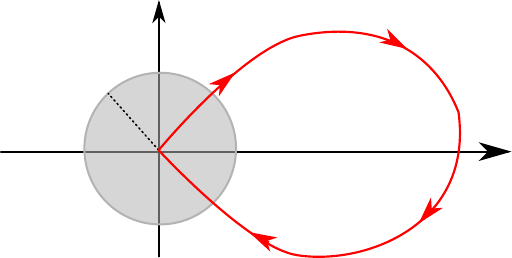}				
	
		\put(16,34){{\footnotesize$\sigma$}}	
		\end{overpic}
	\caption{Multi-bump (left) and single-bump (right) solutions in the 
spatial dynamics framework.}	
	\label{fig:bumpshomo}
\end{figure}

In the next section we first present an abstract setting for analyzing (the breakdown of) small homoclinic loops through a dynamical system approach. Then, we show how the Klein Gordon equation \eqref{eq:KLGtau} fits into this framework regarded from the spatial dynamics point of view.

\subsection{Birth of small homoclinics via  ``eigenvalue collision": exponentially small splitting of separatrices} \label{SS:BT-bifur}

Let us consider an  $N$-dimensional system, $N \le \infty$,  $(P_{\al})$ involving a parameter $\al\in I\subset\R$, which has a steady state at 0. We want to analyze whether this steady state has small homoclinic loops.

Assume the following happens in  $(P_{\al})$.

\begin{itemize}
\item [(a)] {\it ``Eigenvalue collision"} at $\al=0$. Namely, in a neighborhood 
of $0 \in \C$, there are exactly two eigenvalues $\pm \lambda (\al) \sim \pm 
\sqrt{\alpha}$ (modulo symmetries, but counting the algebraic 
multiplicity) of the linearization of $(P_{\al})$ at $0$. As $\alpha$ increases, 
they move towards $0$ from the imaginary axis $i\R$ and then move into the real axis $\R$ after coinciding at $0$ when $\al=0$. 

\item [(b)] 
For $\al>0$, the {\it normal form} of the local nonlinear system $(P_{\al})$ near $0$ projected to the 2-dimensional eigenspace $\mathcal M$ associated to $\pm \lambda(\alpha)$ is equivalent to 
\begin{equation}\label{def:singularlimitBogdanov}
\ddot u - \lambda(\al)^2 u + u^m = 0,\quad m\geq 2,
\end{equation}
where the ``$+$" sign matters only when $m$ is odd. 
Apparently this normal form system has one or two small homoclinic orbits of amplitude $\er\big(\lambda (\al)^{\frac 2{m-1}}\big)$ for $0< \al \ll1$.     

\item[(c)] The system $(P_{\al})$ has a first integral which is locally positive definite in the center manifold around the steady state.
\end{itemize}

Note that, under these assumptions, small homoclinic loops cannot exist 
either  of $\al<0$  or if $\al>0$ is ``not close to 0''. One first observes that they cannot exist  in the center manifold $W^c(0)$ since the steady state  is isolated at its level of energy inside $W^c(0)$. Hence homoclinic orbits 
exist only at the intersection of the stable and unstable invariant manifolds $W^{s, u}(0)$. However, if 
$\al$ is not small, then the dynamics inside the stable and unstable manifolds $W^{s, u}(0)$ are conjugate to the linear dynamics in a large neighborhood of $0$. Then  any orbit in them (and in particular any possible homoclinic orbits) must first go away at a uniform distance from 0. In conclusion,  small homoclinic loops can only exist for small $\al>0$.

If the whole system $(P_{\al})$ is 2-dimensional, $\al=0$ corresponds to  one type of the Bogdanov-Takens bifurcations. In this case the existence of a  conserved quantity leads to the existence of small homoclinic orbit  for all small $\al>0$.  

When $(P_{\al})$ is of higher dimensions, then the dynamics in the directions transverse to $\mathcal M$ is at a fast scale and thus $(P_{\al})$ is a typical singular perturbation system. 

If the fast dynamic is hyperbolic, then by the standard {\it normally hyperbolic} invariant manifold theory, a 2-dimensional slow manifold $\mathcal M_\al$ persists for $0< \al \ll 1$ and  the existence of small homoclinic orbit can be reduced to the above 2-dim case on $\mathcal M_\al$. 
This mechanism indeed happens in the construction of some special solutions in some nonlinear PDEs including \cite{BCLS11} (or more of the elliptic type PDE, 
see, e.g. \cite{Mi94}). 

However, if there are fast elliptic/oscillatory directions (as happens for the Klein-Gordon equation \eqref{eq:KLGtau}), then there does not necessarily exist a slow manifold and one cannot reduce $(P_{\al})$ to 2 dimensions. Without such reduction, one is forced to find small homoclinic orbits as the intersection of the low dimensional stable and unstable manifolds $W^{u, s} (0)$ of $0$ close to $\mathcal M$ in the $N$-dimensional phase space of $(P_{\al})$, but this is highly unlikely simply by  counting the dimensions. Homoclinic orbits are generated via such eigenvalue collision mechanism only in some very lucky/rare systems, such as the completely integrable \eqref{sinegordon} where the family of breathers actually can be also extended to large amplitudes.

In a generic system of the above {\it normally elliptic case} with rapid oscillations, while the stable and unstable invariant manifolds, denoted by $W^{u, s} (0)$, do not intersect, the existence of a conserved quantity whose Hessian is positive definite in the center direction of the linearized $(P_{\al})$ at $0$ often ensures the transverse intersection of the center-stable and center-unstable manifolds $W^{cs, cu}(0)$. 
This intersection yields a finite co-dimensional tube homoclinic to the center manifold, which corresponds to generalized breathers for the Klein-Gordon equation \eqref{eq:KLGtau}). Moreover, the  distance between $W^{u, s} (0)$ determines how close this homoclinic tube is to $0$ which, in the present paper,  corresponds to how small the tails of the generalized breathers of the nonlinear Klein-Gordon equations can be.

Regarding the  distance between $W^{u, s} (0)$, the strong averaging effect of the fast oscillations makes $W^{u, s} (0)$ very close to each other -- usually $\mathcal{O}(\al^n)$ if $(P_{\al})$ has finite smoothness and $\mathcal{O}(e^{-\frac 1{\al^\delta}})$ if $(P_{\al})$ analytic. A leading order approximation such as the one obtained in Theorem \ref{T:main}(2ab) provides accurate information of this distance, usually called \emph{splitting distance}\footnote{It also sheds light for the future study of scattering maps \cite{DelshamsLS08}
induced by the homoclinic tube and multi-bump homoclinics.}.       

To summarize, the mechanism of eigenvalue collision leading  to a 
Bogdanov-Takens type bifurcation embedded in a normally elliptic singular 
perturbation problem is primarily responsible for the  birth of small homoclinic 
orbits/breathers with small tails for the nonlinear Klein-Gordon equation 
\eqref{eq:KLGtau}. 
It yields exact breathers in some very special cases such as 
the completely integrable \eqref{sinegordon}. 

In fact, this general mechanism leads to  what is usually called \emph{exponentially 
small splitting of separatrixs}, a phenomenon that usually arises in analytic systems with two time scales with i.) fast oscillations and ii.) slow hyperbolic dynamics with a homoclinic loop (also called separatrix), as in the setting explained above. 
Other settings where this phenomenon occurs are the resonances of nearly integrable Hamiltonian systems and
close to the identity area preserving maps.
Analysis of such phenomena is fundamental in 
the construction of unstable behaviors in these models such as Arnold diffusion 
or chaotic dynamics.

The study of the exponentially small splitting of separatrixs goes back to the 
seminal paper by Lazutkin \cite{Lazutkin84russian}, which dealt with the standard 
map.
His strategy can be described as follows:
\begin{enumerate}
 \item 
The singular limit \eqref{def:singularlimitBogdanov} has a homoclinic orbit whose  time parameterization is analytic in a strip containing the real line  and has singularities in the  complex plane.
\item 
One looks for parameterizations of the perturbed  invariant manifolds 
which are close to the unperturbed homoclinic. 
They can be extended  to complex values of the parameter which are close to the  singularities of the unperturbed homoclinic with smallest imaginary part.
\item 
One analyzes the difference between  the perturbed stable and unstable manifolds 
close to these singularities. 
To this end, one has to look for the leading order of the perturbed invariant manifolds in these complex domains. 
Then, one is encountered with two different situations:
\begin{itemize}
\item[(i)]
In some problems, the perturbed invariant manifolds are also well approximated by the unperturbed homoclinic solution near its singularities. In this case, one can show that the classical Melnikov method gives the first order of the difference between these manifolds.
\item[(ii)]
In most of the  problems, like the problem at hand, the unperturbed homoclinic is not a good approximation of the perturbed invariant manifolds in these complex domains. Therefore, one must look for new first order approximations. 
These first  orders are solutions of the so-called \emph{inner equation}, which is a singular limit equation independent of the perturbative parameter.
The analysis of this equation gives the asymptotic formula for the difference between the invariant manifolds. In particular, it provides the Stokes constant $C_\mathrm{in}$ appearing in Theorem \ref{T:main}.
\end{itemize}
\item 
The last step is to translate the analysis in the complex domain to the real parameterizations of the invariant manifolds.
\end{enumerate}
In the present paper we apply this strategy to the nonlinear Klein-Gordon equation \eqref{eq:KLGtau}, or equivalently \eqref{kleingordonrev}. It is explained heuristically in more detail in Sections \ref{sec:heuristics} and \ref{sec:strategyA} below. 

In the last decades this strategy  or similar ones relying on analytic 
continuation of the parameterization of the invariant manifolds
has been applied to various problems mostly in {\it finite dimensions}. The first category (case 3(i) above), where the Melnikov function provides the leading order of the splitting distance, 
includes fast periodic forcing of integrable Hamiltonian systems  \cite{HolmesMS88, HolmesMS91, DelshamsS92, Gelfreich94} and non-generic unfoldings of the Hopf-zero singularity \cite{BaldomaS06}. This category can also be handled by other methods such as  direct series expansions  \cite{GallavottiGM99,Wang20, Wang21}. Problems falling into the  second category (case 3(ii) above), where one needs an inner equation, are more common.  Among them are near identity maps \cite{Gelfreich99, GelfreichS01, MartinSS11a, MartinSS11b},  resonances of nearly integrable Hamiltonian systems  
\cite{Gelfreich00,OSS03, INMA,BFGS12}, and generic local bifurcations 
\cite{ Lombardi97, Lombardi00, BaldomaS08, BaldomaCS13,BaldomaCS18,BaldomaCS18b,GelfreichL14, GaivaoG11}.
This category  can also be handled by a different method, the so-called 
continuous averaging by Treschev \cite{Treshev97}.

Exponentially small splitting phenomena also arise  in the construction of solitary and traveling waves in PDEs and lattices 
\cite{Amick91,Eckhaus92,SunS93, Lombardi97,Sun99, Lombardi00,Tovbis00,TovbisP06, 
OxtobyPB06}. However, in all the above papers involving leading order analysis of the exponentially smallness the fast oscillatory dimentions are 
finite (often two). As far as the authors know, the present paper is \emph{the first one dealing with an  infinite number of oscillatory directions}.

\subsection{The spatial dynamics approach for the Klein-Gordon equation}\label{SS:spa-dyn}
 
We devote this section to implement the spatial dynamics approach for equation \ref{eq:KLGtau} and to write it as a system having the features of the class of models $P_\alpha$ introduced in Section \ref{SS:BT-bifur}.
To this end, we denote
\begin{equation}\label{g}
g(u)=\dfrac{1}{3}u^3+f(u).
\end{equation}
In terms of the Fourier series expansion \eqref{E:Fourier-1} (see also \eqref{E:norm-1}), the equation \eqref{eq:KLGtau} reads
\begin{equation}\label{eqfourier}
(\partial_x^2+n^2\omega^2-1)u_n=-\Pi_n\left[ g(u)\right],\quad\ n\in\Z.
\end{equation}
The eigenvalues 
of the linearization equation at $0$, that is
\[
\pa_x^2  u - \omega^2 \pa_\tau^2 u - u =0,
\]
are $\pm \nu_n$, where 
\[
\nu_n = \sqrt{1- n^2 \omega^2}, 
\]
and their eigenfunctions can be calculated using the Fourier modes. 

Consider $\omega \in [\tfrac 1{k+1}, \tfrac 1k)$ for some $k \ge 0$. 
For $0\le |n| \le k$, the eigenvalues $\pm\nu_n \in \R \setminus \{0\}$ are hyperbolic, while the center  eigenvalues $\pm \nu_n = \pm i\vartheta_n$, $\vartheta_n=\sqrt{n^2\omega^2-1}$, correspond to $|n| \ge k+1$. 
Recall the two primary classes of intervals $I_k(\e_0)$, $k \in \N$, and $J_k(\e_0)$, $k\in\N \cup \{0\}$, of the frequency $\omega$ defined in \eqref{E:intervals} for some $\e_0\in (0, \frac 12)$. Clearly the  dimension of the hyperbolic eigenspace of $0$ increases by $1$ as the frequency $\omega$ decreases through $\tfrac 1k$ moving from $J_{k-1} (\e_0)$ into $I_k (\e_0)$.  

In the strongly hyperbolic case of $\omega \in J_k(\e_0)$, $k \ge 0$, the smallest hyperbolic eigenvalue satisfies   
\[
\nu_k > \tfrac {\e_0}{\sqrt{k + \e_0^2}} >\min\{1,  \tfrac {\e_0}{2\sqrt{k}}\}.   
\]
Based on the general local invariant manifold theory (see, e.g. Theorem 4.4 in \cite{CL88}) and this spectral gap along with the cubic nonlinearity of wave type equation \eqref{eq:KLGtau}, one expects that the local stable/unstable manifolds of $0$ are close to the stable/unstable subspaces in a neighborhood of $0$ of radius of the order $\er \big(\min\{1,  \tfrac {\e_0}{2\sqrt{k}}\}\big)$ and all orbits on both these manifolds leave such neighborhood eventually. This argument is carried out uniformly in $k$ and $\omega$ in Section \ref{S:SHyperbolic} and statement (1) of Theorem \ref{T:main} follows consequently. Therefore they cannot intersect in such a neighborhood to produce small homoclinic orbits.    

In contrast to the above case, 
when $\omega$ decreases through $\tfrac 1k$ and enters $I_k(\e_0)$, $k \ge 1$, $\nu_k$ can be arbitrarily small. The linearized \eqref{eq:KLGtau} is only weakly hyperbolic in the $k$-th modes -- the newly generated hyperbolic directions -- and small homoclinics might  be generated through a Bogdanov-Takens bifurcation as described in Section \ref{SS:BT-bifur}. The fact $\omega \in I_k(\e_0)$ is consistent with that the periods of small \eqref{sinegordon} breathers \eqref{breatherssine} are close to $2k \pi$. The different scales in $x$ in these weakly hyperbolic directions and the other much faster directions make the local dynamics of \eqref{eq:KLGtau} near $0$ a singular perturbation problem. More precisely, let  
\begin{equation}\label{epsilon}
\e=\sqrt{\frac 1k\left( \frac 1{\omega^{2}}-k^2\right)}\in (0,\e_0)
\end{equation}
and consider the following rescaling of the amplitude and $x$,
\begin{equation}\label{scaling}
u=\e \sqrt{k} \omega v\textrm{ and } y=\e\sqrt{k}\omega x.
\end{equation}
Thus, $u(x,\tau)$
satisfies \eqref{eq:KLGtau} if, and only if, $v(y,\tau)$ satisfies
\begin{equation}
\label{kleingordonv}
\partial_y^2v-\frac{1}{\e^2 k}\partial_{\tau}^2v-\dfrac{1}{\e^2k \omega^2}v+\dfrac{1}{3}v^3+\dfrac{1}{\e^3 k^{\frac 32} \omega^3 }f\left(\e \sqrt {k} \omega v\right)=0,\quad 
\end{equation}
which is a Hamiltonian PDE in the dynamical variable $y$ with the Hamiltonian 
\begin{equation}\label{Hamil}
\mathcal{H}(v,\partial_y v)= \displaystyle\int_{\mathbb{T}}\left(\dfrac{(\partial_y v)^2}{2}+\dfrac{(\partial_{\tau}v)^2}{2\e^2 k}-\dfrac{v^2}{2\e^2 k \omega^2}+\dfrac{v^4}{12}+\dfrac{F(\e \sqrt{k} \omega v)}{\e^4 k^2\omega^4}\right)d\tau.
\end{equation}
Using the projection $\Pi_n$ defined in \eqref{E:norm-1}  and denoting $\cdot=d/dy$, we obtain (see \eqref{eqfourier}),
\begin{equation}\label{vn}
\ddot{v}_n=-\dfrac{(n^2\omega^2-1)}{\e^2 k \omega^2}v_n -\dfrac{1}{\e^3 k^{\frac 32} \omega^3}\Pi_n\left[g(\e \sqrt{k} \omega v)\right], \ n\in \Z .
\end{equation}
By \eqref{epsilon},
\begin{equation*}
\label{lambdan}
\lambda_n= \sqrt {\frac 1k\left |n^2- \frac 1{\omega^{2}}\right|} 
\geq \frac 12,
\quad\text{for each }\,\, |n| \ne k.
\end{equation*}
Using this notation, 
 \eqref{vn} becomes
\begin{equation}
\label{vnshorter}
\left\{\begin{array}{l}
\ddot{v}_n=\dfrac{\lambda_n^2}{\e^2}v_n -\dfrac{1}{\e^3 k^{\frac 32} \omega^3}\Pi_n\left[g(\e \sqrt{k} \omega v)\right],\quad\ |n|< k, \vspace{0.2cm} \\
\ddot{v}_{{\pm k}}= v_{\pm k} -\dfrac{1}{\e^3 k^{\frac 32} \omega^3}\Pi_{\pm 
k}\left[g(\e \sqrt{k} \omega v)\right],\vspace{0.2cm}\\
\ddot{v}_n=-\dfrac{\lambda_n^2}{\e^2}v_n -\dfrac{1}{\e^3 k^{\frac 32} \omega^3}\Pi_n\left[g(\e\sqrt{k} \omega v)\right],\quad \ |n|> k.
\end{array}\right.
\end{equation}
Notice  $v_{-n} = - \overline{v_n}$ and $(\e k^{\frac 12} \omega)^{-3} g(\e \sqrt{k} \omega v)=\er(|v|^3)$ is smooth with bounds uniform in $\e \sqrt{k}\omega$. The   stable and unstable invariant manifolds $W^s(0)$ and $W^u(0)$ of $2k+1$ real dimensions correspond to solutions $v^s$ and $v^u$ of \eqref{vnshorter} satisfying the asymptotic conditions\footnote{The Hamiltonian restricted to the center manifold is positive definitive locally around $u=0$ for all $\omega>0$ and, therefore, all orbits backward/forward asymptotic to $u=0$ must belong to the unstable/stable manifold  (see  Corollary \ref{C:centerM} below).}
\begin{equation}
\label{asympt}
\displaystyle\lim_{y\rightarrow +\infty}v_n^s(y)=\displaystyle\lim_{y\rightarrow +\infty}\dot v_n^s(y)=\displaystyle\lim_{y\rightarrow -\infty}v_n^u(y)=\displaystyle\lim_{y\rightarrow -\infty}\dot v_n^u(y)=0,\ \quad\text{ for all }\,\, n\in \Z.
\end{equation}
The singular perturbation problem \eqref{vnshorter} can be written as
\begin{equation}
\label{vnsingular}
\left\{\begin{array}{l}
\e\dot{v}_n=\lambda_n w_n \\
\e\dot{w}_n=\lambda_n v_n - \lambda_n^{-1} \e^{-1} k^{-\frac 32} \omega^{-3}\Pi_n\left[g(\e \sqrt{k} v)\right],\quad |n|< k,
 \vspace{0.2cm}\\
\e\dot{w}_n=-\lambda_nv_n - \lambda_n^{-1} \e^{-1} k^{-\frac 32} \omega^{-3}\Pi_n\left[g(\e \sqrt{k} \omega v)\right],\ |n| >k,  \vspace{0.2cm}\\
\ddot{v}_{\pm k}= v_{\pm k} -(\e \sqrt{k}\omega)^{-3}\Pi_{\pm k}\left[g(\e \sqrt{k} \omega v)\right].
\end{array}\right.
\end{equation}
The formal singular limit of this system as $\e \to 0$ defines a critical manifold 
\[
\mathcal{M}=\{(v, w) \mid v_n=w_n=0\,\ \text{ for all }\ n\neq \pm k\} 
\]
of real dimension 4 due to $v_{-n} = - \overline{v_n}$.
The limiting dynamics on $\mathcal{M}$ is given by the Duffing equation
\begin{equation}
\label{singularlimit}
\ddot{v}_k=v_k -\dfrac{1}{3}\Pi_k \big[ \big( \mathrm{Im} (v_ke^{i k \tau})\big)^3 \big]=v_k-\dfrac 14 |v_k|^2 v_k,
\end{equation}
which is integrable with the phase symmetry. It is known that in  \eqref{singularlimit} the 2-dimensional stable and unstable manifolds of $0$ coincide. In particular, it has a unique  real homoclinic orbit to $0$ satisfying $v_k>0$ and $\dot v_k (0)=0$, which is given by (see Figure \ref{fig:duffing})
\begin{equation}
\label{homoclinic}
v_k=v^h(y)=\dfrac{2\sqrt{2}}{\cosh(y)}.
\end{equation}
\begin{figure}[!]	
	\centering
	\begin{overpic}[width=6cm]{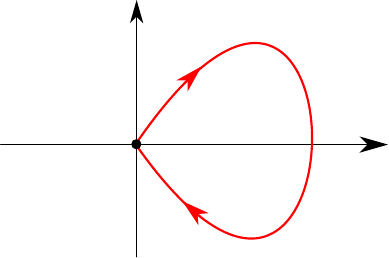}	
		\put(103,30){{\footnotesize$v_1$}}	
\put(39,62){{\footnotesize$\dot{v}_1$}}		
	\end{overpic}
	\caption{Real positive homoclinic \eqref{homoclinic} to $0$ of the 
Duffing equation \eqref{singularlimit} in the critical 
manifold $\mathcal{M}$.}	
	\label{fig:duffing}
\end{figure} 

For $0< \e \ll1$, the special solutions $u_{\mathrm{wk}}^\star$, $\star=u, s$, given in Theorem \ref{T:main}(2) and principally supported in the $k$-modes, are the ones on the $(2k+1)$-dimensional invariant manifolds $W^\star (0)$ with the weakest decay, which are natural deformations of $v^h (y) \sin k \tau$ with proper rescaling. 
In Section \ref{sec:OtherBifs} we prove that $u_{\mathrm{wk}}^\star$ is the only possible intersection of $W^\star (0)$, $\star=u, s$, in an $\er (k^{-\frac 12})$ neighborhood of $0$ (much greater than the amplitude $\er(\e k^{-\frac 12})$ of $u_{\mathrm{wk}}^\star$). 

Most of the analysis in the paper is devoted to identifying the  exponentially small leading order term of the splitting $(u_{\mathrm{wk}}^u - u_{\mathrm{wk}}^s)|_{x=0}$, where $x=0$ corresponds to the first opportunity when they get close, and deriving  the leading order coefficient $C_{\mathrm{in}}$ (see Theorem \ref{T:main}(2b)).  \\

\noindent {\bf Structure of the paper.} The core of the proof of most of the results in Theorem \ref{T:main}(2ab)  is for the case $k=1$, $\omega \in I_1 (\e_0)$, and under the oddness assumption in $t$. The main results for this particular setting are stated in  Section \ref{mainthm}: Theorem \ref{maintheorem} deals with the break up of single bump breathers and Proposition \ref{prop:FirstBif:Generalized} deals with the existence of generalized breathers. The proof of Theorem  \ref{maintheorem} is given in Section \ref{desc_sec} (Section \ref{outerdomain} - \ref{mainthmsec} contain the proofs of some of the statements in Section \ref{desc_sec}). Then,  Proposition \ref{prop:FirstBif:Generalized} is proven in Section \ref{sec:ExpSmallTails}. 
Section \ref{S:SHyperbolic} is devoted to prove the nonexistence of breathers for  frequencies which are far from resonant, that is item (1) of Theorem \ref{T:main}. Section \ref{sec:OtherBifs} explains the reduction of the general case of close to resonant frequencies to that considered in Section \ref{mainthm}: oddness in $t$ assumption and  $\omega \in I_1 (\e_0)$. This completes the prove of item (2) of Theorem \ref{T:main}.  Finally, in Section  \ref{sec:Stokes}, we  prove  Theorem \ref{prop:Stokesconstant}.

\section{Analysis of the first bifurcation ($k=1$) with oddness assumption in 
$t$} 
\label{mainthm}

We devote this section to analyze the stable and unstable manifolds of $v=0$ 
and their splitting for equation  \eqref{kleingordonv} with $k=1$ and $\omega\in 
I_1(\eps_0)$ (see \eqref{E:intervals}). We also analyze the center-stable and center-unstable manifolds to construct the generalized breathers provided by Proposition \ref{prop:generalized}. 

To make the function space setting precise, recall the norm $\|\cdot \|_{\n}$ defined in \eqref{E:norm-1} which is simply the $\n$ norm of the Fourier coefficients in $\tau$. Since 
\[
\| f_1 f_2 \|_{\n} \leq \|f_1\|_{\n} \|f_2 \|_{\n},
\]
treating $y$ as the evolution variable, the local-in-$y$ well-posedness of \eqref{kleingordonv} with $(v, \pa_y v) (y, \cdot) \in \BFX$, where 
\begin{equation} \label{E:phase-space}
\BFX := \{(v, w) \mid v, w \; \text{ are $2\pi$-periodic in $\tau$ and } 
\|(v,w) \|_\BFX :=\|v\|_{\ell_1} + \| (1+ |\pa_\tau|)^{-1}w\|_{\ell_1} 
< \infty\},
\end{equation} 
follows from a standard procedure. Here the operator $|\pa_\tau|$ is simply the multiplication of $|n|$ to the $n$-th Fourier modes for each $n$. For some results where the conservation of energy is used, we also consider the energy space $H_\tau^1 \times L_\tau^2$ which is a dense subspace of $\BFX$ where \eqref{kleingordonv} is also well-posed.  

Due to the oddness assumptions on $f$, the subspace 
\begin{equation}\label{def:Xo}
\BFX_o =  \{(v, w) \in \BFX \mid v,w \text{ are odd in } \tau \} 
\subset \BFX
\end{equation}
of $2\pi$-periodic odd functions of $\tau$ is invariant under the flow of \eqref{kleingordonv}, so we first restrict the analysis to this subspace. 
For such odd functions (of real values) of $\tau$, the Fourier series \eqref{E:Fourier-1} turns out to be  
\begin{equation}\label{usin}
v(t) = \sum_{n=-\infty}^{+\infty} \left(- \frac i2\right) \mathrm{sgn}(n) v_{|n|} e^{in\tau}=\displaystyle\sum_{n\geq 1} v_n\sin(n\tau), \quad \tau = \omega t, \;\; \Pi_n [v]=v_n \in \R, \; n\in\N.
\end{equation}
With a slight abuse of notation, sometimes we may also use $\Pi_n[v]$ to denote 
the n-th mode $v_n \sin n \tau$. Later in Section \ref{sec:OtherBifs}, we 
extend the analysis to the general  setting. 

As explained in Section \ref{SS:spa-dyn}, we refer to the analysis in the setting of $k=1$ and $\omega\in I_1(\eps_0)$ as the \emph{first bifurcation}. Indeed, for $\omega\in I_1(\eps_0)$, the linearization around $v=0$ possesses (in the odd-in-$t$ functions space $\BFX_o$) a pair of weak hyperbolic eigenvalues and all the other eigenvalues are elliptic. In particular the stable  and unstable manifolds, $W^s(0)$ and $W^u(0)$, are one dimensional.

Next theorem gives an asymptotic formula for the splitting between $W^u(0)$ and $W^s(0)$ in the  cross section
\begin{equation}
\label{section}
\s=\{ (v,\partial_y v )\in \BFX_o:
\Pi_1\left[\partial_yv\right]=0 \},
\end{equation}
(see Figure 
\ref{fig:duffingperturb}). 

\begin{theorem}\label{maintheorem}
Fix $r>0$. Consider the equation \eqref{kleingordonv}  with $f\in\FF_r$
 (equivalently \eqref{vnshorter} or \eqref{vnsingular})
 for $k=1$ and $\omega\in I_1(\eps_0)$ defined as in \eqref{E:k-omega}. 
Then, there exist a constant $C_{\mathrm{\mathrm{in}}}\in\mathbb{C}$ such that for any fixed $y_0>0$ there exists $\e_0, M>0$  such that, for every $0<\e\leq\e_0$, the following statements hold. 
	\begin{enumerate}
		\item The invariant manifolds $W^u(0)$ and $W^s(0)$ 
of \eqref{kleingordonv}  
 in $\BFX_o$
correspond to unique solutions  $v^u(y,\tau)$ and $v^s(y,\tau)$ of \eqref{vnshorter} satisfying \eqref{asympt}, which are real-analytic in $y$, $2\pi$-periodic in $\tau$, and satisfy  $\Pi_1\left[\partial_y v^{u,s}\right](0)=0$, respectively. Moreover, $\Pi_{2l}\left[v^{u, s}\right]\equiv 0$, for every $l\in\N$ and
		\[
		\begin{split}
		\big\| \pa_\tau^2 \big(v^u(y,\tau)-v^h(y)\sin\tau \big)\big\|_{\ell_1} +\big\| \pa_\tau^2 \pa_y \big(v^u(y,\tau)-v^h(y)\sin\tau \big)\big\|_{\ell_1} \le M \e^2v^h(y)\quad &\text{ for }y\leq y_0, \\
		\big\| \pa_\tau^2 \big(v^s(y,\tau)-v^h(y)\sin\tau \big)\big\|_{\ell_1} +\big\| \pa_\tau^2 \pa_y \big(v^s(y,\tau)-v^h(y)\sin\tau \big)\big\|_{\ell_1} \le M \e^2v^h(y)\quad &\text{ for }y\geq -y_0,
		\end{split}
		\]
where $v^h$ is the homoclinic orbit given in \eqref{homoclinic}.
		
		\item At $y=0$, their difference satisfies   
		\begin{equation}\label{distancia}
		\left\| \big( |-\pa_\tau^2 -\omega^{-2}|^{\frac 12} (v^u - v^s) + i\e \pa_y (v^u - v^s) \big) (0, \tau)  - \frac {4\sqrt{2}}\e e^{-\frac{\pi\sqrt{2}}{\e}}C_{\mathrm{in}}\sin (3\tau)\right\|_{\ell_1} \le \frac {M e^{-\frac{\pi\sqrt{2}}{\e}}}{\e\log(\e^{-1})}.  
		\end{equation}		
	\end{enumerate}	
\end{theorem}

We highlight that Theorem \ref{maintheorem} is concerned with the distance between the stable and unstable invariant manifolds at the first crossing with  the transversal section $\s$. This does not exclude intersections at further crossings and thus existence of multi-bump breathers. 
See Figures \ref{fig:bumpshomo} and \ref{fig:duffingperturb}. 

Theorem \ref{maintheorem} proves statements in Theorem \ref{T:main}(2ab) 
for $k=1$ and $\omega\in I_1(\eps_0)$ (restricted to the  odd in $t$ setting) which deal with the one-dimensional stable and unstable manifolds . 

The next proposition analyzes the intersection between the center-stable and center-unstable invariant manifolds of $v=0$.

\begin{prop}\label{prop:FirstBif:Generalized}
Fix $r>0$. Consider the equation \eqref{kleingordonv}  with $f\in\FF_r$
  for $k=1$ and $\omega\in I_1(\eps_0)$ defined as in \eqref{E:k-omega}. 
For any fixed $y_0>0$, there exists $\e_0, M>0$  such that, for every $0<\e\leq\e_0$, the following statements hold. 
		
		Let $W \subset \Sigma$
		be the intersection near $(v^h (0)\sin \tau, 0)$ of the center-stable manifold $W^{cs} (0)$ and center-unstable manifold $W^{cu} (0)$ of \eqref{kleingordonv}   in $\BFX_o$
 when they intersect the hyperplane $\Sigma$ for the first time in $y$. Then,
		\begin{enumerate} 
		\item Let 
		\[\begin{split}
		\mathcal N=  \{ (v, \pa_y v) \mid \ & \e^{-1} \big\| |-\pa_\tau^2 -\omega^{-2}|^{\frac 12} \big(v - v^\star(0)\big) \big\|_{L^2} +  \|\pa_y v - \pa_y v^\star(0)\|_{L^2} \\
		&\leq M \big(\e^{-1} \big\| |-\pa_\tau^2 -\omega^{-2}|^{\frac 12} \big(v^u(0) - v^s(0)\big) \big\|_{L^2} +  \|\pa_y v^u(0) - \pa_y v^s (0)\|_{L^2}\big), \; \star=u,s\} 
		\end{split}.\]
		Then $W \cap \mathcal N \ne \emptyset$ and the Hamiltonian $\HH$ evaluated at  solutions in $W \cap \mathcal N$ satisfies 
		\begin{align*}
		& \tfrac 1M \inf_{W  \cap \mathcal N}\HH \le \e^{-2} \big\| |-\pa_\tau^2 -\omega^{-2}|^{\frac 12} \big(v^u(0) - v^s(0)\big) \big\|_{L^2}^2 +  \|\pa_y v^u(0) - \pa_y v^s(0)\|_{L^2}^2 \le M\inf_{W  \cap \mathcal N} \HH.
		\end{align*} 
		
		\item Each $(v, \pa_y v) \in W$ corresponds to a single bump homoclinic orbit $\big(v(y, \tau), \pa_y v(y, \tau)\big)$ to $W^c(0)$, i.e $(v, \pa_y v)$ is  asymptotic to two orbits $\big(v_c^\pm (y), 
\pa_y v_c^\pm (y)\big)$ in the center manifold $W^c(0)$ as $y\to\pm\infty$. Moreover,  $(v, \pa_y v)$ satisfies
		\begin{equation} \label{E:gB-UB}
		\tfrac 1M \HH (v, \pa_y v) \le \e^{-2} \big \| |-\pa_\tau^2 -\omega^{-2}|^{\frac 12} \big(v(y) - v^\star(y)\big)\big\|_{L^2}^2 +  \|\pa_y v(y) - \pa_y v^\star(y)\|_{L^2}^2 \leq M \HH (v, \pa_y v),
		\end{equation}
		for $y\ge -y_0$ with $\star=s$ and $y\le y_0$ with $\star=u$.  
		
		\item If $v^u = v^s$, where $v^u, v^s$ are the solutions obtained in Theorem \ref{maintheorem}, then it is a homoclinic orbit to $0$, otherwise the intersection $W^{cs} (0) \cap W^{cu} (0)$, which is codimension 2 in $\BFX_o$, is transverse in $\mathcal N$.  
		\end{enumerate}	
\end{prop}

\begin{remark} \label{R:W}
In the case $v^u \ne v^s$, the transversality of the intersection of the codimension-1 $W^{cs} (0)$ and $W^{cu} (0)$ actually implies that a dense subset of $W$ consists of functions smooth in $\tau$. See Remark \ref{R:W-1} below.
\end{remark}

\begin{figure}[!]	
	\centering
	\begin{overpic}[width=6cm]{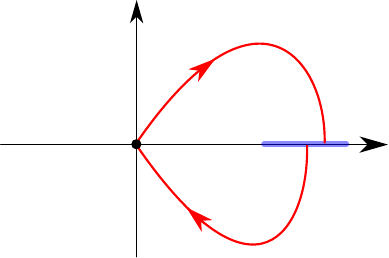}			
		
		\put(103,30){{\footnotesize$v_1$}}	
\put(65,32){{\footnotesize$\Sigma$}}	
\put(70,58){{\footnotesize$W^u(0)$}}	
\put(73,0){{\footnotesize $W^s(0)$}}
\put(39,62){{\footnotesize$\dot{v}_1$}}			
	\end{overpic}
	\caption{The (infinite dimensional) transverse section $\Sigma$ (see 
\eqref{section}) where we measure the distance between the perturbed manifolds 
$W^u(0)$ and $W^s(0)$.}	
	\label{fig:duffingperturb}
\end{figure}

This proposition  implies Proposition \ref{prop:generalized} for $k=1$ and $\omega\in I_1(\eps_0)$, which deal 
with generalized breathers with exponentially small tails. Indeed, Proposition \ref{prop:FirstBif:Generalized} implies the existence of a family of orbits homoclinic to 
the center manifold with exponentially small energy. They correspond to breather 
like solutions $u(x,t)$ of \eqref{kleingordonrev} which are $\frac 
{2\pi}\omega$-periodic in $t$ and decaying in $x$ like $\mathcal{O} ( \e e^{-\e 
|x|})$ subject to perturbations whose $L_x^\infty (H_t^1 \times L_t^2)$ norm is 
bounded by $\mathcal{O} (\tfrac{1}{\e} e^{-\frac 
{\sqrt{2}\pi}\e})$.

Theorem \ref{maintheorem} and Proposition \ref{prop:FirstBif:Generalized} are proven in Sections \ref{desc_sec} and \ref{sec:ExpSmallTails} respectively. We devote the rest of this section to give some heuristics on the proof of Theorem  \ref{maintheorem}, in particular, on why the distance between the invariant manifolds is exponentially small and on how to obtain its asymptotic formula.

\subsection{Heuristics of the proof of Theorem \ref{maintheorem}; exponentially small bounds}\label{sec:heuristics}

Looking at formula \eqref{distancia} one can see that the distance between the 
one dimensional invariant manifolds $W^u(0)$ and $W^s(0)$ of \eqref{kleingordonv} is 
exponentially small in $\e$.
In this section we give some intuition why 
and we explain which are the steps needed to obtain upper bounds on this distance. Later, in Section \ref{sec:strategyA}, we  show how to obtain the asymptotic formula \eqref{distancia} for it.

Since the invariant manifolds are one-dimensional, one can parameterize them as solutions of the second order equation \eqref{vnshorter} for $k=1$, which  satisfies 
\[
\begin{split}
\ddot{v}_1=&\,v_1 -\frac{v_1^3}{4}+\mathcal{O}_{\ell_1}(\wt \Pi v)+\mathcal{O}(\e^2)\\
\e^2 \ddot{v}_n=&\,-\lambda_n^2 v_n +\mathcal{O}_{\ell_1}(\e^2 v^3),\quad n\geq 2,
\end{split}
\]
for small $\wt \Pi v$, where we have introduced the following notation, which is also used in the forthcoming sections 
\begin{equation}\label{def:PiTilde}
 \widetilde\Pi(v)=v-\Pi_1(v)\sin\tau=\displaystyle\sum_{n\geq 2} v_n\sin(n\tau).
\end{equation}
Imposing decay at infinity (as $y\to+\infty$ for  $W^s(0)$, as $y\to-\infty$ for $W^u(0)$) and $\partial_y v_1^{u,s}(0)=0$, item (1) of Theorem \ref{maintheorem} looks natural:  the  distance between the perturbed and unperturbed manifolds $(v_1,\wt \Pi v)=(v^h,0)$ is of the same order as the perturbation (notice the singular character of the model and the different size of each component of the vector field). These estimates can be proven through a fixed point argument by using the standard Perron method.

Even if the perturbed invariant manifolds are $\OO(\e^2)$ close to the unperturbed ones, the singular character of the model makes their difference beyond all orders in $\e$, in fact exponentially small.
Let us give some heuristic ideas of why this phenomenon happens. 
We have chosen parameterizations such that $\partial_y v_1^{u,s}(0)=0$. 
Moreover, as the system conserves the Hamiltonian, both manifolds belong to the energy level of the saddle-center critical point $v=0$. 
Therefore, the difference $v_1^u-v_1^s$ at $y=0$ can be recovered from the differences projected to the rest of directions, namely $\wt \Pi (v^u-v^s)$ and $\wt \Pi (\partial_y v^u- \partial_y v^s)$. Thus, we focus on measuring these differences. 
Let us write the equations for these components as a first order equation for $n\geq 3$ (recall that $\Pi_{2l}v^{u,s}=0$ for $l\geq 0$),
\[
 \begin{split}
  \dot v_n=&\, w_n\\
  \dot w_n=&\,-\dfrac{\lambda_n^2}{\e^2}v_n +\frac{1}{\e^3 \omega^3}\Pi_n \left[g(\e \omega v)\right].
 \end{split}
\]
As the parameterizations of both invariant manifolds satisfy the same equation, their difference 
\[
(\Delta_n,\Xi_n)\triangleq (v_n^u-v_n^s,\partial_y v_n^u-\partial_y  v_n^s)
\]
 satisfies a linear equation for $n\geq 3$,
\[
 \begin{split}
  \dot\Delta_n=&\, \Xi_n\\
  \dot \Xi_n=&\,-\dfrac{\lambda_n^2}{\e^2}\Delta_n +\Pi_n\left[M(v^u,v^s)\Delta\right].
 \end{split}
\]
Since the last term is much smaller than the oscillating one, to give a heuristic idea of the phenomenon taking place, let us assume that $M=0$. 
Then, the system becomes a linear system of constant coefficients which we can diagonalize by taking
\begin{equation}\label{def:GaNThetN}
 \begin{split}
 \Gamma_n=&\,\lambda_n\Delta_n+i\eps\Xi_n\\ 
 \Theta_n=&\,\lambda_n\Delta_n-i\eps\Xi_n
 \end{split}
\end{equation}
to obtain
\[
 \begin{split}
 \dot\Gamma_n=&\,-i\dfrac{\lambda_n}{\e}\Gamma_n \\ 
 \dot\Theta_n=&\,i\dfrac{\lambda_n}{\e}\Theta_n. 
 \end{split}
\]
The solutions of this system can be easily computed as 
\[
\begin{split}
\Gamma_n(y)&=e^{-i\frac{\lambda_n}{\e}(y-y^+)}\Gamma_n(y^+)\\
\Theta_n(y)&=e^{i\frac{\lambda_n}{\e}(y-y^-)}\Theta_n(y^-)
\end{split}
 \]
for any points $y^\pm$. 

By the definition of $(\Gamma_n,\Theta_n)$ in \eqref{def:GaNThetN}, one has
\[
\begin{split}
\Gamma_n(y^+)=&\,\lambda_n(v_n^u(y^+)-v_n^s(y^+))+i\eps (\partial_yv_n^u(y^+)-\partial_y v_n^s(y^+))\\
\Theta_n(y^-)=&\,\lambda_n(v_n^u(y^-)-v_n^s(y^-))-i\eps (\partial_yv_n^u(y^-)-\partial_y v_n^s(y^-)).
\end{split}
\]
The main observation here is that, if  we are able to extend both the stable and 
unstable manifolds $v_n^{u,s}(y)$ to some complex values  $y^\pm=\pm i \sigma$, 
$\sigma>0$,  one obtains the following estimates for $y\in\mathbb{R}$ near 
$y=0$,
\[
\begin{split}
|\Gamma_n(y )|\leq &\, e^{-\frac{\lambda_n\sigma}{\e}}|\Gamma_n(i\sigma)|\\
|\Theta_n(y)|\leq &\, e^{-\frac{\lambda_n\sigma}{\e}}|\Theta_n(-i\sigma)|,
\end{split}
 \]
which are exponentially small in $\eps$ and strongly depend on the size of  the unstable/stable solutions at the complex points $y^\pm=\pm i \sigma$.
 
For the nonlinear system, we will find the solutions 
\[
v_n^{s}(y) \ \mbox{for}\ \Rp y\ge 0, \qquad  v_n^{u}(y) \ \mbox{for} \ \Rp y\le 0
\]
 as perturbations of the singular limit solution $v_1=v^h(y)$, $v_n=0$, $n\geq 2$, where $v^h(y)$ is the unperturbed homoclinic solution \eqref{homoclinic}.
As this function  has poles of order one at the points $y^\pm=\pm i \pi/2$, it is natural to expect that the optimal value for $\sigma$ is in a neighborhood on the lower side of $\pi/2$. In Theorem \ref{outerthm} to be proved  in Section \ref{outerdomain} we show that, for $y$ close to $\pm i \pi/2$, 
\begin{equation}\label{eq:outermanifolds}
v^{s,u}(y,\tau)=v^h(y)\sin \tau + \OO_{\ell_1}\left(\frac{\eps ^2}{|y^2+\frac{\pi ^2}{4}|^3}\right)=\frac{2\sqrt2}{\cosh y}\sin \tau + \OO_{\ell_1}\left(\frac{\eps ^2}{|y^2+\frac{\pi ^2}{4}|^3}\right).
\end{equation}
Therefore, we see that 
\[
|v^{u,s}(y^\pm)|\lesssim \frac{1}{\eps},  \; \text{ at }\; y_\pm =\pm i\sigma, \ \text{ with } \sigma = \frac \pi2 - \kappa\e\ \text{and some }\ \kappa>0.
\]
Consequently, $|\Gamma_n(y^+)|\lesssim \frac{1}{\eps}$, $|\Theta_n(y^-)|\lesssim \frac{1}{\eps}$. 
Therefore, one expects that for $y\in\mathbb{R}$ close to $y=0$,
\[
|\Gamma_n(y)|\lesssim  \frac{1}{\e}e^{-\frac{\lambda_n\pi}{2\e}},\qquad
|\Theta_n(y)|\lesssim \frac{1}{\e} e^{-\frac{\lambda_n\pi}{2\e}}.
\]
As $\lambda_n \ge \lambda_3=2\sqrt{2}+\mathcal{O}(\e)$ for $n\ge 3$, one obtains an upper bound for the difference 
\[
|\Gamma_n (y)|\lesssim  \frac{1}{\e}e^{-\frac{\sqrt{2}\pi}{\e}},\qquad
|\Theta_n (y)|\lesssim \frac{1}{\e} e^{-\frac{\sqrt{2}\pi}{\e}}
\]
and similar bounds are satisfied by $\Delta_n(y)=v_n^u(y)-v_n^s(y)$.

Observe that these bounds fit with the estimates \eqref{distancia} given in Theorem \ref{maintheorem}. 
However, Theorem \ref{maintheorem} gives certainly more information since it provides an 
asymptotic formula for $\Delta$.

\subsection{Strategy of the proof of Theorem \ref{maintheorem}; exponentially small asymptotics}\label{sec:strategyA}

As we have seen in Section \ref{sec:heuristics}, to obtain an asymptotic formula of $v^s-v^u$, one needs a deeper study of these functions near $y^\pm=\pm i(\pi/2-\kappa \e)$ for some $\kappa>0$. 

According to \eqref{eq:outermanifolds}, for real values of $y$, the invariant manifolds are $\eps 
^2$-perturbations of the unperturbed homoclinic orbit, but, when 
$y\mp\frac{i\pi}{2}=\OO(\eps)$ we have that both the homoclinic and error term 
become of the same size $\OO(\frac{1}{\eps})$ and therefore $v^{s,u}(y,\tau)$ 
are not well approximated by the homoclinic solution $v^h(y) \sin\tau$ anymore. 
Thus, we look for suitable leading orders of $v^{s,u}(y,\tau)$ for $y$ such that 
$y\mp\frac{i\pi}{2}=\OO(\eps)$.

We focus on the singularity $y=i \pi/2$ (the same analysis can be performed near 
 the singularity $y=-i \pi/2$ analogously). We proceed as follows. We perform 
the singular  change  to the \emph{inner variable}
\[z=\e^{-1}\left(y-i\dfrac{\pi}{2}\right)\]
and the scaling
\[\phi(z,\tau)=\e v\left(i\dfrac{\pi}{2}+\e z, \tau\right).\]
From  \eqref{kleingordonv}, one can deduce the equation satisfied by 
$\phi(z,\tau)$,
\[
\partial_z^2\phi-\partial_{\tau}^2\phi-\frac{1}{\omega^2} 
\phi+\dfrac{1}{3}\phi^3+\frac{1}{\omega^3}f(\omega \phi)=0, \qquad  
\omega=\frac{1}{\sqrt{1+\e^2}}. 
\]
The first order of  this equation corresponds to the regular limit  $\e=0$, 
which gives the so-called \emph{inner equation}
\[
\partial_z^2\phi^0-\partial_{\tau}^2\phi^0-\phi^0+\dfrac{1}{3}
(\phi^0)^3+f(\phi^0)=0.
\]
The estimates \eqref{eq:outermanifolds} show, that, after these changes, the 
stable/unstable manifolds behave as
\[
\phi^{s,u}(z,\tau)=-\frac{2\sqrt2 i}{z}\sin \tau+ 
\OO_{\ell_1}\left(\frac{1}{z^3}\right).
\]
Therefore, it is natural to look for solutions of the inner equation which 
``match'' these asymptotics.
This is  done in Theorem \ref{innerthm} below where we obtain and analyze two 
solutions $\phi^{0,u}$, $\phi^{0,s}$ of the inner equation which are the first 
order of the unstable/stable manifolds ``close to the singularity'' $y=i \pi/2$. 
They are of the form 
\begin{equation}\label{innersol0}
\begin{split}
\phi^{0,s}(z,\tau)&=-\dfrac{2\sqrt{2} i}{z}\sin\tau+\psi^{s}(z,\tau), \ 
\mbox{for} \ \Rp z>0\\
\phi^{0,u}(z,\tau)&=-\dfrac{2\sqrt{2} i}{z}\sin\tau+\psi^{u}(z,\tau), \ 
\mbox{for} \ \Rp z <0, \end{split}
\end{equation}
with $\psi^{s,u}=\OO(\frac{1}{z^3})$ in suitable complex domains satisfying $|z| 
\ge \kappa$ and containing the negative imaginary axis $\Im z\le -\kappa$ (recall that 
$z=\eps^{-1}(y-\frac{i\pi}{2})$ and therefore $y=0$ lies on this negative 
imaginary axis of $z$). Again these solutions contain only odd modes in $\tau$.

Moreover, in Theorem  \ref{innerthm} we  provide a formula for the difference of 
these two solutions which reads
\begin{equation}\label{def:diff:heur}
\phi^{0,u}(-ir,\tau)-\phi^{0,s}(-i r,\tau) = e^{-2\sqrt{2} 
r}\left(C_{\mathrm{in}}\sin 
(3\tau)+\OO_{\ell_1}\left(\frac{1}{r}\right)\right)\qquad \text{as}\qquad  r\to 
+\infty.
\end{equation}
This asymptotic formula can be obtained relying on different techniques. In the literature,  one can find proofs relying either on fixed point arguments for the difference \cite{INMA, BaldomaS08} or on Borel resummation techniques and \'Ecalle Resurgence Theory \cite{OSS03} (applied to finite dimensional inner equations). In the present paper, we obtain this formula through a different method relying on invariant manifolds and foliations for a an ill-posed associated PDE, which is more in line with the techniques used throughout the paper. Let us give here a (very) heuristic idea of the origin of this result from this new point of view.

Writing a solution of the inner equation as  $\phi^0=\sum \phi^0_n  \sin(n\tau)$ 
we obtain
\begin{equation}\label{innersystemh}
\begin{split}
\frac{d^2 }{d z^2}\phi^0_1-\frac{1}{4}(\phi^0_1)^3&= F_1\left (\phi^0\right)\\
\frac{d^2 }{d z^2}\phi^0_n+\mu_{n}^2\phi^0_n&= F_n \left(\phi^0\right), \; n\ge 
3,
\end{split}
\end{equation} where $'=d/dz$, $\mu_{n}=\sqrt{n^2-1}$, and 
$F_n$ contain higher order terms.

Let us assume that $F_n=0$ to give an heuristic idea of the process.
First, let us make the change $z=-ir$ and write system \eqref{innersystemh} as a 
first order system through the change
\[
\Psi_c (r)=\left(\phi^0_1(-ir), -i\frac{d}{d z}\phi^0_1(-ir)\right), \quad  
\Psi_{n,\pm}=-i\frac{d}{d z}\phi^0_n (-ir) \pm \sqrt{n^2-1} \phi^0_n(-ir), \quad 
\Psi^\pm = (\Psi_{2l-1, \pm})_{l=2}^\infty,
\]
which gives
\begin{equation*}
\begin{aligned}
\frac{d}{dr} \Psi_c^1&= \Psi_c^2\qquad &\frac{d}{dr} \Psi_{n, 
-}=&-\sqrt{n^2-1}\Psi_{n,-}\\
\frac{d}{dr} \Psi_c^2&= \frac{1}{4}\left(\Psi_c^1\right)^3\qquad
&\frac{d}{dr} \Psi_{n, +}=&\sqrt{n^2-1}\Psi_{n,+}.
\end{aligned}
\end{equation*} 
Observe that $\Psi=0$ is a critical point with a center manifold $W^c$  given by 
$\Psi^+=\Psi^-=0$, and a center-stable manifold $W^{cs}$ given by $\Psi^+=0$. 
Moreover, $W^{cs}$ possesses the classical stable foliation. Indeed,  given a 
point 
$\Psi=(\Psi^c,\Psi^-,0)\in W^{cs}$ there exists a point $\Psi_b=(\Psi^c,0,0)\in 
W^c$ such that
$|\Phi_r(\Psi)-\Phi_r(\Psi_b)|\le \OO(e^{-2r})$, as $2\in(0, \sqrt{3^2-1})$, 
where $\Phi_r$ is the flow on $W^{cs}$ which is well defined for $r\geq0$. The 
points whose trajectories are asymptotic to a given $\Psi_b\in W^c$ form a leaf 
of the foliation.

This foliation allows us to give an asymptotic formula for 
$\phi^{0,s}(z)-\phi^{0,u}(z)$:
\begin{itemize}
\item
The first observation is that our solutions $\phi^{0,s}(z)$, $\phi^{0,u}(z)$, 
when  restricted to the negative imaginary axis away from $0$ and 
written in these coordinates, correspond to 
$\Psi^{u,s}(r)=(\Psi^{u,s}_c,\Psi^{u,s}_-,\Psi^{u,s}_+)(r)$ satisfying
\[
\lim _{r \to+ \infty} \Psi^{s,u}(r)=0.
\]  
Therefore, they belong to $W^{cs}$ and, in this simplified model, should have the ``unstable coordinate'' $\Psi^{u,s}_+(r)\equiv 0$.
\item
The second observation is that we know, by  \eqref{innersol0}, that
\[
| \Psi^{u}(r)- \Psi^{s}(r)| \le \OO_{\ell_1}\left(\frac{1}{r^3}\right), \ \mbox{as} \ r \to+ \infty,
\]  
which implies that they should have the same ``central coordinate''  ($\Psi^{u}_c 
(r)= \Psi^{s}_c (r)$ in this simplified model) and therefore they belong to the same leaf in the stable 
foliation. One can see this fact using the  linearized 
fundamental solutions  in the central coordinates which  give:
$\Psi^{u}_c (r)- \Psi^{s}_c (r)\sim c_1 r^{-2}+c_2 r^3$ and the decay of this difference immediately gives $c_1=c_2=0$.
\item
Now  that we know that $\Psi^{u,s}(r)=(\Psi_c(r),\Psi^{u,s}_-(r),0)$, we only 
need to compute the difference in the stable coordinate $\beta_- (r) =\Psi^{u}_- 
(r) - \Psi^{s}_-(r) $ which satisfies:  
\[
\frac{d}{dr}\beta_- = A \beta_-, \quad A=\mathrm{diag}(-\sqrt{n^2-1})
\]
and this immediately implies that
\[
\beta _-(r)= e^{(r-r_0)A }\beta_- (r_0)=e^{-2\sqrt 2 (r-r_0)}\beta _{3,-}(r_0)\sin(3\tau)+\OO_{\ell_1}\left(e^{-3r}\right).
\]
Calling $C=e^{2\sqrt2 r_0}\beta _{3,-}(r_0)$
we have 
\[
\lim _{r\to +\infty} e^{2\sqrt2 r } \beta_- (r)-C\sin {3\tau}=0. 
\]
\end{itemize}
Using these ideas, in Theorem \ref{innerthm} below, we incorporate the dismissed higher order terms (see \eqref{innersystemh}) and give a complete proof of the asymptotic formula for the difference between the solutions of the inner equation. Note that the constant $C$ above corresponds to the constant $C_\mathrm{in}$ in \eqref{def:diff:heur}.

Once we obtain the difference between the inner solutions $\phi^{0,u}-\phi^{0,s}$, we must show that this difference gives indeed a first order of the difference between the perturbed invariant manifolds. That is, we must estimate the function
 $(\phi^{u}-\phi^{s})-(\phi^{0,u}-\phi^{0,s})$ in some appropriate complex 
domain.
To this end, it starts with showing that the solutions of the inner equation
$\phi^{0,u}(z)$, $\phi^{0,s}(z)$, when written in the original variables $y=i\frac{\pi}{2}+\eps z$, are good approximations of the stable and unstable solutions $v^{u}$, $v^s$ for $y$ satisfying $y\mp\frac{i\pi}{2}=\OO(\eps)$. Such analysis is done in Theorem \ref{matchingthm}.

 From such estimates, applying the ideas in Section \ref{sec:heuristics}, we 
obtain smaller exponentially small errors  at $y=0$.
 This  shows that the difference of $\phi^{0,u}-\phi^{0,s}$ provides the main 
term of the exponentially small distance between $v^u$ and $v^s$ and that the first order of this distance is given by the Stokes constant $C_\mathrm{in}$.

\section{Description of the Proof of Theorem \ref{maintheorem}}\label{desc_sec}

We describe the main steps of the proof of Theorem \ref{maintheorem} where $k=1$ and the odd symmetry of functions in $\tau$ is assumed.

\subsection{Estimates of the invariant manifolds in complex domains} \label{SS:inma}

In order to estimate the distance between the perturbed invariant manifolds $W^s(0)$ and $W^u(0)$ in $\s$, we consider  suitable  parameterizations for them. 
Since the invariant manifolds $W^u(0)$ and $W^s(0)$ are one dimensional, they are the images of solutions $v^u$ and $v^s$ of \eqref{vnshorter} with the asymptotic conditions 
\begin{equation*}
\displaystyle\lim_{y\rightarrow +\infty}v^s(y,\tau)=\displaystyle\lim_{y\rightarrow -\infty}v^u(y,\tau)=0,\ \quad \text{ for all }\,\tau\in\mathbb{T}.
\end{equation*}
We write equation \eqref{vnshorter} as
\begin{equation*}
\left\{\begin{array}{l}
\ddot{v}_1=v_1 -\dfrac{v_1^3}{4}+\left(-\dfrac{1}{\e^3 \omega^3}\Pi_1\left[g(\e \omega v)\right]+\dfrac{v_1^3}{4}\right),\vspace{0.2cm}\\
\ddot{v}_n=-\dfrac{\lambda_n^2}{\e^2}v_n -\dfrac{1}{\e^3 \omega^3}\Pi_n\left[g(\e \omega v)\right],\ n\geq 2,
\end{array}\right.
\end{equation*}
where $\Pi_n$ is the Fourier projection given by \eqref{usin} and $g$ is 
given by \eqref{g}. 

We study the solutions $v^u, v^s$ as perturbations of the homoclinic orbit $v^h(y)\sin\tau$ given by \eqref{homoclinic}, which satisfies $\ddot{v}^h=v^h -(v^h)^3/4$. Thus, we set
\begin{equation*}
\label{coord}
\xi(y,\tau)=v(y,\tau)-v^h(y)\sin\tau=\displaystyle\sum_{n\geq 1}\xi_n(y)\sin(n\tau),
\end{equation*}
whose Fourier coefficients satisfy
\begin{equation*}
\label{xin}
\left\{\begin{array}{l}
\ddot{\xi}_1= \xi_1 -\dfrac{3(v^h)^2\xi_1}{4} -\dfrac{3v^h\xi_1^2}{4}-\dfrac{\xi_1^3}{4}+\left(-\dfrac{1}{\e^3\omega^3}\Pi_1(g(\e \omega ( \xi+v^h\sin\tau)))+\dfrac{(\xi_1+v^h)^3}{4}\right),\vspace{0.2cm}\\
\ddot{\xi}_n=-\dfrac{\lambda_n^2}{\e^2}\xi_n -\dfrac{1}{\e^3 \omega^3}\Pi_n(g(\e \omega( \xi+v^h\sin\tau))),\qquad n\geq 2.
\end{array}\right.
\end{equation*}
Define the operators
\begin{align}
\mathcal{L}(\xi)=& \left(\ddot{\xi}_1-\xi_1+\dfrac{3(v^h)^2\xi_1}{4}\right)\sin\tau+\displaystyle\sum_{n\geq 2}\left(\ddot{\xi}_n+\dfrac{\lambda_n^2}{\e^2}\xi_n \right)\sin(n\tau),\label{Lop}\\
\mathcal{F}(\xi)= &-\dfrac{1}{\e^3 \omega^3}g(\e \omega( \xi+v^h\sin\tau))+\left(\dfrac{(\xi_1+v^h)^3}{4} -\dfrac{3v^h\xi_1^2}{4}-\dfrac{\xi_1^3}{4}\right)\sin\tau.\label{Fop}
\end{align}
To obtain solutions $v^\star$,  $\star=u,s$,   of \eqref{kleingordonv} satisfying \eqref{asympt} is equivalent to find  solutions $\xi^\star$ of the functional equation
\begin{equation}
\label{xinop}
\mathcal{L}(\xi)=\mathcal{F}(\xi),
\end{equation}	
satisfying 
\begin{equation}\label{asymptcond}\displaystyle\lim_{y\rightarrow-\infty}\xi^u(y,\tau)=\displaystyle\lim_{y\rightarrow\infty}\xi^s(y,\tau)= 0,\quad \text{ for all}\quad \tau\in\mathbb{T}.\end{equation}

We analyze these parameterizations in the following complex  sectorial domains, usually called \emph{outer domains},
\begin{equation}\label{outer}
\begin{split}
D^{\mathrm{out},u}_{\kappa}=&\,\left\{ y\in\C; \left|\Ip(y)\right|\leq -\tan\bg\Rp(y)+\dfrac{\pi}{2}-\kappa\e \right\}\\
D^{\mathrm{out},s}_{\kappa}=&\,\left\{ y\in\C; -y\in D^{\mathrm{out},u}_{\kappa} \right\},
\end{split}
\end{equation}
where $0<\bg<\pi/4$ is a fixed angle independent of $\e$ and $\kappa\geq 1$ (see Figure \ref{outerfig}).  Observe that $D^{\mathrm{out},\ast}_{\kappa}$, $\ast=u,s$, reach domains at a $\kappa\e$--distance of the singularities $y=\pm i\pi/2$ of $v^h$ (see Section \ref{sec:heuristics}).

\begin{figure}[!]	
	\centering
	\begin{overpic}[width=12cm]{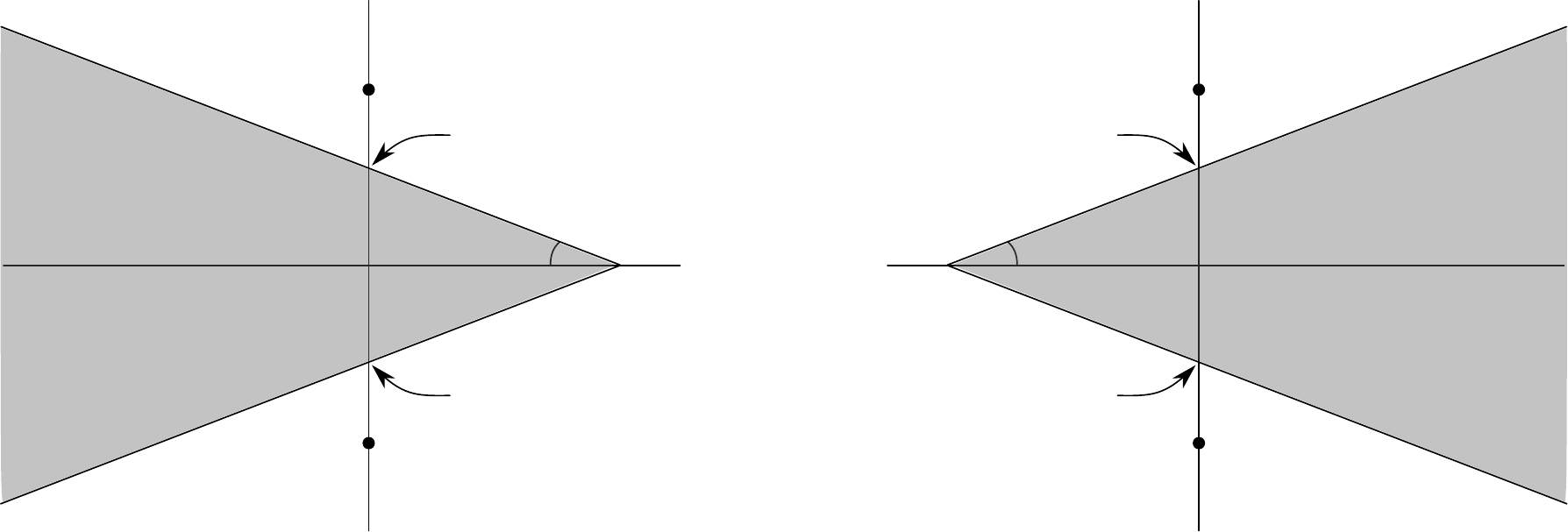}
		\put(25,27.5){{\footnotesize $i\frac{\pi}{2}$}}	
		\put(71,27.5){{\footnotesize $i\frac{\pi}{2}$}}			
		\put(25,5){{\footnotesize $-i\frac{\pi}{2}$}}	
		\put(69,5){{\footnotesize $-i\frac{\pi}{2}$}}	
		\put(30,24){{\footnotesize $i\left(\frac{\pi}{2}-\kappa\e\right)$}}		
		\put(30,8){{\footnotesize $-i\left(\frac{\pi}{2}-\kappa\e\right)$}}			
		\put(57,24){{\footnotesize $i\left(\frac{\pi}{2}-\kappa\e\right)$}}		
		\put(55,8){{\footnotesize $-i\left(\frac{\pi}{2}-\kappa\e\right)$}}	
		\put(33,17){{\footnotesize $\bg$}}	
		\put(65.5,17){{\footnotesize $\bg$}}	
		\put(3,25){{\footnotesize $D^{\mathrm{out},u}_{\kappa}$}}	
		\put(90,25){{\footnotesize $D^{\mathrm{out},s}_{\kappa}$}}						
	\end{overpic}
	\bigskip
	\caption{ Outer domains $D^{\mathrm{out},u}_{\kappa}$ and $D^{\mathrm{out},s}_{\kappa}$. }	
	\label{outerfig}
\end{figure}

The next theorem proves the existence and estimates of the functions $\xi^u,\xi^s$. It is proven in Section \ref{outerdomain}.

\begin{theorem}[Outer]\label{outerthm}
	Consider the equation \eqref{kleingordonv} with $k=1$.
	There exist $\kappa_0\geq 1$ big enough and $\e_0>0$ small enough, such that, for each $0<\e\leq \e_0$ and  $\kappa\geq \kappa_0$, the invariant manifolds $W^{\star}(0) \subset \BFX_o$ of \eqref{kleingordonv},  $\star=u,s$,  are parameterized  
	as unique solutions of equation \eqref{kleingordonv} by
	\[v^\star(y,\tau)=v^h(y)\sin\tau+\xi^\star(y,\tau),\ y\in D^{\mathrm{out},\star}_{\kappa},\ \tau\in\mathbb{T},\] where $v^h$ is given by \eqref{homoclinic} and $\xi^\star: D^{\mathrm{out},\star}_{\kappa}\times\mathbb{T}\rightarrow \C$ are  functions real-analytic in the variable $y$ such that 
	\begin{enumerate}
	\item They satisfy the asymptotic condition \eqref{asymptcond},	$\partial_y\Pi_1[\xi^\star](0)=0$ and  $\Pi_{2l}[\xi^\star](y)\equiv0$  for  $l\in \N$.
	\item There exists a constant $M_1>0$ independent of $\e$ and $\kappa$, such that
	\[
	 \begin{split}
	 \left\|\xi^\star\right\|_{\ell_1}(y)\leq &\, \dfrac{M_1\e^2}{|\cosh(y)|}\qquad \text{ for } \quad y\in D^{\mathrm{out},\star}_{\kappa}\cap\{|\Rp(y)|> 1\}\\
	 \left\|\xi^\star\right\|_{\ell_1}(y)\leq &\,\dfrac{M_1\e^2}{|y^2+\pi^2/4|^3}\qquad \text{ for } \quad y\in D^{\mathrm{out},\star}_{\kappa}\cap\{|\Rp(y)|\leq 1\}.
	 \end{split}
	\]
	\end{enumerate}	
Moreover, the derivatives of $\xi^\star$ can be bounded as 	
\begin{enumerate}
\item For  $y\in D^{\mathrm{out},\star}_{\kappa}\cap\{|\Rp(y)|> 1\}$,
\[
\left\|\partial_{\tau}^2\xi^\star\right\|_{\ell_1}(y), \  \|\pa_\tau^2 \partial_{y}\xi^\star\|(y)\leq \dfrac{M_1\e^2}{|\cosh(y)|}.\] 
\item For  $y\in D^{\mathrm{out},\star}_{\kappa}\cap\{|\Rp(y)|\leq 1\}$,
\[
\left\|\partial_{\tau}^2\xi^\star\right\|_{\ell_1}(y) \leq \dfrac{M_1\e^2}{|y^2+\pi^2/4|^3}\quad\text{and}\quad \left\|\pa_\tau^2 \partial_{y}\xi^\star\right\|_{\ell_1}(y)\leq \dfrac{M_1\e^2}{|y^2+\pi^2/4|^4}.\]
\end{enumerate}
\end{theorem}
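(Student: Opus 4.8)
The plan is to set up a fixed-point scheme for the functional equation \eqref{xinop}--\eqref{asymptcond} in a suitable Banach space of analytic functions on the outer domains $D^{\mathrm{out},\star}_\kappa$, and to close the argument by showing that the nonlinear operator $\FF$ is a contraction on a small ball whose radius encodes the weighted bounds stated in the theorem. Since the two cases $\star=u,s$ are related by $y\mapsto -y$ and the oddness in $\tau$, it suffices to treat $\star=s$. First I would invert the linear operator $\LL$ in \eqref{Lop}. For the first Fourier mode the relevant operator is $\pa_y^2 - 1 + \tfrac34 (v^h)^2$, the linearization of the Duffing homoclinic; its kernel is spanned by $(v^h)'$ (and a second, exponentially growing solution), so one builds the Green's function by variation of parameters imposing decay as $y\to+\infty$ together with the normalization $\pa_y\Pi_1[\xi^s](0)=0$, which picks out a unique bounded right inverse. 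For the modes $n\ge 2$ the operator is $\pa_y^2 + \lambda_n^2/\e^2$, purely oscillatory; here one inverts using the kernel $\tfrac{\e}{\lambda_n}\sin(\tfrac{\lambda_n}{\e}(y-s))$ and integrates from $+\infty$ (along a path staying in $D^{\mathrm{out},s}_\kappa$) so that the solution decays. The key point is that, because of the $1/|y^2+\pi^2/4|$-type singular weights, oscillatory integration against $e^{\pm i\lambda_n s/\e}$ does \emph{not} gain a factor $\e$ near the singularity but is still controlled by the weight itself; away from the singularity (for $|\Rp y|>1$) the $\cosh^{-1}$ decay is what survives.

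Next I would choose the norm. For a function $\xi(y,\tau)=\sum_{n\ge1}\xi_n(y)\sin n\tau$ analytic on $D^{\mathrm{out},s}_\kappa$, set $\|\xi\| = \sup_{y} \varpi(y)^{-1}\|\xi(y,\cdot)\|_{\ell_1}$ where $\varpi(y)=\e^2/|\cosh y|$ for $|\Rp y|>1$ and $\varpi(y)=\e^2/|y^2+\pi^2/4|^3$ for $|\Rp y|\le 1$, possibly with separate weights recording the $\pa_\tau^2$ and $\pa_y$ estimates as in the statement (this is the usual trick of working in a product space whose components carry the different singular orders). One then checks that $\mathcal{G}:=\LL^{-1}\FF$ maps a ball of radius $O(1)$ in this norm into itself and is a contraction there, for $\kappa\ge\kappa_0$ large and $\e\le\e_0$ small. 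The algebra of $\ell_1$ — $\|f_1f_2\|_{\ell_1}\le\|f_1\|_{\ell_1}\|f_2\|_{\ell_1}$, already quoted in the excerpt — makes $\FF$ manifestly analytic in $\xi$ with bounds uniform in $\e\sqrt k\omega$, because $(\e k^{1/2}\omega)^{-3}g(\e\sqrt k\omega v)=O(|v|^3)$. The cubic-and-higher structure of $\FF$ (after removing the resonant $v_1^3/4$ term, which has been absorbed into $\LL$) is exactly what produces the gain: evaluated at $\xi=0$ one has $\FF(0)=O(\e^2)\,$ times the appropriate singular weight coming from $f(\e\omega v^h\sin\tau)$ and from $\omega^{-2}-1=\e^2$, and the Lipschitz constant of $\FF$ on the small ball is $O(\e^2)$ near the singularity or $O(1/\kappa)$ uniformly, which is where the largeness of $\kappa$ is spent.

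The derivative bounds in the last two items follow by differentiating the fixed-point identity $\xi^s=\LL^{-1}\FF(\xi^s)$: $\pa_\tau^2$ commutes with $\LL^{-1}$ mode by mode (it is just multiplication by $-n^2$) and costs nothing beyond what the $\ell_1$-algebra already gives, while $\pa_y$ of the Green's-function representation either hits the kernel — losing one power of the weight near the singularity, which accounts for the $|y^2+\pi^2/4|^{-4}$ in the $\pa_\tau^2\pa_y\xi^s$ estimate — or hits the integrand, which is harmless. Cauchy estimates on a slightly smaller domain (shrinking $\kappa$ by a fixed amount) give an alternative route to the $\pa_y$ bounds if one prefers to avoid differentiating the Green's function directly. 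The main obstacle I anticipate is the oscillatory integral estimate for the modes $n\ge 2$ on the part of $D^{\mathrm{out},s}_\kappa$ within $O(\e)$ of the singularity $y=i\pi/2$: one must integrate $\tfrac{\e}{\lambda_n}\sin(\tfrac{\lambda_n}{\e}(y-s))$ against something of size $\e^2|s^2+\pi^2/4|^{-3}$ times a cubic, over a contour that approaches the singularity, and show the result is still $O(\e^2|y^2+\pi^2/4|^{-3})$ \emph{uniformly in $n\ge 2$} (so that summing over $n$ in $\ell_1$ converges), using $\lambda_n\ge 1/2$ and the analyticity to deform the contour; keeping the constant $M_1$ independent of both $\e$ and $\kappa$ is the delicate bookkeeping here, and is precisely the reason the domain is taken sectorial with a fixed opening angle $\bg<\pi/4$ rather than a full neighborhood of the singularity.
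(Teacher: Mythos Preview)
Your overall scheme---invert $\LL$ via Green's functions, set up a contraction in a weighted $\ell_1$-type space encoding both the $\cosh^{-1}$ decay and the singular weight near $\pm i\pi/2$---is exactly the paper's approach. But there is a genuine gap in the mechanism you describe for the $\e^2$ smallness, and it would prevent the argument from closing.

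You claim that $\FF(0)=O(\e^2)$ times the singular weight, attributing this to $f(\e\omega v^h\sin\tau)$ and to $\omega^{-2}-1=\e^2$. This is false for the higher modes. From \eqref{Fop} one computes directly
\[
\FF(0)=\frac{(v^h)^3}{12}\sin 3\tau-\frac{1}{\e^3\omega^3}f(\e\omega v^h\sin\tau),
\]
so that $\Pi_1[\FF(0)]=O(\e^2(v^h)^5)$ is indeed small (because $f=O(u^5)$), but $\wt\Pi[\FF(0)]=\frac{(v^h)^3}{12}\sin 3\tau+O(\e^2)$ is \emph{order one} in the weighted norm $\|\cdot\|_{\ell_1,1,3}$. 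The cubic interaction of the homoclinic with itself pumps the third harmonic with no $\e$-suppression. Likewise, you say that the oscillatory integration against $e^{\pm i\lambda_n s/\e}$ ``does not gain a factor $\e$ near the singularity''. In fact the opposite is the heart of the matter: the paper's Proposition~\ref{prop_operators}(4) gives
\[
\|\mathcal G_n(h)\|_{m,\alpha}\le \frac{M\e^2}{\lambda_n^2}\,\|h\|_{m,\alpha},\qquad n\ge 2,
\]
uniformly in $n$, and \emph{this} $\e^2/\lambda_n^2$ gain from the fast-oscillatory Green's function is what produces the $\e^2$ in the final estimate. It is also what allows $\pa_\tau^2$ (multiplication by $-n^2\sim-\lambda_n^2$) to be applied for free, explaining the $\pa_\tau^2\xi^\star$ bounds. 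Without this gain you have no smallness for $\wt\Pi[\FF^\sharp(0)]$ and the iteration never starts.

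A secondary point you are missing concerns the contraction structure. Even once the correct $\e^2/\lambda_n^2$ gain is used, the Lipschitz estimate for $\Pi_1[\FF^\sharp]$ contains a term $M\|\wt\Pi[\xi-\xi']\|$ with \emph{no} small factor, coming from the coupling $-(\xi_1+v^h)^2\sin^2\tau\,\wt\Pi[\xi]$ in \eqref{FE}: the $(v^h)^2$ factor is order $|y^2+\pi^2/4|^{-2}$, and $\mathcal G_1$ maps $\|\cdot\|_{3,5}\to\|\cdot\|_{1,3}$ with an $O(1)$ bound. So a flat norm on $(\Pi_1,\wt\Pi)$ is not contractive. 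The paper handles this by exploiting the triangular structure---$\wt\Pi[\FF^\sharp]$ contracts with constant $M/\kappa^2$ in all of $\xi$---and uses the weighted norm $|\xi|_M=\|\Pi_1[\xi]\|+(1+M)\|\wt\Pi[\xi]\|$ (together with the corresponding $\pa_y$-pieces) to absorb the $O(1)$ feedback into mode~$1$.
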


\begin{remark} 
While the 1-dim stable and unstable manifolds of the equilibrium $0$ are determined by their exponential asymptotic behavior as $y\to \pm \infty$ where the freedom of translation in $y$ is fixed by $\pa_y \Pi_1 [\xi^{u, s}] =0$, it is important that the precise order of the error $\xi^{u, s} = \mathcal{O} (\frac {\e^2}{|y^2 +\frac {\pi^2}4|^3})$ is obtained near the singularity $y =\pm \frac \pi2i$. This does not only allows one to identify the correct scaling leading to the limit of the inner equation in the next subsection, but also uniquely fix the solutions of the inner equation optimally approximating $v^{u, s}$.     
\end{remark}

\subsection{Analysis close to the singularities}
Notice that the parameterizations $v^\star(y,\tau)$ of $W^{\star}(0)$, $\star=u,s$ given by Theorem \ref{outerthm}, are $\e^2$-close to the homoclinic orbit $v^h(y)\sin(\tau)$ for $y\in\R\cap D^{\mathrm{out},\star}_{\kappa}$. Nevertheless, at distance $\er(\e)$ of the poles $y=\pm i\pi/2$ of $v^h$,  $v^h\sim\e^{-1}$ has comparable  size to the error $\xi^\star\sim\e^{-1}$.

To obtain a first order approximation of the invariant manifolds  at distance $\er(\e)$ of the poles $y=\pm i\pi/2$ we proceed as follows.
We focus on the singularity $y=i \pi/2$ since  similar results can be derived near the singularity $y=-i \pi/2$ by the conjugacy. Consider the \emph{inner variable}
\begin{equation}
\label{innervar}
z=\e^{-1}\left(y-i\dfrac{\pi}{2}\right)
\end{equation}
and the scaling
\begin{equation}
\label{innerscaling}
\phi(z,\tau)=\e v\left(i\dfrac{\pi}{2}+\e z, \tau\right).
\end{equation}
Writing equation \eqref{kleingordonv} for $\phi(z,\tau)$ and recalling $\omega=(1+\e^2)^{-\frac 12}$, we obtain
\begin{equation}
\label{kleingordonphi}
\partial_z^2\phi-\partial_{\tau}^2\phi- \dfrac 1{\omega^2}\phi+\dfrac{1}{3}\phi^3+ \frac 1{\omega^3} f(\omega \phi)=0.
\end{equation}
This equation coincides with the original Klein-Gordon equation \eqref{eq:KLGtau}\footnote{Warning: It is the original one for $\psi=\omega\phi$, but will be analyzed near a singular complex function.}. However, notice that now the evolution variable is $\displaystyle z=x-i\frac{\pi}{2\e}$.

The first order of \eqref{kleingordonphi} corresponds to the regular limit  $\e=0$, which gives the so-called \emph{inner equation}  
\begin{equation}
\label{inner}
\partial_z^2\phi^0-\partial_{\tau}^2\phi^0-\phi^0+\dfrac{1}{3}(\phi^0)^3+f(\phi^0)=0.
\end{equation}
We are interested in identifying certain solutions of \eqref{inner} with the same first order of the outer solutions $v^{u,s}(y,\tau)$ given in Theorem \ref{outerthm} near the pole $y=i\pi/2$. Therefore, we  look for solutions $\phi^{0,\ast}(z,\tau)$, $\ast=u,s$,  of \eqref{inner} which have the same leading order expansion as $\phi^{u,s}(z,\tau)=\e v^{u,s}\left(i\left(\dfrac{\pi}{2}+\e z\right),\tau\right)$. Near the pole $y=i\pi/2$, by Theorem \ref{outerthm} we have
\[v^{u,s}(y,\tau)= v^h(y)\sin\tau+\er\left(\dfrac{\e^2}{(y-i\pi/2)^3}\right) = \dfrac{-2\sqrt{2}i}{y-i\pi/2}\sin\tau+\er(y-i\pi/2)+\er\left(\dfrac{\e^2}{(y-i\pi/2)^3}\right)\]
which, in the inner variables \eqref{innervar} and \eqref{innerscaling},  corresponds to
\begin{equation*}\label{solutionsouterinnervar}\phi^{u,s}(z,\tau)= \dfrac{-2\sqrt{2}i}{z}\sin\tau+\er(\e^2 z)+\er\left(\frac{1}{z^{3}}\right).\end{equation*} 
Taking into account the change of variables \eqref{innervar} and the shape of the outer domains \eqref{outer}, this asymptotic condition must hold for $\Ip z<0$ and $\Rp z<0$ for $\phi^{u}$ and for $\Ip z<0$ and $\Rp z>0$ for $\phi^{s}$.

Therefore we consider the \emph{inner domains}
\begin{equation}
\label{innerdomainsol}
\begin{array}{l}
D^{u,\mathrm{\mathrm{in}}}_{\theta,\kappa}=\{z\in\C; |\Ip(z)|> \tan\theta \Rp(z)+\kappa \},\vspace{0.2cm}\\
D^{s,\mathrm{\mathrm{in}}}_{\theta,\kappa}=\{z\in \C; -z\in D^{u,\mathrm{\mathrm{in}}}_{\theta,\kappa} \},
\end{array}
\end{equation}
for  $0<\theta<\pi/6$ 
and $\kappa>0$ (see Figure \ref{innerfig}), and we look for solutions of the inner equation of the form
\begin{equation*}\label{innersolution}
\phi^{0,\star}(z,\tau)=\dfrac{-2\sqrt{2}i}{z}\sin\tau+\psi^\star(z,\tau),\quad \text{with} \quad\psi^\star=\er\left(\frac{1}{z^{3}}\right),\quad \text{for}\quad (z,\tau)\in D^{\star,\mathrm{\mathrm{in}}}_{\theta,\kappa}\times\TT, \ \star=u,s.
\end{equation*}
We present the results concerning the existence of these solutions $\phi^{0,\star}$ of \eqref{kleingordonphi}, $\star=u,s$. Moreover we give an asymptotic expression for the difference $\phi^{0,u}(z,\tau)-\phi^{0,s}(z,\tau)$ as $\Ip(z)\to-\infty$, which will be crucial to compute the first order of the difference $v^u-v^s$. The following theorem will be proved in Section \ref{innersec}.

\begin{figure}[!]	
	\centering
	\begin{overpic}[width=12cm]{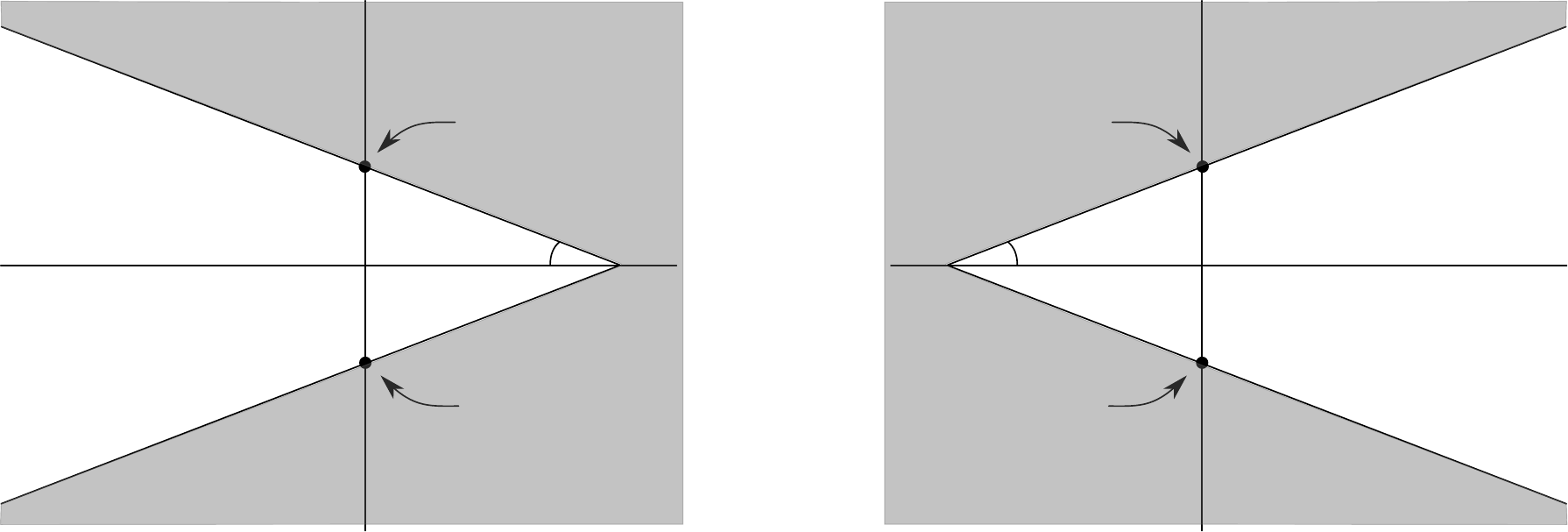}
		\put(30,25){{\footnotesize $i\kappa$}}		
		\put(30,7){{\footnotesize $-i\kappa$}}			
		\put(68,25){{\footnotesize $i\kappa$}}		
		\put(66,7){{\footnotesize $-i\kappa$}}	
		\put(33,17){{\footnotesize $\theta$}}	
		\put(65.5,17){{\footnotesize $\theta$}}	
		\put(35,31){{\footnotesize $D^{s,\mathrm{\mathrm{in}}}_{\theta,\kappa}$}}	
		\put(60,31){{\footnotesize $D^{u,\mathrm{\mathrm{in}}}_{\theta,\kappa}$}}					\end{overpic}
	\bigskip
	\caption{ Inner domains $D^{s,\mathrm{\mathrm{in}}}_{\theta,\kappa}$ and $D^{u,\mathrm{\mathrm{in}}}_{\theta,\kappa}$. }	
	\label{innerfig}
\end{figure} 

\begin{theorem}[Inner] \label{innerthm}Let $\theta\in (0, \frac \pi6)$ and $r>0$ be fixed and consider $f\in\FF_r$. There exists $\kappa_0\geq 1$ big enough such that, for each $\kappa\geq\kappa_0$,
	\begin{enumerate} 
		\item Equation \eqref{inner} has two solutions $\phi^{0,\star}: D^{\star,\mathrm{\mathrm{in}}}_{\theta,\kappa}\times\mathbb{T}\rightarrow \C$, $\star=u,s$, given by \begin{equation}\label{innersol}\phi^{0,\star}(z,\tau)=-\dfrac{2\sqrt{2} i}{z}\sin\tau+\psi^\star(z,\tau),\end{equation}
		which are analytic in the variable $z$. Moreover, $\Pi_{2l}\left[\phi^{0,\star}\right]\equiv 0$ for every $l\in\N$, and there exists a constant $M_2>0$ independent of $\kappa$ such that, for every $z\in D^{\star,\mathrm{\mathrm{in}}}_{\theta,\kappa}$ and $z' \in D^{\star,\mathrm{\mathrm{in}}}_{2\theta,4\kappa}$
		\begin{equation}\label{def:innerestimates}
		\left\|\partial_{\tau}^2\psi^{\star} \right\|_{\n}(z)\leq \dfrac{M_2}{|z|^3}, \quad \left\|\partial_{\tau}^2\pa_z \psi^{\star} \right\|_{\n}(z')\leq \dfrac{M_2}{|z'|^4}.
		\end{equation}
		
		\item The difference $\Delta\phi^0(z,\tau)= \phi^{0,u}(z,\tau)-\phi^{0,s}(z,\tau)$ is given by (see Figure \ref{difinnerfig}),
		\begin{equation}\label{diffinnersol}\Delta\phi^0(z,\tau)=e^{-i\mu_{3}z}\left(C_{\mathrm{in}}\sin(3\tau)+ \chi(z,\tau)\right),\quad z\in \mathcal{R}^{\mathrm{\mathrm{in}},+}_{\theta,\kappa}= D^{u,\mathrm{\mathrm{in}}}_{\theta,\kappa}\cap D^{s,\mathrm{\mathrm{in}}}_{\theta,\kappa}\cap\{ z;\ \Rp(z)=0, \Ip(z)<0 \}\end{equation}
		where $\mu_{3}=2\sqrt{2}$, $C_{\mathrm{in}} \in \C$ is a constant, and $\chi$ is  analytic in $z$ and satisfies  that, for $z\in \mathcal{R}^{\mathrm{\mathrm{in}},+}_{\theta,\kappa}$,
		\[
		\|\partial_{\tau}\chi\|_{\ell_1}(z)\leq \dfrac{M_2}{|z|} \quad \text{and} \quad \|\partial_z\chi\|_{\ell_1}(z)\leq \dfrac{M_2}{|z|^2}.\]

\item	The constant $C_{\mathrm{in}}=C_{\mathrm{in}}(f) \in \C$  depends analytically on $f\in\FF_r$.  
\end{enumerate}
\end{theorem}

\begin{figure}[!]	
	\centering
	\begin{overpic}[width=6cm]{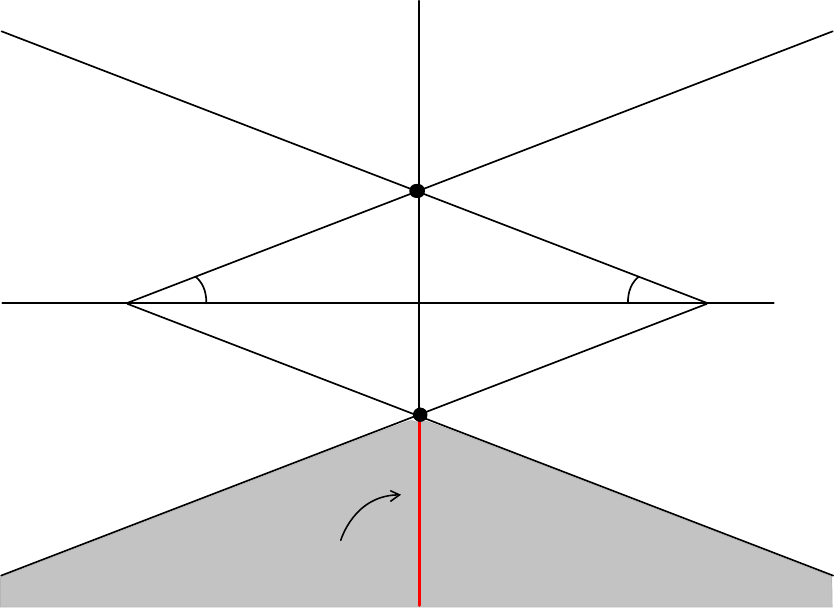}
		\put(52,21.5){{\footnotesize $-i\kappa$}}	
		\put(70,37){{\footnotesize $\theta$}}	
		\put(27,37){{\footnotesize $\theta$}}	
		\put(35,3){{\footnotesize $\mathcal{R}^{\mathrm{\mathrm{in}},+}_{\theta,\kappa}$}}	
	\end{overpic}
	\bigskip
	\caption{ Domain $\mathcal{R}^{\mathrm{\mathrm{in}},+}_{\theta,\kappa}$. }	
	\label{difinnerfig}
\end{figure} 

\begin{remark}
It is interesting to see that the stable and unstable solutions $\phi^{0, \star}$, $\star=u, s$, are identified by the $\mathcal{O} (|z|^{-1})$ decay as $\Rp z \to \pm \infty$, where the same Lyapunov-Perron approach works. The freedom of translation in $z$, which causes a variation of the order $\mathcal{O}(|z|^{-2})$ is fixed by the $\mathcal{O}(|z|^{-3})$ restriction of the error terms. The splitting $\Delta \phi^0$  between $\phi^{0, u}$ and $\phi^{0, s}$ would turn out to be the principal part of the splitting between $v^u$ and $v^s$. The leading order form of $\Delta \phi^0$ can be understood in two different perspectives. On the one hand, it is related to the Borel summation of divergent power series and the readers are referred to Section \ref{SS:Stokes} for related discussions and our conjecture on how to compute $C_{\mathrm{in}}$. 
On the other hand, along the real direction of $z$, the inner equation \eqref{inner} is hyperbolic in the PDE sense and oscillatory. 
However, when we view it along the imaginary axis, it becomes strongly hyperbolic in the dynamical systems sense and elliptic in the PDE sense (and dynamically ill-posed). 
All the originally oscillatory directions become hyperbolic in the dynamical systems sense and thus in particular the stable manifolds become infinite dimensional containing $\phi^{0, \star}$. The splitting $\Delta \phi^0$ is dominated by the weakest exponential decay rate and the Stokes constant $C_{\mathrm{in}}$ basically comes from the difference between the weakest stable coordinates of $\phi^{0, u}$ and $\phi^{0, s}$. 
\end{remark}

\begin{remark}
The constant $C_\mathrm{in} (f)$ in Theorem \ref{innerthm} is the constant appearing in Theorem \ref{maintheorem}. Theorem \ref{innerthm}(3) provides its analyticity with respect to $f$ as stated in Theorem \ref{prop:Stokesconstant}. In Section \ref{sec:Stokes}
we prove that, typically, it does not vanish. 
\end{remark}

Our next step is to prove that  the solutions of the inner equation obtained in Theorem \ref{innerthm} are good approximations of the parameterizations  $v^\ast(y,\tau)$, $\ast=u,s$, obtained in Theorem \ref{outerthm} near the pole $y=i\pi/2$. To prove this fact we introduce the following \emph{matching domains}.

Take  $0<\cg<1$, $0<\beta_1<\beta<\beta_2<\pi/4$ constants independent of $\e$ and $\kappa$. Then, we consider the points   $y_j\in\C$, $j=1,2$  satisfying
\begin{enumerate}
	\item $\Ip(y_j)=-\tan\beta_j\Rp(y_j)+\pi/2-\kappa\e$;
	\item $|y_j-i(\pi/2-\kappa\e)|=\e^\gamma$;
	\item $\Rp(y_1)<0$ and $\Rp(y_2)>0$.
\item $e^{5(\pi - \beta_1)} - e^{-5 \beta_2} \ne 0$. 
\end{enumerate}
Note that 
$\Ip(y_2)<\frac{\pi}{2}-\kappa\eps<\Ip(y_1)$.
Then, consider the following \emph{matching domains} (see Figure \ref{mchfig}),
\begin{equation}\label{matchingdomain}
\begin{array}{lcl}
D^{\mathrm{\mathrm{mch}},u}_{+,\kappa}&=&\left\{ y\in\C;\ \Ip(y)\leq -\tan\beta_1\Rp(y)+\pi/2 - \kappa\e,\ \Ip(y)\leq -\tan\beta_2\Rp(y)+\pi/2 - \kappa\e,  \right.\\
& &  \left.\Ip(y)\geq \Ip(y_1)-\tan\left(\dfrac{\beta_1+\beta_2}{2}\right)(\Rp(y)-\Rp(y_1))\right\},\vspace{0.2cm}\\
D^{\mathrm{\mathrm{mch}},s}_{+,\kappa}&=&\left\{ y\in\C; -\overline{y}\in D^{\mathrm{\mathrm{mch}},u}_{+,\kappa}\right\}.
\end{array}
\end{equation}

\begin{figure}[!]	
	\centering
	\begin{overpic}[width=14cm]{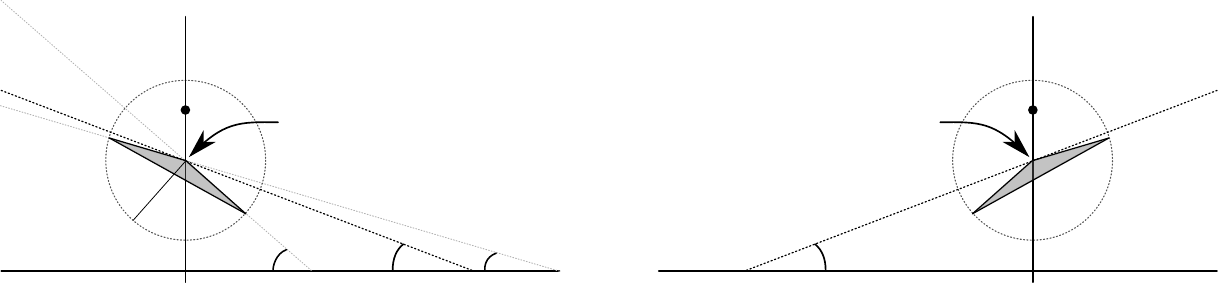}			
		\put(16,14){{\scriptsize $i\frac{\pi}{2}$}}	
		\put(24,12.5){{\footnotesize $i\left(\frac{\pi}{2}-\kappa\e\right)$}}		
		\put(85.5,14){{\scriptsize $i\frac{\pi}{2}$}}	
		\put(67,12.5){{\footnotesize $i\left(\frac{\pi}{2}-\kappa\e\right)$}}		
		\put(30,2){{\footnotesize $\bg$}}
		\put(68.5,2){{\footnotesize $\bg$}}			
		\put(20,2){{\footnotesize $\bg_2$}}	
		\put(37,2){{\footnotesize $\bg_1$}}	
		\put(9.5,3.1){{\footnotesize $c\e^{\cg}$}}
		\put(7,11){{\footnotesize $y_1$}}
		\put(20.5,4.5){{\footnotesize $y_2$}}												\end{overpic}
	\bigskip
	\caption{ Matching domains $D^{\mathrm{\mathrm{mch}},u}_{+,\kappa}$ (on the left) and $D^{\mathrm{\mathrm{mch}},s}_{+,\kappa}$ (on the right). }	
	\label{mchfig}
\end{figure} 

Notice that there exist constants $M_1, M_2>0$ independent of $\e$ and $\kappa$ such that
\[
\begin{split}
M_1\e^\cg&\leq |y_j-i\pi/2|\leq M_2\e^\cg,\qquad j=1,2,\\
M_1\kappa\e&\leq  |y-i\pi/2|\leq M_2\e^\cg,\qquad \text{for}\quad y\in D^{\mathrm{\mathrm{mch}},u}_{+,\kappa}.
\end{split}
\]
In terms of the inner variable $z$ (see \eqref{innervar}), the matching domains are given by
\[
 \mathcal{D}^{\mathrm{\mathrm{mch}},\star}_{+,\kappa}=\{z\in \C; \e z+i\pi/2\in D^{\mathrm{\mathrm{mch}},\star}_{+,\kappa} \},\ \star=u,s.
\]
Notice that,
\[
\begin{split}
M_1\e^{\cg-1}&\leq |z_j|\leq M_2\e^{\cg-1},\qquad j=1,2,\\
M_1\kappa&\leq  |z|\leq M_2\e^{\cg-1}\qquad \text{for,}\quad z\in \mathcal{D}^{\mathrm{\mathrm{mch}},u}_{+,\kappa}.
\end{split}
\]
where $z_1$ and $z_2$ are the vertices of the inner domain $y_1$ and $y_2$, respectively, expressed in the inner variable.

Next theorem estimates the difference  in the matching domains \eqref{matchingdomain} between the functions $\phi^*$, $\ast=u,s$ in \eqref{innerscaling} and the functions $\phi^{0,*}$, $\ast=u,s$, given by Theorem \ref{innerthm}. The theorem is proven in Section \ref{sec:matching}.

\begin{theorem}[Matching]\label{matchingthm}
	Fix $\cg\in(1/3,1)$. Let $\phi^{\star}(z,\tau)=\e v^{\star}(i\pi/2+ \e z,\tau)$, $\star=u,s$, where $v^{\star}$ is the parameterization obtained in Theorem \ref{outerthm}. Then, there exist $\e_0, \delta_0>0$ sufficiently small 
	such that, for each $0<\e\leq \e_0$ and $\kappa$ satisfying $\kappa\e^{1-\gamma}+ \frac {|\log \e|}{\kappa^2} \le \delta_0$, and $z\in \mathcal{D}^{\mathrm{\mathrm{mch}},\star}_{+,\kappa}$,
	$$\phi^{\star}(z,\tau)= \phi^{0,\star}(z,\tau)+ \p^{\star}(z,\tau),$$
	where $\phi^{0,\star}$ is the solution of the inner equation \eqref{inner} obtained in Theorem \ref{innerthm}, and $\varphi^\star$ satisfies that for $(z,\tau)\in \mathcal{D}^{\mathrm{\mathrm{mch}},\star}_{+,\kappa}$
	$$\|\partial_{\tau}^2 \p^{\star}\|_{\ell_1}(z)\leq \dfrac{M_3(\e^{1-\cg}+\e^{3\cg-1})|\log \e|}{|z|^2} \quad and \quad \|\pa_\tau^2 \partial_z\p^{\star}\|_{\ell_1}(z)\leq \dfrac{M_3(\e^{1-\cg}+\e^{3\cg-1})|\log \e|}{\kappa|z|^2} ,$$
	where $M_3>0$   is a constant independent of $\e$ and $\kappa$.
\end{theorem}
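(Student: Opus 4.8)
\medskip
\noindent\emph{Proof idea.}
The plan is to show that the difference $\varphi^\star:=\phi^\star-\phi^{0,\star}$, which is already a well-defined function on $\mathcal{D}^{\mathrm{mch},\star}_{+,\kappa}$ by Theorems~\ref{outerthm} and~\ref{innerthm}, obeys the asserted bound via a fixed-point argument: one recasts it as the solution of an integral equation with small contraction constant, the ``initial data'' being supplied by the outer estimates of Theorem~\ref{outerthm} near the two vertices $z_1,z_2$ (which lie in the overlap of $\mathcal{D}^{\mathrm{mch},\star}_{+,\kappa}$ with the outer domain), and the coefficients of the relevant linear operator being controlled by the inner estimates of Theorem~\ref{innerthm}.

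First I would derive the equation for $\varphi^\star$. Since $\phi^\star$ solves the rescaled equation \eqref{kleingordonphi} while $\phi^{0,\star}$ solves the inner equation \eqref{inner}, and since $\omega^{-2}=1+\e^2$ and $\omega^{-3}f(\omega\phi)=f(\phi)+\OO(\e^2\phi^5)$, subtracting gives $\LL^\star_{\mathrm{in}}\varphi^\star=\EE_\e(\phi^\star)-\mathcal{Q}^\star(\varphi^\star)$, where $\LL^\star_{\mathrm{in}}=\pa_z^2-\pa_\tau^2-1+(\phi^{0,\star})^2+f'(\phi^{0,\star})$ is the linearization of \eqref{inner} at $\phi^{0,\star}$, the source $\EE_\e(\phi^\star)=\e^2\phi^\star+\big(\omega^{-3}f(\omega\phi^\star)-f(\phi^\star)\big)$ is $\OO_{\ell_1}(\e^2|z|^{-1})$ on the first Fourier mode and $\OO_{\ell_1}(\e^2|z|^{-3})$ on the modes $n\ge3$ (using $\phi^\star=\OO(|z|^{-1})$, which follows from Theorem~\ref{outerthm} after the change of variables \eqref{innervar}--\eqref{innerscaling}), and $\mathcal{Q}^\star$ collects the terms at least quadratic in $\varphi^\star$. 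On the Fourier side, the $n$-th component of $\LL^\star_{\mathrm{in}}$ is $\pa_z^2+(n^2-1)+\OO(|z|^{-2})$ for $n\ge3$, while for $n=1$ it is $\pa_z^2+\tfrac34(\phi^{0,\star}_1)^2+\OO(|z|^{-3})=\pa_z^2-6z^{-2}+\OO(|z|^{-3})$, whose two homogeneous solutions behave like $z^3$ and $z^{-2}$ as $z\to\infty$ (the exponents $3$ and $-2$ coming from $v^h(y)^2\sim-8(y-i\pi/2)^{-2}$).

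Next I would construct a right inverse $\GG^\star$ of $\LL^\star_{\mathrm{in}}$ on $\mathcal{D}^{\mathrm{mch},\star}_{+,\kappa}$, treating the whole linearized operator (coupling included) with the same integral-equation machinery used for $\phi^{0,\star}$ in Section~\ref{innersec}: for $n\ge3$ one uses the oscillatory kernel $\mu_n^{-1}\sin(\mu_n(z-s))$, $\mu_n=\sqrt{n^2-1}$, the inclusion $\mathcal{D}^{\mathrm{mch},\star}_{+,\kappa}\subset D^{\star,\mathrm{in}}_{\theta,\kappa}$ keeping $e^{\pm i\mu_n z}$ bounded on the relevant integration paths, whereas for $n=1$ one solves the two-point boundary value problem with fundamental solutions $z^3,z^{-2}$ and data at $z_1,z_2$ --- the nondegeneracy hypothesis~(4) on $\beta_1,\beta_2$, in which the exponent $5$ is exactly the gap $3-(-2)$ between the two characteristic exponents, guaranteeing that the corresponding $2\times2$ determinant is bounded below so that the growing mode $z^3$ is properly suppressed. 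With the weighted norm $\|h\|_\ast:=\sup_{z}|z|^2\|\pa_\tau^2 h\|_{\ell_1}(z)$ one proves an estimate of the form $\|\GG^\star h\|_\ast\lesssim|\log\e|\,\|h\|_\sharp$ for a suitable source norm $\|\cdot\|_\sharp$ --- the factor $|\log\e|$ reflecting the logarithmic width, in the $|z|$ variable, of the matching region $\{\kappa\lesssim|z|\lesssim\e^{\cg-1}\}$ together with the resonance between the $z^3$ mode and the $|z|^{-4}$-type terms generated by the mode coupling --- along with the companion bound for $\pa_z\GG^\star$ (an extra factor $\kappa^{-1}$, via Cauchy estimates on a slightly shrunk domain). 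Writing $\varphi^\star=\GG^\star\big(\EE_\e(\phi^\star)-\mathcal{Q}^\star(\varphi^\star)\big)+h^\star$ with $\LL^\star_{\mathrm{in}}h^\star=0$ carrying the boundary values of $\phi^\star-\phi^{0,\star}$ at $z_1,z_2$, Theorems~\ref{outerthm} and~\ref{innerthm} give $\|\pa_\tau^2\varphi^\star\|_{\ell_1}(z_j)\lesssim\e^2|z_j|+|z_j|^{-3}$, hence $\|h^\star\|_\ast\lesssim\e^{3\cg-1}+\e^{1-\cg}$ after multiplying by $|z_j|^2\sim\e^{2\cg-2}$, and the same sizes bound $\|\GG^\star\EE_\e(\phi^\star)\|_\ast$. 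A contraction estimate --- valid provided $\|\GG^\star\|$ times the size of $\EE_\e$, and $\kappa^{-1}\|\GG^\star\|$, are small, which is precisely what the hypothesis $\kappa\e^{1-\cg}+\kappa^{-2}|\log\e|\le\delta_0$ secures --- then closes the argument and yields $\|\varphi^\star\|_\ast\lesssim(\e^{1-\cg}+\e^{3\cg-1})|\log\e|$, i.e. the stated bound on $\pa_\tau^2\varphi^\star$; the bound on $\pa_\tau^2\pa_z\varphi^\star$ follows from the companion estimate for $\pa_z\GG^\star$.

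The step I expect to be the main obstacle is constructing $\GG^\star$ with norm controlled uniformly --- by $|\log\e|$ and not worse --- over the entire matching region, which is an annulus-like set whose inner and outer radii differ by the large factor $\e^{\cg-1}$: the first mode carries a genuinely growing homogeneous solution $\sim z^3$ that must be suppressed by the two-point data (where the geometric conditions on $\beta,\beta_1,\beta_2,\cg$ and the nondegeneracy~(4) enter), and the infinitely many higher modes require $n$-dependent oscillatory estimates that must nonetheless sum (which is what the $\pa_\tau^2$ weighting provides). Alongside this, the two competing contributions --- the $\OO(\e^2|z|)$ growth induced through the first mode by $\EE_\e$, responsible for the $\e^{3\cg-1}$ term, versus the $\OO(|z|^{-3})$ mismatch inherited from the outer and inner tails, responsible for the $\e^{1-\cg}$ term --- must be balanced, and it is exactly this balance that forces $\cg\in(1/3,1)$ and pins down the final exponents.
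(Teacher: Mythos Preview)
Your proposal is correct and follows essentially the same approach as the paper: derive the equation for $\varphi^\star$, invert the linear part via a two-point boundary value problem on the matching triangle with data at $z_1,z_2$ (using the $z^3,z^{-2}$ pair for mode~$1$ and the oscillatory kernels for $n\ge3$), and close the estimate using smallness of the coupling. Two streamlining choices the paper makes are worth noting: it linearizes around the explicit leading term $-2\sqrt{2}i\,z^{-1}\sin\tau$ rather than the full $\phi^{0,\star}$, so the right inverse $\mathcal{T}$ has completely explicit kernels and the variable-coefficient terms are pushed to the right-hand side as bounded linear operators $L,\widehat L,K$; and since $\varphi^\star=\phi^\star-\phi^{0,\star}$ is already a known function, no contraction is needed---one simply inserts $\varphi^\star$ into the integral identity and algebraically closes the coupled pair of inequalities for $\|\varphi_1\|_2$ and $\|\pa_\tau^2\wt\Pi[\varphi]\|_{\ell_1,2}$ (multiplying the second by $2M|\log\e|$ and adding), the hypothesis $\kappa^{-2}|\log\e|\le\delta_0$ being exactly what makes this closure work.
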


\begin{remark}
	Notice that $\cg=1/2$ minimizes the size of $\|\p^\star\|_{\n,2}$, $\star=u,s$, in Theorem \ref{matchingthm}. In this case, 
	$$\|\pa_\tau^2 \p^\star\|_{\n,2}\leq M |\log \e| \e^{1/2}|z|^{-2}.$$
\end{remark}

\begin{remark} 
The idea to obtain the above matching estimate is that $y_1$ and $y_2$ are connected by a segment with nontrivial slope in the complex plane, where the linear part of the problem becomes somewhat elliptic in the 1-dim variable $z$ (in the PDE sense) except in the direction of the mode $\sin \tau$. Therefore $\p^\star$, $\star=u, s$, is nicely determined by the values at $y_1$ and $y_2$ which simply come from the asymptotic form $\phi^{0, \star}$ and $\p^\star$. The order $\mathcal{O}(|z|^{-2})$ is largely determined by the mode $\sin \tau$.  
\end{remark}

\subsection{The distance between the invariant manifolds}\label{sec:Sketch:difference}

Our next step is  to give an asymptotic formula for the difference
\begin{equation}\label{diffe}
\Delta(y,\tau)= v^u(y,\tau)-v^s(y,\tau)=\xi^u(y,\tau)-\xi^s(y,\tau),
\end{equation}
where $\xi^{u,s}$ are the functions obtained in Theorem \ref{outerthm} (recall that $\Pi_{2l}[\Delta v]=0$ for every $l\geq 0$),
in the  domain (see Figure \ref{diffdom}).
\[
\mathcal{R}_{\kappa}=D^{\mathrm{out},u}_{\kappa}\cap D^{\mathrm{out},s}_{\kappa}\cap i\R,
\]
Next lemma shows that the difference  $\Delta$ satisfies a linear equation. 

\begin{figure}[!]	
	\centering
	\begin{overpic}[width=9cm]{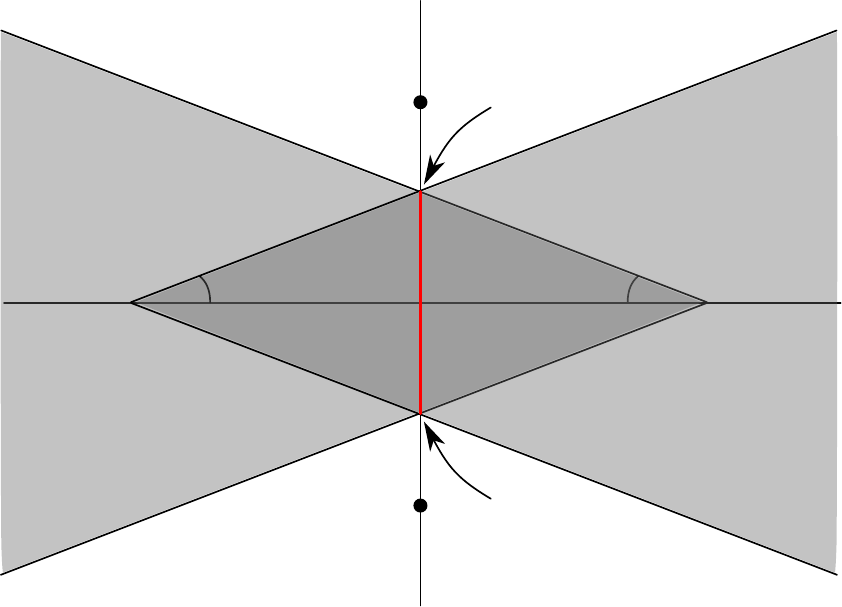}
		\put(42,59){{\footnotesize $i\frac{\pi}{2}$}}
		\put(39.5,11){{\footnotesize $-i\frac{\pi}{2}$}}		
		\put(59,60){{\footnotesize $i\left(\frac{\pi}{2}-\kappa\e\right)$}}			
		\put(58,11){{\footnotesize $-i\left(\frac{\pi}{2}-\kappa\e\right)$}}		
		\put(26,37){{\footnotesize $\bg$}}	
		\put(72,37){{\footnotesize $\bg$}}	
		\put(5,55){{\footnotesize $D^{\mathrm{out},u}_{\kappa}$}}	
		\put(88,55){{\footnotesize $D^{\mathrm{out},s}_{\kappa}$}}
		\put(44,38){{\footnotesize $\mathcal{R}_{\kappa}$}}									\end{overpic}
	\bigskip
	\caption{ Domain $\mathcal{R}_{\kappa}$. }\label{diffdom}	
\end{figure} 
 
\begin{lemma}\label{equationdiff}
	The function $\Delta$ introduced in \eqref{diffe} satisfies the linear equation
	\begin{equation*}\mathcal{L}(\Delta )=\Pi_1\left[ \eta_1(y,\tau)\Pi_1[\Delta ]\sin\tau+ \eta_2(y,\tau)\wt \Pi[\Delta] \right] \sin\tau+ \wt \Pi[\eta_3(y,\tau)\Delta],\end{equation*}
	where $\mathcal{L}$ is the operator given in \eqref{Lop} and $\eta_j:\mathcal{R}_\kappa\times\mathbb{T}\rightarrow\C$, $j=1,2,3$, are functions analytic in $y$. Moreover, there exists a constant $M>0$ independent of $\kappa$ and $\e$ such that
	\[ \|\eta_1\|_{\n}(y)\leq \frac{M\e^2}{|y^2+\pi^2/4|^4},\quad and \quad \|\eta_2\|_{\n}(y),\|\eta_3\|_{\n}(y)\leq \frac{M}{|y^2+\pi^2/4|^2}.\]
\end{lemma}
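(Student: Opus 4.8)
The plan is to subtract the equations satisfied by $v^u$ and $v^s$ and organize the resulting linear equation for $\Delta = v^u - v^s = \xi^u - \xi^s$ according to the structure of the operators $\mathcal{L}$ and $\mathcal{F}$ introduced in \eqref{Lop}--\eqref{Fop}. Both $\xi^u$ and $\xi^s$ solve the same functional equation $\mathcal{L}(\xi) = \mathcal{F}(\xi)$ on the overlap of the outer domains, and $\mathcal{L}$ is linear, so $\mathcal{L}(\Delta) = \mathcal{F}(\xi^u) - \mathcal{F}(\xi^s)$. The first step is to expand this difference using the fundamental theorem of calculus along the segment $\xi^s + t(\xi^u - \xi^s)$, $t \in [0,1]$, which writes $\mathcal{F}(\xi^u) - \mathcal{F}(\xi^s)$ as an integral of $D\mathcal{F}$ evaluated along this path, applied to $\Delta$. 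This already produces a linear equation; the remaining work is to identify the coefficients $\eta_1, \eta_2, \eta_3$ and establish their bounds.

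Next I would separate the mode structure. Recall $\mathcal{F}(\xi)$ in \eqref{Fop} has two pieces: the term $-\tfrac{1}{\e^3\omega^3} g(\e\omega(\xi + v^h \sin\tau))$, which contains all Fourier modes, and the ``correction'' term $\big(\tfrac{(\xi_1+v^h)^3}{4} - \tfrac{3v^h\xi_1^2}{4} - \tfrac{\xi_1^3}{4}\big)\sin\tau$, which lives only in the $\sin\tau$ mode. Applying $\Pi_1$ and $\widetilde\Pi$ (see \eqref{def:PiTilde}) to the linearized difference, the $\Pi_1$-component of $D\mathcal{F}\cdot\Delta$ naturally splits into a part depending on $\Pi_1[\Delta]$ and a part depending on $\widetilde\Pi[\Delta]$, giving the coefficients $\eta_1$ and $\eta_2$; the $\widetilde\Pi$-component gives $\eta_3$. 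The key cancellation to exploit for the $\eta_1$ bound is that $\mathcal{F}$ is designed so that its $\sin\tau$-mode has no cubic term in $\xi_1$ alone near $v^h$ at leading order in $\e$: the Duffing cubic $-v_1^3/4$ was already extracted into $\mathcal{L}$, and the explicit correction term in \eqref{Fop} removes the $\xi_1$-cubic contribution coming from $g$. Hence $\partial_{\xi_1}\big(\Pi_1[\mathcal{F}]\big)$, after the $v^h$-dependent quadratic terms also cancel against the $\sin\tau$-correction, is $\er(\e^2)$ relative to the natural scale, and combined with the decay estimates from Theorem \ref{outerthm} (where $\|\xi^\star\|_{\ell_1} \lesssim \e^2/|y^2+\pi^2/4|^3$ near the singularity) this yields $\|\eta_1\|_{\ell_1}(y) \le M\e^2/|y^2+\pi^2/4|^4$. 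For $\eta_2$ and $\eta_3$, the relevant derivative is $\partial_{v^h}$-type or a cross-derivative in higher modes; the leading contribution is from $-\tfrac34 (v^h)^2$-type terms (recall $\mathcal{L}$ already contains $\tfrac34 (v^h)^2 \xi_1$ in the first mode but not in higher modes), so these coefficients are $\er(1)$ and inherit the $(v^h)^2 \sim |y^2+\pi^2/4|^{-2}$ decay, giving $\|\eta_2\|_{\ell_1}, \|\eta_3\|_{\ell_1} \lesssim 1/|y^2+\pi^2/4|^2$. The analyticity of the $\eta_j$ in $y$ follows because $v^h$, $\xi^{u,s}$, and $g$ are all analytic on $\mathcal{R}_\kappa$, and the path integral of an analytic integrand is analytic.

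The main obstacle I expect is the sharp $\e^2$-gain in the estimate for $\eta_1$: this requires carefully checking that all the quadratic-and-lower-order-in-$\xi_1$ contributions to the $\sin\tau$-mode of the linearization genuinely cancel between $g$ and the explicit correction term in \eqref{Fop}, so that what survives is controlled by $g(\e\omega\,\cdot) = \er(\e^5\omega^5(\cdot)^5)$ from hypothesis \eqref{E:f-Assump} together with the cubic terms that only contribute at order $\e^2$ after the rescaling $u = \e\sqrt{k}\omega v$ and the extraction $-v_1^3/4$ into $\mathcal{L}$. Keeping track of the powers of $\e$ through the scaling \eqref{scaling} and the definition \eqref{epsilon} of $\e$, while simultaneously using the spatial decay of $v^h$ and of the outer error $\xi^{u,s}$, is the delicate bookkeeping; everything else is a routine application of the triangle inequality, the Banach-algebra property $\|f_1 f_2\|_{\ell_1} \le \|f_1\|_{\ell_1}\|f_2\|_{\ell_1}$, and Theorem \ref{outerthm}.
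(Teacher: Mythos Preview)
Your proposal is correct and follows essentially the same route as the paper: start from $\mathcal{L}(\Delta)=\mathcal{F}(\xi^u)-\mathcal{F}(\xi^s)$, linearize (the paper writes the difference out explicitly rather than using the integral form of the mean value theorem, but this is the same thing), then split into $\Pi_1$ and $\widetilde\Pi$ parts and read off the coefficients using the bounds $|v^h|\lesssim|y^2+\pi^2/4|^{-1}$ and $\|\xi^{u,s}\|_{\ell_1}\lesssim\e^2|y^2+\pi^2/4|^{-3}$ from Theorem~\ref{outerthm}. One small clarification on your account of the $\eta_1$ bound: after the correction $+\tfrac{(\xi_1+v^h)^3}{4}$ in \eqref{Fop} cancels the pure cubic $\Pi_1$-contribution from $g$, the surviving $\xi_1$-dependence in $\Pi_1[\mathcal{F}]$ (see \eqref{FE}) consists of the explicit terms $-\tfrac{3v^h\xi_1^2}{4}-\tfrac{\xi_1^3}{4}$, the $f$-term, and cross-terms carrying a factor of $\widetilde\Pi[\xi]$; these are not further ``cancelled'' but are each small upon differentiation because $\xi_1,\widetilde\Pi[\xi]=\er(\e^2)$ and $f'=\er(u^4)$, which is exactly what produces the $\e^2|y^2+\pi^2/4|^{-4}$ bound.
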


\begin{proof}
From \eqref{xinop} and Theorem \ref{outerthm}, we have that 
\begin{equation*}
\mathcal{L}(\Delta)=\mathcal{F}(\xi^u)-\mathcal{F}(\xi^s),
\end{equation*}
where $\mathcal{F}$ is the operator given in \eqref{Fop}. Using the expression of $\mathcal{F}$  (see also \eqref{FE}), we obtain that
	\[
	\begin{split}
	\mathcal{F}(\xi^u)-\mathcal{F}(\xi^s)	=& -\dfrac{1}{\e^3 \omega^3}\wt \Pi\left[g(\e \omega( \xi^u+v^h\sin\tau))-g(\e\omega( \xi^s +v^h\sin\tau))\right]\\
	&-\Pi_1
	\left[(\xi_1^u+v^h)^2\sin^2\tau\wt \Pi(\xi^u)-(\xi_1^s+v^h)^2\sin^2\tau\wt \Pi(\xi^s)\right]\sin\tau\\
	&-\Pi_1\left[(\xi_1^u+v^h)\sin\tau(\wt \Pi[\xi^u])^2-(\xi_1^s+v^h)\sin\tau(\wt \Pi[\xi^s])^2 \right.\\
	&\left.
	+ \dfrac{1}{3}\left((\wt \Pi[\xi^u])^3
	-(\wt \Pi[\xi^s])^3\right)\right]\sin\tau\\
	&+\left(-\dfrac{1}{\e^3 \omega^3}\Pi_1\left[f(\e\omega ( \xi^u+v^h\sin\tau))-f(\e\omega( \xi^s+v^h\sin\tau))\right] \right.\\
	&\left. -\dfrac{3v^h\left((\xi_1^u)^2-(\xi_1^s)^2\right)}{4}-\dfrac{(\xi_1^u)^3-(\xi^s_1)^3}{4}\right)\sin\tau.
	\end{split}
	\]
	The proof follows from calculations based in the power series expansion of $g$ and $f$ and
	the estimates 
	\[
	\left|v^h(y)\right|\leq \frac{M}{|y^2+\pi^2/4|}, \qquad \left\|\xi^{u,s}\right\|_{\n}(y)\leq \frac{ M\e^2}{|y^2+\pi^2/4|^3}
	\]
	obtained in Theorem \ref{outerthm}.
\end{proof}

The idea to obtain the exponentially small splitting estimate is that $y^\pm=\pm 
i(\frac{\pi}{2}-\kappa\eps)$ (see Figure \ref{diffdom}) are connected by a 
vertical segment where the linear operator $\mathcal{L}$ becomes elliptic (in 
the PDE sense) in the 1-dim variable $y$ except in the direction of the mode 
$\sin \tau$. This has two implications: a.)  the solution is determined by the 
values at the two boundary points $y^\pm$ and b.) the Green's function 
principally in the form of exponential functions leads to the desired 
splitting estimate at $y=0$. The mode $\sin \tau$ seems to be an exception. 
Recalling $\pa_y \Pi_1 [\Delta]|_{y=0} =0$, the splitting in the direction 
$\Pi_1 [\Delta] |_{y=0}$ will be handled by the conservation of energy due to 
the Hamiltonian structure. 

As explained in Section \ref{sec:heuristics}, to prove that the distance between the stable and unstable manifold is exponentially small is crucial the fact that the model considered has a conserved quantity. Indeed, if the system would not have a first integral, the distance between the invariant manifolds would be ``tipically'' of order of some power of $\e$. Therefore, in this section we must rely on the conservation of energy to analyze $\Delta$.

Let us rewrite equation \eqref{kleingordonv} as
\begin{equation*}
\left\{\begin{array}{l}
\partial_y v= w,\vspace{0.2cm}\\
\partial_y w= \dfrac{1}{\e^2}\partial_{\tau}^2v+\dfrac{1}{\e^2\omega^2}v-\dfrac{1}{3}v^3-\dfrac{1}{\e^3 \omega^3}f\left(\e\omega v\right),
\end{array}\right.
\end{equation*}
which is  Hamiltonian with respect to
\begin{equation*}
\label{Hamiltonian}
\mathcal{H}(v,w)=\dfrac{1}{\pi}\displaystyle\int_{\mathbb{T}}\left(\dfrac{w^2}{2}+\dfrac{(\partial_{\tau}v)^2}{2\e^2}-\dfrac{v^2}{2\e^2\omega^2}+\dfrac{v^4}{12}+\dfrac{F(\e \omega v)}{\e^4\omega^4}\right)d\tau,
\end{equation*}
where $F$ is an analytic function such that $F'(z)=f(z)$ and $F(z)=\er(z^6)$. 

Notice that the solutions  $v^{\star}(y,\tau)$ of \eqref{kleingordonv}, $\star=u,s$, obtained in Theorem \ref{outerthm} are contained in the  energy level  $\{\mathcal{H}=0\}$. We use the Hamiltonian $\mathcal{H}$ to obtain the variable $\Pi_1[\Delta]$ in terms of the variables $\wt \Pi[\Delta]$, $\Pi_1[\Xi]$ and  $\wt \Pi[\Xi]$  where $\Xi=\partial_y\Delta=w^u-w^s=\partial_y v^u-\partial_y v^s$.

\begin{lemma}\label{propchatissima}
	The functions $\Delta$, $\Xi$ satisfy
	\begin{equation}\label{primeirofourier}
	\Pi_1[\Delta](y)=\dfrac{\dot{v}^h(y)}{\ddot{v}^h(y)}\Pi_1[\Xi](y)+ A(\Xi)(y)+B(\wt \Pi[\Delta])(y),
	\end{equation}
	where $A$ and $B$ are linear operators such that, for  $y\in \mathcal{R}_{\kappa}$,
	\begin{enumerate}
		\item $|A(\Xi)(y)|\leq \dfrac{M\e^2}{|y^2+\pi^2/4|}\|\Xi\|_{\ell_1}(y)$
		\item $|B(\wt \Pi[\Delta])(y)|\leq M\|\wt \Pi[\Delta]\|_{\ell_1}(y)$.
	\end{enumerate}
\end{lemma}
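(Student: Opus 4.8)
The plan is to exploit the conservation of the Hamiltonian $\mathcal{H}$ (which is autonomous in the evolution variable $y$) together with the fact that both invariant manifolds lie on the zero energy level: $\mathcal{H}(v^u,\pa_y v^u)=\mathcal{H}(v^s,\pa_y v^s)=0$ on $\mathcal{R}_\kappa$. This holds for real $y$ by conservation and the decay of $v^s$ at $+\infty$ (resp.\ $v^u$ at $-\infty$), and extends to all of $\mathcal{R}_\kappa$ since $v^{u,s}$ and $\mathcal{H}$ are analytic in $y$. Writing $\mathcal{H}(v,w)=\tfrac12\sum_{n\ge1}w_n^2+\mathcal{V}(v)$, where $\mathcal{V}$ collects the $\pa_\tau v$, $v^2$, $v^4$ and $F$ contributions, so that \eqref{vnshorter} reads $\ddot v_n=-\pa_{v_n}\mathcal{V}(v)$ with $\pa_{v_1}\mathcal{V}(v)=-v_1+\tfrac{1}{\e^3\omega^3}\Pi_1[g(\e\omega v)]$, I would subtract the two identities $\mathcal{H}(v^u,w^u)=\mathcal{H}(v^s,w^s)$ and apply the fundamental theorem of calculus along the segment $s\mapsto(v^s+s\Delta,\,w^s+s\Xi)$, $s\in[0,1]$ (recall $w^\star=\pa_y v^\star$ and $\Xi=\pa_y\Delta$), to get, for every $y\in\mathcal{R}_\kappa$, the exact scalar identity
\[
0=\sum_{n\ge1}\Big(w_n^s+\tfrac12\Xi_n\Big)\Xi_n+\sum_{n\ge1}\Delta_n\int_0^1\pa_{v_n}\mathcal{V}\big(v^s+s\Delta\big)\,ds.
\]

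Next I would isolate the $n=1$ mode. Its contribution equals $\Delta_1\,P(y,\Delta)+\big(w_1^s+\tfrac12\Xi_1\big)\Xi_1$, where $P(y,0)=-v_1^s+\tfrac{1}{\e^3\omega^3}\Pi_1[g(\e\omega v^s)]$. Using $\tfrac{1}{\e^3\omega^3}\Pi_1[g(\e\omega\,v^h\sin\tau)]=\tfrac14(v^h)^3$, the identity $\ddot v^h=v^h-\tfrac14(v^h)^3$, the splitting $v_1^\star=v^h+\xi_1^\star$, and the estimates of Theorem \ref{outerthm} ($\|\xi^\star\|_{\n}=\er(\e^2|y^2+\pi^2/4|^{-3})$, together with $f=\er(u^5)$ and $v^h=\er(|y^2+\pi^2/4|^{-1})$ near $y=\pm i\tfrac\pi2$), one obtains $P(y,\Delta)=-\ddot v^h+\mathrm{err}$ with $\mathrm{err}/\ddot v^h=\er(\kappa^{-2})+\er(\e^2)$ on $\mathcal{R}_\kappa$; the key quantitative inputs are $\ddot v^h=\er(|y^2+\pi^2/4|^{-3})$ (so $1/|\ddot v^h|=\er(|y^2+\pi^2/4|^{3})$ on $\mathcal{R}_\kappa$) and the lower bound $|y^2+\pi^2/4|\ge\tfrac\pi2\kappa\e$ valid there. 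For $\kappa$ large and $\e$ small this keeps $P(y,\Delta)$ bounded away from zero, with $1/P(y,\Delta)=-1/\ddot v^h+(\text{small correction})$ obtained from a convergent geometric series in $(\ddot v^h+P(y,\Delta))/\ddot v^h$.

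Solving the scalar identity for $\Delta_1=\Pi_1[\Delta]$ and writing $w_1^s=\dot v^h+\pa_y\xi_1^s$ then produces $\Pi_1[\Delta]=\tfrac{\dot v^h}{\ddot v^h}\Pi_1[\Xi]+(\text{remainder})$, and it remains to split the remainder as $A(\Xi)+B(\wt\Pi[\Delta])$. The pieces linear in $\Xi$ — $\sum_{n\ge2}\pa_y\xi_n^s\,\Xi_n/\ddot v^h$, the term $\pa_y\xi_1^s\,\Xi_1/\ddot v^h$, and the geometric-series correction to $\tfrac{\dot v^h}{\ddot v^h}\Pi_1[\Xi]$ — are, using $\|\pa_y\wt\Pi\xi^s\|_{\n},\,|\pa_y\xi_1^s|=\er(\e^2|y^2+\pi^2/4|^{-4})$ and $1/|\ddot v^h|=\er(|y^2+\pi^2/4|^{3})$, all bounded by $\tfrac{M\e^2}{|y^2+\pi^2/4|}\|\Xi\|_{\n}$; these form $A(\Xi)$. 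The pieces linear in $\wt\Pi[\Delta]$ — from $\sum_{n\ge2}\tfrac{\lambda_n^2}{\e^2}\xi_n^s\Delta_n/\ddot v^h$ and from the cubic part of $\pa_{v_n}\mathcal{V}$, $n\ge2$, paired with $\Delta_n$ — are, using $\sum_n n^2|\xi_n^s|=\er(\e^2|y^2+\pi^2/4|^{-3})$ and again $1/|\ddot v^h|=\er(|y^2+\pi^2/4|^{3})$, bounded by $M\|\wt\Pi[\Delta]\|_{\n}$; these form $B(\wt\Pi[\Delta])$. The few genuinely quadratic terms (such as $\tfrac12\Xi_1^2/\ddot v^h$ and $\tfrac12\sum_{n\ge2}\tfrac{\lambda_n^2}{\e^2}\Delta_n^2/\ddot v^h$) I would absorb into $A$, resp.\ $B$, by regarding one of the two factors as a small (solution-dependent) coefficient, once more via the bounds of Theorem \ref{outerthm}.

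The main obstacle is not conceptual but organizational: one has to check that every error term carries, after division by $\ddot v^h$, precisely the power of $|y^2+\pi^2/4|$ demanded by the claimed bounds on $A$ and $B$. This rests entirely on the sharp pointwise estimates for $\xi^\star$ and $\pa_y\xi^\star$ near the singularities $y=\pm i\tfrac\pi2$ supplied by Theorem \ref{outerthm}, and on choosing $\kappa$ large enough that $P(y,\Delta)$ stays away from zero and the geometric-series inversion converges; no idea beyond the conservation of $\mathcal{H}$ enters.
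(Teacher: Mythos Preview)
Your proposal is correct and follows essentially the same approach as the paper: both exploit $\mathcal{H}(v^u,w^u)=\mathcal{H}(v^s,w^s)=0$, take the difference (you via the fundamental theorem of calculus along the segment, the paper via the Mean Value Theorem --- equivalent formulations), recognize the coefficient of $\Pi_1[\Delta]$ as $-\ddot v^h$ up to a multiplicative error $1+a(y)$ with $|a|\le M/\kappa^2$, invert, and group the remaining terms into $A(\Xi)$ and $B(\wt\Pi[\Delta])$ using precisely the pole orders of $\dot v^h,\ddot v^h$ and the estimates of Theorem~\ref{outerthm}. Your treatment of the quadratic pieces (absorbing one factor as a solution-dependent coefficient bounded via Theorem~\ref{outerthm}) is exactly what the paper does as well, so the ``linear operators'' $A,B$ are linear in their arguments with $y$-dependent coefficients in both cases.
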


\begin{proof}
	As the projections $\Pi_1$ and $\wt \Pi$ are orthogonal (see 
\eqref{usin} and \eqref{def:PiTilde}),  $\mathcal{H}$ is given by
	\[	
	\mathcal{H}(v,w)=\dfrac{(\Pi_1[w])^2}{2}-\dfrac{(\Pi_1[v])^2}{2}+\dfrac{1}{\pi}\displaystyle\int_{\mathbb{T}}\left(\dfrac{(\wt \Pi[w])^2}{2}+\dfrac{(\partial_{\tau}\wt \Pi[v])^2}{2\e^2}-\dfrac{(\wt \Pi[v])^2}{2\e^2\omega^2}+\dfrac{v^4}{12}+\dfrac{F(\e \omega v)}{\e^4\omega^4}\right)d\tau.
	\]
	Using that $\mathcal{H}(v^{\star},w^{\star})=0$, $\star=u,s$, integrating by parts the $\partial_\tau$ term and the Mean Value Theorem, we have that
	\[
	\begin{split}
	0=& \mathcal{H}(v^{u},w^{u})-\mathcal{H}(v^{s},w^{s})\\
	=&\dfrac{\Pi_1[w^u]+\Pi_1[w^s]}{2}\Pi_1[\Xi]-\dfrac{\Pi_1[v^u]+\Pi_1[v^s]}{2}\Pi_1[\Delta]\\
	&+\dfrac{1}{\pi}\displaystyle\int_{\mathbb{T}}\left[  \dfrac{\wt \Pi[w^u]+\wt \Pi[w^s]}{2}\wt \Pi[\Xi] -\dfrac{1}{\e^2}\dfrac{\partial_{\tau}^2\wt \Pi[v^u]+\partial_{\tau}^2\wt \Pi[v^s]}{2}\wt \Pi[\Delta]- \dfrac{\wt \Pi[v^u]+\wt \Pi[v^s]}{2\e^2\omega^2}\wt \Pi[\Delta] \right]d\tau\\
	&+\dfrac{1}{\pi}\displaystyle\int_{\mathbb{T}}\left[  \dfrac{(v^u)^3+(v^u)^2(v^s)+(v^u)(v^s)^2+(v^s)^3}{12}\Delta +\left(\dfrac{1}{\e^3 \omega^3}\displaystyle\int_0^1f(\e\omega(\sigma v^u+(1-\sigma)\sigma v^s))d\sigma\right)\Delta \right]d\tau.
	\end{split}
	\] 
	Using 
	\[
	v^\star=v^h\sin(\tau)+\xi^{\star}(y,\tau), \quad \ddot{v}^h=v^h-(v^h)^3/4  =\sqrt{2}(\cosh(2y)-3)\sech^3(y), 
	\]
and observing that $\ddot{v}^h(y)$
is strictly negative, for every $y=i\widetilde{y}$ with $\widetilde{y}\in(-\pi/2,\pi/2)$, one has 
	$$0=-\ddot{v}^h(1+{a}(y))\Pi_1[\Delta]+\dot{v}^h\Pi_1[\Xi]+ \widetilde{A}(\Xi)+\widetilde{B}(\wt \Pi[\Delta])$$
	By the estimates in Theorem \ref{outerthm} and using that $\ddot{v}^h(y)$ has a third order pole at $y=\pm i \pi/2$, we have 
	\begin{equation}\label{eq:atilde}
	\displaystyle|{a}(y)|\leq \frac{M\e^2}{|y^2+\pi^2/4|^2}\leq\dfrac{M}{\kappa^2},\qquad \text{ for }\quad y\in\mathcal{R}_{\kappa}
	\end{equation}
	and, also for $y\in\mathcal{R}_{\kappa}$, 
		\[\left|\widetilde{A}(\Xi)\right|(y)\leq \frac{M\e^2}{|y^2+\pi^2/4|^4}\|\Xi\|_{\ell_1}(y)\qquad \text{ and }\qquad
		\left|\widetilde{B}(\wt \Pi[\Delta])\right|(y)\leq \frac{M}{|y^2+\pi^2/4|^3}\|\wt \Pi[\Delta]\|_{\ell_1}(y).
	\]
	Moreover, 
	using the estimate \eqref{eq:atilde} and  taking $\kappa$ big enough, we have 
	\[
	|D(y)^{-1}|\leq M|y^2+\pi^2/4|^3, \; y\in\mathcal{R}_{\kappa}, \text{ where } D(y)=\ddot{v}^h(y)\left(1+{a}(y)\right).
	\]

	Hence, it follows that
	$$
	\Pi_1[\Delta]
	=\dfrac{\dot{v}^h\Pi_1[\Xi]+ \widetilde{A}(\Xi)+\widetilde{B}(\wt \Pi[\Delta])}{\ddot{v}^h(1+a)}
	=\dfrac{\dot{v}^h}{\ddot{v}^h}\Pi_1[\Xi]+ A(\Xi)+B(\wt \Pi[\Delta]),
	$$
	where $A$ and $B$ are the linear operators	
	\[
	\begin{split}
	A(\Xi)(y)=&\,\dfrac{\widetilde{A}(\Xi)(y)}{\ddot{v}^h(y)(1+a(y))}-\dfrac{\dot{v}^h(y)}{\ddot{v}^h(y)(1+a(y))}a(y)\Pi_1[\Xi](y)\\
	B(\wt \Pi[\Delta])=&\,\dfrac{\widetilde{B}(\wt \Pi[\Delta])}{\ddot{v}^h(y)(1+a(y))}.
	\end{split}
	\]
	The proof of the proposition follows directly from the estimates of $\widetilde{A}(\Xi)$, $\widetilde{B}(\wt \Pi[\Delta])$, $a$ and the fact that $\ddot{v}^h$ and $\dot{v}^h$ have a third and second order pole at the points $y=\pm i\pi/2$, respectively.  
\end{proof}

Lemma \ref{propchatissima} allows to study the difference between the invariant manifolds without keeping track of the component $\Delta_1$. In other words, we use coordinates $(\Pi_1w,\wt \Pi v,\wt \Pi w)$ to analyze the level of energy $\mathcal{H}=0$ and therefore we measure the difference between the functions $v^u$ and $v^s$ through the components $(\Xi_1,\wt \Pi\Delta,\wt \Pi \Xi)$. The inconvience of the energy reduction is that the equation loses the second order structure since it also depends on 
$\Xi=\partial_y\Delta$. 

To capture the exponentially small behavior of $(\wt \Pi\Delta,\wt \Pi \Xi)$ it is convenient to write the second order equation as a first order system in diagonal form. Thus, we define
\begin{equation}\label{cambio}
\begin{split}
\Gamma=&\,\displaystyle\sum_{k\geq 1}\Gamma_{2k+1}(y)\sin((2k+1)\tau),\quad \Gamma_{2k+1}=\lambda_{2k+1} \Delta_{2k+1}+i\e\Xi_{2k+1}\\
\Theta=&\,\displaystyle\sum_{k\geq 1}\Theta_{2k+1}(y)\sin((2k+1)\tau),\quad \Theta_{2k+1}= \lambda_{2k+1}\Delta_{2k+1}-i\e\Xi_{2k+1}.
\end{split}
\end{equation}
From now on we measure  the difference between the invariant manifolds (within the energy level $\mathcal{H}=0$) by the difference ``vector''
\begin{equation}\label{def:NewDelta}
 \widetilde \Delta =(\Xi_1,\Gamma,\Theta).
\end{equation}
Notice that the estimates of Theorem \ref{outerthm} imply that $\Delta$ satisfies
$$\sum_{k\geq 1} \lambda_{2k+1}^2|\Delta_{2k+1}(y)|\leq M \|\partial_\tau^2\xi^u\|_{\ell_1}(y)+M\|\partial_\tau^2\xi^s\|_{\ell_1}(y)\leq  \frac{M\e^2}{|y^2+\pi^4/4|^3},$$
along with a similar estimate on $\Xi$, therefore the functions $\Gamma$ and $\Theta$ are well defined for $y\in\mathcal{R}_{\kappa}$ and satisfy
$$\displaystyle\sum_{k\geq 1}\lambda_{2k+1}|\Gamma_{2k+1}(y)|\leq  \frac{M\e^2}{|y^2+\pi^4/4|^3} \quad and \quad \displaystyle\sum_{k\geq 1}\lambda_{2k+1}|\Theta_{2k+1}(y)|\leq  \frac{M\e^2}{|y^2+\pi^4/4|^3}.$$

\begin{prop}\label{sistfinal}
The function  $\widetilde \Delta =(\Xi_1,\Gamma,\Theta)$ satisfies the equation
	\begin{equation*}
	\widetilde\LL(\widetilde \Delta)=\mathcal{M}(\widetilde \Delta),
	\end{equation*}
	where $\widetilde\LL$ is the differential operator  
\begin{equation}\label{Nop}
\begin{array}{lcl}
\widetilde\LL(\Xi_1,\Gamma,\Theta)&=&\left( \dot{\Xi}_1-\dfrac{\dddot{v}^h}{\ddot{v}^h}\Xi_1 ,\displaystyle\sum_{k\geq 1}\left(\dot{\Gamma}_{2k+1}+i\dfrac{\lambda_{2k+1}}{\e}\Gamma_{2k+1}\right)\sin((2k+1)\tau),\right.\vspace{0.2cm}\\
&&\left.
\displaystyle\sum_{k\geq 1}\left(\dot{\Theta}_{2k+1}-i\dfrac{\lambda_{2k+1}}{\e}\Theta_{2k+1}\right)\sin((2k+1)\tau)  \right)
\end{array}
\end{equation}
and  $\mathcal{M}$ is a linear operator which can be written as
	\begin{equation}\label{Mop}
	\mathcal{M}(\Xi_1,\Gamma,\Theta)=\left(\begin{array}{c}m_W(y)\Xi_1+ \mathcal{M}_{W}(\Gamma,\Theta)\vspace{0.2cm}\\
	m_{\mathrm{osc}}(y,\tau)\Xi_1+ \mathcal{M}_{\mathrm{osc}}(\Gamma,\Theta)\vspace{0.2cm}\\
	-m_{\mathrm{osc}}(y,\tau)\Xi_1- \mathcal{M}_{\mathrm{osc}}(\Gamma,\Theta)
	\end{array}\right),
	\end{equation}
	where $m_W:\mathcal{R}_\kappa\rightarrow \C$, $m_{\mathrm{osc}}:\mathcal{R}_\kappa\times\mathbb{T}\rightarrow \C$ are functions analytic in $y$ satisfying
	\[
|m_W(y)|\leq \frac{M\e^2}{|y^2+\pi^2/4|^3}\qquad \text{ and }\qquad \|m_{\mathrm{osc}}\|_{\ell_1}(y)\leq \frac{M\e}{|y^2+\pi^2/4|}
	\]
and $ \mathcal{M}_{W},\  \mathcal{M}_{\mathrm{osc}}$ are linear operators such that, for $y\in\mathcal{R}_{\kappa}$, 
\[
\begin{split}
 \left|\mathcal{M}_{W}(\Gamma,\Theta)(y)\right|&\leq \dfrac{M}{|y^2+\pi^2/4|^2}\left(\|\Gamma\|_{\ell_1}(y)+\|\Theta\|_{\ell_1}(y)\right)\\
\left\|\mathcal{M}_{\mathrm{osc}}(\Gamma,\Theta)\right\|_{\ell_1}(y)&\leq \dfrac{M\e}{|y^2+\pi^2/4|^2}\left(\|\Gamma\|_{\ell_1}(y)+\|\Theta\|_{\ell_1}(y)\right),
\end{split}
\]
where $M>0$ is a constant independent of $\e$ and $\kappa$ .
\end{prop}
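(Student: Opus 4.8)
The plan is to combine three ingredients that are already in place: the linear equation for $\Delta$ of Lemma~\ref{equationdiff}, the energy reduction $\Pi_1[\Delta]=\tfrac{\dot v^h}{\ddot v^h}\Pi_1[\Xi]+A(\Xi)+B(\wt\Pi[\Delta])$ of Lemma~\ref{propchatissima}, and the diagonalising change of variables \eqref{cambio}. First I would project the identity of Lemma~\ref{equationdiff} onto the mode $\sin\tau$ and onto its orthogonal complement, using $\Pi_{2l}[\Delta]\equiv0$ so that only odd harmonics occur. The orthogonal part is $\ddot\Delta_n+\tfrac{\lambda_n^2}{\e^2}\Delta_n=\Pi_n[\eta_3\Delta]$ for $n=2k+1\ge3$, and the mode-$1$ part is $\ddot\Delta_1-\Delta_1+\tfrac34(v^h)^2\Delta_1=\Pi_1[\eta_1\Delta_1\sin\tau+\eta_2\wt\Pi[\Delta]]=:R_1(\Delta)$.

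For the oscillatory block, put $\Xi_n=\dot\Delta_n$ and differentiate $\Gamma_n=\lambda_n\Delta_n+i\e\Xi_n$, $\Theta_n=\lambda_n\Delta_n-i\e\Xi_n$; this yields exactly $\dot\Gamma_n+i\tfrac{\lambda_n}{\e}\Gamma_n=i\e\,\Pi_n[\eta_3\Delta]$ and $\dot\Theta_n-i\tfrac{\lambda_n}{\e}\Theta_n=-i\e\,\Pi_n[\eta_3\Delta]$, i.e.\ the second and third components of $\widetilde\LL$ on the left and a common forcing up to sign on the right. It then remains to rewrite $i\e\,\wt\Pi[\eta_3\Delta]$ as $m_{\mathrm{osc}}(y,\tau)\Xi_1+\mathcal{M}_{\mathrm{osc}}(\Gamma,\Theta)$: write $\Delta=\Delta_1\sin\tau+\wt\Pi[\Delta]$, substitute $\Delta_1$ from Lemma~\ref{propchatissima}, and express $\wt\Pi[\Delta]$ and the oscillatory part of $\Xi$ through $\Gamma,\Theta$ via $\Delta_n=(\Gamma_n+\Theta_n)/2\lambda_n$, $\Xi_n=(\Gamma_n-\Theta_n)/2i\e$ together with $\lambda_n\ge\tfrac12$. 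The $\Xi_1$-proportional part defines $m_{\mathrm{osc}}$; its leading term $i\e\,\eta_3\sin\tau\cdot\tfrac{\dot v^h}{\ddot v^h}$ has size $\e\cdot|y^2+\tfrac{\pi^2}4|^{-2}\cdot|y^2+\tfrac{\pi^2}4|$, using $|\dot v^h/\ddot v^h|\le M|y^2+\tfrac{\pi^2}4|$ on $\mathcal R_\kappa$, which is the claimed $O(\e|y^2+\tfrac{\pi^2}4|^{-1})$; everything else is $\mathcal{M}_{\mathrm{osc}}$, of size $\e\cdot|y^2+\tfrac{\pi^2}4|^{-2}(\|\Gamma\|_{\ell_1}+\|\Theta\|_{\ell_1})$, where the apparent $\e^{-1}$ coming from $\Xi_n=(\Gamma_n-\Theta_n)/2i\e$ inside $A(\Xi)$ is cancelled by the $\e$ prefactor and any residual excess absorbed into $1/\kappa$ via $\e|y^2+\tfrac{\pi^2}4|^{-1}\le M/\kappa$ on $\mathcal R_\kappa$. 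The antisymmetry of the $\Gamma$- and $\Theta$-equations is then automatic, the forcing carrying opposite signs.

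For the mode-$1$ block, set $\Xi_1=\dot\Delta_1$, so $\dot\Xi_1=(1-\tfrac34(v^h)^2)\Delta_1+R_1(\Delta)$. The algebraic key is that differentiating $\ddot v^h=v^h-\tfrac14(v^h)^3$ gives $\dddot v^h=(1-\tfrac34(v^h)^2)\dot v^h$, hence $(1-\tfrac34(v^h)^2)\tfrac{\dot v^h}{\ddot v^h}=\tfrac{\dddot v^h}{\ddot v^h}$; substituting $\Delta_1$ from Lemma~\ref{propchatissima} thus turns the leading part of $(1-\tfrac34(v^h)^2)\Delta_1$ into $\tfrac{\dddot v^h}{\ddot v^h}\Xi_1$, the homogeneous part of the first component of $\widetilde\LL$. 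The remaining terms $(1-\tfrac34(v^h)^2)\bigl(A(\Xi)+B(\wt\Pi[\Delta])\bigr)$ and $R_1(\Delta)$ (with $\Delta_1$ again eliminated) are linear in $(\Xi_1,\wt\Pi[\Delta],\wt\Pi[\Xi])$; the $\Xi_1$-proportional part is $m_W(y)$ and the rest, re-expressed through $\Gamma,\Theta$, is $\mathcal{M}_W(\Gamma,\Theta)$. Their bounds follow by combining the estimates of Lemmas~\ref{equationdiff} and \ref{propchatissima} with $|\dot v^h/\ddot v^h|\le M|y^2+\tfrac{\pi^2}4|$, $|1-\tfrac34(v^h)^2|\le M|y^2+\tfrac{\pi^2}4|^{-2}$ and $|B(\cdot)|\le M\|\cdot\|_{\ell_1}$: e.g.\ a dominant contribution to $\mathcal{M}_W$ is $(1-\tfrac34(v^h)^2)B(\wt\Pi[\Delta])$, of size $|y^2+\tfrac{\pi^2}4|^{-2}(\|\Gamma\|_{\ell_1}+\|\Theta\|_{\ell_1})$, and that to $m_W$ is $(1-\tfrac34(v^h)^2)$ times the $\Xi_1$-coefficient of $A$, of size $|y^2+\tfrac{\pi^2}4|^{-2}\cdot\e^2|y^2+\tfrac{\pi^2}4|^{-1}=\e^2|y^2+\tfrac{\pi^2}4|^{-3}$. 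The main obstacle is exactly this bookkeeping: at every occurrence of $\Delta_1$ and of $\Xi$ one must cleanly separate the $\Xi_1$-proportional contribution (which has to land in the scalar multipliers $m_W,m_{\mathrm{osc}}$) from the $\wt\Pi$-contribution (which has to land in $\mathcal{M}_W,\mathcal{M}_{\mathrm{osc}}$), and then check that each resulting product of an $\e$-power with a pole order meets the stated bound, repeatedly using $\e|y^2+\tfrac{\pi^2}4|^{-1}\le M/\kappa$ on $\mathcal R_\kappa$ to trade surplus powers of $\e$ for smallness.
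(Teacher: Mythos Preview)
Your proposal is correct and follows essentially the same approach as the paper's proof: differentiate the diagonalising variables \eqref{cambio} to get the $\Gamma,\Theta$ equations with forcing $\pm i\e\,\wt\Pi[\eta_3\Delta]$, use the identity $(1-\tfrac34(v^h)^2)\dot v^h=\dddot v^h$ together with Lemma~\ref{propchatissima} to extract $\tfrac{\dddot v^h}{\ddot v^h}\Xi_1$ in the first component, and then separate the $\Xi_1$-proportional parts from the $(\Gamma,\Theta)$-parts by substituting $\Delta_n=(\Gamma_n+\Theta_n)/2\lambda_n$, $\Xi_n=(\Gamma_n-\Theta_n)/2i\e$. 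The paper simply writes out the resulting explicit formulas for $m_W,\mathcal M_W,m_{\mathrm{osc}},\mathcal M_{\mathrm{osc}}$ and reads off the bounds from Lemmas~\ref{equationdiff} and~\ref{propchatissima}, exactly as you outline.
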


\begin{proof}
	From \eqref{cambio} and Proposition \ref{equationdiff}, we have that, for each $k\geq1$,
	\begin{equation}\label{gamma2k}
	\begin{array}{lcl}
	\dot{\Gamma}_{2k+1}&=& \lambda_{2k+1}\Xi_{2k+1}+i\e \ddot{\Delta}_{2k+1}\vspace{0.2cm}\\
	&=& \lambda_{2k+1}\Xi_{2k+1}+i\e \left(-\dfrac{\lambda_{2k+1}^2}{\e^2}\Delta_{2k+1} + \Pi_{2k+1}\left[ \eta_3(y,\tau)\Delta \right]\right)\vspace{0.2cm}\\
	&=& -i\dfrac{\lambda_{2k+1}}{\e}\Gamma_{2k+1} +i\e\Pi_{2k+1}\left[ \eta_3(y,\tau)\Delta  \right].
	\end{array}
	\end{equation}
	Analogously, for each $k\geq 1$,	
	\begin{equation}\label{theta2k}
	\dot{\Theta}_{2k+1}=i\dfrac{\lambda_{2k+1}}{\e}\Theta_{2k+1}-i\e\Pi_{2k+1}\left[ \eta_3(y,\tau)\Delta\right].
	\end{equation}
	Moreover, for the variable $\Xi_1$, by \eqref{Lop} and Proposition \ref{equationdiff}, we have that
	\begin{equation*}
	\dot{\Xi}_1=\left(1-\dfrac{3(v^h)^2}{4}\right)\Delta_1+\Pi_1\left[ \eta_1(y,\tau)\Delta_1\sin(\tau)+ \eta_2(y,\tau)\wt \Pi[\Delta] \right].
	\end{equation*}
Using  \eqref{primeirofourier} for $\Delta_1$ and $\displaystyle\left(1-\dfrac{3(v^h)^2}{4}\right)\dot{v}^h=\dddot{v}^h,$ we obtain
	\[
	\begin{split}
	\dot{\Xi}_1=&\,\dfrac{\dddot{v}^h}{\ddot{v}^h}\Xi_1+\left(1-\dfrac{3(v^h)^2}{4}\right)\left( A(\Xi)+B(\wt \Pi[\Delta])\right)\\
	&\,+\Pi_1\left[ \eta_1(y,\tau)\left(\dfrac{\dot{v}^h}{\ddot{v}^h}\Xi_1+ A(\Xi)+B(\wt \Pi[\Delta])\right)\sin(\tau)+ \eta_2(y,\tau)\wt \Pi[\Delta]\right]\\
	=&\,\dfrac{\dddot{v}^h}{\ddot{v}^h}\Xi_1+\left(1-\dfrac{3(v^h)^2}{4}\right) A(\Xi_1 \sin(\tau))+\Pi_1\left[ \eta_1(y,\tau)\left(\dfrac{\dot{v}^h}{\ddot{v}^h}\Xi_1+ A(\Xi_1\sin(\tau))\right)\sin\tau\right]\\
	&\,+\left(1-\dfrac{3(v^h)^2}{4}\right)\left( A(\wt \Pi[\Xi])+B(\wt \Pi[\Delta])\right)\\
	&\,+\Pi_1\left[ \eta_1(y,\tau)\left( A(\wt \Pi[\Xi])+B(\wt \Pi[\Delta])\right)\sin\tau+ \eta_2(y,\tau)\wt \Pi[\Delta]\right].
	\end{split}
	\]
Finally, using \eqref{cambio}, 
	\[
	\begin{split}
	\dot{\Xi}_1=&\,\dfrac{\dddot{v}^h}{\ddot{v}^h}\Xi_1+\left(1-\dfrac{3(v^h)^2}{4}\right) A(\Xi_1\sin\tau)
	+\Pi_1\left[ \eta_1(y,\tau)\left(\dfrac{\dot{v}^h}{\ddot{v}^h}\Xi_1+ A(\Xi_1\sin\tau)\right)\sin\tau\right]\\
	&+\left(1-\dfrac{3(v^h)^2}{4}\right)\left( \dfrac{1}{2i\e}A(\Gamma-\Theta)+B\left(\displaystyle\sum_{k\geq 1}\dfrac{\Gamma_{2k+1}+\Theta_{2k+1}}{2\lambda_{2k+1}}\sin((2k+1)\tau)\right)\right)\\
	&+\Pi_1\left[ \eta_1(y,\tau)\left( \dfrac{1}{2i\e}A(\Gamma-\Theta)\right)\sin(\tau)\right.
	\left.+\eta_1(y,\tau)B\left(\displaystyle\sum_{k\geq 1}\dfrac{\Gamma_{2k+1}+\Theta_{2k+1}}{2\lambda_{2k+1}}\sin((2k+1)\tau)\right)\sin\tau\right.\\
	&\left.+ \eta_2(y,\tau)\left(\displaystyle\sum_{n\geq 2}\dfrac{\Gamma_{2k+1}+\Theta_{2k+1}}{2\lambda_{2k+1}}\sin((2k+1)\tau)\right)\right].
	\end{split}
	\]
For the other components, as
	$$
	\begin{array}{lcl}
	i\e\wt \Pi\left[ \eta_3(y,\tau)\Delta \right] &=& i\e\wt \Pi\left[\eta_3(y,\tau)\left(\left(\dfrac{\dot{v}^h}{\ddot{v}^h}\Xi_1+ A(\Xi)+B(\wt \Pi[\Delta])\right)\sin(\tau)+\wt \Pi[\Delta]\right) \right]\vspace{0.2cm}\\
	&=& i\e\wt \Pi\left[\eta_3(y,\tau)\left(\dfrac{\dot{v}^h}{\ddot{v}^h}\Xi_1+ A(\Xi_1\sin(\tau))\right)\sin(\tau)\right] \vspace{0.2cm}\\
	&&+ i\e\wt \Pi\left[\eta_3(y,\tau)\left(A(\wt \Pi[\Xi])\sin(\tau)+B(\wt \Pi[\Delta])\sin(\tau)+\wt \Pi[\Delta])\right)\right] \vspace{0.2cm}\\
	&=& i\e\wt \Pi\left[\eta_3(y,\tau)\left(\dfrac{\dot{v}^h}{\ddot{v}^h}\Xi_1+ A(\Xi_1\sin(\tau))\right)\sin(\tau)\right] \vspace{0.2cm}\\
	&&+ i\e\wt \Pi\left[\dfrac{\eta_3(y,\tau)}{2i\e}A(\Gamma-\Theta)\sin(\tau)\right.
	+\eta_3(y,\tau)B\left(\displaystyle\sum_{k\geq 1}\dfrac{\Gamma_{2k+1}+\Theta_{2k+1}}{2\lambda_{2k+1}}\sin((2k+1)\tau)\right)\sin(\tau)\vspace{0.2cm}\\
	&& + \eta_3(y,\tau)\displaystyle\sum_{k\geq 1}\dfrac{\Gamma_{2k+1}+\Theta_{2k+1}}{2\lambda_{2k+1}}\sin((2k+1)\tau) \Bigg]
	\end{array}
	$$
	the proof is concluded by  using \eqref{gamma2k} and \eqref{theta2k} and  taking
{\allowdisplaybreaks
	\begin{align}
	m_W(y)\Xi_1=&\,\left(1-\dfrac{3(v^h)^2}{4}\right) A(\Xi_1\sin\tau)+\Pi_1\left[ \eta_1(y,\tau)\left(\dfrac{\dot{v}^h}{\ddot{v}^h}\Xi_1+ A(\Xi_1\sin\tau)\right)\sin\tau\right]\notag\\
	\mathcal{M}_W(\Gamma,\Theta)=&\left(1-\dfrac{3(v^h)^2}{4}\right)\left( \dfrac{1}{2i\e}A(\Gamma-\Theta)+B\left(\displaystyle\sum_{k\geq 1}\dfrac{\Gamma_{2k+1}+\Theta_{2k+1}}{2\lambda_{2k+1}}\sin((2k+1)\tau)\right)\right)\notag\\
	&+\Pi_1\left[ \dfrac{\eta_1(y,\tau)}{2i\e}A(\Gamma-\Theta)\sin(\tau)+ \eta_1(y,\tau)B\left(\displaystyle\sum_{k\geq 1}\dfrac{\Gamma_{2k+1}+\Theta_{2k+1}}{2\lambda_{2k+1}}\sin((2k+1)\tau)\right)\sin\tau \right.\notag\\
	&\left.+\eta_2(y,\tau)\left(\displaystyle\sum_{k\geq 1}\dfrac{\Gamma_{2k+1}+\Theta_{2k+1}}{2\lambda_{2k+1}}\sin((2k+1)\tau)\right)\right]\notag\\
	m_{\mathrm{osc}}(y,\tau)\Xi_1=&i\e\wt \Pi\left[\eta_3(y,\tau)\left(\dfrac{\dot{v}^h}{\ddot{v}^h}\Xi_1+ A(\Xi_1\sin \tau)\right)\sin\tau\right]\notag\\	
	\mathcal{M}_{\mathrm{osc}}(\Gamma,\Theta)=&i\e\wt \Pi\left[\eta_3(y,\tau)\left(\dfrac{1}{2i\e}A(\Gamma-\Theta)\sin\tau+B\left(\displaystyle\sum_{k\geq 1}\dfrac{\Gamma_{2k+1}+\Theta_{2k+1}}{2\lambda_{2k+1}}\sin((2k+1)\tau)\right)\sin\tau \right.\right.\notag\\
	&\left.\left.+ \displaystyle\sum_{k\geq 1}\dfrac{\Gamma_{2k+1}+\Theta_{2k+1}}{2\lambda_{2k+1}}\sin((2k+1)\tau) \right)\right],\notag
	\end{align}}
\noindent and using the bounds for the functions $\eta_j$, $j=1,2,3$ and the operators $A$ and $B$ provided in Propositions \ref{equationdiff} and \ref{propchatissima}.
\end{proof}

We characterize  the function $\widetilde\Delta$ as the \emph{unique solution} of a certain integral equation.
To this end, we introduce some notation. Given a sequence $a=(a_{2k+1})_{k\geq 1}$, we define the functions
\begin{equation}\label{Is1}
\begin{split}
\mathcal{I}_{\mathrm{\Gamma}}(a)(y,\tau)&=\displaystyle\sum_{k\geq 1} a_{2k+1} e^{-i\frac{\lambda_{2k+1}}{\e}y}\sin((2k+1)\tau)\\
\mathcal{I}_{\mathrm{\Theta}}(a)(y,\tau)&=\displaystyle\sum_{k\geq 1} a_{2k+1} e^{i\frac{\lambda_{2k+1}}{\e}y}\sin((2k+1)\tau).
\end{split}
\end{equation}
We also define  the following linear operator, which is a right inverse of the operator $\widetilde\LL$ in \eqref{Nop},
\begin{equation}\label{Pop}
\mathcal{P}(f,g,h)=\left(\mathcal{P}^W(f),\mathcal{P}^\Gamma(g),\mathcal{P}^\Theta(h)\right),
\end{equation}
where
\begin{equation*}
\begin{split}
\mathcal{P}^W(f)&=\ddot{v}^h(y)\displaystyle\int_0^y \dfrac{f(s)}{\ddot{v}^h(s)} ds\\
\mathcal{P}^\Gamma(g)&=\displaystyle\sum_{k\geq 1}\mathcal{P}^\Gamma_{2k+1}(g)\sin((2k+1)\tau), \qquad \mathcal{P}^\Gamma_{2k+1}(g)(y)=\,\displaystyle\int_{y^+}^y e^{i\frac{\lambda_{2k+1}}{\e}(s-y)}\Pi_{2k+1}[g](s)ds\\
\mathcal{P}^\Theta(h)&=\displaystyle\sum_{k\geq 1}\mathcal{P}^\Theta_{2k+1}(h)\sin((2k+1)\tau),\qquad \mathcal{P}^\Theta_{2k+1}(h)(y)=\,\displaystyle\int_{y^-}^y e^{-i\frac{\lambda_{2k+1}}{\e}(s-y)}\Pi_{2k+1}[h](s)ds
\end{split}
\end{equation*}
and 
\begin{equation*}y^{\pm}=\pm i\left(\dfrac{\pi}{2} -\kappa\e\right).\end{equation*}

Using the just introduced functions and operators and recalling that by, Theorem \ref{outerthm}, $\Xi_1(0)=\partial_y\xi^u_1(0)-\partial_y\xi^s_1(0)=0$, it can be easily checked that the function $\widetilde\Delta$ must  satisfy the integral equation 
	\begin{equation}\label{fixedequationmod}
	\widetilde{\Delta}=(0,\mathcal{I}_{\mathrm{\Gamma}}(c),\mathcal{I}_{\mathrm{\Theta}}(d))+\widetilde{\mathcal{M}}(\widetilde{\Delta}),\qquad \text{with}\qquad \widetilde{\mathcal{M}}(\widetilde{\Delta})=\mathcal{P}\circ\mathcal{M}(\widetilde{\Delta}),
	\end{equation}
 where $\mathcal{M}$ is given by \eqref{Mop}
and $\mathcal{I}_{\mathrm{\Gamma}}(c)$, $\mathcal{I}_{\mathrm{\Theta}}(d)$ are given in \eqref{Is1} with
\begin{equation}\label{def:csandds}
 c_{2k+1}=\Gamma_{2k+1}(y^+)  e^{i\frac{\lambda_{2k+1}}{\e}y^+}\qquad \text{ and }\qquad  d_{2k+1}=\Theta_{2k+1}(y^-) e^{-i\frac{\lambda_{2k+1}}{\e}y^-},
\end{equation}
(note that $\Gamma(y^+,\tau)=\mathcal{I}_{\mathrm{\Gamma}}(c)(y^+,\tau)$ and $\Theta(y^-,\tau)=\mathcal{I}_{\mathrm{\Theta}}(d)(y^-,\tau)$).

Now we are ready to define the leading order of the function 
$\widetilde\Delta$. We first give some heuristic explanation. In Section 
\ref{mainthmsec}, we shall first show that $\wt M$ is small and thus we expect that the main term of  $\widetilde\Delta$ 
for the $(\Gamma,\Theta)$ is given by 
$(\mathcal{I}_{\mathrm{\Gamma}}(c),\mathcal{I}_{\mathrm{\Theta}}(d))$. Let us 
analyze how these functions behave. We do the reasoning for $\Gamma$ since the 
one for $\Theta$ is analogous. 
\[
\mathcal{I}_{\mathrm{\Gamma}}(c)(y,\tau)=\sum_{k\geq 1}\Gamma_{2k+1}(y^+)  e^{-i\frac{\lambda_{2k+1}}{\e}(y-y^+)}\sin((2k+1)\tau)
\]
Recalling $\lambda_3 = \sqrt{8-\e^2}$, $\mu_3 =2\sqrt{2}$, \eqref{cambio}, that $\Gamma_{2k+1}(y^+)=\lambda_{2k+1} \Delta_{2k+1}(y^+)+i\e\Xi_{2k+1}(y^+)$ and using Theorem \ref{matchingthm} to approximate the functions $v^{u,s}$ at the point $y=y^+$ by the corresponding solutions of the inner equation (see Theorem \ref{innerthm}) and the asymptotic formula for the difference between $\phi^{0,u}$ and $\phi^{0,s}$ at $z^+=(y^+-i\pi/2)/\e$, also in Theorem \ref{innerthm}, one has
\[
 \begin{split}
\Gamma_{3}(y^+)=&\,2\frac{\lambda_{3}}{\e}e^{-i\mu_{3}\frac{y^+-i\frac{\pi}{2}}{\eps}}\left(C_\mathrm{in}+\mathcal{O}\left(\frac{1}{\kappa}\right)\right)+\text{h.o.t}\\
\Gamma_{2k+1}(y^+)=&\,\frac{1}{\e}e^{-i\mu_{3}\frac{y^+-i\frac{\pi}{2}}{\eps}}\mathcal{O}\left(\frac{1}{\kappa}\right)+\text{h.o.t}.
\end{split}
\]
Therefore,
\[
\mathcal{I}_{\mathrm{\Gamma}}(c)(y)=\frac{2 \lambda_3}{\e}e^{-i2\sqrt{2}\frac{y-i\frac{\pi}{2}}{\eps}}\left(C_\mathrm{in} \sin 3\tau +\mathcal{O}\left(\frac{1}{\kappa}\right)\right)+\text{h.o.t}
\]
To prove Theorem \ref{maintheorem}, it suffices to justify the above leading order expansion of $\wt \Delta$.

\begin{prop}\label{prop:DifferenceDeltatilde}
Take $\kappa=\frac{1}{2\lambda_3}|\log\e|$. There exists $M>0$ independent of small $\e$ such that, for any $y\in\mathcal{R}_{\kappa}$, it holds 
\[
 \begin{split}
& |\Xi_1(y)|\leq \frac{M}{|y^2+\pi^2/4|^2} e^{-\frac{\lambda_3}{\e}\left(\frac{\pi}{2}-|\Ip(y)|\right)}, \\ 
& \left\|\Gamma (y,\tau) - \frac{2 \lambda_3}{\e}C_\mathrm{in} e^{-i2\sqrt{2}\frac{y-i\frac{\pi}{2}}{\eps}}\sin 3\tau \right\|_{\ell_1} \leq \frac{M}{\e|\log \e|} e^{-\frac{\lambda_3}{\e}\left(\frac{\pi}{2}-|\Ip(y)|\right)},
 \end{split}
 \]
for some constant $M$ independent of $\e$. Moreover $\Theta (\bar y,\tau) =\overline{\Gamma(y,\tau)}$ satisfies a similar estimate. 
\end{prop}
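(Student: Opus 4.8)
The plan is to set up a Banach fixed point argument for the integral equation \eqref{fixedequationmod} in a space of analytic functions on $\mathcal{R}_\kappa$ equipped with a weighted norm that encodes the expected exponentially small size of $\widetilde\Delta$. First I would record the ``seed'' estimates: by Theorem \ref{matchingthm} applied at $y=y^\pm$ together with the difference formula \eqref{diffinnersol} of Theorem \ref{innerthm} evaluated at $z^\pm=(y^\pm-i\pi/2)/\e$, the coefficients $c_{2k+1}$, $d_{2k+1}$ defined in \eqref{def:csandds} satisfy
\[
c_3=\tfrac{2\lambda_3}\e e^{-i\mu_3 z^+}\big(C_{\mathrm{in}}+\OO(\kappa^{-1})\big), \qquad c_{2k+1}=\tfrac1\e e^{-i\mu_3 z^+}\OO(\kappa^{-1}), \quad k\ge 2,
\]
and analogously for $d$, where $z^+=-i\kappa$. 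Since $e^{-i\mu_3 z^+}=e^{-\mu_3\kappa}=e^{-2\sqrt2\kappa}$ and we have chosen $\kappa=\frac1{2\lambda_3}|\log\e|$, this is $\OO(\e^{\mu_3/\lambda_3})=\OO(\e^{1+\OO(\e^2)})$ times $\e^{-1}$; pulling out the factor $e^{-i\mu_3(y-i\pi/2)/\e}$ inherent in $\mathcal{I}_\Gamma(c)$, $\mathcal{I}_\Theta(d)$ we see the free term of \eqref{fixedequationmod} equals $\frac{2\lambda_3}\e C_{\mathrm{in}}e^{-i2\sqrt2(y-i\pi/2)/\e}\sin 3\tau$ plus an error of relative size $\OO(\kappa^{-1})=\OO(1/|\log\e|)$, exactly the claimed leading term.

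The core step is to estimate the operator $\widetilde{\mathcal M}=\mathcal P\circ\mathcal M$ on $\mathcal{R}_\kappa$. I would work in the norm
\[
\|(\Xi_1,\Gamma,\Theta)\|_* = \sup_{y\in\mathcal{R}_\kappa}\Big( |y^2+\tfrac{\pi^2}4|^2 e^{\frac{\lambda_3}\e(\frac\pi2-|\Ip y|)}|\Xi_1(y)| + \e e^{\frac{\lambda_3}\e(\frac\pi2-|\Ip y|)}\big(\|\Gamma\|_{\ell_1}(y)+\|\Theta\|_{\ell_1}(y)\big)\Big),
\]
or a close variant. The key point is that the operators $\mathcal P^\Gamma_{2k+1}$, $\mathcal P^\Theta_{2k+1}$ integrate along the vertical segment $i\R$, and on that segment the kernels $e^{i\lambda_{2k+1}(s-y)/\e}$ (for $\mathcal P^\Gamma$, with $y,s$ purely imaginary so $s-y\in i\R$) are \emph{contracting} in the direction of integration: writing $y=i\widetilde y$, $s=i\widetilde s$, one has $|e^{i\lambda_{2k+1}(s-y)/\e}|=e^{-\lambda_{2k+1}(\widetilde s-\widetilde y)/\e}$, so the integral from $y^+$ down to $y$ is dominated by its value near $y$, gaining a factor $\e/\lambda_{2k+1}$. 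Combined with the bounds on $m_W$, $m_{\mathrm{osc}}$, $\mathcal M_W$, $\mathcal M_{\mathrm{osc}}$ from Proposition \ref{sistfinal} — which carry extra powers of $\e$ and extra decay in $|y^2+\pi^2/4|$ — and the elementary fact $\lambda_{2k+1}\ge\lambda_3$, one gets $\|\widetilde{\mathcal M}(\widetilde\Delta)\|_*\le \frac{M}{\kappa^2}\|\widetilde\Delta\|_* = \OO(|\log\e|^{-2})\|\widetilde\Delta\|_*$, so $\widetilde{\mathcal M}$ is a contraction for small $\e$. For the $\Xi_1$ component, $\mathcal P^W(f)=\ddot v^h(y)\int_0^y f(s)/\ddot v^h(s)\,ds$ must be estimated using that $\ddot v^h$ has a third order pole at $\pm i\pi/2$ (so $1/\ddot v^h$ is bounded and small near the poles) while $\ddot v^h$ itself is bounded on compact subsets away from $\pm i\pi/2$; the $m_W$, $\mathcal M_W$ bounds then close the estimate in the weighted norm. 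One also needs that $\mathcal M_W(\Gamma,\Theta)$ feeds the exponentially small $(\Gamma,\Theta)$ into the $\Xi_1$ equation with the right weight — here the extra $|y^2+\pi^2/4|^{-2}$ in the $\mathcal M_W$ bound is what makes $\Xi_1$ pick up the $|y^2+\pi^2/4|^{-2}$ weight rather than a worse one.

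Once the contraction is established, the unique fixed point $\widetilde\Delta$ satisfies $\widetilde\Delta = (0,\mathcal{I}_\Gamma(c),\mathcal{I}_\Theta(d)) + \OO_*(|\log\e|^{-2})$, and substituting the seed estimates for $c,d$ gives the two displayed bounds of the Proposition; the statement about $\Theta(\bar y,\tau)=\overline{\Gamma(y,\tau)}$ follows from the reality/symmetry of the original equation (real-analyticity in $y$ of $v^{u,s}$, established in Theorem \ref{outerthm}, forces $v^u(\bar y,\tau)=\overline{v^u(y,\tau)}$ and likewise for $v^s$, hence $\Delta(\bar y)=\overline{\Delta(y)}$, $\Xi(\bar y)=\overline{\Xi(y)}$, which by \eqref{cambio} swaps $\Gamma$ and $\Theta$). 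I expect the main obstacle to be the bookkeeping in the contraction estimate for $\widetilde{\mathcal M}$: one must simultaneously track the $\ell_1$-in-$\tau$ norms, the $y$-dependent polynomial weights (which differ between the $\Xi_1$ and $(\Gamma,\Theta)$ blocks and interact through the off-diagonal terms $\mathcal M_W$ and $m_{\mathrm{osc}}$), and the mode-dependent exponential factors $e^{-\lambda_{2k+1}(\cdots)/\e}$, making sure that the coupling between the slow mode $\Xi_1$ and the fast oscillatory tail never destroys the uniform factor $\OO(\kappa^{-2})$. A secondary subtlety is that the matching Theorem \ref{matchingthm} only controls $\phi^\star$ on the matching domains $\mathcal D^{\mathrm{mch},\star}_{+,\kappa}$, so one must check $y^\pm$ indeed lie in (the appropriate reflections of) those domains and that the inner difference formula \eqref{diffinnersol} is valid at $z^\pm$, i.e. $z^\pm\in\mathcal R^{\mathrm{in},+}_{\theta,\kappa}$, which holds precisely because $\kappa=\frac1{2\lambda_3}|\log\e|\ge\kappa_0$ for small $\e$.
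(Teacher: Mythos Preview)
Your approach is essentially the one taken in the paper: estimate the seed $(0,\mathcal I_\Gamma(c),\mathcal I_\Theta(d))$ via Theorems~\ref{innerthm} and~\ref{matchingthm}, then show $\widetilde{\mathcal M}$ is a contraction in an exponentially weighted norm on $\mathcal R_\kappa$, and read off the result from $\widetilde\Delta=(I-\widetilde{\mathcal M})^{-1}(0,\mathcal I_\Gamma(c),\mathcal I_\Theta(d))$.

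Two small corrections. First, the contraction factor one actually obtains is $O(1/\kappa)$, not $O(1/\kappa^2)$; the diagonal block $\mathcal P^\Gamma\circ\mathcal M_{\mathrm{osc}}$ gains only one power of $\kappa$ (the exponential kernel does not itself produce an $\e/\lambda_{2k+1}$ gain here---after extracting the weight $e^{\frac{\lambda_3}{\e}(\frac\pi2-|\Ip y|)}$ the combined exponential in the integrand is merely bounded, and the gain comes from integrating the polynomial weight $|s^2+\pi^2/4|^{-2}$, yielding $(\kappa\e)^{-1}$). Second, and more important, the specific norm you propose does not close: with weights $(1,\e,\e)$ on $(\Xi_1,\Gamma,\Theta)$ the off-diagonal term $\mathcal M_W(\Gamma,\Theta)\to\Xi_1$ contributes $M\e(\|\Gamma\|+\|\Theta\|)$ to $\|\widetilde{\mathcal M}_1\|_{2,\exp}$, which in your norm carries an $O(1)$ coefficient rather than $o(1)$. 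The paper's remedy is to weight $(\Xi_1,\Gamma,\Theta)$ by $(1/\e,\kappa,\kappa)$ in the product norm; with that choice all four block entries close with factor $M/\kappa$. Since you flag exactly this cross-coupling as the expected obstacle and allow for ``a close variant'', the gap is cosmetic.
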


The proof of this proposition is deferred to Section \ref{mainthmsec}. 
Recall that  $\Xi_1(0)=\partial_y v^u(0)-\partial_y v^s(0)=0$ (see Theorem \ref{outerthm}). However, we also need to estimate this component for $y\in\mathcal{R}_{\kappa}$  to obtain the estimate of $(\Gamma,\Theta)$ due to the coupling (see Section \ref{mainthmsec}). 
The definition of $\Gamma$ and the above inequality imply inequality \eqref{distancia} except for the missing $\sin \tau$ mode, which easily follows from $\Xi_1(0)=0$ and the estimate on $\Pi_1[\Delta]$ given by Lemma \ref{propchatissima}.

\section{Estimates of the invariant manifolds: Proof of Theorem \ref{outerthm}} \label{outerdomain}

\subsection{Banach Spaces and Linear Operators}\label{Banachouter}

In this section we prove Theorem \ref{outerthm} through a fixed point argument in some appropriate Banach spaces. We consider only the unstable case, since the stable one is completely analogous.

Given $\kappa\geq 1$ and a real-analytic function $h:D^{\mathrm{out},u}_{\kappa}\rightarrow \C$ (see \eqref{outer}), we define
\begin{equation}
\label{normcosh}
\|h\|_{m,\ag}=\displaystyle\sup_{y\in D^{\mathrm{out},u}_{\kappa}\cap\{ \Rp(y)\leq -1 \}}|\cosh(y)^mh(y)|+ \displaystyle\sup_{y\in D^{\mathrm{out},u}_{\kappa}\cap\{ \Rp(y)\geq -1 \}}|(y^2+\pi^2/4)^\ag h(y)|,
\end{equation}
and given a function $\xi:D^{\mathrm{out},u}_{\kappa}\times\mathbb{T}\rightarrow \C$ which is  real analytic in $y\in D^{\mathrm{out},u}_{\kappa}$, we define
\begin{equation*}
\label{normcoshl1}
\|\xi\|_{\ell_1,m,\ag}=\displaystyle\sum_{n\geq 1}\|\Pi_n[\xi]\|_{m,\ag}
\end{equation*}
and  the Banach spaces
\begin{equation*}
\begin{split}
\mathcal{E}_{m,\ag}&=\{\xi:D^{\mathrm{out},u}_{\kappa}\rightarrow \C;\ \xi \textrm{ is real-analytic in }y, \textrm{ and } \|\xi\|_{m,\ag}<\infty \}\\
\mathcal{E}_{\ell_1,m,\ag}&=\{\xi:D^{\mathrm{out},u}_{\kappa}\times\mathbb{T}\rightarrow \C;\ \xi(y,\tau) \textrm{ is real-analytic in }y  \textrm{ and } \|\xi\|_{\ell_1,m,\ag}<\infty \}.
\end{split}
\end{equation*}

\begin{lemma}\label{propertiesnorm1}
	There exists $M>0$ depending only on $\beta$ such that, for any $g,h:D^{\mathrm{out},u}_{\kappa}\times\mathbb{T} \rightarrow\C$, it holds
		\begin{enumerate}
		\item If $\ag_2\geq\ag_1\geq 0$, then
		$$\|h\|_{\ell_1,m,\ag_2}\leq M\|h\|_{\ell_1,m,\ag_1}\quad and \quad \|h\|_{\ell_1,m,\ag_1}\leq \dfrac{M}{(\kappa\e)^{\ag_2-\ag_1}}\|h\|_{\ell_1,m,\ag_2}	.$$			
		\item If $\ag_1,\ag_2\geq 0$, and $\|g\|_{\ell_1,m_1,\ag_1},\|h\|_{\ell_1,m_2,\ag_2}<\infty$, then
		$$\|gh\|_{\ell_1,m_1 + m_2,\ag_1+\ag_2}\leq \|g\|_{\ell_1,m_1,\ag_1}\|h\|_{\ell_1, m_2, \ag_2}.$$	
	\end{enumerate}	
\end{lemma}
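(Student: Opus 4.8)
The plan is to reduce both assertions to elementary pointwise bounds on the weights $\cosh(y)^m$ and $(y^2+\pi^2/4)^{\ag}$ over $D^{\mathrm{out},u}_{\kappa}$, combined with the submultiplicativity (at each fixed $y$) of the $\ell_1$ norm of the Fourier coefficients in $\tau$. The single geometric input needed is the two-sided comparison $|y^2+\pi^2/4| \leq C_\bg$ on $D^{\mathrm{out},u}_{\kappa}\cap\{\Rp(y)\geq -1\}$ and $|y^2+\pi^2/4| \geq c_\bg\,\kappa\e$ on all of $D^{\mathrm{out},u}_{\kappa}$, with $0<c_\bg\leq C_\bg$ depending only on $\bg$. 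The upper bound is immediate because on that region $\Rp(y)\in[-1,(\pi/2)/\tan\bg]$ and $|\Ip(y)|\leq\tan\bg+\pi/2$, so $|y|$ is bounded in terms of $\bg$. For the lower bound I would factor $y^2+\pi^2/4=(y-\tfrac{i\pi}{2})(y+\tfrac{i\pi}{2})$: the boundary line $\Ip(y)=-\tan\bg\,\Rp(y)+\pi/2-\kappa\e$ lies at perpendicular distance $\kappa\e\cos\bg$ from $\tfrac{i\pi}{2}$, and by the symmetry of $D^{\mathrm{out},u}_{\kappa}$ about $\R$ the lower boundary is at distance $\kappa\e\cos\bg$ from $-\tfrac{i\pi}{2}$; hence every $y\in D^{\mathrm{out},u}_{\kappa}$ has $\min\{|y-\tfrac{i\pi}{2}|,|y+\tfrac{i\pi}{2}|\}\geq\kappa\e\cos\bg$, while $\max\{|y-\tfrac{i\pi}{2}|,|y+\tfrac{i\pi}{2}|\}\geq\tfrac{1}{2}|(y-\tfrac{i\pi}{2})-(y+\tfrac{i\pi}{2})|=\tfrac{\pi}{2}$, and the product is $\geq\tfrac{\pi}{2}\kappa\e\cos\bg$.

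Granting this, statement (1) is purely algebraic and may be done mode by mode. The term $\sup_{\Rp(y)\leq-1}|\cosh(y)^m\Pi_n[h](y)|$ in $\|\Pi_n[h]\|_{m,\ag}$ does not involve $\ag$ and is therefore the same for $\ag_1$ and $\ag_2$; on $\{\Rp(y)\geq-1\}$ one writes, for $\ag_2\geq\ag_1\geq 0$,
\[
|(y^2+\pi^2/4)^{\ag_2}\Pi_n[h](y)| = |y^2+\pi^2/4|^{\ag_2-\ag_1}\,|(y^2+\pi^2/4)^{\ag_1}\Pi_n[h](y)| \leq C_\bg^{\,\ag_2-\ag_1}\,|(y^2+\pi^2/4)^{\ag_1}\Pi_n[h](y)|,
\]
and symmetrically $|(y^2+\pi^2/4)^{\ag_1}\Pi_n[h](y)| \leq (c_\bg\kappa\e)^{-(\ag_2-\ag_1)}\,|(y^2+\pi^2/4)^{\ag_2}\Pi_n[h](y)|$. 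Taking suprema and summing over $n$ yields both inequalities of (1), the constant being of the form $\max\{1,C_\bg^{\ag_2-\ag_1}\}$, respectively $c_\bg^{-(\ag_2-\ag_1)}$ — depending on $\bg$ and the (finitely many, bounded) exponents actually used, but crucially independent of $\e$ and $\kappa$, the $\e,\kappa$-dependence of the second bound being made explicit by the factor $(\kappa\e)^{-(\ag_2-\ag_1)}$.

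For statement (2) I would use that the weights are multiplicative, $\cosh(y)^{m_1}\cosh(y)^{m_2}=\cosh(y)^{m_1+m_2}$ and $(y^2+\pi^2/4)^{\ag_1}(y^2+\pi^2/4)^{\ag_2}=(y^2+\pi^2/4)^{\ag_1+\ag_2}$, together with the Fourier convolution identity $\Pi_n[gh](y)=\sum_{j+l=n}\Pi_j[g](y)\Pi_l[h](y)$ (equivalently, the already recorded pointwise bound $\|g(y,\cdot)h(y,\cdot)\|_{\ell_1}\leq\|g(y,\cdot)\|_{\ell_1}\|h(y,\cdot)\|_{\ell_1}$). For each $y$, on $\{\Rp(y)\leq-1\}$ with the $\cosh$ weights and on $\{\Rp(y)\geq-1\}$ with the polynomial weights, the triangle inequality gives $|\cosh(y)^{m_1+m_2}\Pi_n[gh](y)|\leq\sum_{j+l=n}\|\Pi_j[g]\|_{m_1,\ag_1}\|\Pi_l[h]\|_{m_2,\ag_2}$ (and the analogue for the polynomial weight), whence $\|\Pi_n[gh]\|_{m_1+m_2,\ag_1+\ag_2}\leq\sum_{j+l=n}\|\Pi_j[g]\|_{m_1,\ag_1}\|\Pi_l[h]\|_{m_2,\ag_2}$; summing over $n$ and applying Fubini (discrete Young's inequality $\ell_1\ast\ell_1\to\ell_1$) factorizes the double sum as $\|g\|_{\ell_1,m_1,\ag_1}\|h\|_{\ell_1,m_2,\ag_2}$. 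The only step beyond routine bookkeeping is the plane-geometry estimate of $|y^2+\pi^2/4|$ near the pinched part of $D^{\mathrm{out},u}_{\kappa}$, which enters statement (1); statement (2) is the Banach-algebra property and is entirely routine.
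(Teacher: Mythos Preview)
Your proof is correct and supplies precisely the details that the paper omits (the paper simply states that the proof ``is straight forward and we omit it''). Your geometric estimate $|y^2+\pi^2/4|\geq \tfrac{\pi}{2}\kappa\e\cos\bg$ via the perpendicular distance from $\pm i\pi/2$ to the boundary lines is the right way to get the lower bound, and your treatment of statement~(2) via the convolution identity and the elementary inequality $a_jb_l+c_jd_l\leq(a_j+c_j)(b_l+d_l)$ (to handle the fact that $\|\cdot\|_{m,\ag}$ is a \emph{sum} of two suprema) is clean and gives the sharp constant~$1$. Your parenthetical remark that the constant in~(1) may in principle depend on $\ag_2-\ag_1$ (through $C_\bg^{\ag_2-\ag_1}$), with the crucial point being independence of $\e,\kappa$, is an honest observation; in the applications (Proposition~\ref{tecnico}) the exponents appearing are absorbed by geometric factors in $\kappa^{-1}$, so this causes no trouble.
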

This lemma actually applies to general  functions $2\pi$-periodic in $\tau$, not just to odd functions. The proof of this lemma is straightforward and we omit it.

Firstly to solve the linear equation $\mathcal{L}\xi=h$, we  introduce the operator $\mathcal{G}(h)$ acting on the Fourier coefficients of $h$ as
\begin{equation*}
\mathcal{G}(h)=\displaystyle\sum_{n\geq 1}\mathcal{G}_n(h_n)\sin(n\tau), \quad \wt \GG (h) = \displaystyle\sum_{n\geq 2}\mathcal{G}_n(h_n)\sin(n\tau) = \wt \Pi [\GG (h)], 
\end{equation*} 
with
\begin{align}
\mathcal{G}_1(h_1)&=-\zeta_1(y)\displaystyle\int_{0}^y\zeta_2(s)h_1(s)ds+\zeta_2(y)\displaystyle\int_{-\infty}^y\zeta_1(s)h_1(s)ds\label{g1}\\
\mathcal{G}_n(h_n)&=-\dfrac{i\e}{2\lambda_n}e^{i\frac{\lambda_n}{\e}y}\displaystyle\int_{-\infty}^ye^{-i\frac{\lambda_n}{\e}s}h_n(s)ds +\dfrac{i\e}{2\lambda_n}e^{-i\frac{\lambda_n}{\e}y}\displaystyle\int_{-\infty}^ye^{i\frac{\lambda_n}{\e}s}h_n(s)ds,\,\, n\geq 2.\label{gn}
\end{align}
where
\begin{equation}
\label{zetas}
\zeta_{1}(y)=-2\sqrt{2}\dfrac{\sinh(y)}{\cosh^2(y)} \quad and \quad \zeta_{2}(y)= -\dfrac{\sqrt{2}}{16}\dfrac{\sinh(y)}{\cosh^2(y)}(6y-4\coth(y)+\sinh(2y)),
\end{equation}
are linearly independent solutions of 
\[
\ddot{\zeta}-\zeta+\dfrac{3(v^h)^2}{4}\zeta=0 \qquad \text{(see \eqref{Lop})}.
\]

\begin{remark} \label{R:integral}
When $-\infty$ is involved in the above integrals, it should be understood that 
the integral is along horizontal lines. As the integrands are analytic 
functions, integral paths may be modified to yield better estimates in certain 
cases. 
\end{remark}

\begin{prop}\label{prop_operators}
	The following statements hold.
	\begin{enumerate}
		\item $\partial_y\Pi_1(\mathcal{G}(\xi))(0)=0$.
		\item $\mathcal{G}\circ\mathcal{L}(\xi)=\mathcal{L}\circ\mathcal{G}(\xi)=\xi$.
		\item For any $m>1$ and $\alpha\ge 5$, there exists a constant $M>0$ independent of $\e$ and $\kappa$ such that, for every $h \in\mathcal{E}_{m,\ag}$,
		\begin{equation*}
		\label{desig}
		\left\|\mathcal{G}_1 (h)\right\|_{1,\ag-2} 
		+\left\|\partial_y \mathcal{G}_1 (h)\right\|_{1,\ag-1} \leq M\|h\|_{m,\ag}.  
		\end{equation*}
		\item For any $m\geq 1$, $\al\ge0$, there exists $M>0$ such that for every $n\ge 2$ and $h \in\mathcal{E}_{m,\ag}$,
\begin{equation*}
\label{desig2}
		\left\|\GG_n (h)\right\|_{m,\ag}\leq M\frac {\e^2}{\lambda_n^2} 
		\|h 
		\|_{m,\ag}, \quad \left\|\partial_y \GG_n (h)\right\|_{m,\ag}\leq M\frac {\e}{\lambda_n} \| h \|_{m,\ag}, \quad \left\|\partial_y \GG_n (h)\right\|_{m,\ag}\leq M\frac {\e^2}{\lambda_n^2} \| \partial_y h \|_{m,\ag}.   
		\end{equation*}		
		\end{enumerate}
		\end{prop}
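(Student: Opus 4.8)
The final statement to prove is Proposition~\ref{prop_operators}, which collects four properties of the linear solution operator $\GG$ defined through \eqref{g1}--\eqref{gn}. The plan is to verify each item by direct computation, using the explicit formulas for $\zeta_1,\zeta_2$ and for the oscillatory Fourier modes, together with the norm estimates of Lemma~\ref{propertiesnorm1}.

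For item (1), I would simply differentiate the expression \eqref{g1} for $\GG_1(h_1)$ in $y$ and evaluate at $y=0$. The two boundary terms coming from differentiating the integral limits cancel (both integrals degenerate to the same evaluation of $\zeta_1\zeta_2 h_1$ at $y=0$ up to sign), and the remaining terms are $-\zeta_1'(0)\int_0^0(\cdots)+\zeta_2'(0)\int_{-\infty}^0\zeta_1 h_1$; since $\zeta_2'$ and hence the surviving term must be checked, but in fact $\zeta_1(y)$ is odd and $\zeta_2(y)$ is odd (both are proportional to $\sinh y/\cosh^2 y$ times even/odd factors), so $\zeta_1(0)=\zeta_2(0)=0$, which kills the first integral's coefficient and leaves $\partial_y\Pi_1(\GG(h))(0)=\zeta_2'(0)\int_{-\infty}^0\zeta_1 h_1\,ds - \zeta_1(0)\int_0^0(\cdots) $; a careful bookkeeping of which of $\zeta_1,\zeta_2$ vanishes at $0$ gives the claimed vanishing. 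Item (2) is a standard variation-of-parameters verification: one checks that $\zeta_1,\zeta_2$ solve the homogeneous equation $\ddot\zeta-\zeta+\tfrac34(v^h)^2\zeta=0$ with the Wronskian normalized so that $\GG_1$ is a particular solution of $(\ddot{}-1+\tfrac34(v^h)^2)\GG_1(h_1)=h_1$, and that for $n\ge2$ the kernel $-\tfrac{i\e}{2\lambda_n}(e^{i\lambda_n(y-s)/\e}-e^{-i\lambda_n(y-s)/\e})$ is exactly the Green's function for $\ddot{}+\lambda_n^2/\e^2$; together these give $\LL\circ\GG=\GG\circ\LL=\mathrm{id}$ on the relevant space, with the asymptotic/initial conditions built into the choice of integration limits.

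For item (3), the estimate on $\GG_1$, I would bound the two integrals in \eqref{g1} separately on the two pieces of $D^{\mathrm{out},u}_\kappa$ (the region $\Rp y\le -1$ where the $\cosh$-weight is used, and $\Rp y\ge -1$ where the polynomial weight $(y^2+\pi^2/4)^\alpha$ is used). In the far region one uses the exponential decay of $\zeta_1,\zeta_2$ (like $e^{-|y|}$ and $e^{|y|}$ respectively, up to polynomial factors) against the hypothesis $|h(y)|\le \|h\|_{m,\alpha}|\cosh y|^{-m}$ with $m>1$; the product $\zeta_2(y)\int_{-\infty}^y\zeta_1 h$ converges and decays like $|\cosh y|^{-1}$ because the integral picks up the fast decay of $\zeta_1 h$. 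Near $y=\pm i\pi/2$ one tracks the orders of the poles: $\zeta_1$ has a simple pole, $\zeta_2$ has a first-order pole times a $\log$-type factor coming from the $y$ and $\coth y$ terms, and integrating $h=\mathcal O((y^2+\pi^2/4)^{-\alpha})$ against $\zeta_1$ raises the order appropriately so that the final product is $\mathcal O((y^2+\pi^2/4)^{-(\alpha-2)})$; the condition $\alpha\ge5$ guarantees the integrals near the pole are finite after the shift. The derivative estimate follows by differentiating and noting the boundary terms cancel as in item (1), leaving $\partial_y\GG_1(h)=-\zeta_1'\int_0^y\zeta_2 h+\zeta_2'\int_{-\infty}^y\zeta_1 h$, which has one fewer power of decay near the pole, hence the weight $\alpha-1$.

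For item (4), the oscillatory modes, the key point is to exploit the oscillation $e^{\pm i\lambda_n s/\e}$ by deforming the integration path (Remark~\ref{R:integral}): integrating along a horizontal ray toward $-\infty$, the factor $e^{-i\lambda_n s/\e}$ together with $e^{i\lambda_n y/\e}$ combines to $e^{i\lambda_n(y-s)/\e}$ which is bounded (it is oscillatory along horizontals and decaying in the appropriate half-plane), so $|\GG_n(h)(y)|\le \tfrac{\e}{2\lambda_n}\int_{-\infty}^{\Rp y}|h|$; using $|h(s)|\le\|h\|_{m,\alpha}|\cosh s|^{-m}$ (or the polynomial weight near the pole) and the fact that $\int|\cosh s|^{-m}$ against a weight gains one more factor of the weight while a second integration by parts (or a second application of the same bound, since there are effectively two nested integrations in the structure once one writes $\GG_n$ as iterated) gains the second factor $\e/\lambda_n$, yields $\|\GG_n(h)\|_{m,\alpha}\le M(\e/\lambda_n)^2\|h\|_{m,\alpha}$. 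The derivative $\partial_y\GG_n(h)$ loses one factor of $\e/\lambda_n$ when the exponential is differentiated (giving the bound $M(\e/\lambda_n)\|h\|_{m,\alpha}$), but the alternative bound $M(\e/\lambda_n)^2\|\partial_y h\|_{m,\alpha}$ comes from integrating by parts once to move the derivative onto $h$. The main obstacle throughout will be item (3) and (4): being careful and uniform in $\e$ and $\kappa$ near the singularities $y=\pm i\pi/2$, where the polynomial weights and the orders of the poles of $\zeta_1,\zeta_2$ must be matched exactly so that the claimed exponents $\alpha-2$, $\alpha-1$ and the powers of $\e/\lambda_n$ come out right; in particular one must verify that the path deformations in the oscillatory integrals stay inside $D^{\mathrm{out},u}_\kappa$ and that the contributions from the path near the pole are controlled by the distance $\kappa\e$, which is where the hypothesis $\kappa\ge1$ and the geometry of the sectorial domain enter.
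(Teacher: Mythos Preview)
Your approach to items (1), (2), and the far-region part of (3) is essentially what the paper does (though note $\zeta_1$ is odd and $\zeta_2$ is even, so it is $\zeta_1(0)=0$ and $\zeta_2'(0)=0$ that kill the two surviving terms in item (1)). The gaps are in item (3) near the singularities and in item (4).

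For item (3) near $y=\pm i\pi/2$, your pole bookkeeping is wrong and, more importantly, the direct strategy with $\zeta_1,\zeta_2$ cannot work without identifying a large cancellation. Both $\zeta_1$ and $\zeta_2$ have poles of order \emph{two} at $\pm i\pi/2$ (not simple poles), so each term $\zeta_j(y)\int \zeta_{3-j}h$ individually behaves like $(y-i\pi/2)^{-\alpha-3}$, whereas the target is $(y-i\pi/2)^{-(\alpha-2)}$; that is a five-order cancellation you never address. The paper resolves this by introducing a \emph{different} fundamental pair $\zeta_\pm(y)=\zeta_1(y)\int_{\pm i\pi/2}^y \zeta_1^{-2}$, which satisfy $\zeta_\pm(y)=\dfrac{(y\mp i\pi/2)^3}{(y\pm i\pi/2)^2}\eta_\pm(y)$ with $\eta_\pm$ bounded analytic. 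In this basis $\GG_1$ is rewritten so that, near $+i\pi/2$, the outside factor $\zeta_+$ carries a \emph{zero} of order three, which exactly compensates the growth of $\int\zeta_- h$. This change of basis is the key idea you are missing; the condition $\alpha\ge 5$ is precisely what makes the resulting integrals converge.

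For item (4), your explanation ``two nested integrations'' is not the mechanism; $\GG_n$ is a single integral. The $\e^2/\lambda_n^2$ arises as one explicit prefactor $\e/\lambda_n$ times one further $\e/\lambda_n$ gained by tilting the integration contour inside $D^{\mathrm{out},u}_\kappa$ so that $e^{\pm i\lambda_n(s-y)/\e}$ becomes $e^{-c\lambda_n|s-y|/\e}$ along the path; integrating this exponential against the weight produces the extra factor. The paper cites an external lemma for this, but that is the content. Your remark about integration by parts is exactly how one obtains the alternative bound $\|\partial_y\GG_n(h)\|\le M\e^2\lambda_n^{-2}\|\partial_y h\|$, since $\partial_y\GG_n(h)=\GG_n(\partial_y h)$ once the boundary terms vanish by decay.
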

		
		The proof of this proposition is deferred to Appendix \ref{app:Operator}. In particular, the last item indicates a gain of an extra order regularity in $\tau$ for $\GG(\xi)$ compared to general solutions to wave equations and an improvement in the estimate of $\partial_y \GG_n(h)$ when $\partial_y h \in \EE_{m, \alpha}$, which is a typical trading between the smoothness and the smallness in problems involving rapid oscillations.

\subsection{Fixed Point Argument}\label{fixedouter}

Now, we use Proposition \ref{prop_operators} to rewrite \eqref{xinop} as 
$\xi=\mathcal{G}\circ\mathcal{F}(\xi),$
where $\mathcal{F}$ is given in \eqref{Fop}. We analyze the operator
\begin{equation*}
{\mathcal{F}}^\sharp=\mathcal{G}\circ\mathcal{F}
\end{equation*}
defined on the closed ball 
\[
\BB_0 (R\e^2) =\left\{ \xi \in \EE_{\ell_1, 1, 3} \mid \|\xi\|_{\ell_1, 1, 3} + \|\pa_y \xi\|_{\ell_1, 1, 4} \le R \e^2\right\}
\]
for some $R>0$. 

\begin{prop}\label{tecnico}
	There exists $M, \kappa_0, \e_0>0$, such that, if $\e\in (0, \e_0)$, $R>0$, and $\kappa> \kappa_0 R^{\frac 12}$, then 
		the operator 
		$${\mathcal{F}}^\sharp: \mathcal{E}_{\ell^1,1,3}\supset 
\mathcal{B}_0(R\e^2) \rightarrow \mathcal{E}_{\ell^1,1,3}$$ 
		is well defined and satisfies 
		\begin{align*}
		\MoveEqLeft
		\| \partial_\tau^2 {\mathcal{F}}^\sharp (0)\|_{\ell_1, 1, 3} 
		+  \|\partial_\tau^2 \partial_y {\mathcal{F}}^\sharp 
(0)\|_{\ell_1, 1, 4}\leq M \e^2, \\
		\MoveEqLeft
		\| \partial_\tau^2 \wt 
\Pi[{\mathcal{F}}^\sharp(\xi)-{\mathcal{F}}^\sharp(\xi')] \|_{\ell_1, 1, 3} + 
\| \pa_\tau^2 \partial_y\wt 
\Pi[{\mathcal{F}}^\sharp(\xi)-{\mathcal{F}}^\sharp(\xi')]\|_{\ell_1, 1, 
4} 
		\leq \frac {M}{\kappa^2}\big(\|\xi -\xi'\|_{\ell_1, 1, 3} + \|\pa_y \xi -\pa_y \xi'\|_{\ell_1, 1, 4}\big), \\
\MoveEqLeft\|\Pi_1[{\mathcal{F}}^\sharp(\xi)-{\mathcal{F}}^\sharp(\xi')]\|_{1 
,3}+ \| \partial_y \Pi_1\big[ 
{\mathcal{F}}^\sharp(\xi)-\widetilde{\mathcal{F}}(\xi') \big]\|_{1, 4} \\
		\leq &\, M \frac {1+R}{\kappa^2}\left( \|\xi -\xi'\|_{\ell_1, 1, 3} + \|\pa_y \xi -\pa_y \xi'\|_{\ell_1, 1, 4}\right)+ M \left(\|\wt \Pi[\xi -\xi']\|_{\ell_1, 1, 3} + \|\pa_y \wt \Pi[ \xi - \xi']\|_{\ell_1, 1, 4}\right).   
		\end{align*} 
	\end{prop}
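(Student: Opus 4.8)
The plan is to prove Proposition~\ref{tecnico} by a direct estimation of the composed operator $\mathcal{F}^\sharp = \mathcal{G}\circ\mathcal{F}$ using the algebra property of the norms (Lemma~\ref{propertiesnorm1}) together with the smoothing/gain estimates for $\mathcal{G}$ (Proposition~\ref{prop_operators}). First I would record the behaviour of the building blocks: by \eqref{homoclinic} and \eqref{zetas} the functions $v^h$, $\zeta_1$, $\zeta_2$ lie in weighted spaces $\EE_{m,\alpha}$ with explicit $(m,\alpha)$ (e.g.\ $v^h \in \EE_{1,1}$ up to the logarithmic factor hidden in $\zeta_2$, which is harmless on the outer domain since $|\Rp(y)|\le 1$ keeps $6y-4\coth y+\sinh 2y$ bounded), and $v^h$ has a simple pole at $y=\pm i\pi/2$ so that $(y^2+\pi^2/4) v^h$ is bounded. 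Since the nonlinearity $g(u) = \tfrac13 u^3 + f(u)$ with $f = \er(u^5)$ is analytic and odd, $(\e\omega)^{-3} g(\e\omega v) = \tfrac13 v^3 + \er(\e^2 v^5)$ with coefficients uniformly bounded in $\e\omega$, so $\mathcal{F}(\xi)$ in \eqref{Fop} is, after the cancellation of the $v^h{}^3$ and $\xi_1^3$ terms, a sum of monomials each containing at least one factor of $\wt\Pi(\xi)$ or $\xi_1^2$ or an explicit $\e^2$, multiplied by powers of $v^h$ and $\xi$.

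Next I would estimate $\mathcal{F}(0)$. By the construction of $\mathcal{L}$ and $\mathcal{F}$, $\mathcal{F}(0) = -(\e\omega)^{-3}g(\e\omega\, v^h\sin\tau) + \tfrac14 (v^h)^3 \sin\tau = \tfrac13 (v^h\sin\tau)^3 - \tfrac14 (v^h)^3\sin\tau + \er(\e^2 (v^h)^5) $; expanding $\sin^3\tau = \tfrac34\sin\tau - \tfrac14\sin 3\tau$ kills the $\sin\tau$ contribution, leaving a $\sin 3\tau$ term of size $\er\big((v^h)^3\big) \in \EE_{\ell_1,3,3}$ plus $\er(\e^2 (v^h)^5)\in\EE_{\ell_1,5,5}$. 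Applying Proposition~\ref{prop_operators}(4) to the $n\ge 2$ modes gains a factor $\e^2/\lambda_n^2$, and using $\lambda_n\ge 1/2$ and the summability over $n$ together with Lemma~\ref{propertiesnorm1}(1) to trade $\alpha$ down to $3$, one obtains $\|\pa_\tau^2 \mathcal{F}^\sharp(0)\|_{\ell_1,1,3} \lesssim \e^2$ and similarly $\|\pa_\tau^2\pa_y \mathcal{F}^\sharp(0)\|_{\ell_1,1,4}\lesssim \e^2$, where the $\pa_\tau^2$ is just the $n^2$ weight that converts the gain $\e^2/\lambda_n^2$ into the claimed $\ell_1$-summable bound. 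For the Lipschitz estimates on the $\wt\Pi$ part, I would write $\mathcal{F}(\xi)-\mathcal{F}(\xi')$ as a telescoping sum over monomials: each difference term is a product in which at least one factor is $(\xi-\xi')$ and the remaining factors are either $v^h$ (contributing an $\alpha$-weight and, crucially, a $1/|y^2+\pi^2/4|$ smallness of size $\er(1/\kappa^2\e^2)$ when paired with the $\e^2$ from $\mathcal{G}$), or elements of $\mathcal{B}_0(R\e^2)$ of size $R\e^2$. Using the algebra inequality Lemma~\ref{propertiesnorm1}(2) and then Lemma~\ref{propertiesnorm1}(1) to convert surplus $\alpha$-weight into powers of $(\kappa\e)^{-1}$, and finally the $\e^2/\lambda_n^2$ gain from $\GG_n$, every such term is bounded by $\tfrac{M}{\kappa^2}\big(\|\xi-\xi'\|_{\ell_1,1,3}+\|\pa_y\xi-\pa_y\xi'\|_{\ell_1,1,4}\big)$ once $\kappa> \kappa_0 R^{1/2}$ (the condition arises precisely because the $R\e^2$-sized factors must be absorbed: a term with $j$ such factors contributes $R^j\e^{2j}$, which against the $\e^2/\kappa^{2}$-type gains demands $\kappa^2 \gtrsim R$).

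Finally, for the $\Pi_1$-component I would use Proposition~\ref{prop_operators}(3): $\mathcal{G}_1$ maps $\EE_{m,\alpha}\to\EE_{1,\alpha-2}$ with $\|\mathcal{G}_1 h\|_{1,\alpha-2}+\|\pa_y\mathcal{G}_1 h\|_{1,\alpha-1}\lesssim\|h\|_{m,\alpha}$, i.e.\ no $\e$-gain, so the $\sin\tau$-mode of $\mathcal{F}(\xi)-\mathcal{F}(\xi')$ must be handled more carefully. Inspecting \eqref{Fop}, the $\Pi_1$ part of $\mathcal{F}$ is, after the cubic cancellation, the sum of (i) terms quadratic or higher in $\xi_1$ times $v^h$, which against $\mathcal{G}_1$ give $\er(R\e^2/\kappa^2)\|\xi_1-\xi_1'\|$ because one factor of $v^h$ supplies $1/|y^2+\pi^2/4|$ and the coefficient carries $\xi_1$ of size $R\e^2$ plus the $\alpha$-trading; (ii) terms containing $\wt\Pi(\xi)$, which after $\GG_1$ are bounded by $M\|\wt\Pi[\xi-\xi']\|_{\ell_1,1,3}$ without smallness; and (iii) the $\er(\e^2)$ explicit terms from $f$. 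Collecting (i)--(iii) gives exactly the stated bound $M\tfrac{1+R}{\kappa^2}(\cdots) + M(\|\wt\Pi[\xi-\xi']\|+\|\pa_y\wt\Pi[\xi-\xi']\|)$. The main obstacle is bookkeeping: one must be scrupulous about which monomials actually survive the $\sin^3\tau$ cancellation and about the logarithmic factor in $\zeta_2$ (controlled because on $D^{\mathrm{out},u}_\kappa\cap\{|\Rp(y)|\le 1\}$ the variable $y$ stays bounded and $\coth y$ has only the removable contribution near $y=0$), and about the fact that $\mathcal{G}_1$ gives \emph{no} gain in $\e$, so the only small parameter available for the $\Pi_1$-Lipschitz constant is $\kappa^{-2}$ coming from the $v^h$ pole structure and the radius $R\e^2$ of the ball; this is the reason the proposition needs the coupling $\kappa>\kappa_0 R^{1/2}$ and why the $\wt\Pi$ term cannot be made small, which is acceptable because it will be closed in the subsequent fixed-point iteration together with the contraction on the $\wt\Pi$ components.
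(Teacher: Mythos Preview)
Your proposal is correct and follows essentially the same route as the paper: rewrite $\mathcal F$ to expose the $\sin^3\tau$ cancellation in the first mode, then combine the algebra property of the weighted norms (Lemma~\ref{propertiesnorm1}) with the power-series expansion of $g$ and $f$ and the operator bounds of Proposition~\ref{prop_operators}, using the $\e^2/\lambda_n^2$ gain of $\mathcal G_n$ for $n\ge 2$ and the $\alpha\mapsto\alpha-2$ mapping of $\mathcal G_1$ (no $\e$-gain) for the first mode, so that the only smallness available there comes from trading surplus $\alpha$-weight into $(\kappa\e)^{-1}$ factors via the $v^h$ pole, forcing the hypothesis $\kappa>\kappa_0 R^{1/2}$. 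Two cosmetic remarks: the sign in your display of $\mathcal F(0)$ is flipped (it should read $-\tfrac13(v^h\sin\tau)^3+\tfrac14(v^h)^3\sin\tau+\ldots$, though the cancellation you use is unaffected), and your aside about $\zeta_2$ is extraneous here since those functions enter only inside $\mathcal G_1$, whose bounds are already packaged in Proposition~\ref{prop_operators}(3).
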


Notice that the above bounds on $\partial_\tau^2 \FF^\sharp (\xi)$ immediately 
implies those on $\FF^\sharp (\xi)$ as the zeroth mode is not included. 
 
\begin{proof}
	First, we rewrite the operator $\mathcal{F}$ given in $\eqref{Fop}$, in order to make explicit some cancellations. Recall that $g(u)=u^3/3+f(u)$ is given by \eqref{g}. Then,
	\[
	\begin{array}{lcl}
	\mathcal{F}(\xi) &=& -\dfrac{1}{\e^3\omega^3}g(\e\omega( \xi+v^h\sin\tau))+\left(\dfrac{(\xi_1+v^h)^3}{4} -\dfrac{3v^h\xi_1^2}{4}-\dfrac{\xi_1^3}{4}\right)\sin\tau\vspace{0.2cm}\\
	&=& -\dfrac{1}{\e^3\omega^3}\wt \Pi\left[g(\e\omega( \xi+v^h\sin\tau))\right]+\left\{-\dfrac{1}{3}\Pi_1\left[\left( (\xi_1+v^h)\sin(\tau)+\wt \Pi(\xi)\right)^3\right] \right.\vspace{0.2cm}\\
	& &\left.-\dfrac{1}{\e^3 \omega^3}\Pi_1\left[f(\e \omega( \xi+v^h\sin\tau))\right]+\dfrac{(\xi_1+v^h)^3}{4} -\dfrac{3v^h\xi_1^2}{4}-\dfrac{\xi_1^3}{4}\right\}\sin\tau\vspace{0.2cm}\\
	&=& -\dfrac{1}{\e^3\omega^3}\wt \Pi\left[g(\e \omega( \xi+v^h\sin\tau))\right]+\left\{-\dfrac{1}{3} \Pi_1\left[(\xi_1+v^h)^3\sin^3\tau
	+3(\xi_1+v^h)^2\sin^2\tau\wt \Pi[\xi]\right.\right.\vspace{0.2cm}\\ 
	& &\left.+3(\xi_1+v^h)\sin\tau(\wt \Pi[\xi])^2+(\wt \Pi[\xi])^3\right]-\dfrac{1}{\e^3\omega^3}\Pi_1\left[f(\e\omega( \xi+v^h\sin\tau))\right]\vspace{0.2cm}\\ 
	&&
	\left.+\dfrac{(\xi_1+v^h)^3}{4} -\dfrac{3v^h\xi_1^2}{4}-\dfrac{\xi_1^3}{4}\right\}\sin\tau.
	\end{array}
	\]
	Therefore,
	\begin{equation}\label{FE}	
	\begin{split}
	\mathcal{F}(\xi)	=\,& -\dfrac{1}{\e^3\omega^3}\wt \Pi\left[g(\e\omega( \xi+v^h\sin\tau))\right]+\Bigg\{\Pi_1\bigg[
	-(\xi_1+v^h)^2(\sin^2\tau) \wt \Pi[\xi]-(\xi_1+v^h)(\sin\tau)(\wt \Pi[\xi])^2\vspace{0.2cm}\\ 
	& -\dfrac{1}{3}(\wt \Pi[\xi])^3\bigg]-\dfrac{1}{\e^3\omega^3} \Pi_1\left[f(\e\omega( \xi+v^h\sin\tau))\right] -\dfrac{3v^h\xi_1^2}{4}-\dfrac{\xi_1^3}{4}\Bigg\}\sin\tau.
	\end{split}
	\end{equation}
which implies	
\[\mathcal{F}(0)= -\dfrac{1}{\e^3\omega^3}\wt \Pi\left[g(\e\omega( v^h\sin\tau))\right] - \dfrac{1}{\e^3\omega^3}\Pi_1\left[f(\e\omega(v^h\sin\tau))\right]\sin\tau.\]
Let $g$ and $f$ have the power series expansion 
\[
g(u) = \sum_{d=1}^\infty g_{2d+1} u^{2d+1}, \quad f(u) = \sum_{d=2}^\infty g_{2d+1} u^{2d+1}, \quad g_3 = \frac 13,
\]
with a positive radius of convergence. Using Lemma \ref{propertiesnorm1} and Proposition \ref{prop_operators}, one may estimate 
\begin{align*}
\MoveEqLeft[4]\|\pa_\tau^2 \wt \GG \FF (0) \|_{\ell_1, 1,3} \lesssim \e^2 \|\wt \Pi \FF (0) \|_{\ell_1, 1, 3} \lesssim \e^2 \sum_{d=1}^\infty (\e \omega)^{2d-2} |g_{2d+1}| \|(v^h\sin \tau)^{2d+1} \|_{\ell_1, 1, 3}  \\
\lesssim &\, \e^2 \sum_{d=1}^\infty \left(\frac \omega \kappa\right)^{2d-2} |g_{2d+1}| \|(v^h \sin \tau)^{2d+1} \|_{\ell_1, 2d+1,2d+1} \lesssim \e^2 \sum_{d=1}^\infty \left(\frac \omega \kappa\right)^{2d-2} |g_{2d+1}| \|v^h \|_{1,1}^{2d+1} \lesssim \e^2,  
\end{align*}
for reasonably large $\kappa$. 
In particular, in the above the operator $\partial_\tau^2$ creates a Fourier 
multiplier of $n^2$ to the mode of $\sin n\tau$, which is cancelled by the 
$\lambda_n^{-2}$ in the estimate of $\GG_n$ in Proposition \ref{prop_operators}. 
In order to obtain the desired  estimate on  $\| \partial_y \wt \GG \FF (0) 
\|_{\ell_1, 1, 4}$, we also need 
\[
\partial_y \FF(0) = -\dfrac{1}{\e^2\omega^2}g'(\e\omega( v^h\sin\tau)) (v^h)'\sin \tau = - \sum_{d=1}^\infty (2d+1)(\e \omega)^{2d-2} g_{2d+1} ( v^h)^{2d} \partial_y v^h \sin^{2d+1} \tau 
\]
which implies 
\begin{align*}
\|\partial_y \FF(0) \|_{\ell_1, 1,4} \lesssim& \sum_{d=1}^\infty (2d+1)(\e \omega)^{2d-2} |g_{2d+1}| \| (v^h)^{2d} \partial_y v^h \sin^{2d+1} \tau\|_{\ell_1, 1,4} \\
\lesssim & \sum_{d=1}^\infty (2d+1)\left(\frac \omega \kappa\right)^{2d-2} |g_{2d+1}| \| (v^h)^{2d} \partial_y v^h \sin^{2d+1} \tau\|_{\ell_1, 2d+1, 2d+2} \\
\lesssim & \sum_{d=1}^\infty (2d+1)\left(\frac \omega \kappa\right)^{2d-2} |g_{2d+1}| \|v^h \|_{1,1}^{2d} \| \partial_y v^h \|_{1,2} \lesssim 1, 
\end{align*}
for reasonably large $\kappa$. Hence the estimates related to $\| \cdot \|_{\ell_1, 1, 4}$ estimate related to $\partial_y \wt \GG \FF (0)$ follows from Proposition \ref{prop_operators}. Again, using Lemma \ref{propertiesnorm1} and Proposition \ref{prop_operators}, one may also estimate 
\begin{align*}
\MoveEqLeft[4]\|\GG_1 \Pi_1 \FF (0) \|_{1,3} \lesssim \|\Pi_1 \FF (0) \|_{3,5} \lesssim 
\sum_{d=2}^\infty (\e \omega)^{2d-2} |g_{2d+1}| \|(v^h)^{2d+1} \|_{3,5}  \\
 \lesssim & \,(\e\omega)^2 \sum_{d=2}^\infty \left(\frac \omega 
\kappa\right)^{2d-4} |g_{2d+1}| \|(v^h)^{2d+1} \|_{2d+1,2d+1} \lesssim 
(\e\omega)^2 \sum_{d=2}^\infty \left(\frac \omega \kappa\right)^{2d-4} 
|g_{2d+1}| \|v^h \|_{1,1}^{2d+1} \lesssim \e^2,  
\end{align*}
for reasonably large $\kappa$. The estimate on $\partial_y \GG \Pi_1 [\FF(0)]$ 
is obtained in a similar fashion. The sum of these inequalities imply the 
estimate on $\FF^\sharp(0)$. 

To estimate the Lipschitz constant of $\FF^\sharp$, let 
$\xi,\xi'\in\mathcal{B}_0(R\e^2),$ we have 
	\[
	\begin{split}
	\mathcal{F}(\xi)-\mathcal{F}(\xi')=&\, -\dfrac{1}{(\e\omega) ^3}\wt \Pi\left[g(\e\omega( \xi+v^h\sin\tau))-g(\e\omega( \xi'+v^h\sin\tau))\right]\\
	&\,+\Bigg\{-\Pi_1\left[(\xi_1+v^h)^2(\sin^2\tau)(\wt \Pi[\xi]-\wt \Pi[\xi'])-\left((\xi_1+v^h)^2-(\xi'_1+v^h)^2\right)(\sin^2\tau)\wt \Pi[\xi']\right]\\  
	&\,-\Pi_1\left[(\xi_1+v^h)(\sin\tau)(\wt \Pi[\xi]^2-\wt \Pi[\xi']^2)-(\xi_1-\xi'_1)(\sin\tau)\wt \Pi[\xi']^2\right] -\dfrac{1}{3}\Pi_1\left[\wt \Pi[\xi]^3-\wt \Pi[\xi']^3\right]\\
	&\,-\dfrac{1}{(\e\omega)^3}\Pi_1\left[f(\e\omega( \xi+v^h\sin\tau))-f(\e\omega( \xi'+v^h\sin\tau))\right]-\dfrac{3v^h(\xi_1^2-(\xi_1')^2)}{4}-\dfrac{\xi_1^3-(\xi'_1)^3}{4}\Bigg\}\sin\tau.
	\end{split}
	\]
For any $d\ge 2$, $m\ge 0$, $\al \ge 0$, and $\zeta, \zeta' \in \EE_{\ell_1, m, \alpha}$, 
it is straight forward to estimate 
	\begin{equation*} 
	\| \zeta^d - (\zeta')^d \|_{\ell_1, dm, d\al} \lesssim d (\|\zeta\|_{\ell_1, m, \al}^{d-1} + \| \zeta'\|_{\ell_1, m, \al}^{d-1}) \|\zeta -\zeta'\|_{\ell_1, m,\al} 	
	\end{equation*}	
where the constant is independent of $d$. Another useful inequality is 
\begin{equation*}
\| \xi \|_{\ell_1, 1, 1} + \|\pa_y \xi \|_{\ell_1, 1, 2} \lesssim (\kappa \e)^{-2}( \| \xi \|_{\ell_1, 1, 3} + \|\pa_y \xi \|_{\ell_1, 1, 4}) \lesssim  \frac R{\kappa^2} \lesssim 1, \quad \xi \in \BB_0 (R\e^2).  
\end{equation*}
Hence one may use Lemma \ref{propertiesnorm1} and Proposition \ref{prop_operators} to estimate 
	\begin{align*}
	\MoveEqLeft[4]\|\pa_\tau^2\wt \GG [\FF(\xi) - \FF(\xi')] \|_{\ell_1, 1, 3} \lesssim \e^2 \|\wt \Pi [\FF(\xi) - \FF(\xi')] \|_{\ell_1, 1, 3}\\
	 \lesssim &\,\e^2 \sum_{d=1}^\infty (\e\omega)^{2d-2} |g_{2d+1}| \|(\xi +v^h\sin\tau)^{2d+1} - (\xi' +v^h\sin\tau)^{2d+1} \|_{\ell_1, 1, 3} \\
	 \lesssim &\, \e^2 \sum_{d=1}^\infty \left(\frac \omega \kappa\right)^{2d-2} |g_{2d+1}|\|(\xi +v^h\sin\tau)^{2d+1} - (\xi' +v^h\sin\tau)^{2d+1} \|_{\ell_1, 2d+1, 2d+1} \\
	 \lesssim &\, \e^2 \sum_{d=1}^\infty d \left(\frac \omega \kappa\right)^{2d-2} |g_{2d+1}| (1 + \|\xi \|_{\ell_1, 1, 1}^{2d} +\| \xi' \|_{\ell_1,1, 1}^{2d}) \|\xi -\xi'\|_{\ell_1, 1, 1} \\
	 \lesssim &\, \kappa^{-2} \sum_{d=1}^\infty d \left(\frac \omega \kappa\right)^{2d-2} |g_{2d+1}| 
	 \|\xi -\xi'\|_{\ell_1, 1, 3} \lesssim \kappa^{-2}  \|\xi -\xi'\|_{\ell_1, 1, 3}			
	 \end{align*}
for $\kappa \ge R$ reasonably large. To estimates $\pa_y \wt \GG [\FF(\xi) - \FF(\xi')]$, in a similar fashion one needs to compute  
	\begin{align*}
	\MoveEqLeft[4]\|\pa_y \big(\FF(\xi) - \FF(\xi')\big) \|_{\ell_1, 1, 4}\\ 
	\lesssim & \sum_{d=1}^\infty d (\e\omega)^{2d-2} |g_{2d+1}| \Big\|(\xi +v^h\sin\tau)^{2d} (\pa_y \xi + \pa_y \sin \tau)  - (\xi' +v^h\sin\tau)^{2d}(\pa_y \xi' + \pa_y \sin \tau) \Big\|_{\ell_1, 1, 4}\\
	\lesssim & \sum_{d=1}^\infty d \left(\frac \omega\kappa\right)^{2d-2} |g_{2d+1}| \Big\|(\xi +v^h\sin\tau)^{2d} (\pa_y \xi + \pa_y v^h \sin \tau) \\
	& \qquad  - (\xi' +v^h\sin\tau)^{2d}(\pa_y \xi' + \pa_y v^h \sin \tau) \Big\|_{\ell_1, 2d+1, 2d+2}\\
	\lesssim & \|\xi -\xi'\|_{\ell_1, 1, 1} +  \|\pa_y \xi - \pa_y\xi'\|_{\ell_1, 1, 2}\lesssim  (\kappa\e)^{-2} ( \|\xi -\xi'\|_{\ell_1, 1, 3} +  \|\pa_y \xi - \pa_y\xi'\|_{\ell_1, 1, 4}),
		\end{align*} 
where in the derivation of the third $\lesssim$ we applied $\|\cdot\|_{\ell_1, 1,1}$ norm to all $\xi$, $\xi'$, and $v^h$ and $\|\cdot\|_{\ell_1, 1,2}$ norm to all $\pa_y \xi$, $\pa_y \xi'$, and $\pa_y v^h$. Along with Proposition \ref{prop_operators} this inequality yields the desired estimate on $\pa_y \wt \GG [\FF(\xi) - \FF(\xi')]$. The $\GG_1$ component can be estimated much as in the above. In fact, 
	\begin{align*}
	\MoveEqLeft[4]\|\GG_1 \Pi_1 [\FF(\xi) - \FF(\xi')] \|_{1, 3} + \|\pa_y \GG_1 \Pi_1 [\FF(\xi) - \FF(\xi')] \|_{1, 4}\lesssim \|\Pi_1 [\FF(\xi) - \FF(\xi')] \|_{3, 5}\\
	\lesssim & \dfrac{1}{(\e\omega)^3} \| f(\e\omega( \xi+v^h\sin\tau)) - f(\e\omega( \xi'+v^h\sin\tau))\|_{\ell_1, 3, 5} + \| \wt \Pi[\xi]-\wt \Pi[\xi']\|_{\ell_1, 1, 3} \\
	& + ( \|\xi\|_{\ell_1, 1, 1}+ \|\xi\|_{\ell_1, 1, 1}^2+  \|\xi'\|_{\ell_1, 1, 1}+ \|\xi'\|_{\ell_1, 1, 1}^2) \|\xi -\xi'\|_{\ell_1, 1, 3}
	\end{align*}
where all the $\xi$, $\xi'$, and $v^h \sin\tau$ in front of $\xi -\xi'$ were taken the $\|\cdot \|_{\ell_1, 1, 1}$ norm. The $f$ terms can be estimated much as in the above 
\begin{align*}
\MoveEqLeft[4]\dfrac{1}{(\e\omega)^3} \| f(\e\omega( \xi+v^h\sin\tau)) - f(\e\omega( \xi'+v^h\sin\tau))\|_{\ell_1, 3, 5}\\
	 \lesssim & \sum_{d=2}^\infty (\e\omega)^{2d-2} |g_{2d+1}| (\kappa \e)^{-(2d-4)} \|(\xi +v^h\sin\tau)^{2d+1} - (\xi' +v^h\sin\tau)^{2d+1} \|_{\ell_1, 2d+1, 2d+1} \\
	  \lesssim &\, \kappa^{-2} \sum_{d=1}^\infty d (\frac \omega \kappa)^{2d-4} |g_{2d+1}| 
	 \|\xi -\xi'\|_{\ell_1, 1, 3} \lesssim \kappa^{-2}  \|\xi -\xi'\|_{\ell_1, 1, 3}	
	 \end{align*}
for $\kappa \ge R$ reasonably large.  
Summarizing the above estimates, the proposition follows. 		
\end{proof}

With the above preparations, we are ready to prove Theorem \ref{outerthm}.\\

\paragraph{\textbf{Proof of Theorem \ref{outerthm}}}
	We claim that, if $\kappa$ is sufficiently large, then 
${\mathcal{F}}^\sharp$ is a contraction on the set 
\begin{align*}
S = \{ \xi \in \EE_{\ell_1, 1, 3} \mid & \|\Pi_1 [\xi]\|_{1, 3} + \|\pa_y \Pi_1 [\xi]\|_{1, 4} \le (1+M)^2 \e^2, \\ 
&\|\wt \Pi [\xi]\|_{\ell_1, 1, 3} +\|\pa_y \wt \Pi [\xi]\|_{\ell_1, 1, 4} \le (1+M) \e^2\} \subset \mathcal{B}_0(R\e^2), \quad R= (1+M)(2+M), 
\end{align*}	
equipped with the metric 
\[
| \xi|_M := \|\Pi_1[\xi]\|_{1,3} + \|\pa_y \Pi_1 [\xi]\|_{1, 4} + (1+M) \big(\|\wt \Pi [\xi]\|_{\ell_1, 1, 3} +\|\pa_y \wt \Pi [\xi]\|_{\ell_1, 1, 4}),
\]
where $M$ is the constant from Proposition \ref{tecnico}. In fact, using Proposition \ref{tecnico} it is straight forward to estimate that, for any $\xi\in S$, 
\begin{align*}
\|\Pi_1 [{\mathcal{F}}^\sharp (\xi)]\|_{1, 3} + \|\pa_y \Pi_1 [
\FF^\sharp
(\xi)]\|_{1, 4} \le & \|\FF^\sharp (0)\|_{\ell_1, 1, 3} +  \| \pa_y \FF^\sharp
(0)\|_{\ell_1, 1, 4}+ M \frac {1+R}{\kappa^2} R\e^2 + M (1+M) \e^2 \\
\le& \left(M+ M \frac {1+R}{\kappa^2} R + M(1+M) \right) \e^2\leq (1+M)^2\eps^2,  \\
\|\wt \Pi [\FF^\sharp(\xi)]\|_{\ell_1, 1, 3} +\|\pa_y \wt \Pi [\FF^\sharp
(\xi)]\|_{\ell_1, 1, 4} \le & \|\FF^\sharp (0)\|_{\ell_1, 1, 3} +  \| \pa_y 
\FF^\sharp (0)\|_{\ell_1, 1, 4}+ \frac M{\kappa^2} R \e^2\\
\le & \left(M+\frac M{\kappa^2} R\right) \e^2\leq (1+M)\eps^2, 
\end{align*}
and for any $\xi, \xi' \in S$, 
\begin{align*}
|\FF^\sharp(\xi) - \FF^\sharp(\xi') |_M \le & \left( M \frac {1+R}{\kappa^2} + 
(1+M) \frac M{\kappa^2} \right) \big( \|\Pi_1[\xi - \xi'] \|_{1, 3} + \| \wt \Pi 
[\xi -\xi']\|_{\ell_1, 1, 3} +\|\pa_y \Pi_1[\xi - \xi'] \|_{1, 4} \\
& + \|\pa_y \wt \Pi [\xi -\xi']\|_{\ell_1, 1, 4} \big) + M( \| \wt \Pi [\xi -\xi']\|_{\ell_1, 1, 3}+ \|\pa_y \wt \Pi [\xi -\xi']\|_{\ell_1, 1, 4}) \\
\le &\left( M \frac {1+R}{\kappa^2} + (1+M) \frac M{\kappa^2} + \frac M{1+M}\right) |\xi -\xi'|_M. 
\end{align*}
Therefore our above claim holds if $\kappa$ is large and $\FF^\sharp$ has a 
unique fixed point $\xi^u \in S\subset \mathcal{B}_0(R\e^2)$. It clearly 
satisfies all desired properties in Theorem \ref{outerthm}. 
Using that  $g$ given in \eqref{g} is an odd function, 
a straightforward computation shows  that the operator $\mathcal{F}$  in \eqref{Fop}
leaves invariant the subspace of functions 
$\xi:D_{\kappa}^{\mathrm{out},u}\times\mathbb{T} \rightarrow \C$ satisfying 
$\Pi_{2l}[\xi]=0$,  $\forall l\geq0$. Consequently, 
 $\xi^u$ satisfies that $\Pi_{2l}[\xi^u]=0$, $\forall l\geq0$ which completes the proof of Theorem \ref{outerthm}.

\section{The Inner equation: Proof of Theorem \ref{innerthm}}\label{innersec}

We  look for solutions odd in $\tau$  of the inner equation \eqref{inner} as
\begin{equation}\label{phi0sine}
\phi^0=\sum_{n\geq 1} \phi_n^0 \sin(n\tau).
\end{equation}
Substituting \eqref{phi0sine} into \eqref{inner}, we obtain that
\begin{equation}\label{innerfourier}(\partial_z^2+(n^2-1))\phi_n^0+\Pi_n\left[\dfrac{1}{3}(\phi^0)^3+f(\phi^0)\right]=0,\qquad  n\geq 1.\end{equation}
As explained in Section \ref{desc_sec}, we look for solutions of the form
\begin{equation}\label{asympinner} 
\phi^0(z,\tau)=\dfrac{-2\sqrt{2}i}{z}\sin(\tau)+\psi(z,\tau) \qquad \text{with}\qquad \psi=\er\left(\frac{1}{z^{3}}\right).
 \end{equation}
Then, by \eqref{innerfourier},   $\psi(z,\tau)=\sum_{n\geq 1}\psi_n(z)\sin(n\tau)$ must satisfy
\begin{equation}\label{innersystem}
\left\{
\begin{array}{l}
\pa_{z}^2 \psi_1-\dfrac{6}{z^2}\psi_1=-\Pi_1\left[-\dfrac{8}{z^2}\sin^2(\tau)\wt \Pi\left[\psi \right]-\dfrac{2\sqrt{2}i}{z}\sin(\tau)\psi^2+\dfrac{1}{3}\psi^3 +f\left(\dfrac{-2\sqrt{2}i}{z}\sin(\tau)+\psi\right)\right],\vspace{0.2cm}\\
\pa_z^2 \psi_n+\mu_{n}^2\psi_n= -\Pi_n\left[\dfrac{1}{3}\left(\dfrac{-2\sqrt{2}i}{z}\sin(\tau)+\psi\right)^3+f\left(\dfrac{-2\sqrt{2}i}{z}\sin(\tau)+\psi\right)\right],\ n\geq 2,
\end{array}
\right.
\end{equation} 
where $'=d/dz$, and $\mu_{n}=\sqrt{n^2-1}$.

We observe that the nonlinearity $f(u) = \er(|u|^5)$ in \eqref{innerfourier} does not have to be a real analytic function, so we complexify the space $\FF_r$, $r>0$, in \eqref{def:Banach:f}, into a complex Banach space 
\begin{equation}\label{def:Banach:fc} \begin{split}
\mathcal{F}_r^c=\Big\{f:  \{u \in \C: |u|<r\} & \to\C,\,f \text{ is odd, analytic, and} \, 
f(u)=\sum_{k\geq 2}f_k u^{2k+1},\, 
\|f\|_r <\infty\Big\}, 
\end{split} \end{equation}
where 
\begin{equation} \label{E:norm-r}
\|f\|_r := \sum_{k=0}^\infty |f_k|r^k, \ \text{ for } \ f(u)=\sum_{k=0}^\infty f_k u^{k}.
\end{equation}

We define the operators
\begin{align}
\mathcal{I}(\psi)=& \left(\pa_z^2 \psi_1-\dfrac{6}{z^2}\psi_1\right)\sin(\tau)+\displaystyle\sum_{n\geq 2}\left(\pa_z^2 \psi_n+\mu_{n}^2\psi_n\right)\sin(n\tau)\label{Iop}\\
\mathcal{W}(f, \psi)=& -\Pi_1\left[-\dfrac{8}{z^2}\sin^2(\tau)\wt \Pi\left[\psi \right]-\dfrac{2\sqrt{2}i}{z}\sin(\tau)\psi^2+\dfrac{1}{3}\psi^3\right]\sin(\tau)\label{Wop}
\\
&-\wt \Pi\left[ \frac 13 \left(\dfrac{-2\sqrt{2}i}{z}\sin(\tau)+\psi\right)^3\right]-f\left(\dfrac{-2\sqrt{2}i}{z}\sin(\tau)+\psi\right)
\notag\end{align}
and notice that, for $\star=u,s$, to find a solution $\phi^{0,\star}$ of \eqref{inner} satisfying \eqref{asympinner} is equivalent to find a solution $\psi^{\star}$ of the functional equation
\begin{equation}
\label{psinop}
\mathcal{I}(\psi)=\mathcal{W}(f, \psi),
\end{equation}	
which satisfies $\psi^\star\sim\mathcal{O}(z^{-3})$ for $z\in D^{\star,\mathrm{in}}_{\theta,\kappa}$, $\star=u,s$, as defined in \eqref{innerdomainsol}.
In the remainder of this section, we look for solutions of  \eqref{psinop} with such asymptotics  through a fixed point argument and analyze their dependence on $f \in \FF_r^c$.
As before, we consider only the unstable case, since the stable one is completely analogous.

\subsection{Banach Spaces and Linear Operators}\label{Banachinner}

Given $\ag\geq0$ and an analytic function $h: D^{u,\mathrm{\mathrm{in}}}_{\theta,\kappa}\rightarrow \C$, where $D^{u,\mathrm{\mathrm{in}}}_{\theta,\kappa}$ is given in \eqref{innerdomainsol}, consider the norm
\begin{equation*}
\|h\|_{\ag}=\displaystyle\sup_{z\in D^{u,\mathrm{\mathrm{in}}}_{\theta,\kappa}}|z^{\ag}h(z)|,
\end{equation*}
and the Banach space $$\mathcal{X}_{\ag}=\{h:D^{u,\mathrm{\mathrm{in}}}_{\theta,\kappa}\rightarrow \C;\ h \textrm{ is an analytic function and } \|h\|_{\ag}<\infty \}.$$
Moreover, for $h: D^{u,\mathrm{\mathrm{in}}}_{\theta,\kappa}\times\mathbb{T}\rightarrow \C$, analytic in the variable $z$, we define
\begin{equation*}
\|h\|_{\n,\ag}=\displaystyle\sum_{n\geq 1}\|h_n\|_{\ag},
\end{equation*}
and the Banach space
$$\mathcal{X}_{\n,\ag}=\left\{h: D^{u,\mathrm{\mathrm{in}}}_{\theta,\kappa}\times\mathbb{T}\rightarrow \C;\ h \textrm{ is an analytic function in the variable }z \textrm{ and }\|h\|_{\n,\ag}<\infty \right\}.$$

\begin{lemma}\label{propertiesnorm2}
Let $r>0$. 	Given an analytic function $f: \{u \in \C :  |u| < r\} \to \C$  
	and $g,h:D^{u,\mathrm{\mathrm{in}}}_{\theta,\kappa}\times\mathbb{T}\rightarrow\C$, 
	the following statements hold for some $M$ depending only on $\theta$ and $r$, 
	\begin{enumerate}
		\item If $\ag\geq\bg\geq 0$, then 		$$\|h\|_{\ell_1,\ag-\bg}\leq \dfrac{M}{\kappa^{\bg}}\|h\|_{\ell_1,\ag}.$$			
		
		\item If $\ag,\bg\geq 0$, and $\|g\|_{\ell_1,\ag},\|h\|_{\ell_1,\bg}<\infty$, then
		$$\|gh\|_{\ell_1,\ag+\bg}\leq \|g\|_{\ell_1,\ag}\|h\|_{\ell_1,\bg}.$$	

		\item If $\ag\ge 0$, $g, h \in \mathcal{X}_{\ell_1, \ag}$ and $\|g\|_{\ell_1,0}$, $\|h\|_{\ell_1,0}< r/2$, then
		$$\|f(g)-f(h)\|_{\ell_1,\ag}\leq M \|f\|_r \|g-h\|_{\ell_1,\ag}.$$
		\item Given $n\geq 0$, if $f^{(k)}(0)=0$, for every $0\leq k\leq n-1$, and $\|g\|_{\ell_1,0}< r/2$, 
		then
		$$\|f(g)\|_{\ell_1,n\ag}\leq M \|f\|_r (\|g\|_{\ell_1,\ag})^n.$$
		$M$ also depends on $n$.
		
		\item If $h\in\mathcal{X}_{\n,\ag}$ (with respect to the inner domain $ D^{u,\mathrm{\mathrm{in}}}_{\theta,\kappa}$), then $\partial_zh\in\mathcal{X}_{\n,\ag+1}$ (with respect to the inner domain $ D^{u,\mathrm{\mathrm{in}}}_{2\theta,4\kappa}$), and
		$$\|\partial_zh\|_{\n,\ag+1}\leq M\|h\|_{\n,\ag}.$$		
	\end{enumerate}	
\end{lemma}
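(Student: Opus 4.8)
\textbf{Plan for the proof of Lemma \ref{propertiesnorm2}.}

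The plan is to verify each of the five claims directly from the definition of the norms $\|\cdot\|_{\alpha}$ and $\|\cdot\|_{\ell_1,\alpha}$ on the sectorial inner domains, reducing the vector-valued statements to scalar estimates on the Fourier coefficients and then to pointwise estimates on $D^{u,\mathrm{in}}_{\theta,\kappa}$ (and $D^{u,\mathrm{in}}_{2\theta,4\kappa}$ for part (5)). The underlying geometric fact to be used repeatedly is that on these domains $|z|\geq \kappa>0$, so negative powers of $|z|$ are bounded; this is what produces the gain of $\kappa^{-\beta}$ in part (1) and makes composition with analytic functions behave well.

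First I would prove (1): for each $n$, $\|h_n\|_{\alpha-\beta}=\sup_z |z|^{\alpha-\beta}|h_n(z)|=\sup_z |z|^{-\beta}\,|z|^{\alpha}|h_n(z)|\leq \kappa^{-\beta}\|h_n\|_{\alpha}$, and summing over $n$ gives the claim (here $M$ absorbs only the harmless convention that the estimate holds with constant $1$; the $\theta$-dependence is vacuous but kept for uniformity). Part (2) is the sub-multiplicativity of the $\ell_1$-convolution combined with the pointwise identity $|z|^{\alpha+\beta}|g_jh_{n-j}|=(|z|^{\alpha}|g_j|)(|z|^{\beta}|h_{n-j}|)$: writing $(gh)_n=\sum_j g_j h_{n-j}$, one gets $\|(gh)_n\|_{\alpha+\beta}\leq\sum_j\|g_j\|_{\alpha}\|h_{n-j}\|_{\beta}$, and summing over $n$ yields $\|gh\|_{\ell_1,\alpha+\beta}\leq\|g\|_{\ell_1,\alpha}\|h\|_{\ell_1,\beta}$; this is exactly the inner-domain analogue of Lemma \ref{propertiesnorm1}(2). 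For (3) and (4) I would expand $f$ in its convergent power series $f(u)=\sum_{k\geq 0}c_k u^k$ on $B(R_0)$: using (2) iteratively, $\|g^k\|_{\ell_1,k\alpha}\leq\|g\|_{\ell_1,\alpha}^k$, and for $k\geq 1$, $\|g^k\|_{\ell_1,\alpha}\leq \|g^k\|_{\ell_1,k\alpha}\cdot M\kappa^{-(k-1)\alpha}\leq M(\|g\|_{\ell_1,0})^{k-1}\|g\|_{\ell_1,\alpha}$ via part (1) and part (2) again (splitting off one factor); the hypothesis $\|g\|_{\ell_1,0}\leq R_0/4$ then makes $\sum_k |c_k|\,\|g\|_{\ell_1,0}^{k-1}$ converge, which gives (4) in the case $n=1$, and the general $n$ follows by factoring $f(u)=u^n\tilde f(u)$ with $\tilde f$ analytic and applying (2). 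For (3) one writes $f(g)-f(h)=\sum_k c_k(g^k-h^k)=\sum_k c_k(g-h)\sum_{j=0}^{k-1}g^j h^{k-1-j}$ and estimates each term with (2) and (1) as above; the smallness $\|g\|_{\ell_1,0},\|h\|_{\ell_1,0}\leq R_0/4$ ensures geometric convergence with constant $M$ depending only on $f$ and $\theta$.

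The genuinely new point, and the one I expect to be the main obstacle, is part (5), the Cauchy-type estimate for $\partial_z h$ with the loss of domain from $D^{u,\mathrm{in}}_{\theta,\kappa}$ to $D^{u,\mathrm{in}}_{2\theta,4\kappa}$. The idea is: for $z\in D^{u,\mathrm{in}}_{2\theta,4\kappa}$, the disk $\{|w-z|\leq c|z|\}$ for a suitable small $c=c(\theta)>0$ is contained in $D^{u,\mathrm{in}}_{\theta,\kappa}$ — this is a geometric lemma comparing the two sectors, where the factor $4$ in $4\kappa$ and the factor $2$ in $2\theta$ provide the room needed to fit a disk of radius proportional to $|z|$ (one checks that on the smaller domain one stays a definite relative distance from the boundary rays and from the circle $|w|=\kappa$). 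Then the Cauchy integral formula on that disk gives, for each Fourier coefficient, $|\partial_z h_n(z)|\leq (c|z|)^{-1}\sup_{|w-z|\leq c|z|}|h_n(w)|\leq (c|z|)^{-1}\cdot (\text{const})\,|z|^{-\alpha}\|h_n\|_{\alpha}$, using that $|w|\sim|z|$ uniformly on the disk; multiplying by $|z|^{\alpha+1}$ and summing over $n$ yields $\|\partial_z h\|_{\ell_1,\alpha+1}\leq M\|h\|_{\ell_1,\alpha}$ with $M=M(\theta)$. The only care needed is to make the geometric containment explicit enough that the constant $c$ depends on $\theta$ alone and not on $\kappa$ or $\alpha$, so that the resulting $M$ is uniform; once that containment is established the rest is the standard Cauchy-estimate argument. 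I would state and prove this containment as a short preliminary observation, then assemble (1)--(5).
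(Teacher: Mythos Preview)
Your proof is correct and complete in its essentials. The paper itself does not give a proof of this lemma at all: immediately after the statement it simply writes ``This lemma is proved in \cite{INMA}'' and moves on. Your direct argument---pointwise comparison using $|z|\gtrsim \kappa\cos\theta$ on $D^{u,\mathrm{in}}_{\theta,\kappa}$ for part~(1), $\ell_1$-convolution sub-multiplicativity for part~(2), term-by-term power series estimates for parts~(3)--(4), and a Cauchy estimate on a disk of radius $c(\theta)|z|$ for part~(5)---is the standard route and almost certainly coincides with what the cited reference does. The one point worth making fully explicit when you write it up is the geometric containment in~(5): for $z\in D^{u,\mathrm{in}}_{2\theta,4\kappa}$ one has $\mathrm{dist}(z,\partial D^{u,\mathrm{in}}_{\theta,\kappa})\geq c(\theta)|z|$, which follows from comparing the distances from $z$ to the two pairs of boundary rays (the passage from angle $\theta$ to $2\theta$ and from $\kappa$ to $4\kappa$ is precisely what provides this relative margin). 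Once that is stated, the Cauchy integral formula on the disk $\{|w-z|\leq c|z|\}$ gives the claimed gain of one power of $|z|^{-1}$ uniformly in $n$, and summing over Fourier modes finishes.
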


\begin{proof} 
Items (1)(2)(5) of this lemma are proved as Lemma 4.3 in \cite{INMA}. To prove (3) and (4), let $f(u) = \sum_{k=0}^\infty f_k u^k$. One may estimate using item (2), 
\begin{align*}
\|f(g)-f(h)\|_{\ell_1,\ag} = & \Big\| (g-h) \sum_{k=0}^\infty f_{k+1} \sum_{j=0}^k g^{k-j}h^j \Big\|_{\ell_1,\ag} \leq \sum_{k=0}^\infty |f_{k+1}| \sum_{j=0}^k \|g\|_{\ell_1,0}^{k-j} \|h\|_{\ell_1,0}^{j} \|g-h\|_{\ell_1,\ag}   \\
\leq & \sum_{k=0}^\infty (k+1) |f_{k+1}| \Big( \frac r2\Big)^k \|g-h\|_{\ell_1,\ag} = \sum_{k=0}^\infty \frac {k+1}{2^k r} |f_{k+1}| r^{k+1} \|g-h\|_{\ell_1,\ag},
\end{align*} 
which implies item (3). Again based on item (2), the proof of item (4) is similar   
\[
\|f(g)\|_{\ell_1,n\ag} = \Big\| \sum_{k=n}^\infty f_{k} g^{k}\Big\|_{\ell_1,n\ag} \leq  \sum_{k=n}^\infty |f_{k}| \|g\|_{\ell_1,0}^{k-n} \|g\|_{\ell_1, \ag}^n \leq \sum_{k=n}^\infty |f_{k}| r^{k} r^{-n} \|g\|_{\ell_1, \ag}^n
\]
and thus item (4) follows. 
\end{proof}

Now, define the linear operator acting on the Fourier coefficients of $\psi$
$$\mathcal{J}(\psi)=\displaystyle\sum_{n\geq 1}\mathcal{J}_n(\psi_n)\sin(n\tau),$$
where
\begin{equation}
\label{j1}
\begin{split}
\mathcal{J}_1(\psi_1)(z)=&\,\dfrac{z^3}{5}\displaystyle\int_{-\infty}^z\dfrac{\psi_1(s)}{s^2}ds-\dfrac{1}{5z^2}\displaystyle\int_{-\infty}^z s^3\psi_1(s)ds\\
\mathcal{J}_n(\psi_n)(z)= &\,\dfrac{1}{2i\mu_{n}}\displaystyle\int_{-\infty}^z e^{-i\mu_{n}(s-z)}\psi_n(s)ds-\dfrac{1}{2i\mu_{n}}\displaystyle\int_{-\infty}^z e^{i\mu_{n}(s-z)}\psi_n(s)ds,\ n\geq 2.
\end{split}
\end{equation}
See Remark \ref{R:integral} regarding the integral paths.  

\begin{prop}\label{prop_operatorsinner}
	Consider $\kappa\geq 1$ big enough. Given $\ag> 2$, the operator $(\pa_\tau^2)\circ \mathcal{J}:\mathcal{X}_{\ell_1,\ag+2}\rightarrow \mathcal{X}_{\ell_1,\ag}$ is well defined and the following statements hold.
	\begin{enumerate}
		\item $\mathcal{J}\circ\mathcal{I}(\psi)=\mathcal{I}\circ\mathcal{J}(\psi)=\psi$.
		\item For any $\ag >2$, there exists a constant $M>0$ independent of $\kappa$ such that, for every $h \in\mathcal{X}_{\ag+2}$,
		\begin{equation*}
		\left\| \mathcal{J}_1(h)\right\|_{\ag}\leq M\|h\|_{\ag+2}.
		\end{equation*}
		\item For any $\ag >1$, there exists a constant $M>0$ independent of $\kappa$ and $n$ such that, for every $h \in\mathcal{X}_{\ag}$,
		\begin{equation*}
		\left\|\mathcal{J}_n(h)\right\|_{\ag}\leq \frac M{\mu_n^2} \|h\|_{\ag}.
		\end{equation*}		
		\end{enumerate}
	\end{prop}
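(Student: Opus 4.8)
\textbf{Proof proposal for Proposition \ref{prop_operatorsinner}.}

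The plan is to treat each Fourier mode separately, exactly as the structure of $\mathcal{J}$ suggests, and prove the three statements via direct estimates on the representation formulas in \eqref{j1}. First I would verify statement (1). For $n\ge 2$, $\mathcal{J}_n$ is built from the variation-of-parameters formula for the constant-coefficient operator $\partial_z^2 + \mu_n^2$, using the two exponential solutions $e^{\pm i\mu_n z}$ and integrating from $-\infty$ along a horizontal ray (see Remark \ref{R:integral}); differentiating twice under the integral sign and collecting terms, the boundary contributions at the moving endpoint $s=z$ cancel and one recovers $(\partial_z^2+\mu_n^2)\mathcal{J}_n(h)=h$, which is precisely $\mathcal{I}_n\circ\mathcal{J}_n=\mathrm{id}$ on the $n$-th mode; the reverse identity $\mathcal{J}_n\circ\mathcal{I}_n=\mathrm{id}$ follows on the space $\mathcal{X}_\alpha$ with $\alpha>1$ because any solution of the homogeneous equation $\psi''+\mu_n^2\psi=0$ that decays like $z^{-\alpha}$ must vanish. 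For $n=1$ the operator $\partial_z^2 - 6z^{-2}$ has the explicit homogeneous solutions $z^3$ and $z^{-2}$ with Wronskian $-5$, and $\mathcal{J}_1$ in \eqref{j1} is the corresponding variation-of-parameters kernel integrated from $-\infty$; the same computation gives $\mathcal{I}_1\circ\mathcal{J}_1 =\mathrm{id}$, and the condition $\alpha>2$ in the hypothesis is exactly what kills the $z^3$-component of a homogeneous solution (which would otherwise grow), yielding $\mathcal{J}_1\circ\mathcal{I}_1=\mathrm{id}$.

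Next I would prove the quantitative bound (3). Fix $z\in D^{u,\mathrm{in}}_{\theta,\kappa}$ and $h\in\mathcal{X}_\alpha$ with $\alpha>1$. The key geometric fact is that the horizontal ray from $-\infty$ to $z$, or a suitably rotated ray within the sector, stays in $D^{u,\mathrm{in}}_{\theta,\kappa}$ and that along it $|s|\ge c|z|$ on the portion near the endpoint while $|s|$ grows linearly away from it; moreover $|e^{\pm i\mu_n(s-z)}|\le 1$ when the integration path is chosen on the correct side (this is where the sign of $\mathrm{Im}(s-z)$ and the choice between the two ``$\pm$'' pieces matter, and one exploits the freedom in Remark \ref{R:integral} to bend the path into the half-plane where the exponential decays). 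Then
\[
|\mathcal{J}_n(h)(z)| \le \frac{M}{\mu_n}\int_{\gamma_z} |h(s)|\, |ds| \le \frac{M\|h\|_\alpha}{\mu_n}\int_{\gamma_z} \frac{|ds|}{|s|^\alpha} \le \frac{M\|h\|_\alpha}{\mu_n^2 |z|^\alpha},
\]
where the extra factor $\mu_n^{-1}$ comes from the length scale $\mu_n^{-1}$ of the oscillation being traded for decay after an integration by parts (integrating $e^{\pm i\mu_n s}$ once against the slowly varying $h$), and the final $|z|^{-\alpha}$ bound uses $\alpha>1$ so that the tail integral converges with the stated $|z|$-dependence. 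Multiplying by $z^\alpha$ and taking the supremum gives $\|\mathcal{J}_n(h)\|_\alpha \le M\mu_n^{-2}\|h\|_\alpha$ with $M$ independent of $n$ and $\kappa$. Statement (2) for $n=1$ is the analogous but more delicate computation: the kernel has the two pieces $\tfrac{z^3}{5}\int_{-\infty}^z s^{-2}\psi_1\,ds$ and $-\tfrac1{5z^2}\int_{-\infty}^z s^3\psi_1\,ds$; with $h\in\mathcal{X}_{\alpha+2}$ the first integral behaves like $|z|^{-(\alpha+3)}$ (so multiplied by $|z|^3$ gives $|z|^{-\alpha}$) and the second, integrated from $-\infty$, behaves like $|z|^{4-(\alpha+2)}=|z|^{2-\alpha}$ (so multiplied by $|z|^{-2}$ gives $|z|^{-\alpha}$), using $\alpha+2>2$, i.e. $\alpha>0$, together with $\alpha>2$ where needed for the $s^3$-integral to be controlled by the endpoint rather than diverge at $-\infty$; careful bookkeeping of these two powers, again possibly after deforming the integration path, yields $\|\mathcal{J}_1(h)\|_\alpha \le M\|h\|_{\alpha+2}$. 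The composition with $\partial_\tau^2$ (a Fourier multiplier by $-n^2$) is then controlled using that $n^2/\mu_n^2 = n^2/(n^2-1)$ is bounded for $n\ge2$, which also explains why the target space of $(\partial_\tau^2)\circ\mathcal{J}$ is $\mathcal{X}_{\ell_1,\alpha}$ (two fewer powers than the domain $\mathcal{X}_{\ell_1,\alpha+2}$, the two powers being spent on the $n=1$ mode via $\mathcal{J}_1$).

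The main obstacle I anticipate is not any single estimate but the path-deformation argument underlying all of them: one must choose, for each $z$ in the sectorial domain $D^{u,\mathrm{in}}_{\theta,\kappa}$ and each sign of the exponential, an integration contour from $-\infty$ to $z$ that (a) stays inside the domain of analyticity $D^{u,\mathrm{in}}_{\theta,\kappa}$, (b) keeps the oscillatory factor bounded (no exponential growth), and (c) stays at distance comparable to $|z|$ from the origin for long enough to produce the sharp $|z|^{-\alpha}$ decay rather than a weaker one. Making this uniform in $z$ near the two boundary rays of the sector (and uniform in $\kappa\ge\kappa_0$) is the delicate point; it is handled by the standard device of rotating the ray by a small angle depending on $\arg z$, and the condition $0<\theta<\pi/2$ leaves exactly enough room. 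Once the contour choice is fixed the remaining work is the routine power-counting and integration-by-parts bookkeeping sketched above, and the uniformity of $M$ in $n$ follows because every $n$-dependent factor is absorbed into the explicit $\mu_n^{-2}$.
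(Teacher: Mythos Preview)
Your proposal is correct and follows essentially the same route as the paper, which dispatches item (1) as straightforward, cites \cite{INMA} for the $\mu_n^{-2}$ estimate in item (3) (your path-deformation and integration-by-parts sketch is precisely the mechanism behind that reference), and performs the direct power-counting for $\mathcal{J}_1$ in item (2) exactly as you outline. One small slip in your discussion of $\mathcal{J}_1\circ\mathcal{I}_1=\mathrm{id}$: it is the $z^{-2}$ homogeneous solution, not the $z^3$ one, that the hypothesis $\alpha>2$ is needed to exclude from $\mathcal{X}_\alpha$; the growing $z^3$ part is already ruled out by any decay at infinity.
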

	
Again the above estimates represent the gain of one more order of derivative in $\tau$. The assumption $\ag>1$ in the above last inequality ensures the convergence of the integral in the definition of $\mathcal{J}_n$ and also allows one to adjust the path of the integral in certain ways.
	
\begin{proof}
The proof of item (1) is straightforward.
For $\mathcal{J}_n$, $n\geq 2$ and $\ag>1$, one can use the same trick as in the proof of Lemma 4.6  in \cite{INMA}, by using the Cauchy Integral Theorem to move the integral paths to the rays $\{z - s e^{\pm i \theta} : s>0\}$ for the two integrals respectively, to obtain that for $h\in\mathcal{X}_{\ag}$
and  $z\in D^{u,\mathrm{\mathrm{in}}}_{\theta,\kappa}$,
	\begin{align*}
	\left|z^{\ag}\mathcal{J}_n(h)(z)\right|= & \left| \dfrac{z^{\ag}e^{i\theta} }{2i\mu_{n}}\displaystyle\int_0^{\infty} e^{i\mu_{n}s e^{i\theta}} h(z - s e^{i \theta})ds - \dfrac{z^{\ag} e^{-i\theta}}{2i\mu_{n}}\displaystyle\int_0^{\infty} e^{-i\mu_{n}s e^{-i\theta}} h(z - s e^{ -i \theta})ds\right| \\
	\leq & \frac 1{2\mu_n} \left( \displaystyle\int_0^{\infty} e^{-\mu_{n} (\sin \theta) s } |z|^{\ag} 
	|h(z - s e^{ i \theta})| ds + \displaystyle\int_0^{\infty} e^{-\mu_{n} (\sin \theta) s } |z|^{\ag} 
	|h(z - s e^{ -i \theta})| ds \right) 
	\leq \dfrac{M}{\mu_{n}^2}\|h\|_{\ag}.
	\end{align*}
For $\mathcal{J}_1$, taking $h\in\mathcal{X}_{\ag+2}$,  $\ag>2$ and $z\in D^{u,\mathrm{\mathrm{in}}}_{\theta,\kappa}$,
\[
	\begin{split}
	\left|z^{\ag}\mathcal{J}_1(h)(z)\right|=& \left|\dfrac{z^{\ag+3}}{5}\displaystyle\int_{-\infty}^z\dfrac{h(s)}{s^2}ds-\dfrac{z^{\ag-2}}{5}\displaystyle\int_{-\infty}^z s^3h_1(s)ds\right| \\
	\leq& M\|h\|_{\ag+2}\left(\displaystyle\int_{-\infty}^z\dfrac{|z|^{\ag+3}}{|s|^{\ag+4}}ds+\displaystyle\int_{-\infty}^z \dfrac{|z|^{\ag-2}}{|s|^{\ag-1}}ds\right) \leq M\|h\|_{\ag+2}.
	\end{split}
\]
	The proof of the proposition is complete. 
\end{proof}

\subsection{The fixed point argument}\label{fixedinner}
By Proposition \ref{prop_operatorsinner}, we rewrite \eqref{psinop} as 
\begin{equation*}
\label{fixedpointpsieq}
\psi= {\mathcal{W}}^\sharp(f, \psi),\quad 
{\mathcal{W}}^\sharp=\mathcal{J}\circ\mathcal{W}, \qquad f \in \FF_r^c, \;\; \|\psi\|_{\ell_1, 0} \le r,
\end{equation*}
where $\mathcal{W}$ is given by \eqref{Wop}. In the following proposition we study some properties of the operator $\mathcal{W}^\sharp$.

\begin{prop}\label{tecnicoinner}
	Given $r>0$, for big enough $\kappa\geq \max\{1, 100 /r\}$ and $R < \min\{\kappa^2,  r\kappa^3 /100\}$, the operator 
${\mathcal{W}}^\sharp: \FF_r^c \times \mathcal{B}_0(R)
\rightarrow \mathcal{X}_{\ell_1,3}$ (where $\mathcal{B}_0(R)\subset\mathcal{X}_{\ell_1,3}$ is the ball of radius $R$) is analytic in both $f$ and $\psi$  
and the following statements hold.
	\begin{enumerate}		
		\item There exists a constant $M_1>0$ 
		depending only on $\theta$ and $r$ 
such that $\|\pa_\tau^2 {\mathcal{W}}^\sharp(f, 0)\|_{\ell_1,3}\leq 
M_1(1+ \|f\|_r).$	

		\item 
		There exists a constant $M_2>1$ 
		depending only on $\theta$ and $r$ such that, for every $\psi,\psi'\in \mathcal{B}_0(R)\subset\mathcal{X}_{\ell_1,3}$, 
$$\left\|{\mathcal{W}}^\sharp(f, \psi)-{\mathcal{W}}^\sharp(f, \psi')\right\|_{
\ell_1,3}\leq M_2\left(\dfrac{1}{\kappa^2} (1+R+\|f\|_{r}) \|\psi-\psi'\|_{\ell_1,3}+\|\wt 
\Pi[\psi]-\wt \Pi[\psi']\|_{\ell_1,3}\right).$$ 
		Furthermore, 
		$$\left\|\pa_\tau^2 \left( \wt \Pi[{\mathcal{W}}^\sharp(f, \psi)]-\wt \Pi[{\mathcal{W}}^\sharp(f, \psi')]\right)\right\|_{\ell_1,3}\leq 
\dfrac{M_2}{\kappa^2}  \left(1+R+\|f\|_{r}\right)\|\psi-\psi'\|_{\ell_1,3}.$$
	\end{enumerate}			
\end{prop}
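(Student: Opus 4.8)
\textbf{Proof plan for Proposition \ref{tecnicoinner}.}
The plan is to estimate the nonlinear operator $\mathcal{W}$ from \eqref{Wop} in the algebra-type norms $\|\cdot\|_{\ell_1,\alpha}$, then compose with $\mathcal{J}$ using Proposition \ref{prop_operatorsinner} to gain two powers of $z$ (and, crucially, the factor $\mu_n^{-2}$ that cancels the $\partial_\tau^2$ multiplier on the oscillatory modes). For statement (1), I would evaluate $\mathcal{W}(0)$ explicitly: plugging $\psi=0$ into \eqref{Wop} leaves only $-\widetilde\Pi\big[\tfrac13(\tfrac{-2\sqrt2 i}{z}\sin\tau)^3\big] - f\big(\tfrac{-2\sqrt2 i}{z}\sin\tau\big)$. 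The cubic term is a finite trigonometric polynomial whose nonzero Fourier mode ($\sin 3\tau$) carries an explicit $z^{-3}$, so it lies in $\mathcal{X}_{\ell_1,3}$ with a universal bound. For the $f$ term, I would use hypothesis \eqref{E:f-Assump}, namely $f(u)=\mathcal{O}(u^5)$: writing $f(u)=\sum_{d\ge2} g_{2d+1}u^{2d+1}$, each term contributes $\|(\tfrac{-2\sqrt2 i}{z}\sin\tau)^{2d+1}\|_{\ell_1,2d+1}\lesssim C^{2d+1}$ times the coefficient, which for $z$ in the inner domain ($|z|\ge\kappa$, $\kappa$ large) is summable since $|g_{2d+1}|$ decays geometrically inside the radius of convergence; by Lemma \ref{propertiesnorm2}(4) this gives $\|f(\tfrac{-2\sqrt2i}{z}\sin\tau)\|_{\ell_1,5\cdot 1}\lesssim 1$, hence also $\|\cdot\|_{\ell_1,3}\lesssim 1$ by Lemma \ref{propertiesnorm2}(1). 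Applying the third estimate in Proposition \ref{prop_operatorsinner}($\mathcal{J}_n$, $n\ge2$) together with the $\mathcal{J}_1$ estimate, and noting $\partial_\tau^2$ turns the $\sin n\tau$ mode into $n^2\sin n\tau$ which is absorbed by the $\mu_n^{-2}$ (since $n^2/\mu_n^2$ is bounded for $n\ge 3$, and the $n=1$ mode is handled by $\mathcal{J}_1$), we obtain $\|\partial_\tau^2\mathcal{W}^\sharp(0)\|_{\ell_1,3}\le M_1$.

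For statement (2), the Lipschitz estimate, I would expand $\mathcal{W}(\psi)-\mathcal{W}(\psi')$ term by term. The polynomial pieces in \eqref{Wop} are sums of monomials of the form (fixed power of $\tfrac1z\sin\tau$) times (product of one, two, or three factors of $\psi$), so their differences telescope as $\prod\psi - \prod\psi' = \sum (\cdots)(\psi-\psi')(\cdots)$; using Lemma \ref{propertiesnorm2}(2) and that $\|\psi\|_{\ell_1,0}\le R\kappa^{-3}$ is small for large $\kappa$ (by Lemma \ref{propertiesnorm2}(1) applied to $\mathcal{B}_0(R)\subset\mathcal{X}_{\ell_1,3}$), every term carries at least one extra factor of $z^{-1}$ or $z^{-2}$ beyond what is needed, i.e.\ a factor $\kappa^{-1}$ or $\kappa^{-2}$. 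The term $-\tfrac{8}{z^2}\sin^2\tau\,\widetilde\Pi[\psi]$ is the one genuinely responsible for the $\|\widetilde\Pi[\psi]-\widetilde\Pi[\psi']\|_{\ell_1,3}$ contribution with an $\mathcal{O}(1)$ coefficient, since it is linear in $\psi$ and only projected onto the first mode; all remaining linear-in-$\psi$ contributions (and the quadratic/cubic ones) gain the small factor. The $f$ term is handled by Lemma \ref{propertiesnorm2}(3) after factoring out five powers of the argument — more precisely $f(a+\psi)-f(a+\psi')$ with $a=\tfrac{-2\sqrt2i}{z}\sin\tau$: writing $f(a+\psi)=\sum g_{2d+1}(a+\psi)^{2d+1}$ and telescoping, each difference has $2d+1$ factors total of which at least $2d$ are $a$'s or $\psi$'s of norm $\lesssim\kappa^{-1}$, so the whole $f$ contribution is $\lesssim\kappa^{-2}\|\psi-\psi'\|_{\ell_1,3}$. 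Composing with $\mathcal{J}$ via Proposition \ref{prop_operatorsinner} (and using Lemma \ref{propertiesnorm2}(1) to trade the gained powers of $z$ for powers of $\kappa$) yields the stated bound with $M_2$ independent of $\kappa$. The ``Furthermore'' clause follows by the same computation restricted to $\widetilde\Pi$: the projection $\widetilde\Pi$ kills the $n=1$ mode, so one only uses the $\mathcal{J}_n$, $n\ge2$, estimates, and every surviving term in $\widetilde\Pi[\mathcal{W}(\psi)-\mathcal{W}(\psi')]$ — including the would-be $\mathcal{O}(1)$ linear term $\tfrac13(\cdots)^3$ expansion and the $f$ term — is at least quadratic in $(a,\psi)$ after projecting away the $\sin\tau$ piece, hence carries the $\kappa^{-2}$; the $\partial_\tau^2$ is again absorbed by the $\mu_n^{-2}$.

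The main obstacle I anticipate is bookkeeping the interaction between the Fourier projections $\Pi_1,\widetilde\Pi$ and the power-counting in $z$: one must verify that in \emph{every} monomial of \eqref{Wop}, after accounting for which factors can be made small (those of norm $\lesssim\kappa^{-1}$) and which cannot (the outer $\Pi_1$-projected linear term $\tfrac{-8}{z^2}\sin^2\tau\,\widetilde\Pi[\psi]$), the total $z$-weight lands in $\mathcal{X}_{\ell_1,\ge 5}$ so that $\mathcal{J}$ maps it into $\mathcal{X}_{\ell_1,3}$ with room to spare, and simultaneously that the $\partial_\tau^2$ derivative is always compensated. A secondary subtlety is ensuring the constants $M_1,M_2$ are genuinely $\kappa$-independent, which requires that the geometric series coming from the Taylor expansions of $f$ (and of $\tfrac13 u^3$, though that is finite) converge uniformly once $\kappa$ exceeds a fixed threshold depending only on the radius of convergence of $f$ and on $\theta$. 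Once these are in place the proposition is a routine consequence of Lemma \ref{propertiesnorm2} and Proposition \ref{prop_operatorsinner}; the fixed-point existence of $\psi^\star$ then follows by the contraction mapping principle on $\mathcal{B}_0(R)$ for suitable $R$ (comparable to $M_1$) and $\kappa$ large.
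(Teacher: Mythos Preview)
Your proposal is correct and follows essentially the same strategy as the paper: bound $\mathcal{W}(0)$ and $\mathcal{W}(\psi)-\mathcal{W}(\psi')$ separately in the $\Pi_1$ and $\widetilde\Pi$ components, then apply Proposition~\ref{prop_operatorsinner} (with the $\mu_n^{-2}$ gain absorbing $\partial_\tau^2$). The paper handles the $f$ (and $g$) terms via the integral Mean Value form $f(a+\psi)-f(a+\psi')=(\psi-\psi')\int_0^1 f'(\cdots)\,ds$ together with Lemma~\ref{propertiesnorm2}(4) applied to $f'$ (resp.\ $g'$), whereas you telescope the power series term by term; these are interchangeable and yield the same $\kappa^{-2}$ factor.
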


\begin{proof}
	$\mathcal{W}(f, 0)$ is given by
	\[
	\mathcal{W}(f, 0)=-\wt \Pi\left[\frac 13 \left(\dfrac{-2\sqrt{2}i}{z}\sin(\tau)\right)^3\right]-f\left(\dfrac{-2\sqrt{2}i}{z}\sin(\tau)\right).
	\]
	Thus, since $f(z)=\er(z^5)$, it follows from Lemma \ref{propertiesnorm2}(4) that
	\[
	\begin{split}\left\|\Pi_1[\mathcal{W}(f, 0)]\right\|_{5}\leq&\, M \|f\|_r \left\|\dfrac{-2\sqrt{2}i}{z}\sin(\tau)\right\|_{\ell_1,1}^5\leq M \|f\|_r, \\
	\left\|  \wt \Pi[\mathcal{W}(f, 0)]\right\|_{\n,3}\leq&\, M\left(\left\|\dfrac{-2\sqrt{2}i}{z}\sin(\tau)\right\|_{\ell_1,1}^3+\dfrac{\|f\|_r}{\kappa^2 }\left\|\dfrac{-2\sqrt{2}i}{z}\sin(\tau)\right\|_{\ell_1,1}^5\right)\leq M\left(1+ \kappa^{-2} \|f\|_r\right).
	\end{split}
	\]
Hence, from Lemma \ref{propertiesnorm2} and Proposition \ref{prop_operatorsinner}, there exists $M_1>0$ such that
	\[
 	\begin{split}
	\left\|\pa_\tau^2{W}^\sharp(f,  0)\right\|_{\ell_1,3}&\leq 
\left\|\pa_\tau^2\mathcal{J}\left(\Pi_1[\mathcal{W}(f, 0)]
\sin(\tau)\right)\right\|_{\ell_1,3}+ \left\|\pa_\tau^2\mathcal{J}\left(\wt 
\Pi[\mathcal{W}(f, 0)]\right)\right\|_{\ell_1,3}\\
	&\leq M\left(\left\|\Pi_1[\mathcal{W}(f,  0)]\right\|_{5}+ \left\|\wt \Pi[\mathcal{W}(f,  0)]\right\|_{\ell_1,3}\right)\leq M_1 \left(1+ \|f\|_r\right).
 	\end{split}
	\]
To prove item $(2)$ on the Lipschitz property, assume that $\|\psi\|_{\n,3},\|\psi'\|_{\n,3}\leq R$, and notice that
	\[
	\begin{split}
	\mathcal{W}(f,  \psi)-\mathcal{W}(f,\psi')=& -\Pi_1\left[-\dfrac{8}{z^2}\sin^2(\tau)\left(\wt \Pi\left[\psi - \psi' \right]\right)-\dfrac{2\sqrt{2}i}{z}\sin(\tau)\left(\psi^2-(\psi')^2\right)\right.\\
	&\left.+\dfrac{1}{3}\left(\psi^3-(\psi')^3\right)\right]\sin(\tau)-\dfrac{1}{3} \wt \Pi\left[\left(\dfrac{-2\sqrt{2}i}{z}\sin(\tau)+\psi\right)^3-\left(\dfrac{-2\sqrt{2}i}{z}\sin(\tau)+\psi'\right)^3\right]\\
	&-f\left(\dfrac{-2\sqrt{2}i}{z}\sin(\tau)+\psi\right)+f\left(\dfrac{-2\sqrt{2}i}{z}\sin(\tau)+\psi'\right).
	\end{split}
	\]
	Thus, again from Lemma \ref{propertiesnorm2},  
	\[
	\begin{aligned}
	\left\|\Pi_1\left[\mathcal{W}(f, \psi)-\mathcal{W}(f, \psi')\right]\right\|_{5}\leq& \left\|\dfrac{8}{z^2}\sin^2(\tau)\right\|_{\n,2}\left\|\wt \Pi\left[\psi - \psi' \right]\right\|_{\n,3}\\
	&+\left(\left\|\dfrac{2\sqrt{2}i}{z}\sin(\tau)\right\|_{\n,1}\left\|\psi+\psi'\right\|_{\n,1}+\left\|\psi^2+\psi\psi'+(\psi')^2\right\|_{\n,2}\right)\left\|\psi-\psi'\right\|_{\n,3}\\
	&+\left\|\displaystyle\int_0^1f'\left(\dfrac{-2\sqrt{2}i}{z}\sin(\tau)+s\psi+(1-s)\psi'\right)ds\right\|_{\n,2}\left\|\psi-\psi'\right\|_{\n,3}\\
	\leq&M\left(\left\|\wt \Pi\left[\psi - \psi' \right]\right\|_{\n,3}+\dfrac{1}{\kappa^2} (R+ \|f'\|_{\frac r2})\left\|\psi-\psi'\right\|_{\n,3}\right) \\
	\leq & M\left(\left\|\wt \Pi\left[\psi - \psi' \right]\right\|_{\n,3}+\dfrac{1}{\kappa^2} (R+ \|f\|_{r})\left\|\psi-\psi'\right\|_{\n,3}\right),
	\end{aligned}
	\]
	and, recalling from \eqref{g} that $g(z)= \frac {z^3}3 + f(z) =\er(z^3)$, we have that
	\[
	\begin{aligned}
	\left\|\wt \Pi\left[\mathcal{W}(f,\psi)-\mathcal{W}(f, \psi')\right]\right\|_{\n,3}\leq&\left\|\displaystyle
	\int_0^1 g'\left(\dfrac{-2\sqrt{2}i}{z}\sin(\tau)+s\psi+(1-s)\psi'\right)ds\right\|_{\n,0}\left\|\psi -\psi' \right\|_{\n,3}\\
	\leq& \dfrac{M}{\kappa^2} \|g'\|_{\frac r2} \left\|\psi-\psi'\right\|_{\n,3} \leq  \dfrac{M}{\kappa^2} (1+\|f\|_{r}) \left\|\psi-\psi'\right\|_{\n,3}.
	\end{aligned}
	\]
	Item $(2)$ follows from the estimates above and Proposition \ref{prop_operatorsinner}.
	
Finally we prove the analyticity of $\mathcal{W}^\sharp (f, \psi)$. Since $\mathcal{J}$ is linear, it suffices to show that $\mathcal{W}(f, \psi)$ is analytic in $f \in \FF_r^c$ and $\psi \in \mathcal{B}_0 (R) \subset \mathcal{X}_{\ell_1,3}$, which is equivalent to the analyticity of $(f, \psi) \to f\big(\frac{-2\sqrt{2}i}{z}\sin(\tau)+\psi\big)$  
as the analyticity of the other terms is obvious. For any $\psi_0 \in \mathcal{B}_0 (R)$, let us denote 
\[
\varphi_0 = \frac{-2\sqrt{2}i}{z}\sin(\tau) + \psi_0,  
\]
which, due to Lemma \ref{propertiesnorm2}, satisfies 
\[ 
\|\varphi_0\|_{\ell_1, 1} \le 3 +\kappa^{-2} \|\psi_0\|_{\ell_1, 3} \le 3+ \kappa^{-2}R \le 4.
\]
Consider also $f \in \FF_r^c$,
\[ 
f (u)  = \sum_{k=0}^\infty f_k u^{k}, \quad f_k \in \C,  \quad f_{k}=0, \; \forall  k\in \{j \in \N: 2| j \text{ or } j<5\}, \quad \|f\|_r < \infty,
\]
where the coefficient sequence $(f_k)$ can be viewed as the coordinates of $f$. Near $\psi_0$, one may compute 
\begin{align*}
 f\Big(\frac{-2\sqrt{2}i}{z}\sin(\tau) + \psi_0 +\psi\Big) =& f(\varphi_0 + \psi) = \sum_{k=0}^\infty f_k (\varphi_0 + \psi)^{k} =  \sum_{k=0}^\infty f_k \sum_{j=0}^{k} \frac {k!}{j! (k-j)!}\varphi_0^{k-j} \psi^{j} \\
= &  \sum_{j=0}^\infty \Big(\sum_{k=j}^{\infty}  \frac {k!}{j! (k-j)!} f_k\varphi_0^{k-j} \Big) \psi^{j} \triangleq \sum_{j=0}^\infty A_j (\varphi_0) (f, \psi),
\end{align*}
where
\[A_j (\varphi_0) (f, \psi) = \sum_{k=j}^{\infty}  \frac {k!}{j! (k-j)!} f_k\varphi_0^{k-j} \psi^{j}. \] 
The $(j+1)$-linear transformation $A_j (\varphi_0)$ of $f$ and $\psi$ can be estimated by Lemma \ref{propertiesnorm2} as, for $j =0$,
\[
\| A_0 (\varphi_0) (f, \psi) \|_{\ell_1, 3} \le \sum_{k=5}^{\infty} |f_k| \| \varphi_0^{k} \|_{\ell_1, 3} \leq \sum_{k=5}^{\infty} |f_k| r^k r^{-k} \| \varphi_0 \|_{\ell_1, 0}^{k-3} \| \varphi_0 \|_{\ell_1, 1}^3 \leq M \sum_{k=5}^{\infty} |f_k| r^k \big(\frac{r \kappa}4\big)^{3-k} \leq M \|f\|_r,
\]
and for $j \ge 1$, 
\begin{align*}
 \| A_j (\varphi_0) (f, \psi) \|_{\ell_1, 3}& \leq  \sum_{k=\max\{j, 5\}}^{\infty}  \frac {k!}{j! (k-j)!} |f_k| \| \varphi_0^{k-j} \psi^{j}\|_{\ell_1, 3} \\
&\leq  
\| \psi\|_{\ell_1, 3}\sum_{k=\max\{j, 5\}}^{\infty} |f_k| r^k \frac {k!}{j! (k-j)!}  r^{-k} \| \varphi_0\|_{\ell_1, 0}^{k-j} \|\psi\|_{\ell_1, 0}^{j-1} \\
&\leq \| \psi\|_{\ell_1, 3}^j \sum_{k=\max\{j, 5\}}^{\infty} |f_k| r^k \frac {k! 4^{k-j}}{j! (k-j)!}  r^{-k} \kappa^{-k-2j+3}. 
\end{align*}
Since, using $\frac {k!}{(k-j)!} \le k^j$, for any $k \ge \max\{j, 5\}$, 
\begin{align*}
 \frac {k! 4^{k-j}}{j! (k-j)!}  r^{-k} \kappa^{-k-2j+3} \le & \frac {1}{j! 4^j \kappa^{2j-3}} k^j \big(\frac 4{r\kappa} \big)^k = \frac {j^j}{j! 4^j \kappa^{2j-3}} \Big(\log \frac {r\kappa}4\Big)^{-j} \Big( \Big( \frac kj \log \frac {r\kappa}4\Big) e^{-\frac kj  \log \frac {r\kappa}4} \Big)^j \\
\le & \frac {j^j \kappa^3}{j! e^j} \Big(4 \kappa^2 \log \frac {r\kappa}4\Big)^{-j} \le M j^{-\frac 12} \kappa^3 \Big(4 \kappa^2 \log \frac {r\kappa}4\Big)^{-j},  
\end{align*}
where the Stirling's approximation was used in the last step. Hence, $A_j(\varphi_0)$ is a bounded multi-linear transformation, which satisfies
\[
\| A_j (\varphi_0) (f, \psi) \|_{\ell_1, 3} \leq M j^{-\frac 12} \kappa^3 \Big(4 \kappa^2 \log \frac {r\kappa}4\Big)^{-j}  \|f\|_r \| \psi\|_{\ell_1, 3}^j. 
\]
This estimate also implies the analyticity of $f\big(\frac{-2\sqrt{2}i}{z}\sin(\tau)+\psi\big)$ in $f$ and $\psi$. 
\end{proof}

\begin{proof}[Proof of Theorem \ref{innerthm}(1)]
Much as in the proof of Theorem \ref{outerthm}, we use an equivalent norm on $\mathcal{X}_{\ell_1, 3}$ 
\[
\| \psi \|_{*} : = \|\psi_1\|_3 + 2 M_2 \|\wt \Pi [\psi]\|_{\ell_1, 3},
\]
where $M_2$ is the constant resulted in Proposition \ref{tecnicoinner}(2). Let $R_0>0$ and consider $f \in \FF_r^c$ with $\|f\|_r \le R_0$. 
Using Proposition \ref{tecnicoinner}, it is straight forward to verify that, 
with in the above norm $\|\cdot\|_*$ for sufficiently large $\kappa>0$, $\mathcal{W}^\sharp$ 
is a contraction on the closed ball of $\mathcal{X}_{\ell_1, 3}$ with radius 
$R= 3M_1 (1+2M_2) (1+R_0)$ with Lipschitz constant $\kappa^{-2} (1+R+R_0) M_2 (1+2M_2) + \frac 12 <\frac 23$. The unique fixed point $\psi^u$ depends on $f \in \FF_r^c$ analytically and gives the unstable solution 
$\phi^{0, u}$ in the form of \eqref{asympinner} which satisfies the desired 
estimates.   
Using the same arguments in the proof of Theorem \ref{outerthm}, one can conclude $\Pi_{2l}[\psi^u]\equiv 0$, $\forall l\geq0$. 
\end{proof}

\subsection{The difference between the solutions of the Inner Equation}\label{difdinnersec}

This section is devoted to prove the second and third statement of Theorem \ref{innerthm}. We consider the  two solutions $\phi_0^{u,s}$ of the inner equation \eqref{inner} which are given by \eqref{innersol} and  we study the difference 
\begin{equation*}
\Delta\psi(z,\tau)=\phi^{0,u}(z,\tau)-\phi^{0,s}(z,\tau)=\psi^{u}(z,\tau)-\psi^{s}(z,\tau),
\end{equation*}
for $z\in \mathcal{R}^{\mathrm{\mathrm{in}},+}_{\theta,\kappa}= D^{u,\mathrm{\mathrm{in}}}_{\theta,\kappa}\cap D^{s,\mathrm{\mathrm{in}}}_{\theta,\kappa}\cap\{ z:\ z\in i\R \textrm{ and }\Ip(z)<0 \}$ and $\tau\in\mathbb{T}$. For this purpose, we actually work on \eqref{inner} as an ill-posed dynamical system of real independent variable along $\mathcal{R}^{\mathrm{\mathrm{in}},+}_{\theta,\kappa}$.  

\begin{remark}
	We are interested in the behavior of the difference in the connected component $\mathcal{R}^{\mathrm{\mathrm{in}},+}_{\theta,\kappa}$ of $D^{u,\mathrm{\mathrm{in}}}_{\theta,\kappa}\cap D^{s,\mathrm{\mathrm{in}}}_{\theta,\kappa}\cap i \R$ because the change $z=\e^{-1}(y-i\pi/2)$ brings the origin $y=0$ into $z=-i\e^{-1}\pi/2\in \mathcal{R}^{\mathrm{\mathrm{in}},+}_{\theta,\kappa}$. 
\end{remark}

Let $r\gg 1$. 
We define the change of variables
\begin{equation}\label{def:ChangeToReal}
z=-ir, \quad \Psi_1 (r) = \phi_1^0 (-ir), \quad \Psi_{n\pm} (r) = \pa_r \big(\phi_n^0 (-ir)\big) \pm \sqrt{n^2 -1} \phi_n^0 (-ir), \; \; n \ge 3.
\end{equation}
That is
\begin{align} 
\phi^0 (-ir,\tau) &= \Psi_1(r) \sin \tau + \sum_{n\ge 3} \frac 1{2\sqrt{n^2-1}} (\Psi_{n+}(r) - \Psi_{n-}(r)) \sin n\tau,\label{E:Psi-phi-0}\\
\pa_r \big(\phi^0 (-ir, \tau)\big)&= \pa_r \Psi_1 (r) \sin \tau + \sum_{n\ge 3} \frac 1{2} (\Psi_{n+}(r) + \Psi_{n-}(r)) \sin n\tau. \label{E:Psi-phi-1}
\end{align}
Then, equation \eqref{inner} takes the form  
\begin{equation} \label{E:inner-1} \begin{cases}
\pa_{r}^2 \Psi_1 - \frac 14 \Psi_1^3 = F_1 (\Psi) \\
\pa_r \Psi_{n\pm}= \pm \sqrt{n^2 -1} \Psi_{n\pm} + F_{n} (\Psi),
\end{cases} \end{equation}
where 
\begin{align*}
F = (F_n)_{n=1}^\infty, \qquad F_1 (\Psi) = &\Pi_1 \big[ \, \frac 13 (\phi^0)^3 + f(\phi^0)  \big]-\frac 14 \Psi_1^3, \qquad F_{n} (\Psi) =\Pi_n \big[\frac 13 (\phi^0)^3 + f(\phi^0)  \big], \quad n \ge 3.
\end{align*}
Since $\frac 14 \Psi_1^3$ in the nonlinearity is isolated into the left side 
of \eqref{E:inner-1}, the cubic terms in  $F_1$ do not include $\Psi_1^3$. 
Note that, by item (1) of Theorem \ref{innerthm}, we can restrict to the space 
of \emph{odd} $n$'s.

Let  $\Psi^{*} (r)$, $* =u, s$, be the functions $\phi^{0,\ast}$, $\ast=u,s$, expressed in the 
coordinates  introduced in \eqref{def:ChangeToReal}. We are 
interested in $\Psi^s - \Psi^u$ as $r\to +\infty$ where, since we shall 
consider certain local invariant manifolds/foliation which are not necessarily 
analytic submanifolds, we work in the space $\ell_2$ with the smooth norm  
\[
\|\Psi\|_{\ell_2}^2 :=  |\Psi_1|^2 + |\pa_r \Psi_1|^2 + \sum_{n=3, 
\text{odd}}^\infty n^2 (|\Psi_{n+}|^2 +|\Psi_{n-}|^2)  
\]
and treat $\Psi_1, \pa_r \Psi_1, \Psi_{n\pm}$ as 2-dim real vectors. We also 
define
\[
 \Psi_c=(\Psi_1, \pa_r \Psi_1),\qquad \Psi_\pm = (\Psi_{n\pm})_{n=3}^\infty.
\]
Part (1) of Theorem \ref{innerthm} implies that $\Psi^{u, s}$ do belong 
to the $\ell_2$ space. 

It is easy to see that $F$ defines a smooth mapping on the 
$\ell_2$ space. Due to both positively and negatively unbounded exponential 
growth rates caused by the linear parts, \eqref{E:inner-1} is ill-posed both 
forward and backward in $r$. However, after multiplying a smooth  cut-off 
function based on $\|\cdot\|_{\ell_2}$ to the nonlinearities $F$, 
the standard Lyapunov-Perron approach still yields 
smooth local invariant manifolds and foliations near $\Psi=0$, including an 
infinite dimensional center-stable manifold $W^{cs}$ where \eqref{E:inner-1} is 
well-posed for $r>0$ (see e.~g.~Theorem 4.4 in \cite{CL88}), the 4-dim center manifold $W^c\subset W^{cs}$ (again see Theorem 4.4 in \cite{CL88}), and stable 
fibers inside $W^{cs}$ transverse to $W^c$ (see e.~g.~Theorem 4.3 in \cite{CLL91}).\footnote{Even though the linear operators in \cite{CLL91, CL88} are assumed to be sectorial operators generating analytic semigroups, which is not satisfied by wave type PDEs, the same proofs and results still hold when the nonlinearity are smooth mappings on the phase spaces (i.~e.~without loss of regularity), which is the case of \eqref{E:inner-1} when posed in $\ell_2$ space.} 
We shall outline a framework to derive of $W^{cs}$ and $W^c$ and the stable foliation inside $W^{cs}$ for \eqref{E:inner-1}.

Following the standard cut-off technique, take $\gamma \in C^\infty( \R, \R)$ satisfying $\mathrm{supp}(\gamma) \subset (-2, 2)$ and $\gamma|_{[-1, 1]} =1$. Let $\delta>0$ and 
\[
F^\# (\Psi) = \big(F_n^\# (\Psi) \big)_{n=1}^\infty, \quad \; F_1^\# = \gamma \left(\frac{\|\Psi\|_{\ell_2}^2}{\delta^2}\right) \left(F_1 (\Psi) + \frac 14 \Psi_1^3\right), \quad F_n^\# = \gamma \left(\frac{\|\Psi\|_{\ell_2}^2}{\delta^2}\right) (F_n (\Psi)), \; n \ge 3. 
\]
Consider 
\begin{equation} \label{E:inner-2} \begin{split}
\pa_{r}^2 \Psi_1 & = F_1^\# (\Psi) \\
\pa_r \Psi_{n\pm}&= \pm \sqrt{n^2 -1} \Psi_{n\pm} + F_{n}^\# (\Psi),
\end{split} \end{equation}
whose nonlinearity has small Lipschitz constants for $\delta \ll 1$. We shall work on the global center-stable and center manifolds and stable foliations of \eqref{E:inner-2}. 
It is clear that solutions of \eqref{E:inner-1} and \eqref{E:inner-2} coincide in the $\delta$-ball of $\ell_2$ and thus we obtain local invariant manifolds and foliations of \eqref{E:inner-1} containing $\Psi^{u, s}(r)$.\\

\noindent {\bf Center-stable manifold.} The center-stable mainfold $W^{cs} = \{ \Psi_+ = h^{cs} (\Psi_c, \Psi_-)\}$  of \eqref{E:inner-2} is represented as a graph of a mapping $h^{cs}$ satisfying 
\begin{itemize} 
\item $h^{cs} \in C^4$, $D^j h^{cs} (0)=0$, $j=0, 1, 2,$ and $h^{cs}$ is odd, i.~e.~$h^{cs} (-\Psi_c, -\Psi_-) = -h^{cs} (\Psi_c, \Psi_-)$.
\item {\it Invariance}: if $\Psi_* \in W^{cs}$, then there exists a unique solution $\Psi(r) \in W^{cs}$, $r\ge 0$, to \eqref{E:inner-2} such that $\Psi(0)=\Psi_*$ and 
\[
\Psi (\cdot) \subset \EE^{cs} := \big\{ \psi \in C^0 \big([0, \infty), \ell_2 \big) \mid 
\sup_{r\ge 0} e^{-r} \| \psi(r) \|_{\ell_2} < \infty \big\}. 
\]
\item Any solution $\Psi(\cdot) \in \EE^{cs}$ to \eqref{E:inner-2} satisfies $\Psi(r) \in W^{cs}$ for any $r\ge 0$. 
\end{itemize} 
 
To outline its construction, one observes that a solution $\Psi(r)$, $r \ge 0$, to \eqref{E:inner-2}belongs to  $ \EE^{cs}$ iff 
\[\begin{aligned} 
\Psi_1 (r)& = \Psi_1 (0) + r \pa_r \Psi_1 (0) + \int_0^r (r-\tau)  F_1^\# (\Psi(\tau) d\tau, \\
\Psi_{n-} (r)& = e^{- \sqrt{n^2 -1} r} \Psi_{n-}(0)+ \int_0^r e^{- \sqrt{n^2 -1} (r-\tau)} F_n^\# (\Psi(\tau) ) d\tau, \\ 
\Psi_{n+} (r)& = - \int_r^{+\infty} e^{\sqrt{n^2 -1} (r-\tau)} F_n^\# (\Psi(\tau)) d\tau. 
\end{aligned}\]
Denote the above righthand side as $\CT\big(\Psi_c (0), \Psi_-(0), \Psi(\cdot)\big)$. Following the proof of Theorem 4.4 (mostly consisting of Lemma 3.1 -- 3.4) in \cite{CL88} (or that of Theorem 4.2 in \cite{CLL91}), one may prove that, for $\delta \ll 1$, 
\begin{itemize}
\item[a.)] $\CT$ is a contraction in $\Psi (\cdot) \in \EE^{cs}$ possessing a unique fixed point $\Psi\big(\cdot, \Psi_c (0), \Psi_-(0)\big) \in \EE^{cs}$ depending on parameters $\Psi_c (0)$ and $\Psi_-(0)$;
\item[b.)] the mapping $h^{cs}$, defined by 
\[
h^{cs} (\Psi_c (0), \Psi_-(0)) =\left. \Psi_+ \big(r, \Psi_c (0), \Psi_-(0)\big)\right|_{r=0}
\]
from this fixed point, gives the smooth center-stable manifold $W^{cs}$ invariant under \eqref{E:inner-2}. 
\end{itemize}
The oddness of $h^{cs}$ is obtained from the fact  $\Psi(\cdot) \in \EE^{cs}$ is a solution iff so is $-\Psi(\cdot) \in \EE^{cs}$ due to the oddness of \eqref{E:inner-2}.

The property $D h^{cs} (0)=0$ always holds and corresponds to the tangency of $W^{cs}$ to the center-stable subspace. 
Here the extra $D^2 h^{cs} (0)=0$ is a natural consequence of the oddness of $h^{cs}$ from that of  \eqref{E:inner-2}. More essentially, it is implied by the lack of the quadratic nonlinearity in \eqref{E:inner-2}. 
\\

\noindent {\bf Inside $W^{cs}$: the center manifold $W^c$.} Inside the center-stable invariant manifold there is the 4-dimensional center manifold $W^c= \{ \Psi \in W^{cs} \mid \Psi_-  = h^c (\Psi_c) \}$ of \eqref{E:inner-2}, which is represented as a graph of a mapping $h^{c}$ satisfying 
\begin{itemize} 
\item $h^{c} \in C^4$ is odd, $D^j h^{c} (0)=0$, $j=0, 1, 2$.
\item {\it Invariance}: if $\Psi_* \in W^{c}$, then there exists a unique solution $\Psi(r) \in W^{c}$, $r\in \R$, to \eqref{E:inner-2} such that $\Psi(0)=\Psi_*$ and 
\[
\Psi (\cdot) \subset \EE^{c} := \big\{ \psi \in C^0 \big(\R, \ell_2 \big) \mid 
\sup_{r\in \R} e^{-|r|} \| \psi(r) \|_{\ell_2} < \infty \big\}. 
\]
\item Any solution $\Psi(\cdot) \in \EE^{c}$ to \eqref{E:inner-2} satisfies $\Psi(r) \in W^{c}$ for any $r\in \R$. 
\end{itemize} 
Due to the invariance of $W^{cs}$, when restricted to $W^{cs}$, \eqref{E:inner-2} is equivalent to 
\begin{equation} \label{E:inner-cs} \begin{cases}
\pa_{r}^2 \Psi_1 = F_1^{cs} (\Psi_c, \Psi_-) \\
\pa_r \Psi_{n-}= - \sqrt{n^2 -1} \Psi_{n-} + F_{n}^{cs} (\Psi_c, \Psi_-),
\end{cases} \end{equation} 
where 
\[
F^{cs} (\Psi_c, \Psi_-) = \big( F_n^{cs} (\Psi_c, \Psi_-) \big)_{n=1}^\infty := F^\# \big(\Psi_c, \Psi_-,   h^{cs} (\Psi_c, \Psi_-) \big). 
\]
The construction of $W^c$ is essentially that of an unstable manifold in $W^{cs}$ and thus again can follow from Theorem 4.4 in \cite{CL88} as illustrated in the above framework for $W^{cs}$.
\\

\noindent {\bf Inside $W^{cs}$: stable foliation and fiber coordinates.}
The invariant foliation theorem (e.~g.~Theorem 4.3 in \cite{CLL91}) implies that, for $\delta \ll1$, there exist  $h^s (\tilde \Psi_c, \tilde \Psi_-) \in \R^4$  (which we call the stable foliation mapping) and $\Psi= \Gamma(\tilde \Psi_c, \tilde \Psi_-)$ on $W^{cs}$ (which we call  the stable fiber coordinate system) such that 
\begin{itemize} 
\item $h^{s} \in C^4$ is odd, $D^j h^{s} (0)=0, \; j=0, 1, 2$, and $h^s( \tilde \Psi_c, 0) =0$ for any $\tilde \Psi_c$. 
\item $\Psi = (\Psi_c, \Psi_-, \Psi_+)= \Gamma( \tilde \Psi_c, \tilde \Psi_-)$ is defined as
\begin{equation} \label{E:fibercoord}
\Psi_c = \tilde \Psi_c + h^s (\tilde \Psi_c, \tilde \Psi_-), \quad \Psi_- = h^c (\tilde \Psi_c) + \tilde \Psi_-, \quad \Psi_+ = h^{cs} (\Psi_c, \Psi_-).
\end{equation}  
\item Let $\Psi_j(r) =  \Gamma( \tilde \Psi_{c, j}(r), \tilde \Psi_{-, j}(r))$, $j=1,2$, $r >0$, be solutions to \eqref{E:inner-2} and $\tilde \Psi_{c, 1}(0) = \tilde \Psi_{c, 2}(0)$, then 
\begin{itemize} 
\item $\tilde \Psi_{c, 1}(r) = \tilde \Psi_{c, 2}(r)$ for all $r\ge 0$ ({\it invariance}), and 
\item there exists $M>0$ depending only on $f$ such that 
\[
\|\tilde \Psi_{-, 1}(r) - \tilde \Psi_{-, 2}(r) \|_{\ell_2} \le M e^{-2r} \|\tilde \Psi_{-, 1}(0) - \tilde \Psi_{-, 2}(0) \|_{\ell_2}, \quad \forall r\ge 0.
\] 
\end{itemize} 
\item Consequently $W^c = \{ \Gamma(\tilde \Psi_c, 0) :  \tilde \Psi_c \in \R^4\}$ and, if $\Psi(r) =  \Gamma( \tilde \Psi_c(r), \tilde \Psi_-(r)) \in W^{cs}$, $r >0$, is a solution to \eqref{E:inner-2}, then $\Psi_b(r) =  \Gamma( \tilde \Psi_c(r), 0) \in W^{c}$ is also a solution to \eqref{E:inner-2}, called the base solution of $\Psi(r)$.  
\end{itemize}
For each $\tilde \Psi_c$, the  submanifold given by the image $\Gamma (\tilde \Psi_c, \cdot)$ is often referred to as a stable fiber. 

Note that the functions $h^{cs}$ and $h^c$ have been already obtained. Therefore, to construct $\Gamma$ we only need to show the existence of $h^s$. To this end, we only need to work  with \eqref{E:inner-cs}. Let $(\Psi_c^c (r), \Psi_-^c (r) = h^c( \Psi_c^c(r)) \in W^c$ be solution to \eqref{E:inner-cs}. One may compute that $(\Psi_c^c (r), \Psi_-^c(r)) + (\tilde \Psi_c (r), \tilde \Psi_-(r))$ where 
\[
(\tilde \Psi_c (\cdot), \tilde \Psi_-(\cdot)) \in \EE^{ss} = \big\{  (\tilde \psi_c, \tilde \psi_-) \in C^0([0, + \infty), \ell_2) \mid  \sup_{r\ge 0} e^{2r} \| (\tilde \psi_c, \tilde \psi_-) \|_{\ell_2} < \infty\},
\]
is also a solution  to \eqref{E:inner-cs} iff, for all $r \ge 0$, 
\[\begin{split} 
\tilde \Psi_1 (r)& = - \int_r^{+\infty} (r-\tau) \left( F_1^{cs} (\Psi_c^c(\tau) + \tilde \Psi_c (r), \Psi_-^c(\tau) + \tilde \Psi_- (r)) - F_1^{cs} (\Psi_c^c(\tau), \Psi_-^c(\tau))  \right) d\tau, \\
\tilde \Psi_{n-} (r)& = e^{- \sqrt{n^2 -1} r}  \tilde \Psi_- (0) + \int_0^r e^{- \sqrt{n^2 -1} (r-\tau)} \left( F_n^{cs} (\Psi_c^c(\tau) + \tilde \Psi_c (r), \Psi_-^c(\tau) + \tilde \Psi_- (r)) - F_n^{cs} (\Psi_c^c(\tau), \Psi_-^c(\tau))  \right)  d\tau.  
\end{split} \]  
Following the proof of Theorem 4.3 (mostly contained in Section 3) in \cite{CLL91}), for $\delta \ll 1$, 
\begin{itemize}
\item [a.)] the above right side is a contraction in $\EE^{ss}$ possessing a unique fixed point $\big(\tilde \Psi_c \big(\cdot, \tilde \Psi_-(0)\big), \tilde \Psi_- \big(\cdot, \tilde \Psi_-(0)\big) \in \EE^{ss}$ depending on the parameter $\tilde \Psi_- (0)$; 
\item [b.)] the desired mapping $h^s$ is given by $h^{s} (\tilde \Psi_-(0)) = \tilde \Psi_c \big(r, \tilde \Psi_-(0)\big)|_{r=0}$. The oddness of $h^{s}$ is also obtained from the oddness of \eqref{E:inner-cs}.
\end{itemize}

\noindent {\bf Splitting estimates.} By item (1) of  Theorem \ref{innerthm} and \eqref{def:ChangeToReal}, the stable/unstable solutions $\Psi^{u,s}$ to \eqref{E:inner-1} satisfy $\lim_{r\to +\infty}\Psi^{u,s}(r,\tau)=0$ and therefore they belong to the center-stable manifold $W^{cs}$. Thus, we can express them in the stable fiber coordinates,
\begin{equation} \label{E:fiber-coord}
\Psi^{u,s} (r) = \Gamma\big( \tilde \Psi_c^{u, s}(r), \tilde \Psi_-^{u,s} (r) \big), \quad \tilde \Psi_c^{u,s} (r) = \big(\tilde \Psi_1^{u,s} (r), \pa_r \tilde \Psi_1^{u, s} (r) \big),
\end{equation} 
and let $\Psi_b^{u, s}$ be their base points 
\begin{equation*}
\Psi_b^{u,s} (r) = \Gamma(\tilde \Psi_c^{u, s} (r), 0) = \Big(\tilde \Psi_c^{u, s} ( 
r), h^c \big(\tilde  \Psi_c^{u, s} ( r)  \big), h^{cs} \big(\tilde \Psi_c^{u, s} 
( r) ,  h^c \big( \tilde \Psi_c^{u, s} ( r)  \big)\big)\Big) \in W^c, 
\end{equation*}
which are solutions to \eqref{E:inner-1} themselves and satisfy
\[
\|\Psi^{u, s} (r) - \Psi_b^{u, s} (r)\|_{\ell_2} \le \OO(e^{-2r}), \; \text{ as 
} r\to +\infty.  
\]

\begin{lemma} \label{L:base} 
$\tilde \Psi_c^u (r) = \tilde \Psi_c^s (r)$. 
\end{lemma}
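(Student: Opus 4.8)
The plan is to reduce the statement to the coincidence of two orbits of the \emph{finite-dimensional}, well-posed center flow on $W^c$, and then to exploit the fact that the linearized center flow has only polynomial growth rates while the difference $\Delta\psi$ decays like $\mathcal{O}(|z|^{-3})$ by Theorem \ref{innerthm}(1).

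First I would record that, by the invariant-foliation structure built above, the base points $\Psi_b^{u,s}(r)=\Gamma(\tilde\Psi_c^{u,s}(r),0)\in W^c$ of the fibers through $\Psi^{u,s}(r)$ are themselves orbits of the flow restricted to $W^c$ (the foliation being invariant, the base point of $\Psi^{u,s}(r')$ is the image of the base point of $\Psi^{u,s}(r)$ under the time-$(r'-r)$ map), and they obey the fiber-contraction estimate $\|\Psi^{u,s}(r)-\Psi_b^{u,s}(r)\|_{\ell_2}=\mathcal{O}(e^{-2r})$ as $r\to+\infty$. Since the center chart $\Psi_c\mapsto\big(\Psi_c,h^c(\Psi_c),h^{cs}(\Psi_c,h^c(\Psi_c))\big)$ is a diffeomorphism onto $W^c$, proving $\tilde\Psi_c^u\equiv\tilde\Psi_c^s$ is equivalent to proving $\Psi_b^u\equiv\Psi_b^s$. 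From Theorem \ref{innerthm}(1), on the overlap $\mathcal{R}^{\mathrm{in},+}_{\theta,\kappa}$ one has $|\psi_1^u(z)-\psi_1^s(z)|\le \mathcal{O}(|z|^{-3})$; combined with the fiber estimate (which, via the $\ell_2$ norm, controls both the $\Psi_1$- and $\partial_r\Psi_1$-components) and with $\Psi_1^{u,s}(r)=\tfrac{2\sqrt2}r+\psi_1^{u,s}(-ir)$, the difference $\beta(r):=\Psi_b^u(r)-\Psi_b^s(r)$ has first component $\beta_1(r):=\Psi_{1,b}^u(r)-\Psi_{1,b}^s(r)=\mathcal{O}(r^{-3})$ as $r\to+\infty$, with $\partial_r\beta_1=\mathcal{O}(r^{-4})$.

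Next I would write down the variational equation. The restriction of \eqref{E:inner-1} to $W^c$ reads $\partial_r^2\Psi_{1}=\tfrac14\Psi_{1}^3+R_1(\Psi_c)$ with $R_1(\Psi_c):=F_1\big(\Psi_c,h^c(\Psi_c),h^{cs}(\Psi_c,h^c(\Psi_c))\big)$; using $D^jh^c(0)=0$ for $j=0,1,2$, the identity $\Pi_1[\sin^3\tau]=\tfrac34$ (so that the leading cubic in $F_1$ exactly cancels $\tfrac14\Psi_1^3$), and $f(u)=\mathcal{O}(u^5)$, one checks $R_1(\Psi_c)=\mathcal{O}(|\Psi_c|^5)$, hence $DR_1=\mathcal{O}(|\Psi_c|^4)$. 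Subtracting the equations for $\Psi_b^u$ and $\Psi_b^s$ in mean-value form, and using $\tfrac34(\Psi_{1,b}^{\ast})^2=\tfrac34\big(\tfrac{2\sqrt2}r\big)^2\big(1+\mathcal{O}(r^{-2})\big)=\tfrac6{r^2}+\mathcal{O}(r^{-4})$ along each base orbit, $\beta_1$ solves a scalar linear equation
\[
\beta_1''=\Big(\tfrac6{r^2}+\mathcal{O}(r^{-4})\Big)\beta_1+\mathcal{O}(r^{-4})\,\beta_1'.
\]
Substituting $r=e^t$ turns this into $\ddot u-\dot u-6u=\mathcal{O}(e^{-2t})u+\mathcal{O}(e^{-3t})\dot u$ (with $u(t)=\beta_1(e^t)$), a constant-coefficient equation with characteristic roots $3$ and $-2$ (distinct, spectral gap) plus a perturbation that is absolutely integrable on $[t_0,\infty)$. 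By the classical asymptotic-integration theory (Levinson's theorem) there is then a fundamental system $\{\phi_+,\phi_-\}$ of the $\beta_1$-equation with $\phi_+(r)=r^{3}\big(1+o(1)\big)$ and $\phi_-(r)=r^{-2}\big(1+o(1)\big)$ as $r\to+\infty$.

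Finally, writing $\beta_1=c_+\phi_++c_-\phi_-$ and comparing with $\beta_1=\mathcal{O}(r^{-3})$: the unbounded branch forces $c_+=0$; then $\beta_1=c_-\phi_-\sim c_-r^{-2}$, and $\mathcal{O}(r^{-3})=o(r^{-2})$ forces $c_-=0$. Hence $\beta_1\equiv0$ for $r\ge r_0$, so $\beta\equiv0$ there, and by uniqueness for the (smooth, finite-dimensional) center flow $\Psi_b^u\equiv\Psi_b^s$, i.e. $\tilde\Psi_c^u\equiv\tilde\Psi_c^s$. The step I expect to be the main obstacle is making the $\{r^3,r^{-2}\}$ asymptotics rigorous: one must verify carefully that the nonlinear corrections ($DR_1$, the $\mathcal{O}(r^{-2})$ error in $(\Psi_{1,b})^2$, and the $\beta_1'$-coupling) really amount to an $L^1$ perturbation (exponentially small in $t$) of a constant-coefficient equation with distinct characteristic exponents, so that Levinson's theorem applies; the remaining bookkeeping on the orders of the center-manifold/foliation corrections is already supplied by the Lyapunov--Perron construction recalled above.
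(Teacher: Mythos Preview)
Your proposal is correct and follows essentially the same route as the paper: both reduce to the variational equation $\beta_1'' - \tfrac{6}{r^2}\beta_1 = (\text{lower order})$ on the center manifold and use the $\{r^3, r^{-2}\}$ fundamental pair against the $\mathcal{O}(r^{-3})$ decay of $\Delta\psi$ from Theorem~\ref{innerthm}(1) to force $\beta_1\equiv 0$. The only technical difference is in handling the lower-order terms: the paper treats them as an inhomogeneity, uses variation of parameters with the exact solutions $r^3$, $r^{-2}$, and closes via a bootstrap (setting $B(r)=\sup_{r'\ge r}(r')^3|\tilde\beta_1(r')|$ and deriving $|\tilde\beta_1|\le \mathcal{O}(r^{-4}B(r))$, a contradiction unless $B\equiv 0$), whereas you absorb them into the coefficients and invoke Levinson's theorem after the substitution $r=e^t$. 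Both arguments are standard and equally valid here; the paper's is slightly more self-contained, yours slightly more systematic.
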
 

\begin{proof}
From item (1) of Theorem \ref{innerthm} and Lemma \ref{propertiesnorm2}, $\pa_\tau \Psi^{u,s}(r)$ have exactly the same leading order term proportional to $r^{-1} \sin \tau$ with remainders of $\OO(r^{-3})$ in $\ell_2$ metric and $\pa_r \Psi^{u, s} (r)$ with remainders of $\OO(r^{-4})$. Since $Dh^c(0)=0$ and $Dh^{cs}(0)=0$, we have, for $r \gg 1$,   
\begin{align*} 
\OO(r^{-3}) \ge & \| \Psi^u (r) - \Psi^s (r) \|_{\ell_2} \ge \|\tilde \Psi_b^u (r) - \tilde \Psi_b^s (r)\|_{\ell_2} - \OO(e^{-2r}) \ge \frac 12 |\tilde \Psi_c^u (r) -\tilde \Psi_c^s (r)| - \OO(e^{-2r}), 
\end{align*}
and thus 
\begin{equation} \label{E:approx-1} 
|\tilde \Psi_c^u (r) - \tilde \Psi_c^s (r)|  \le \OO(r^{-3}). 
\end{equation}
 Let 
\[
\big( \tilde \beta_1 (r), \pa_r \tilde \beta_1 (r)\big) =\tilde \Psi_c^u (r) - \tilde \Psi_c^s (r), \quad B (r) = \sup_{r'\ge r} (r')^3 |\tilde \Psi_c^u (r') - \tilde \Psi_c^s (r') | <\infty,
\]
where \eqref{E:approx-1} is also used. 
Recall that $\Psi_b^{u,s}(r)$ are solutions to \eqref{E:inner-1} contained in the center manifold $W^c$, governed by the dynamics of their center coordinates $\tilde \Psi_c^{u, s}(r)$.
Substituting $h^c$ and $h^{cs}$ into the term $F_1(\Psi)$ in \eqref{E:inner-1}, using $Dh^c(0)=0$ and $Dh^{cs}(0)=0$ along with the leading order expansion of $\Psi^{u, s} (r)$ corresponding to \eqref{innersol}, and observing that the cubic nonlinearity $F_1(\Psi)$ does not contain the term $\Psi_1^3$ in its Taylor expansion, we have  
\[
\pa_r^2 \tilde \beta_1 - \frac 6{r^2} \tilde \beta_1 = \tilde G(r) = \OO\left(\frac{1}{r^{3}} (|\tilde \beta_1| + |\pa_r \tilde \beta_1|) \right) \le \OO \left( \frac{B(r)}{r^{6}} \right). 
\]
As in the definition of $\mathcal{J}_1$ in \eqref{j1}, a fundamental set of solutions of $\pa_r^2 \tilde \beta_1 - \frac 6{r^2} \tilde \beta_1 =0$ are given by $r^{-2}$ and $r^3$. Therefore the general solutions of the above equation is 
\[
\tilde \beta_1 (r) = c_1 r^{-2} + c_2 r^3 +\frac{r^3}{5}\displaystyle\int_{+\infty}^r \frac{\tilde G(s)}{s^2}ds-\frac{1}{5r^2}\displaystyle\int_{+\infty}^r s^3\tilde G(s)ds, 
\]
which implies 
\[
|\tilde \beta_1 (r) - c_1 r^{-2} - c_2 r^3| \le \OO\big(r^{-4} B(r) \big).
\]
In the view of \eqref{E:approx-1}, we conclude $c_1 =c_2=0$ and thus $|\tilde \beta_1 (r)| \le \OO\big(r^{-4} B(r) \big)$. In turn it also implies $|\pa_r \tilde \beta_1 (r)| \le \OO\big(r^{-5} B(r) \big)$ and leads to a contradiction to the definition of $B(r)$ for $r\gg1$, unless $B\equiv 0$. The lemma is proved. 
\end{proof}

Finally we are ready to prove the estimate on the difference between $\Psi^{u,s}(r)$. 

\begin{proof}[Proof of item (2) Theorem \ref{innerthm}]
Due to Lemma \ref{L:base}, to complete the proof of the theorem, we need to estimate 
\[
\tilde \beta_- = (\tilde \beta_{n-})_{n=3}^{+\infty} := \Psi_-^u (r) - \Psi_-^s(r) = \tilde \Psi_-^u(r) - 
\tilde \Psi_-^s (r) = \big(\tilde \Psi_{n-}^u(r) - \tilde \Psi_{n-}^s (r) 
\big)_{n=3}^{+\infty},  
\]
where the second equal sign is due to the definition \eqref{E:fibercoord} of the stable fiber coordinate system $\Gamma$. From \eqref{E:inner-1} and using $\|D F (\Psi) \|_{L(\ell_2)} = \er(\|\Psi \|_{\ell_2}^2)$ for $\|\Psi \|_{\ell_2}\ll 1$ and $\|\Psi^{u, s} (r)\|_{\ell_2} = \er(\frac 1r)$ for $r\gg 1$,  we have 
\[
\pa_r \tilde \beta_{-} = A \tilde \beta_- + \wt\Pi \big[ F\big( \Gamma(\tilde \Psi_c^{u,s}(r), \tilde \Psi_-^u (r))\big) - F\big( \Gamma(\tilde \Psi_c^{u,s}(r), \tilde \Psi_-^s (r))\big) \big] \triangleq  A \tilde \beta_- + \wt A_-(r) \tilde \beta_-,
\]
where 
\[
\begin{split}
A \Psi_- &= \big(  - \sqrt{n^2 -1} \Psi_{n-} \big)_{n=3}^{+\infty}\\
\big(\wt A_-(r) \Psi_-\big)_{n} &= \Big( \int_0^1 \wt\Pi \big[ ((D F_n) \circ \Gamma) 
D_{\tilde \Psi_-} \Gamma\big]\big(\tilde \Psi_c^{u,s}(r), (1-\tau) \tilde \Psi_-^s (r) + \tau \tilde \Psi_-^u (r)\big) d\tau \Big) \Psi_-,
\end{split}
\]
which satisfies
\[
 \|\wt A_- (r)\|_{L(\ell_2)} = \OO\left(r^{-2}\right).  
\]
Consequently,
\[
\pa_r \big(e^{\sqrt{8} r} \tilde \beta_-\big) = (A + \sqrt{8}) e^{\sqrt{8} r}  \tilde \beta_- + \wt A_-(r)e^{\sqrt{8} r}  \tilde \beta_-. 
\]
As $A+ \sqrt{8} \le 0$, $ \|\wt A_- (r)\|_{L(\ell_2)} = \OO(r^{-2})$  implies 
\[
\sup_{r\ge 0} \left\{e^{\sqrt{8} r} \|\tilde \beta_-(r)\|_{\ell_2}\right\} < +\infty.
\]
Write $e^{\sqrt{8} r} \tilde \beta_-$ using the variation of constants formula, 
\[
e^{\sqrt{8} r} \tilde \beta_- (r) = e^{r (A+\sqrt{8})}
\tilde \beta_- (0) + \int_{0}^{r}   e^{ (r-r') (A+\sqrt{8})}\wt A_-(r') e^{\sqrt{8} 
r'}  \tilde \beta_-(r') dr'.  
\]
Now, since $\Pi_3(A+\sqrt{8})=0$ and 
$(I - \Pi_3)(A+\sqrt{8})\leq - \sqrt{24}+\sqrt{8}<-2$, one may estimate, for $ r\gg 1$, 
\begin{align*}
\|(I-\Pi_3) e^{\sqrt{8} r} \tilde \beta_- (r) \|_{\ell_2} \le &\, e^{-2 r} \|\tilde \beta_- (0) \|_{\ell_2} + \sup_{r''\ge 0} \|e^{\sqrt{8} r''}  \tilde \beta_-(r'') \|_{\ell_2}\int_{0}^{r}   e^{-2 (r-r') } \|\wt A_-(r')\|_{L(\ell_2)} dr'  \\
\le &\, e^{-2 r} \|\tilde \beta_- (0) \|_{\ell_2} + \int_{0}^{\frac r2} \er (e^{-2 (r-r') }) dr' + \int_{\frac r2}^r \er \left(\frac {e^{-2 (r-r') }}{r^2}\right) dr'\\
\le &\, \er \left(\frac 1{r^2} \right).  
\end{align*}
Defining
\[
\tilde C_{\inn} = 
\tilde \beta_{3-} (0) + \int_{0}^{+\infty} \Pi_3\left[\wt  A_-(r') e^{\sqrt{8} r'}  \tilde \beta_-(r')\right]dr', 
\]
which converges since $ \|\wt A_- (r)\|_{L(\ell_2)} = \OO(r^{-2})$, then we obtain 
\[
\| e^{\sqrt{8} r} \tilde \beta_{3-} (r) -\tilde C_{\inn} \|_{\ell_2} \le \sup_{r''\ge 0} \|e^{\sqrt{8} r''}  \tilde \beta_-(r'') \|_{\ell_2} \int_r^{+\infty} \|A_-(r') \|_{L(\ell_2)} dr'  \le \er \left(\frac 1r\right).
\]

To complete the proof of Theorem \ref{innerthm}, we need to estimate 
$e^{\sqrt{8} r} (\phi^{0, u} - \phi^{0, s})$ and $\pa_r \big(e^{\sqrt{8} r} 
(\phi^{0, u}- \phi^{0, s})\big)$. 
From \eqref{E:Psi-phi-0} and Lemma \ref{L:base}, 
\begin{align*}
e^{\sqrt{8} r} \Delta \phi^{0}(-ir) = & e^{\sqrt{8} r} (\phi^{0, u} (-ir)  - \phi^{0, s}(-ir)) \\
= &e^{\sqrt{8} r}(\Psi_1^u (r ) - \Psi_1^s (r)) \sin \tau + \sum_{n\ge 3} \frac {e^{\sqrt{8} r}}{2\sqrt{n^2-1}} \Big( \Psi_{n+}^u(r)  - \Psi_{n+}^s (r) - \tilde \beta_{n-} (r) \Big) \sin n\tau.
\end{align*}
Therefore,  for $C_{\inn} = -\frac {\sqrt{2}}8 \tilde C_{\inn}$, from the definition of $\tilde \beta_-(r)$ and the cubic leading order of $h^{c, s, cs}$ in the stable fiber coordinates $\Gamma(\tilde \Psi_c, \tilde \Psi_-)$, we have  
\begin{align*}
\| \pa_\tau \big(e^{\sqrt{8} r} \Delta \phi^{0}(-ir)  - C_{\inn} \sin 3\tau\big)  \|_{\ell_1} 
\le & e^{\sqrt{8} r} \big| \big(h^s (\tilde \Psi_c^{u, s}(r), \tilde \Psi_-^u(r)) - h^s ( \tilde \Psi_c^{u, s}(r), \tilde \Psi_-^{s}(r)\big) \big|  \\
&  +  \|e^{\sqrt{8} r} \big(\Psi_+^u(r) - \Psi_+^{s}(r)\big) \|_{\ell_1} + \| e^{\sqrt{8} r} (\tilde \beta_{n-})_{n>3} \|_{\ell_1} + |e^{\sqrt{8} r} \tilde \beta_{3-} (r) -\tilde C_{\inn}| \\
\le & M\big(r^{-2} e^{\sqrt{8} r}\| \tilde \beta_- (r) \|_{\ell_2} + \| e^{\sqrt{8} r} (\tilde \beta_{n-})_{n>3} \|_{\ell_2} \big)+ \er (r^{-1}) \le \er (r^{-1}). 
\end{align*} 
Similarly from \eqref{E:Psi-phi-1}
\begin{align*} 
\|\pa_r \big(e^{\sqrt{8} r} \Delta \phi^{0}(-ir))\|_{\ell_1} = & \Big\|e^{\sqrt{8} r}\sum_{n\ge 3} \Big( \frac {\sqrt{n^2-1} + \sqrt{8}}{2\sqrt{n^2-1}} ( \Psi_{n+}^u(r)  - \Psi_{n+}^s (r)) + \frac {\sqrt{n^2-1} - \sqrt{8}}{2\sqrt{n^2-1}} \tilde \beta_{n-} (r) \Big) \sin n\tau \\
& + \pa_r \big( e^{\sqrt{8} r} (\Psi_1^u (r ) - \Psi_1^s (r)) \big) \sin \tau\Big\|_{\ell_1} \\
\le & M\big(r^{-2} e^{\sqrt{8} r}\| \tilde \beta_- (r) \|_{\ell_2} + \| e^{\sqrt{8} r} (\tilde \beta_{n-})_{n>3} \|_{\ell_2} \big) \le \er (r^{-2}). 
\end{align*}
Since $\Delta \phi^0(z)$ is analytic, the estimate on $\pa_r \big(e^{\sqrt{8} r} \Delta \phi^{0}(-ir) \big)$ implies the same estimate on $\pa_z \big(e^{i\sqrt{8} z} \Delta \phi^{0}(z) \big)$ and this completes the proof of Theorem \ref{innerthm}.  
\end{proof}

Finally, we prove that the Stokes constant is analytic with respect to $f\in \FF_r^c$

\begin{proof}[Proof of item (3) of Theorem \ref{innerthm}]
This proof is based on the analytic dependence of $\phi^{0, \star} (z, \tau)$, $\star=u, s$, on $f \in \FF_r^c$ in Theorem \ref{innerthm}(1) and the asymptotics \eqref{diffinnersol} in Theorem \ref{innerthm}(2) just proven.
For $s \in (-\infty, -\kappa)$, let 
\[
\widetilde C(s, f) = \frac 1\pi   \int_{-\pi}^\pi e^{-\mu_3 s} \Delta \phi^0 (is, \tau) \sin 3\tau d\tau,
\]
which is complex analytic in $f \in \FF_r^c$. From \eqref{diffinnersol} we have $C_{\mathrm{in}} (f)= \lim_{s \to -\infty} \widetilde C(s, f)$ uniformly in $f$ with $\|f\|_r \le R_0$. So we obtain the analyticity of $C_{\mathrm{in}}(f)$ in $f \in \FF_r^c$, which also implies its analyticity in $f \in \FF_r$.  
\end{proof}

\section{Complex matching estimates: Proof of Theorem \ref{matchingthm}}\label{sec:matching}

As usual, we consider only the unstable case, and in order to simplify the notation, we omit the superscript ``$u$" of the solutions. Moreover, in this section, we use  the domain $D^{\mathrm{\mathrm{mch}},u}_{+,\kappa}$ instead of $D^{u,\mathrm{\mathrm{in}}}_{\theta,\kappa}$  (see \eqref{innerdomainsol} and \eqref{matchingdomain}) but we work on the same notation for  the norms and Banach spaces introduced in Section \ref{Banachinner}.

\begin{prop}\label{errorequationlem}
Let $\phi(z, \tau)$ and $\phi^0(z, \tau)$ be solutions to \eqref{kleingordonphi} and \eqref{inner}, respectively.  
	The function $\p:D^{\mathrm{\mathrm{mch}},u}_{+,\kappa}\times\mathbb{T}\rightarrow \C $
	defined as \begin{equation}
		\label{difference}
		\varphi(z,\tau)=\phi(z,\tau)-\phi^0(z,\tau).
	\end{equation}  satisfies the following differential equation
	\begin{equation}
		\label{eqmatch}
		\mathcal{I}(\p)(z,\tau)= \left(L(\p)(z)+\widehat{L}(\wt \Pi[\p])(z)\right)\sin(\tau) + K(\p)(z,\tau)+ \mathcal{C}_{\mathrm{mch}}(z,\tau),
	\end{equation}
	where $\mathcal{I}$ is the operator given by \eqref{Iop}, $L:\mathcal{X}_{\n,\ag}\rightarrow\mathcal{X}_{\ag+4}$, $\widehat{L}:\mathcal{X}_{\n,\ag}\rightarrow\mathcal{X}_{\ag+2}$, and $K:\mathcal{X}_{\n,\ag}\rightarrow\mathcal{X}_{\n,\ag+2}$ are linear operators and $\mathcal{C}_{\mathrm{\mathrm{mch}}}:D^{\mathrm{\mathrm{mch}},u}_{+,\kappa}\times\mathbb{T}\rightarrow \C$ is an analytic function in the variable $z$.
	Moreover, $\Pi_1\circ K\equiv 0$ and there exists a constant $M>0$ independent of $\e$ and $\kappa$ such that, for $0<\cg<1$, $\e$ sufficiently small and $\kappa$ big enough
	\begin{enumerate}
		\item $\left\|\Pi_1[\mathcal{C}_{\mathrm{\mathrm{mch}}}]\right\|_{4}\leq M\e^{3\cg-1}$ and $\left\|\pa_\tau^2 \wt \Pi[\mathcal{C}_{\mathrm{\mathrm{mch}}}]\right\|_{\n,3}\leq M\e^2;$
		\item $\|L(\p)\|_{\ag+4}\leq M\|\p\|_{\n,\ag}$; 
		\item $\|\widehat{L}(\p)\|_{\ag+2}\leq M\|\p\|_{\n,\ag}$;
		\item $\| K(\p)\|_{\n,\ag+2}\leq M\| \p\|_{\n,\ag}$, $j=0,1,2$.
	\end{enumerate}
\end{prop}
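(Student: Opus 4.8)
The goal is to derive the equation \eqref{eqmatch} for the difference $\varphi = \phi - \phi^0$ between the solution $\phi$ of the full equation \eqref{kleingordonphi} and the solution $\phi^0$ of the inner equation \eqref{inner}, and to establish the stated estimates on the terms that appear. First I would subtract \eqref{inner} from \eqref{kleingordonphi}. Recalling $\omega = (1+\e^2)^{-1/2}$, one has $\omega^{-2} = 1+\e^2$ and $\omega^{-3} = (1+\e^2)^{3/2}$, so
\[
\partial_z^2\phi - \partial_\tau^2\phi - \phi + \tfrac13\phi^3 + f(\phi) = \e^2\phi - \big((1+\e^2)^{3/2}-1\big)\, \tfrac{1}{\e^2}\cdot\e^2\, \widehat{f}(\phi) \quad\text{(schematically)},
\]
more precisely the full equation reads $\partial_z^2\phi - \partial_\tau^2\phi - (1+\e^2)\phi + \tfrac13\phi^3 + (1+\e^2)^{3/2}f\big((1+\e^2)^{-1/2}\phi\big) = 0$. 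Subtracting the inner equation satisfied by $\phi^0$, the linear part in $\varphi$ produces $\partial_z^2\varphi - \partial_\tau^2\varphi - \varphi + (\text{cubic and higher in }\phi,\phi^0)$, which after splitting off the $\sin\tau$ mode via $\Pi_1$ and $\wt\Pi$ gives exactly the operator $\mathcal{I}(\varphi)$ on the left (see \eqref{Iop}); the remaining terms are organized into: (i) linear-in-$\varphi$ contributions, split according to whether they hit $\Pi_1[\varphi]$ or $\wt\Pi[\varphi]$ — these become $L(\varphi)\sin\tau$, $\widehat L(\wt\Pi[\varphi])\sin\tau$, and $K(\varphi)$ with $\Pi_1\circ K\equiv 0$ by construction — and (ii) the terms with no $\varphi$ dependence at all, which collect into the inhomogeneity $\mathcal{C}_{\mathrm{mch}}$. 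The linear coefficients in (i) come from expanding $\tfrac13\big(\phi^3-(\phi^0)^3\big) = \tfrac13\big((\phi^0)^2 + \phi^0\phi + \phi^2\big)\varphi$ and the analogous Taylor remainder for $f$, plus the $\e^2\phi$ term and the $\big((1+\e^2)^{3/2}-1\big)$-type corrections; all of these have factors of $1/z$ or $1/z^2$ coming from the leading $-2\sqrt2 i\, z^{-1}\sin\tau$ part of $\phi^0$ and $\phi$, which is what produces the weight shifts $\alpha\mapsto\alpha+4$ (for $L$, because of the $\Pi_1$ projection picking up extra decay), $\alpha\mapsto\alpha+2$ (for $\widehat L$ and $K$).

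For the estimates (2)--(4) on $L,\widehat L,K$, the plan is to use Lemma \ref{propertiesnorm2}: the coefficients multiplying $\varphi$ are built from $\phi^0$ and $\phi$, which by Theorem \ref{outerthm} (transported to inner variables, so $v^h(y)\sin\tau$ becomes $-2\sqrt2 i\,z^{-1}\sin\tau + \mathcal{O}(\e^2 z) + \mathcal{O}(z^{-3})$) and Theorem \ref{innerthm} lie in $\mathcal{X}_{\ell_1,1}$ up to controlled remainders. Products of such functions with $\varphi\in\mathcal{X}_{\ell_1,\alpha}$ then land in $\mathcal{X}_{\ell_1,\alpha+2}$ (two factors of the $z^{-1}$-type coefficient), and applying $\Pi_1$ to $\sin^2\tau\cdot\wt\Pi[\varphi]$-type terms gains the extra weight for $L$. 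The bound $\|L(\varphi)\|_{\alpha+4}\le M\|\varphi\|_{\ell_1,\alpha}$ specifically: the coefficient of $\Pi_1[\varphi]$ in the $\sin\tau$-equation, after subtracting the inner operator's $-6/z^2$ term that is already in $\mathcal{I}$, is of order $z^{-4}$ (the $z^{-2}$ pieces cancel against $\mathcal{I}$'s definition, leaving the $\e^2$-corrections and higher-order-in-$z^{-1}$ terms, which are $\mathcal{O}(z^{-4})$ since the leading discrepancy between $\phi$ and the $z^{-1}$ profile is $\mathcal{O}(z^{-3})$ from Theorem \ref{outerthm} plus $\mathcal{O}(\e^2 z)$). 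These are routine once the bookkeeping of which terms cancel into $\mathcal{I}$ is done carefully.

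For item (1), the inhomogeneity $\mathcal{C}_{\mathrm{mch}}$ is exactly the residual obtained by plugging $\phi^0$ (a genuine solution of the inner equation) into the full equation \eqref{kleingordonphi} — it measures how far $\phi^0$ is from solving the $\e$-dependent equation. Its building blocks are $\e^2\phi^0$, the difference $(1+\e^2)^{3/2}f\big((1+\e^2)^{-1/2}\phi^0\big) - f(\phi^0)$, and similar $\e^2$-order corrections. For the $\wt\Pi$ component: since $\phi^0 = \mathcal{O}(z^{-1})$ in $\mathcal{X}_{\ell_1,1}$ and the relevant terms carry an explicit $\e^2$, one gets $\|\pa_\tau^2\wt\Pi[\mathcal{C}_{\mathrm{mch}}]\|_{\ell_1,3}\le M\e^2$ directly from Lemma \ref{propertiesnorm2} (the $\pa_\tau^2$ is absorbed because in $\mathcal{X}_{\ell_1,3}$ these live in the non-resonant modes and one uses the $\mu_n^{-2}$-type gains of $\mathcal{J}_n$). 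For the $\Pi_1$ component the bound is the weaker $M\e^{3\gamma-1}$: here one must be careful because on the matching domain $|z|$ ranges up to $M\e^{\gamma-1}$, so $\e^2 z = \e^2\cdot\e^{\gamma-1}|z|/|z|\cdots$ — tracking the worst point $z\sim\e^{\gamma-1}$, the $\e^2\phi^0$ term contributes $\e^2\cdot z\sim\e^{\gamma+1}$ which in the $\|\cdot\|_4$ norm with $|z|\sim\e^{\gamma-1}$ becomes $\e^{\gamma+1}\cdot\e^{4(\gamma-1)} = \e^{5\gamma-3}$, while other terms give $\e^{3\gamma-1}$; the dominant one for $\gamma\in(1/3,1)$ is $\e^{3\gamma-1}$. \textbf{The main obstacle} I anticipate is precisely this last point: organizing the $\Pi_1$-projected inhomogeneity so that the worst term over the whole matching domain is genuinely $\mathcal{O}(\e^{3\gamma-1})$ and not larger — this requires knowing the $\mathcal{O}(\e^2 z)$ correction in $\phi$ from Theorem \ref{outerthm} precisely in the $\sin\tau$ mode (which is why $\Pi_1$ is singled out and why the weight is $4$ there, matching the $z^{-2}$ pole structure of the $\sin\tau$ component rather than the $z^{-1}$ of the others), together with a careful accounting of how the $\e$-dependent terms $\e^2\phi$ and $((1+\e^2)^{3/2}-1)f(\cdots)$ interact with the variable size $|z|\in[\kappa, M\e^{\gamma-1}]$ of the matching domain.
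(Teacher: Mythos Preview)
Your plan is essentially the same as the paper's and is correct. Two small points of discrepancy are worth noting. First, the paper chooses $\mathcal{C}_{\mathrm{mch}}=\e^2\phi + f(\phi)-\omega^{-3}f(\omega\phi)$, i.e.\ built from the \emph{full} solution $\phi$ rather than from $\phi^0$ as you describe; the difference between the two choices is $\e^2\varphi$ plus an $\mathcal{O}(\e^2)$-coefficient times $\varphi$, which can be absorbed into the linear operators, so either works, but using $\phi$ makes the split between ``inhomogeneity'' and ``linear-in-$\varphi$'' cleaner since the cubic and $f$ differences $-\tfrac13(\phi^3-(\phi^0)^3)-f(\phi)+f(\phi^0)$ are then exactly linear in $\varphi$. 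Second, your heuristic for $\|\Pi_1[\mathcal{C}_{\mathrm{mch}}]\|_4$ contains a slip: the leading piece is $\e^2\cdot(-2\sqrt2 i/z)$, not $\e^2\cdot z$, and its $\|\cdot\|_4$-norm is $\sup_{|z|\le M\e^{\gamma-1}} \e^2|z|^3 \le M\e^{3\gamma-1}$, which is already the dominant contribution --- the $l_1(z)=\mathcal{O}(\e^2 z)$ and $\e\xi$ corrections give smaller powers $\e^{5\gamma-1}$ and $\e^{\gamma+2}$ respectively. Your final conclusion $\e^{3\gamma-1}$ is nonetheless correct.
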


\begin{proof}
	Since $\phi$ and $\phi^0$ satisfy \eqref{kleingordonphi} and \eqref{inner}, respectively, we have that $\p(z,\tau)$ satisfies
	\begin{equation}
		\label{errorequation}
		\partial_z^2\p -\partial_{\tau}^2\p-\p=\e^2\phi-\dfrac{1}{3}(\phi^3-(\phi^0)^3)- \frac 1{\omega^3} f(\omega \phi)+f(\phi^0), \quad \omega = (1+\e^2)^{-\frac 12}.
	\end{equation}
	Now, recall that $\phi(z,\tau)=\e v(i\pi/2+\e z,\tau)$, where $v(y,\tau)= v^h(y)\sin(\tau)+\xi(y,\tau)$, $v^h$ is given by \eqref{homoclinic} and $\xi$ is given by Theorem \ref{outerthm}. 	An easy computation shows that
	$$\e v^h(i\pi/2+\e z)=-\dfrac{2\sqrt{2}i}{z} +l_1(z),$$ 
	where $l_1$ is an analytic function such that $|l_1(z)|\leq M\e^2|z|$, for each $z\in D^{\mathrm{\mathrm{mch}},u}_{+,\kappa}$. Thus,
	\begin{equation}\label{cotaouter}\phi(z,\tau)=-\dfrac{2\sqrt{2}i}{z}\sin(\tau) +l_1(z)\sin(\tau)+\e\xi(i\pi/2+\e z,\tau).\end{equation}
	Using Theorem \ref{outerthm}
	and  $y=i\pi/2+\e z$, we have 
	$$\|\e\partial^2_\tau\xi(i\pi/2+\e z,\tau)\|_{\n,3}\leq \dfrac{1}{\e^2}\|\partial^2_\tau\xi(y,\tau)\|_{\n,1,3}\leq M,$$
	where $\|\cdot\|_{\n,1,3}$ is the norm introduced in Section \ref{Banachouter}.
	
	Since $M\kappa\leq |z|\leq M\e^{\cg-1}$ for every $z\in D^{\mathrm{\mathrm{mch}},u}_{+,\kappa}$, it holds 
	\begin{equation}\label{cotainner}\left\| \pa_\tau^2 \left( \phi^0(z,\tau)+\dfrac{2\sqrt{2}i}{z}\sin(\tau) \right) \right\|_{\n,3}\leq M,\end{equation}
	 and using that $\|\phi\|_{\n,1},\|\phi^0\|_{\n,1}\leq M$, $f(z)=\er(z^5)$, we obtain from the Mean Value Theorem that
	\begin{equation}\label{cubica}
		\begin{array}{lcl}
			-\dfrac{1}{3}\left(\phi^3-(\phi^0)^3\right)- f(\phi)+f(\phi^0)&=&-\dfrac{1}{3}\left(\phi^2+\phi\phi^0+(\phi^0)^2\right)\p-\p\displaystyle\int_0^1f'(s\phi+(1-s)\phi^0)ds	\vspace{0.2cm}\\
			&=&\dfrac{6}{z^2}\Pi_1[\p]\sin(\tau)- \dfrac{2}{z^2}\Pi_1[\p]\sin(3\tau) +l_2(\p) +l_3(\wt \Pi\left[\p\right]),
		\end{array}
	\end{equation}
	where $l_2:\mathcal{X}_{\n,\ag}\rightarrow \mathcal{X}_{\n,\ag+4}$ and  $l_3:\mathcal{X}_{\n,\ag}\rightarrow \mathcal{X}_{\n,\ag+2}$ are linear operators such that,
	$$\|l_2(\p)\|_{\n,\ag+4}\leq M\|\p\|_{\n,\ag}\quad and \quad \|l_3(\p)\|_{\n,\ag+2}\leq M\|\p\|_{\n,\ag}.$$
	
	The proof of the proposition follows from \eqref{errorequation}, \eqref{cotaouter}, and \eqref{cubica} and by taking,
	\begin{itemize}
		\item $	\mathcal{C}_{\mathrm{mch}}=\e^2\phi + f(\phi) - \omega^{-3} f(\omega \phi),\vspace{0.2cm}$
			\item $L(\p)= \Pi_1\left[l_2(\p)\right]\vspace{0.2cm}$,
		\item $\widehat{L}(\p)= \Pi_1\left[l_3(\wt \Pi\left[\p\right])\right]\vspace{0.2cm}$,
	
		\item $K(\p)=\wt \Pi\left[- \dfrac{2}{z^2}\Pi_1[\p]\sin(3\tau) +l_2(\p) +l_3(\wt \Pi\left[\p\right])\right].$ 
	\end{itemize}	
	\end{proof}

Let $z_j=\e^{-1}(y_j-i\pi/2)$, $j=1,2$, where $y_1$ and $y_2$ are the vertices of the matching domain $D^{\mathrm{\mathrm{mch}},u}_{+,\kappa}$ given by \eqref{matchingdomain}. Consider the following linear operator acting on the Fourier coefficients of $h(z,\tau)=\sum_{k\geq 0}h_{2k+1}(z)\sin((2k+1)\tau).$
\begin{equation} \label{oplinmatch}
\mathcal{T}(h)=\displaystyle\sum_{k\geq 0}\mathcal{T}_{2k+1}(h_{2k+1})\sin((2k+1)\tau),
\end{equation}
where 
\begin{align*} 
\mathcal{T}_1(h_1)=&\,\dfrac{z^3}{5}\displaystyle\int_{z_1}^z\dfrac{h_1(s)}{s^2}ds-\dfrac{1}{5z^2}\displaystyle\int_{z_2}^zh_1(s)s^3ds
\\
&\,-\dfrac{1}{5(z_2^5-z_1^5)}\left[ \left(z^3-\dfrac{z_2^5}{z^2}\right)\displaystyle\int_{z_2}^{z_1}h_1(s)s^3ds+ \left(z^3z_2^5-\dfrac{(z_1z_2)^5}{z^2}\right)\displaystyle\int_{z_1}^{z_2}\dfrac{h_1(s)}{s^2}ds\right]
\\
\mathcal{T}_{2k+1}(h_{2k+1})=&\,\displaystyle\int_{z_2}^z\dfrac{h_{2k+1}(s)e^{-i\mu_{2k+1}(s-z)}}{2i\mu_{2k+1}}ds -\displaystyle\int_{z_1}^z\dfrac{h_{2k+1}(s)e^{i\mu_{2k+1}(s-z)}}{2i\mu_{2k+1}}ds, 
\\
&\,+ \dfrac{\sin(\mu_{2k+1}(z_2-z))}{\sin(\mu_{2k+1}(z_1-z_2))}\displaystyle\int_{z_2}^{z_1}\dfrac{h_{2k+1}(s)e^{-i\mu_{2k+1}(s-z_1)}}{2i\mu_{2k+1}}ds
\\
&\,+\dfrac{\sin(\mu_{2k+1}(z_1-z))}{\sin(\mu_{2k+1}(z_1-z_2))}\displaystyle\int_{z_1}^{z_2}\dfrac{h_{2k+1}(s)e^{-i\mu_{2k+1}(s-z_2)}}{2i\mu_{2k+1}}ds, \qquad \textrm{ for }\,k\geq 1.
\end{align*}
Observe that $\mathcal{T}$ is chosen such that $\mathcal{I}\circ\mathcal{T}=\mathrm{Id}$ and $\mathcal{T}(h)(z_j,\tau)=0$, $j=1,2$.

Moreover, consider the analytic in $z$ function $\mathcal{Q}:D^{\mathrm{\mathrm{mch}},u}_{+,\kappa}\times\mathbb{T}\rightarrow\C$ given by
\begin{equation}
\label{indepterm}
\mathcal{Q}(z,\tau)=\displaystyle\sum_{k\geq 0}\mathcal{Q}_{2k+1}(z)\sin((2k+1)\tau),
\end{equation}
which is defined using  $\p$ in \eqref{difference} as follows, where $k\geq 1$,
\begin{align*}
\mathcal{Q}_1(z)&
=\dfrac{1}{z_2^5-z_1^5}\left(z^3(z_2^2\p_1(z_2)-z_1^2\p_1(z_1))-\dfrac{1}{z^2}\left(z_1^5z_2^2\p_1(z_2)-z_1^2z_2^5\p_1(z_1)\right)\right),\\
\mathcal{Q}_{2k+1}(z)&
=\dfrac{\sin(\mu_{2k+1}(z-z_2))}{\sin(\mu_{2k+1}(z_1-z_2))}\p_{2k+1}(z_1)-\dfrac{\sin(\mu_{2k+1}(z-z_1))}{\sin(\mu_{2k+1}(z_1-z_2))}\p_{2k+1}(z_2). 
\end{align*}
Observe that $\mathcal{Q}$ satisfies $\mathcal{I}\mathcal{Q}=0$ and $\mathcal{Q}_{2k+1}(z_j)=\varphi_{2k+1}(z_j)$, $j=1,2$.

In conclusion, observe that if $h,\widehat{\p}:D^{\mathrm{\mathrm{mch}},u}_{+,\kappa}\times\C\rightarrow \C$ are analytic in $z$ functions such that 
	\[\mathcal{I}(\widehat{\p})=h,\qquad \widehat{\p}(z_j)=\p(z_j), \quad j=1,2,\] 
	where $\p$ is given in \eqref{difference}, then, we have that 
	\begin{equation*}
	\widehat{\p}(z,\tau)= \mathcal{Q}(z,\tau)+ \mathcal{T}(h)(z,\tau),
	\end{equation*}
	where $\mathcal{T}$ and $\mathcal{Q}$ are given by \eqref{oplinmatch} and \eqref{indepterm}. In particular, as the function $\varphi$ satisfies \eqref{eqmatch} by Proposition \ref{errorequationlem}, it can be written as
\begin{equation}\label{eqmat}
\p(z,\tau)= \mathcal{Q}(z_1,z_2)(z,\tau)+\mathcal{T}\left(\mathcal{C}_{\mathrm{mch}}(z,\tau)+ \left(L(\p)(z)+\widehat{L}(\wt \Pi[\p])(z)\right)\sin(\tau) + K(\p)(z,\tau)\right).
\end{equation}
We use this expression for $\varphi$ to obtain estimates of this function for $z\in D^{\mathrm{\mathrm{mch}},u}_{+,\kappa}$.

The next lemma gives estimates for the operators $\mathcal{T}$ and $\mathcal{Q}$ given in \eqref{oplinmatch} and \eqref{indepterm}.

\begin{lemma}\label{propertiesmat}
	There exists $\delta>0$ depending only on $\beta_{1,2}$ (see \eqref{matchingdomain}), such that, for $\kappa \e^{1-\gamma} \le \delta$,  the following statements hold.
	\begin{enumerate}
		\item 
		The linear operator $\mathcal{T}_1:\mathcal{X}_{\ag}\rightarrow\mathcal{X}_{\ag-2}$ is well defined and  
		\begin{equation*}
		\left\| \mathcal{T}_1(h)\right\|_{\ag-2}\leq M\left\|h\right\|_{\ag}, \; \; \ag >4; \quad \left\| \mathcal{T}_1(h)\right\|_{2}\leq M |\log \e| \left\|h\right\|_{4}, \; \; \ag =4.
		\end{equation*}
		
		\item For $k\ge 1$ and $h\in\mathcal{X}_\ag$, with  $\ag\geq 0$, 
		\begin{equation*}
		\left\|\mathcal{T}_{2k+1} (h)\right\|_{\ag}\leq \frac M{k^2} \left\|h\right\|_{\ag}. 
		\end{equation*}
		\item 
		$\mathcal{Q}$ satisfies
		\begin{equation*}
		\left\|\mathcal{Q}_1\right\|_{\ag}\leq M\left(\e^{(\ag-3)(\cg-1)}+\e^{2+(\ag+1)(\cg-1)}\right), \; \; \ag \ge 2; \quad \; \left\| \pa_\tau^2 \wt \Pi [\mathcal{Q}] \right\|_{\ag}\leq M\e^{(\ag-3)(\cg-1)}, \; \; \ag \ge 0. 
		\end{equation*}		
	\end{enumerate}
\end{lemma}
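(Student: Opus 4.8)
The statement to be proved is Lemma \ref{propertiesmat}, which provides quantitative estimates on the linear operator $\mathcal{T}$ (defined in \eqref{oplinmatch}) and on the function $\mathcal{Q}$ (defined in \eqref{indepterm}) on the matching domain $D^{\mathrm{\mathrm{mch}},u}_{+,\kappa}$. The common feature of both objects is that they are built from the variation-of-parameters formulas for the linear operator $\mathcal{I}$ in \eqref{Iop}, but anchored at the two vertices $z_1,z_2$ of the matching domain rather than at $-\infty$; hence one must carefully control the integral kernels along paths joining $z_1$, $z_2$ and a general point $z\in D^{\mathrm{\mathrm{mch}},u}_{+,\kappa}$. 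The plan is to treat the $n=1$ (Duffing/resonant) mode and the $n=2k+1\ge 3$ (oscillatory) modes separately, since the linear part $\pa_z^2 - 6/z^2$ behaves differently from $\pa_z^2 + \mu_n^2$.

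\textbf{Step 1: the oscillatory modes of $\mathcal{T}$ (item (2)).} For $k\ge 1$ I would directly estimate the three integral terms in the formula for $\mathcal{T}_{2k+1}(h_{2k+1})$. The key geometric fact about $D^{\mathrm{\mathrm{mch}},u}_{+,\kappa}$ (see \eqref{matchingdomain} and the remark preceding it) is that any two of its points are joined by a segment on which $\Ip(s)$ is monotone with a slope bounded below by $\tan\beta_1>0$; therefore $|e^{\mp i\mu_{2k+1}(s-z)}|$ decays exponentially in the arclength away from $z$, which converts each integral into essentially a geometric sum and produces the factor $\mu_{2k+1}^{-1}\gtrsim k^{-1}$ from the prefactor $1/(2i\mu_{2k+1})$, and a second factor $k^{-1}$ from the exponential damping (the length scale of decay being $\sim 1/\mu_{2k+1}$). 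The only delicate point is the boundary term with $\sin(\mu_{2k+1}(z_1-z))/\sin(\mu_{2k+1}(z_1-z_2))$: here I would use that $\Ip(z_1-z_2)>0$ is bounded below (this is exactly the purpose of condition (4), $e^{5(\pi-\beta_1)}-e^{-5\beta_2}\ne 0$, and the ordering $\Ip(y_2)<\pi/2-\kappa\e<\Ip(y_1)$) so that $|\sin(\mu_{2k+1}(z_1-z_2))|\gtrsim e^{\mu_{2k+1}\Ip(z_1-z_2)}$ does not degenerate, while $|\sin(\mu_{2k+1}(z_1-z))|$ is controlled by the corresponding growing exponential along the segment; summing over $n$ using $\ell_1$ of Fourier coefficients gives the stated bound. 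I would then sum over $k$ with weight $\|h\|_{\ell_1,\ag}$, noting the gain of $k^{-2}$ is what later yields the $\pa_\tau^2$ (one order of $\tau$-regularity) smoothing claimed in Proposition \ref{errorequationlem}.

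\textbf{Step 2: the $n=1$ mode of $\mathcal{T}$ (item (1)) and the function $\mathcal{Q}$ (item (3)).} For $\mathcal{T}_1$ I would split the expression into the two ``Perron-type'' integrals $\int_{z_1}^z h_1(s)/s^2\,ds$ and $\int_{z_2}^z h_1(s)s^3\,ds$ (times $z^3$, resp. $z^{-2}$) and the boundary-correction combination with prefactor $1/(z_2^5-z_1^5)$. Using $M_1\kappa\le |s|\le M_2\e^{\gamma-1}$ on the domain and $|h_1(s)|\le \|h\|_\ag |s|^{-\ag}$, the integrals are bounded by powers of $|z|$; one checks $\|\mathcal{T}_1(h)\|_{\ag-2}\lesssim \|h\|_\ag$ for $\ag>4$ by elementary estimation of $\int |z|^{\ag+3}/|s|^{\ag+4}\,ds$ and $\int |z|^{\ag-2}/|s|^{\ag-1}\,ds$, exactly as in the proof of Proposition \ref{prop_operatorsinner}. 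The borderline case $\ag=4$ is where the logarithm appears: then $\int_{z_1}^z |s|^{-4-\ag+(\ag+3)}\,ds=\int |s|^{-1}\,ds\sim |\log(z/z_1)|$, and since $|z_1|\sim \e^{\gamma-1}$ while $|z|$ can be as small as $\kappa$, this is $\OO(|\log\e|)$; the prefactor $1/(z_2^5-z_1^5)$ is harmless because $|z_2^5-z_1^5|\gtrsim |z_1|^5\gtrsim \e^{5(\gamma-1)}$ using condition (4) and $|z_1|\sim|z_2|$. For $\mathcal{Q}$: both $\mathcal{Q}_1$ and $\mathcal{Q}_{2k+1}$ are explicit linear combinations of the two boundary values $\p_n(z_1)$, $\p_n(z_2)$; plugging $|\p_n(z_j)|\le \|\p\|_{\ell_1,3}|z_j|^{-3}\lesssim \e^{3(1-\gamma)}$ together with $|z|$-power weights and, for $k\ge 1$, again the non-degeneracy of $\sin(\mu_{2k+1}(z_1-z_2))$ as in Step 1, yields the two displayed bounds; the $\pa_\tau^2$ version comes from the extra $k^{-2}$ available because $|\mathcal{Q}_{2k+1}|$ carries a $k^{-2}$-summable tail once one divides by the $\sin$ normalization and uses the $\ell_1$-smallness of $\wt\Pi[\p]$.

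\textbf{Main obstacle.} The routine part is the power-counting in $|z|$ and $|s|$; the genuinely delicate part is uniform control of the boundary-correction terms, i.e.\ showing that the denominators $z_2^5-z_1^5$ and $\sin(\mu_{2k+1}(z_1-z_2))$ never become small relative to the quantities they are dividing, uniformly in $k$ and in $\e$ (with $\kappa\e^{1-\gamma}\le\delta$). This is precisely the role of the geometric conditions (1)--(4) defining $y_1,y_2$, and in particular of the awkward-looking algebraic non-vanishing condition (4): it guarantees a fixed gap between the ``slopes'' $\beta_1,\beta_2$ so that $\Ip(z_1-z_2)$ stays bounded away from $0$, which is what makes $|\sin(\mu_{2k+1}(z_1-z_2))|$ bounded below by an exponential that dominates all the numerator exponentials along the matching segment. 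I would isolate this as the one lemma-internal claim to verify carefully before the otherwise mechanical estimates. Once that uniform lower bound is in hand, items (1)--(3) follow by the same integral estimation techniques already used for $\mathcal{G}$ in Section \ref{outerdomain} and $\mathcal{J}$ in Section \ref{innersec}.
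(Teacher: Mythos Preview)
Your overall strategy matches the paper's proof closely: handle the $n=1$ mode by direct estimation of the four integrals in $\mathcal{T}_1$ using the basis $z^3,\,z^{-2}$, and handle the oscillatory modes via exponential decay of $e^{\pm i\mu_{2k+1}(s-z)}$ along segments whose imaginary part is monotone (yielding one factor $\mu_{2k+1}^{-1}$ from the integral and one from the prefactor), together with the uniform bound on the sine ratio. Two points need correction.

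First, you misidentify the role of condition (4). The sine ratio bound follows directly from $|\Ip(z_j-z)|\le|\Ip(z_1-z_2)|$ for all $z$ in the matching triangle and from $\Ip(z_1-z_2)\sim\e^{\gamma-1}(\sin\beta_1+\sin\beta_2)\to\infty$; this needs only $\beta_1,\beta_2>0$, not condition (4). Condition (4) is what ensures $|z_1^5-z_2^5|\gtrsim\e^{5(\gamma-1)}$ for the boundary-correction terms in $\mathcal{T}_1$, which you do invoke correctly in Step~2. (Also, the logarithm at $\ag=4$ comes from $\int_{z_2}^z s^3 h_1(s)\,ds$, whose integrand is $\sim|s|^{-1}$, not from the $\int_{z_1}^z$ integral.)

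Second, and more seriously, your bound $|\p_1(z_j)|\lesssim|z_j|^{-3}$ is false in general and would miss the second term in the estimate for $\mathcal{Q}_1$. Writing $\phi(z,\tau)=\e v^h(i\pi/2+\e z)\sin\tau+\e\xi$ and $\phi^0=-\tfrac{2\sqrt{2}i}{z}\sin\tau+\psi$, the piece $b:=\e\xi-\psi$ does satisfy $\|\pa_\tau^2 b\|_{\ell_1,3}\le M$, but the remaining piece
\[
l_1(z):=\e v^h(i\pi/2+\e z)+\tfrac{2\sqrt{2}i}{z}
\]
satisfies $|l_1(z)|\le M\e^2|z|$, which \emph{grows} on the matching domain. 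Hence $|\p_1(z_j)|\lesssim|z_j|^{-3}+\e^2|z_j|$, and the $\e^2|z_j|$ contribution is exactly what produces the term $\e^{2+(\ag+1)(\gamma-1)}$ in $\|\mathcal{Q}_1\|_\ag$; for $\gamma<1/2$ this term dominates, so your estimate would be too small there. The paper's fix is precisely the decomposition $\p=l_1\sin\tau+b$ (which also explains why only $b$ enters the $\wt\Pi[\mathcal{Q}]$ bound, since $l_1$ lives in the first mode). Your explanation of the $\pa_\tau^2$ control on $\mathcal{Q}_{2k+1}$ should likewise be rephrased: there is no $k^{-2}$ gain in $\mathcal{Q}_{2k+1}$ itself; rather the factor $(2k+1)^2$ introduced by $\pa_\tau^2$ is absorbed because $\|\pa_\tau^2 b\|_{\ell_1,3}$ is already bounded from Theorems~\ref{outerthm} and~\ref{innerthm}.
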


\begin{proof}
        Due to the assumption $e^{5(\pi - \beta_1)} - e^{-5 \beta_2} \ne 0$, when $\delta$ is small, it holds 
        \begin{equation}\label{E:temp-z}
        \frac 1M \e^{\gamma-1} \le |z_1|, \, |z_2|, \, |z_1^5 -z_2^5|^{\frac 15} \le M \e^{\gamma - 1}; \quad \kappa \le |z| \le M \e^{\gamma - 1}, \;\; \forall z\in D^{\mathrm{mch,u}}_{+, \kappa}.        
        \end{equation}
	Therefore,
	\[
	\begin{split}		
	\left|\dfrac{1}{5z^2}\displaystyle\int_{z_2}^z h(s)s^3ds \right| \leq& \,\dfrac{M\|h\|_{\ag}}{|z|^2}\displaystyle\int_{z_2}^z |s|^{3-\ag}ds
	\leq  \begin{cases} M\|h\|_{\ag}|z|^{2-\ag}, \qquad &\alpha > 4, \\
	M|\log \e| \|h\|_{\ag}|z|^{2-\ag}, & \alpha = 4,
\end{cases}\end{split}\]
	and, for  $\ag \ge 4$,
	\[\begin{split}
&\left|\dfrac{z^3}{5}\displaystyle\int_{z_1}^z \dfrac{h (s)}{s^2}ds \right| \leq \, M\|h \|_{\ag}|z|^3\displaystyle\int_{z_1}^z \dfrac{1}{|s|^{2+\ag}}ds
	\leq M\|h \|_{\ag}|z|^{2-\ag}\\
	& \left|\dfrac{1}{5(z_2^5-z_1^5)} \left(z^3-\dfrac{z_2^5}{z^2}\right)\displaystyle\int_{z_2}^{z_1}h(s)s^3ds\right|\leq \, \dfrac{M\|h\|_{\ag}}{|z|^2}\displaystyle\int_{z_2}^{z_1} |s|^{3-\ag}ds \le \dfrac {M\|h\|_{\ag} \e^{(\gamma-1) (4-\ag)}}{|z|^2}
	\leq M\|h\|_{\ag}|z|^{2-\ag}\\	
	& \left|\dfrac{1}{5(z_2^5-z_1^5)}\left(z^3z_2^5-\dfrac{(z_1z_2)^5}{z^2}\right)\displaystyle\int_{z_1}^{z_2}\dfrac{h(s)}{s^2}ds\right|\leq \, M \|h\|_{\ag}\left(|z|^3+\dfrac{|z_2|^5}{|z|^2}\right)\displaystyle\int_{z_1}^{z_2} \dfrac{1}{|s|^{2+\ag}}ds\\
	& \qquad \qquad \qquad \qquad \qquad \qquad\qquad \qquad \qquad \;\;\; \le M \|h\|_{\ag}\left(|z|^3+\dfrac{|z_2|^5}{|z|^2}\right) \e^{(\gamma-1)(-1-\ag)} \leq M \|h\|_{\ag}|z|^{2-\ag},
\end{split}
	\]
	where the integral $\int_{z_1}^{z_2}$ was simply taken along the arc of the circle centered at $-i\kappa \e$. 
	Hence, we finish the proof of item (1) of the theorem. 
	
	To deal with the higher modes, we will see that
	\begin{equation}\label{sines}\left|\dfrac{\sin(\mu_{2k+1}(z_j-z))}{\sin(\mu_{2k+1}(z_1-z_2))}\right|\leq M,\ j=1,2, \ \forall\ z\in D^{\mathrm{\mathrm{mch}},u}_{+,\kappa},\ \forall\ k\geq 1. \end{equation}
	In fact, recalling that $|\sin^2(z)|=\frac{1}{2}(\cosh(2\Ip(z))-\cos(2\Rp(z)))$, we have
	\[
	\left|\dfrac{\sin(\mu_{2k+1}(z_j-z))}{\sin(\mu_{2k+1}(z_1-z_2))}\right|^2\leq\dfrac{\cosh(2\mu_{2k+1}\Ip(z_j-z))+1}{\cosh(2\mu_{2k+1}\Ip(z_1-z_2))-1}.
\]
Since $\Ip(z_1-z_2)=K\e^{\cg-1}$ and $|\Ip(z_j-z)|\leq|\Ip(z_1-z_2)| $,  we obtain \eqref{sines}.

Assume that $\ag\geq 0$. For each $z\in D^{\mathrm{\mathrm{mch}},u}_{+,\kappa}$, there exist $\beta_1^*,\beta_2^*$ (depending on $z$) between $\beta_1$ and $\beta_2$ and $t_2^*, t_1^*>0$ (depending on $z$) such that $z_2= z+ e^{-i\beta_2^*}t_2^{*}$ and $z_1= z+  e^{i(\pi - \beta_1^*)}t_1^{*}$. Thus, we have that
	\[
	\begin{split}
	\left|\displaystyle\int_{z_2}^{z}h_{2k+1}(s)e^{-i\mu_{2k+1}(s-z)}ds\right|\leq&\,\displaystyle\int^{t_2^*}_{0}\left|h_{2k+1}\left(z+e^{-i\beta_2^*}t\right)\right|e^{-\mu_{2k+1}\sin(\beta_2^*t)}dt\\
	\leq&\,\|h_{2k+1}\|_{\ag}\displaystyle\int_{0}^{t_2^*}\dfrac{e^{-\mu_{2k+1}\sin(\beta_2^*t)}}{|z+e^{-i\beta_2^*}t|^{\ag}}dt
	\leq\,\dfrac{\|h_{2k+1}\|_{\ag}}{|z|^{\ag}}\displaystyle\int_{0}^{\infty}e^{-\mu_{2k+1}\sin(\beta_2^*t)}dt\\
	\leq&\,\dfrac{M\|h_{2k+1}\|_{\ag}}{\mu_{2k+1}|z|^{\ag}}.
	\end{split}
	\]
	Analogously, we prove that 
	$$\left|\int_{z_1}^{z}h_{2k+1}(s)e^{i\mu_{2k+1}(s-z)}ds\right|\leq \dfrac{M\|h_{2k+1}\|_{\ag}}{\mu_{2k+1}|z|^{\ag}},$$
	and in particular, using that $|z_j|\geq M|z|$, $j=1,2$,
	\[
	\begin{split} 
	\left|\displaystyle\int_{z_2}^{z_1}h_{2k+1}(s)e^{-i\mu_{2k+1}(s-z_1)}ds\right|\leq &\, \dfrac{M\|h_{2k+1}\|_{\ag}}{\mu_{2k+1}|z_1|^{\ag}}\leq \dfrac{M\|h_{2k+1}\|_{\ag}}{\mu_{2k+1}|z|^{\ag}}\\
\left|\displaystyle\int_{z_1}^{z_2}h_{2k+1}(s)e^{i\mu_{2k+1}(s-z_2)}ds\right|\leq  &\,\dfrac{M\|h_{2k+1}\|_{\ag}}{\mu_{2k+1}|z_2|^{\ag}}\leq \dfrac{M\|h_{2k+1}\|_{\ag}}{\mu_{2k+1}|z|^{\ag}}.
\end{split}
\]
	Hence,	
	\begin{equation}\label{boundTn}\|\mathcal{T}_{2k+1}(h_{2k+1})\|_{\ag}\leq \dfrac{M}{\mu_{2k+1}^2}\|h_{2k+1}\|_{\ag},\ k\geq 1,\ \ag\geq 0.\end{equation} 
	Items $(2)$ follows 
	\eqref{boundTn}.	 			
		
	To estimate $\mathcal{Q}$, observe that  using \eqref{cotaouter} and \eqref{cotainner}, one has
	$$\p(z,\tau)= l_1(z)\sin\tau+b(z,\tau),\qquad\text{ with }\,b(z,\tau)=\e\xi(i\pi/2+\e z,\tau) -\left(\phi_0(z,\tau)+\frac{2\sqrt{2}i}{z}\sin\tau\right),$$
	where $l_1$ is given in \eqref{cotaouter}. Then, $\| \pa_\tau^2 b\|_{\n,3}\leq M$ and $|l_1(z)|\leq M\e^2|z|$, for each $z\in D^{\mathrm{\mathrm{mch}},u}_{+,\kappa}.$	
	Thus, 	from \eqref{E:temp-z}, we can see that
	$$
	\begin{array}{lcl}
	\left|\mathcal{Q}_1(z_1,z_2)(z)\right|&=& \left|\dfrac{1}{z_2^5-z_1^5}\left(z^3(z_2^2\p_1(z_2)-z_1^2\p_1(z_1))-\dfrac{1}{z^2}\left(z_1^5z_2^2\p_1(z_2)-z_1^2z_2^5\p_1(z_1)\right)\right)\right|\vspace{0.2cm}\\
	&\leq&  M\left(\left|\p_1(z_1)\right|+\left|\p_1(z_2)\right|+\dfrac{|z_1^2|}{|z|^2}\left|\p_1(z_1)\right|+\dfrac{|z_2^2|}{|z|^2}\left|\p_1(z_2)\right|\right)\vspace{0.2cm}\\
	&\leq&  M\left(\dfrac{1}{|z_2||z|^2}+ \e^2|z_2|+\dfrac{\e^2|z_2|^3}{|z|^2}\right).	
	\end{array}
	$$   
		Therefore for $\ag\geq 2$,
	$$
	\left\|\mathcal{Q}_1(z_1,z_2)\right\|_{\ag}\leq  M\left(\e^{(\ag-3)(\cg-1)}+\e^{2+(\ag+1)(\cg-1)}\right).
	$$   	
	Finally, from $\eqref{sines}$ and \eqref{indepterm}, we can see that, for $\ag \ge 0$ and $k\geq 1$,
	\[
	\begin{split}
	|z^{\ag}\pa_\tau^2 \mathcal{Q}_{2k+1}(z_1,z_2)(z)|=&\,\left|\dfrac{\sin(\mu_{2k+1}(z-z_2))}{\sin(\mu_{2k+1}(z_1-z_2))}z^{\ag}\pa_\tau^2 \p_{2k+1}(z_1)
	-\dfrac{\sin(\mu_{2k+1}(z-z_1))}{\sin(\mu_{2k+1}(z_1-z_2))}z^{\ag} \pa_\tau^2 \p_{2k+1}(z_2)\right|\vspace{0.2cm}\\
	\leq&\,M k^2 \|\Pi_{2k+1}[b]\|_{3}\dfrac{|z|^{\ag}}{|z_2|^3}
	\leq Mk^2 \|\Pi_{2k+1}[b]\|_{3}\e^{(\ag-3)(\cg-1)},
	\end{split}
	\]
	and thus
	$$\|\pa_\tau^2 \mathcal{Q}_{2k+1}(z_1,z_2)\|_{\ag}\leq M
	\e^{(\ag-3)(\cg-1)},\ \ag\ge 0,\ k\geq 1,$$
	which completes the proof of item (3).
\end{proof}

\begin{proof}[End of the proof of Theorem  \ref{matchingthm}] 
	To obtain the estimates for $\varphi$ stated in the theorem, we just need to estimate  $\|\varphi\|_{\ell^1,2}$.
	From \eqref{eqmat}, and Propositions \ref{errorequationlem} and \ref{propertiesmat}, we have that
	$$
	\begin{array}{lcl}
	\|\p_1\|_2&=& \left\|\mathcal{Q}_{1}(z_1,z_2)+ \mathcal{T}_{1}\left(\Pi_1\left[\mathcal{C}_{\mathrm{mch}}\right]+ L(\p)+\widehat{L}(\wt \Pi[\p])\right)\right\|_2\vspace{0.2cm}\\
	&\leq& \left\|\mathcal{Q}_{1}(z_1,z_2)\right\|_2+ M |\log \e| \left(\left\|\Pi_1\left[\mathcal{C}_{\mathrm{mch}}\right]\right\|_4+ \left\|L(\p)\right\|_4+ \left\| \widehat{L}(\wt \Pi[\p])\right\|_4\right)\vspace{0.2cm}\\
	&\leq& M(\e^{1-\cg}+\e^{2+3(\cg-1)})+ M|\log \e| \left(\e^{3\cg-1}+\left\|\p\right\|_{\n,0}+ \left\| \wt \Pi[\p]\right\|_{\n,2} \right)\vspace{0.2cm}\\
	&\leq& M\left(\e^{1-\cg}+\e^{3\cg-1} |\log \e| +\dfrac{ |\log \e|}{\kappa^2}\left\|\p\right\|_{\n,2}+ |\log \e| \left\| \wt \Pi[\p]\right\|_{\n,2} \right).	
	\end{array}
	$$
	Moreover, since $\Pi_1\circ K\equiv 0$, we have that	
	$$
	\begin{array}{lcl}
	\left\|\pa_\tau^2 \wt \Pi[\p]\right\|_{\n,2}&=& \left\|\pa_\tau^2 \wt \Pi\circ\mathcal{Q}(z_1,z_2,\varphi)+ \pa_\tau^2 \mathcal{T}\left(\wt \Pi\left[\mathcal{C}_{\mathrm{mch}}\right]+ K(\p)\right)\right\|_{\n,2}\vspace{0.2cm}\\
	&\leq& \left\|\pa_\tau^2 \wt \Pi\circ\mathcal{Q}(z_1,z_2,\varphi)\right\|_{\n,2}+ M\left(\left\|\wt \Pi\left[\mathcal{C}_{\mathrm{mch}}\right]\right\|_{\n,2}+ \left\|K(\p)\right\|_{\n,2}\right)\vspace{0.2cm}\\
	&\leq& M(\e^{1-\cg}+\e^{2+3(\cg-1)})+ M\left(\dfrac{\e^2}{\kappa}+\left\|\p\right\|_{\n,0}\right)\vspace{0.2cm}\\
	&\leq& M\left(\e^{1-\cg}+\e^{3\cg-1}+\dfrac{1}{\kappa^2}\left\|\p\right\|_{\n,2} \right).
	\end{array}
	$$
	Since $\kappa^{-2} |\log \e|$ is assumed to be small, it follows from multiplying the second inequality by $2 M |\log \e|$ and adding it to the first one that 
	$$
	\| \p_1 \|_2 + M |\log \e| \|\pa_\tau^2 \wt \Pi[\p] \|_{\n,2}\leq 2 M |\log \e| \left(\e^{1-\cg}+\e^{3\cg-1}
		\right).
	$$
	Finally, the estimate on $\pa_z \p$ could be derived by differentiating the formula of $\p$ with respect to $z$. Alternatively, from Lemma 8.1 of \cite{BFGS12}, reducing the domain $D^{\mathrm{\mathrm{mch}},u}_{+,\kappa}$ (see \eqref{matchingdomain}), with vertices $y_1$ and $y_2$ such that $|y_j-i(\pi/2-\kappa\e)|=\e^{\cg}$, $j=1,2$,  to $D^{\mathrm{\mathrm{mch}},u}_{+,2\kappa}\subset D^{\mathrm{\mathrm{mch}},u}_{+,\kappa}$ having vertices $\widetilde{y}_1$ and $\widetilde{y}_2$ such that $|\widetilde{y}_j-i(\pi/2-2\kappa\e)|=\widetilde{c}\e^{\cg}$, $j=1,2$, and $0<\widetilde{c}<1$, we obtain that
	$$\|\pa_\tau^2 \partial_z\p\|_{\n,2}\leq \dfrac{M}{\kappa} |\log \e| (\e^{1-\cg}+\e^{3\cg-1}).$$
	It completes the proof of this theorem.  In order to simplify the notation, we make no distinction between $D^{\mathrm{\mathrm{mch}},u}_{+,\kappa}$ and $D^{\mathrm{\mathrm{mch}},u}_{+,2\kappa}$. 			
\end{proof}

\section{The distance between the manifolds: Proof of Proposition \ref{prop:DifferenceDeltatilde}}\label{mainthmsec}

\subsection{Banach Space and Operators}
We devote this section to prove Proposition \ref{prop:DifferenceDeltatilde}. We start by defining the functional setting. 
Given an analytic function $f:\mathcal{R}_{\kappa}\rightarrow \C$ (see Figure 
\ref{diffdom}), we define the norm
\begin{equation*}
\|f\|_{\ag,\mathrm{exp}}=\displaystyle\sup_{y\in\mathcal{R}_{\kappa}}\left|(y^2+\pi^2/4)^{\ag} e^{\frac{\lambda_3}{\e}\left(\frac{\pi}{2}-|\Ip(y)|\right)}f(y)\right|,
\end{equation*}
and the Banach space
\begin{equation*}
\mathcal{X}_{\ag,\exp}=\{f:\mathcal{R}_{\kappa}\rightarrow \C;\ f\, \textrm{ analytic}, \|f\|_{\ag,\mathrm{exp}}<\infty\}.
\end{equation*}
Moreover, given an analytic function $f:\mathcal{R}_{\kappa}\times\mathbb{T}\rightarrow \C$ odd in $\tau \in \mathbb{T}$, we define the corresponding norm and the associated Banach space
\begin{equation*}
\begin{split}
\|f\|_{\ell_1,\ag,\mathrm{exp}}=&\displaystyle\sum_{k\geq 1} \|\Pi_{2k+1}[f]\|_{\ag,\mathrm{exp}}\\
\mathcal{X}_{\ell_1,\ag,\exp}=&\left\{f:\mathcal{R}_{\kappa}\times \mathbb{T}\rightarrow \C;\ f \textrm{ is an analytic function in the variable } y \textrm{ such that }\right.\\
&\left. \Pi_1[f]=\Pi_{2l}[f]=0,\forall l\geq 0\textrm{ and }  \|f\|_{\ell_1,\ag,\mathrm{exp}}<\infty\right\}.
\end{split}
\end{equation*}
Finally, we consider the product Banach space
\begin{equation*}
\mathcal{Y}_{\ell_1,2,\exp}=\mathcal{X}_{2,\exp}\times \mathcal{X}_{\ell_1,0,\exp}\times \mathcal{X}_{\ell_1,0,\exp},
\end{equation*} 
endowed with the weighted norm
\begin{equation*}
\llbracket(f,g,h)\rrbracket_{\ell_1,2,\exp}=\dfrac{1}{\e}\|f\|_{2,\exp}+\kappa \|g\|_{\ell_1,0,\exp}+ \kappa \|h\|_{\ell_1,0,\exp}.
\end{equation*} 
The next lemmas give estimates for the operators and functions given in Section \ref{sec:Sketch:difference}.

\begin{lemma}\label{lemmaP} The components of the operator  $\mathcal{P}$ in \eqref{Pop} have the following properties.
\begin{enumerate}
\item For $\ag=2,5$, the operator $\mathcal{P}^W:\mathcal{X}_{\ag,\exp}\rightarrow \mathcal{X}_{2,\exp}$ is well defined. Moreover,  there exists a constant $M>0$ independent of $\e$ and $\kappa$ such that,
\begin{itemize}
 \item For $h\in \mathcal{X}_{2,\exp}$, $\displaystyle\|\mathcal{P}^{W}(h)\|_{2,\exp}\leq M\e\|h\|_{2,\exp}.$
 \item For $h\in \mathcal{X}_{5,\exp}$, $\displaystyle\|\mathcal{P}^{W}(h)\|_{2,\exp}\leq \frac{M}{\e^2\kappa^3}\|h\|_{5,\exp}.$
 \end{itemize}
\item 	For $\ag>1$, the operators $\mathcal{P}^{\Gamma},\mathcal{P}^{\Theta}:\mathcal{X}_{\ell_1,\ag,\exp}\rightarrow \mathcal{X}_{\ell_1,0,\exp}$  are well-defined. Moreover, there exists a constant $M>0$ independent of $\e$ and $\kappa$ such that, for every $h\in \mathcal{X}_{\ell_1,\ag,\exp}$,
	\begin{equation*}
	\|\mathcal{P}^{\Gamma}(h)\|_{\ell_1,0,\exp},\|\mathcal{P}^{\Theta}(h)\|_{\ell_1,0,\exp}\leq \dfrac{M}{(\kappa \e)^{\ag-1}}\|h\|_{\ell_1,\ag,\exp}.
	\end{equation*}
\end{enumerate}
	\end{lemma}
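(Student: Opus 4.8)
The claim is an estimate on the three integral operators $\mathcal P^W$, $\mathcal P^\Gamma$, $\mathcal P^\Theta$ comprising $\mathcal P$ in \eqref{Pop}, measured in the exponentially weighted spaces $\mathcal X_{\ag,\exp}$ and $\mathcal X_{\ell_1,\ag,\exp}$. The whole proof reduces to estimating kernels of one-dimensional integrals along the vertical segment $\mathcal R_\kappa\subset i\R$ joining $y^-=-i(\tfrac\pi2-\kappa\e)$ and $y^+=i(\tfrac\pi2-\kappa\e)$, where the crucial bookkeeping is how the algebraic weight $(y^2+\pi^2/4)^{-\ag}$ and the exponential weight $e^{-\frac{\lambda_3}\e(\frac\pi2-|\Ip y|)}$ interact with the exponential Green's functions $e^{\pm i\lambda_{2k+1} s/\e}$ and with the polynomial fundamental solutions of $\mathcal P^W$. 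First I would parametrize $y = it$, $s = i\sigma$ with $t,\sigma\in(-\tfrac\pi2+\kappa\e,\tfrac\pi2-\kappa\e)$, so that $y^2+\pi^2/4 = \pi^2/4 - t^2 = (\tfrac\pi2-t)(\tfrac\pi2+t)$ is real positive and comparable to $(\tfrac\pi2-|t|)$ up to a bounded factor, and the exponential weight becomes $e^{-\frac{\lambda_3}\e(\frac\pi2-|t|)}$; then everything is a real integral and the estimates become elementary.

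\textbf{Step 1: the oscillatory components $\mathcal P^\Gamma$, $\mathcal P^\Theta$.} For the $(2k+1)$-th mode, $\mathcal P^\Gamma_{2k+1}(h)(y) = \int_{y^+}^y e^{i\lambda_{2k+1}(s-y)/\e}\Pi_{2k+1}[h](s)\,ds$. On the segment $i\R$ with $y=it$, $s=i\sigma$ integrated from $t^+:=\tfrac\pi2-\kappa\e$ down towards $t$, one has $e^{i\lambda_{2k+1}(s-y)/\e} = e^{-\lambda_{2k+1}(\sigma-t)/\e}$, a genuine decaying exponential for $\sigma>t$ (the integration goes from larger $\sigma$ to smaller). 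The standard move is: bound $|\Pi_{2k+1}[h](s)| \le \|h\|_{\ell_1,\ag,\exp}\,(\tfrac\pi2-\sigma)^{-\ag}\,e^{-\frac{\lambda_3}\e(\frac\pi2-\sigma)}$ by the very definition of the norm, pull out the target weights, and reduce to $\int_t^{t^+}(\tfrac\pi2-\sigma)^{-\ag}e^{-\frac{\lambda_{2k+1}-\lambda_3}\e(\sigma-t)}\cdots d\sigma$, which near the endpoint contributes the worst case. Since $\lambda_{2k+1}\ge\lambda_3$ (so the extra exponential only helps) and $\lambda_3=\sqrt{8-\e^2}$, the oscillatory exponential, after division by the target exponential weight, leaves a factor $e^{-\frac{\lambda_{2k+1}}\e(\sigma-t)}$ times $e^{\frac{\lambda_3}\e(\sigma-t)}$, i.e. $e^{-\frac{\lambda_{2k+1}-\lambda_3}\e(\sigma-t)}\le 1$; the loss of algebraic weight is then $\int^{t^+}(\tfrac\pi2-\sigma)^{-\ag}d\sigma$, and because the domain's lower cutoff is at distance $\kappa\e$ from $\pi/2$, this integral is $\lesssim (\kappa\e)^{-(\ag-1)}/(\ag-1)$. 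Summing over $k$ and using $\sum_k\|\Pi_{2k+1}[h]\|_{\ag,\exp}=\|h\|_{\ell_1,\ag,\exp}$ (the $\mu_{2k+1}^{-1}$ or $\lambda_{2k+1}^{-1}$ prefactors only make things smaller) gives $\|\mathcal P^\Gamma(h)\|_{\ell_1,0,\exp}\lesssim (\kappa\e)^{-(\ag-1)}\|h\|_{\ell_1,\ag,\exp}$. The case $\mathcal P^\Theta$ is identical with the conjugate exponential and integration from $y^-$; the reality/oddness structure $\Theta(\bar y)=\overline{\Gamma(y)}$ makes the two symmetric, so I would prove one and remark on the other. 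One subtlety to treat carefully: the weight $e^{-\frac{\lambda_3}\e(\frac\pi2-|\Ip y|)}$ has a corner at $y=0$ (where $|\Ip y|$ is not smooth); along the vertical segment this is harmless since $|\Ip(it)|=|t|$ and the integral is split at $t=0$ if needed, but it is worth noting that on the portion $t\in(0,t^+)$ the weight is $e^{-\frac{\lambda_3}\e(\frac\pi2-t)}$ and the path from $t^+$ downwards keeps $\sigma\ge t$, so the sign works out.

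\textbf{Step 2: the hyperbolic (center) component $\mathcal P^W$.} Here $\mathcal P^W(h)(y)=\ddot v^h(y)\int_0^y h(s)/\ddot v^h(s)\,ds$, and one uses the explicit $\ddot v^h(y)=\sqrt2(\cosh 2y-3)\sech^3 y$, which on $y=it$ is real, strictly negative, has a triple zero of $1/\ddot v^h$ at $t=\pm\pi/2$ and is bounded away from $0$ on compact subintervals. Thus $|\ddot v^h(y)|\sim (\tfrac\pi2-|t|)^{-3}$ near the endpoints and $\sim 1$ in the middle; $|1/\ddot v^h(s)|\sim(\tfrac\pi2-|\sigma|)^{3}$. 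For $h\in\mathcal X_{2,\exp}$: $|h(s)/\ddot v^h(s)|\lesssim \|h\|_{2,\exp}(\tfrac\pi2-|\sigma|)^{-2}(\tfrac\pi2-|\sigma|)^{3}e^{-\frac{\lambda_3}\e(\frac\pi2-|\sigma|)} = \|h\|_{2,\exp}(\tfrac\pi2-|\sigma|)e^{-\cdots}$; integrating $\int_0^y$ and multiplying by $|\ddot v^h(y)|\sim(\tfrac\pi2-|t|)^{-3}$: the exponential factor $e^{-\frac{\lambda_3}\e(\frac\pi2-|\sigma|)}$ is, on the interval $\sigma$ between $0$ and $t$, maximized at $\sigma=t$ (the endpoint closest to $\pi/2$), contributing $e^{-\frac{\lambda_3}\e(\frac\pi2-|t|)}\cdot\frac\e{\lambda_3}$ after integrating $\int^t e^{\frac{\lambda_3}\e\sigma}d\sigma$ — this is exactly where the extra factor $\e$ in the estimate $\|\mathcal P^W(h)\|_{2,\exp}\le M\e\|h\|_{2,\exp}$ comes from. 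The remaining algebraic accounting is $(\tfrac\pi2-|t|)^{-3}\cdot(\tfrac\pi2-|t|)\cdot$(the algebraic weight $(\tfrac\pi2-|t|)$ already inside, plus the target weight $(\tfrac\pi2-|t|)^{2}$ we multiply by), which balances to $O(1)$; I would do this split carefully, isolating the contribution near $t=\pi/2$, near $t=0$, and near $s=\pi/2$, but the exponential always localizes mass at the endpoint closest to $\pi/2$ and gives the factor $\e/\lambda_3$. For $h\in\mathcal X_{5,\exp}$ the only change is $(\tfrac\pi2-|\sigma|)^{-5}(\tfrac\pi2-|\sigma|)^3 = (\tfrac\pi2-|\sigma|)^{-2}$, so the $\sigma$-integral $\int^t(\tfrac\pi2-|\sigma|)^{-2}e^{\frac{\lambda_3}\e\sigma}d\sigma$ near $\sigma=t$ gives $(\tfrac\pi2-|t|)^{-2}\cdot\frac\e{\lambda_3}e^{\frac{\lambda_3}\e t}$ provided $\tfrac\pi2-|t|\gg\e$, which holds on $\mathcal R_\kappa$ where $\tfrac\pi2-|t|\ge\kappa\e$; but in fact the dominant contribution for the $\ag=5$ case comes from $\sigma$ near the far endpoint-region, producing the stated $(\e^2\kappa^3)^{-1}$: the bookkeeping gives $\|h\|_{5,\exp}\cdot(\tfrac\pi2-|t|)^{-3}\cdot\frac1\e\cdot(\kappa\e)^{-2}\cdot(\tfrac\pi2-|t|)^2\lesssim \frac1{\e^2\kappa^3}\|h\|_{5,\exp}(\tfrac\pi2-|t|)^{-1}$, and since we only need the $\|\cdot\|_{2,\exp}$ norm (weight $(\tfrac\pi2-|t|)^{-2}$), there is room to spare near $t=\pi/2$; I'd track the worst region to pin the exact power $\kappa^{-3}$.

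\textbf{Main obstacle.} None of this is deep, but the bookkeeping is delicate in two respects, and these are where I expect to spend effort. First, keeping the exponential weights consistent across the corner at $y=0$ of the weight $e^{-\frac{\lambda_3}\e(\frac\pi2-|\Ip y|)}$ when the integration path in $\mathcal P^W$ runs from $0$ (through the corner) to $y$: one must split $\int_0^y$ and verify in each half that the ``source'' exponential at $\sigma$ is never larger than (target exponential at $y$) times a decaying factor — this is true because the integration always moves towards $y$ which is the point where the weight is evaluated, but it requires checking the two cases $0<\Ip y<t^+$ and $-t^+<\Ip y<0$ separately and using that along the segment $|\Ip(i\sigma)|$ is monotone on each side of $0$. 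Second, the exact power of $\kappa$ in the $\mathcal X_{5,\exp}\to\mathcal X_{2,\exp}$ bound for $\mathcal P^W$: one must identify whether the integral's mass sits near $s=\pi/2$ (giving $\kappa^{-3}$ from $(\kappa\e)^3$ in $1/\ddot v^h(s)$) or is spread out, and combine that with the loss of three powers from $\ddot v^h(y)$ and the $\e^{-2}$ deficit — getting $\kappa^{-3}$ exactly (not $\kappa^{-2}$ or $\kappa^{-4}$) is the one place where a sloppy estimate would give the wrong constant and break the fixed-point scheme downstream. Everything else (the oscillatory parts) follows the by-now-standard template of pulling out norms, using $\lambda_{2k+1}\ge\lambda_3$, and summing a geometric-type series in $k$.
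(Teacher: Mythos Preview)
Your approach is essentially the same as the paper's, and it is correct. Two points of comparison are worth making.

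For $\mathcal P^\Gamma,\mathcal P^\Theta$, the paper avoids your case split at $t=0$ with a one-line observation: writing the combined exponent as $\tfrac1\e\big(f_k(\sigma)-f_k(\Im y)\big)$ with $f_k(t)=\lambda_3|t|-\lambda_{2k+1}t$, it suffices to note that $f_k$ is non-increasing on all of $\R$ (since $\lambda_{2k+1}\ge\lambda_3$), so the exponential is $\le 1$ along the whole path from $\Im y$ to $t^+$, regardless of the sign of $\Im y$. Your split works too, but this is cleaner.

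For $\mathcal P^W$ with $\ag=5$, your bookkeeping has a slip and a confusion. The slip: the exponential integral $\int_0^t e^{\frac{\lambda_3}\e\sigma}d\sigma$ contributes a factor $\e/\lambda_3$, not $1/\e$. The confusion: the mass of the integrand is concentrated at the endpoint $\sigma=t$ \emph{nearest} to $\pi/2$ (both the algebraic singularity and the exponential peak there), not a ``far endpoint.'' The paper sidesteps any localization entirely by the crude route: bound $|s^2+\pi^2/4|^{-2}\le M(\kappa\e)^{-2}$ and $|y^2+\pi^2/4|^{-1}\le M(\kappa\e)^{-1}$ pointwise on $\mathcal R_\kappa$, then integrate only the exponential to pick up one $\e$. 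This gives
\[
(\kappa\e)^{-1}\cdot(\kappa\e)^{-2}\cdot\tfrac{\e}{\lambda_3}=\tfrac{M}{\e^2\kappa^3}
\]
in one line, with no need to identify where the integrand concentrates. Since the downstream application only needs the stated power, this cruder estimate is preferable.
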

\begin{proof}
We first prove item (1). 
We take $h\in \mathcal{X}_{\ell_1,2,\exp}$ and, recalling that  $\ddot{v}^h$ has a a pole of order $3$, we obtain the following estimate for $\Ip(y)>0$, 
	\[
	\begin{split}
	\left|e^{\frac{\lambda_3}{\e}\left(\frac{\pi}{2}-|\Ip(y)| \right)}|y^2+\pi^2/4|^2\ddot v^h(y)\displaystyle\int_0^y \frac{h(s)}{\ddot v^h(s)}ds\right| \leq&\,\dfrac{e^{\frac{\lambda_3}{\e}\left(\frac{\pi}{2}-|\Ip(y)| \right)}}{|y^2+\pi^2/4|}\displaystyle\int_0^y \left|\frac{h(s)}{\ddot v^h(s)}\right|ds \\
	\leq &\, M\|h\|_{2,\exp}\dfrac{e^{\frac{\lambda_3}{\e}\left(\frac{\pi}{2}-|\Ip(y)| \right)}}{|y^2+\pi^2/4|}\displaystyle\int_0^y e^{-\frac{\lambda_3}{\e}\left(\frac{\pi}{2}-|\Ip(s)| \right)}|s^2+\pi^2/4|ds\\
\leq &\, M \|h\|_{2,\exp}\dfrac{e^{\frac{\lambda_3}{\e}\left(\frac{\pi}{2}-\Ip(y) \right)}}{|y-i\pi/2|}\displaystyle\int_0^{\Ip(y)} |\sigma- \pi/2|e^{-\frac{\lambda_3}{\e}\left(\frac{\pi}{2}-\sigma\right)}d\sigma\\
\leq&\, M \|h\|_{2,\exp}\dfrac{e^{\frac{\lambda_3}{\e}\left(\frac{\pi}{2}-\Ip(y) \right)}}{|y-i\pi/2|}\displaystyle\int_{\frac{\frac{\pi}{2}-\Ip(y)}{\e}}^{\frac{\pi}{2\e}} \e re^{-\lambda_3 r}\e dr\\
\leq& \, \dfrac{M\e\|h\|_{2,\exp}}{|y-i\pi/2|}\left( \frac \e{\lambda_3}+\dfrac{\pi}{2}-\Ip(y) - e^{-\frac{\lambda_3}{\e}\Ip(y)}\left(\frac \e{\lambda_3} +\dfrac{\pi}{2}\right) \right)\\
\leq&\,  M\e \|h\|_{2,\exp}\left( \dfrac{1}{\kappa}+1 
\right)
\leq\,  M\e \|h\|_{2,\exp}.
	\end{split}
	\]
Analogously, one can obtain the same estimate for $\Ip(y)<0$. 

For $h\in \mathcal{X}_{\ell_1,5,\exp}$, one obtains,
\[
	\begin{split}
	\left|e^{\frac{\lambda_3}{\e}\left(\frac{\pi}{2}-|\Ip(y)| \right)}|y^2+\pi^2/4|^2\ddot v^h(y)\displaystyle\int_0^y \frac{h(s)}{\ddot v^h(s)}ds\right| \leq&\,\dfrac{e^{\frac{\lambda_3}{\e}\left(\frac{\pi}{2}-|\Ip(y)| \right)}}{|y^2+\pi^2/4|}\displaystyle\int_0^y \left|\frac{h(s)}{\ddot v^h(s)}\right|ds \\
	\leq &\, M\|h\|_{5,\exp}\dfrac{e^{\frac{\lambda_3}{\e}\left(\frac{\pi}{2}-|\Ip(y)| \right)}}{|y^2+\pi^2/4|}\displaystyle\int_0^y \dfrac{e^{-\frac{\lambda_3}{\e}\left(\frac{\pi}{2}-|\Ip(s)| \right)}}{|s^2+\pi^2/4|^2}ds\\
	\leq &\, \dfrac{M\|h\|_{5,\exp}}{\kappa^3\e^3} e^{-\frac{\lambda_3}{\e}|\Ip(y)| }\displaystyle\int_0^y e^{\frac{\lambda_3}{\e}|\Ip(s)| }ds\\
	\leq &\, \dfrac{M\|h\|_{5,\exp}}{\kappa^3\e^2}.
	\end{split}
	\]
	
We prove item (2) only for the operator $\mathcal{P}^{\Gamma}$, since the result for $\mathcal{P}^{\Theta}$ follows analogously. Let $h(y,\tau)=\sum_{k\geq 1}h_{2k+1}(y)\sin((2k+1)\tau)$. 
We bound each component of the operator $\mathcal{P}^{\Gamma}$ as 
	$$
	\begin{array}{lcl}
	\left|\mathcal{P}^\Gamma_{2k+1}(h_{2k+1})e^{\frac{\lambda_3}{\e}\left(\frac{\pi}{2}-|\Ip(y)|\right)}\right|&\leq&\|h_{2k+1}\|_{\ag,\exp}\displaystyle
	\int^{y^+}_y\left|e^{\frac{\lambda_3}{\e}\left(\frac{\pi}{2}-|\Ip(y)|\right)}\dfrac{e^{-\frac{\lambda_3}{\e}\left(\frac{\pi}{2}-|\Ip(s)|\right)}}{|s^2+\pi^2/4|^\ag}e^{i\frac{\lambda_{2k+1}}{\e}(s-y)}\right|ds\vspace{0.2cm}\\
	&\leq&\|h_{2k+1}\|_{\ag,\exp}\displaystyle
	\int^{\frac{\pi}{2}-\kappa\e}_{\Ip(y)} \dfrac{e^{\frac{1}{\e}\left(\lambda_3|\sigma|-\lambda_{2k+1}\sigma-(\lambda_3|\Ip(y)|-\lambda_{2k+1}\Ip(y))\right)}}{|\sigma^2-\pi^2/4|^\ag}d\sigma.
	\end{array}
	$$
Now, since the functions  $f_k(t)=\lambda_3|t|-\lambda_{2k+1}t$ are decreasing  for $t\in\mathbb{R}$ and $k\geq 1$, $\sigma >\Ip(y)$, and  recalling that $\ag>1$, we obtain 
	$$
	\left|\mathcal{P}^\Gamma_{2k+1}(h_{2k+1})e^{\frac{\lambda_3}{\e}\left(\frac{\pi}{2}-|\Ip(y)|\right)}\right|
	\leq\|h_{2k+1}\|_{\ag,\exp}\displaystyle
	\int^{\frac{\pi}{2}-\kappa\e}_{\Ip(y)}\dfrac{1}{|\sigma^2 -\pi^2/4|^\ag}d\sigma
	\leq \dfrac{M}{(\kappa\e)^{\ag-1}}\|h_{2k+1}\|_{\ag,\exp}.
	$$	
\end{proof}

In next proposition, we obtain estimates for the right hand side of equation \eqref{fixedequationmod}.

\begin{prop}\label{propgigante}
	There exists a constant $M$ independent of $\e$ and $\kappa$ such that the following statements hold.
	\begin{enumerate}
		\item The operator $\widetilde{\mathcal{M}}:\mathcal{Y}_{\ell_1,2,\exp}\rightarrow\mathcal{Y}_{\ell_1,2,\exp}$ introduced in \eqref{fixedequationmod} is well-defined and 
		\begin{equation*}
		\left\llbracket\widetilde{\mathcal{M}}\left({\Xi_1},\Gamma,\Theta\right)\right\rrbracket_{\ell_1,2,\exp}\leq \dfrac M\kappa \left\llbracket\left({\Xi_1},\Gamma,\Theta\right)\right\rrbracket_{\ell_1,2,\exp}.  
		\end{equation*}
		Moreover, denoting $\widetilde{\mathcal{M}}=(\widetilde{\mathcal{M}}_1,\widetilde{\mathcal{M}}_2,\widetilde{\mathcal{M}}_3)$, we have that
		\begin{align*}
		\left\|\widetilde{\mathcal{M}}_1\left({\Xi_1},\Gamma,\Theta\right)\right\|_{2,\exp}\leq &\, \dfrac{M}{\kappa^3}\|{\Xi_1}\|_{2,\exp}+M\e\left(\left\|\Gamma\right\|_{\ell_1,0,\exp}+\left\|\Theta\right\|_{\ell_1,0,\exp}\right),
\\
		\left\|\widetilde{\mathcal{M}_j}\left({\Xi_1},\Gamma,\Theta\right)\right\|_{\ell_1,0,\exp}\leq &\, 
		\dfrac{M}{\kappa^2 \e}\|{\Xi_1}\|_{2,\exp}+\dfrac M\kappa \left(\left\|\Gamma\right\|_{\ell_1,0,\exp}+\left\|\Theta\right\|_{\ell_1,0,\exp}\right), 
		\ j=2,3. 
		\end{align*}		
		\item The function $\wt \Delta$ defined in \eqref{def:NewDelta} 
satisfies 
		\[
		\wt \Delta = (I- \wt M)^{-1} 
\big(0,\mathcal{I}_{\mathrm{\Gamma}}(c),\mathcal{I}_{\mathrm{\Theta}}(d) \big) \ 
 \text{ and } \ \llbracket \wt \Delta - 
\big(0,\mathcal{I}_{\mathrm{\Gamma}}(c),\mathcal{I}_{\mathrm{\Theta}}(d) \big) 
\rrbracket_{\ell_1,2,\exp} \le \dfrac M\kappa \llbracket  
\big(0,\mathcal{I}_{\mathrm{\Gamma}}(c),\mathcal{I}_{\mathrm{\Theta}}(d) \big) 
\rrbracket_{\ell_1,2,\exp},\]
where $\mathcal{I}_{\mathrm{\Gamma}}(c)$, $\mathcal{I}_{\mathrm{\Theta}}(d)$ are
the functions defined in \eqref{Is1} and \eqref{def:csandds}.
	\end{enumerate}
\end{prop}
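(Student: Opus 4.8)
The proof is a contraction-mapping argument for the operator $\widetilde{\mathcal{M}} = \mathcal{P}\circ\mathcal{M}$ on the weighted product space $\mathcal{Y}_{\ell_1,2,\exp}$, combined with the Neumann series inversion of $I-\widetilde{\mathcal{M}}$. The key point is that the weights in $\llbracket\cdot\rrbracket_{\ell_1,2,\exp}$ (with the factors $\tfrac1\e$ in front of $\|\cdot\|_{2,\exp}$ and $\kappa$ in front of the oscillatory components) are chosen precisely so that the cross-coupling terms in $\mathcal{M}$ (see \eqref{Mop}) and the smallness gained by $\mathcal{P}$ (Lemma \ref{lemmaP}) balance out, yielding a contraction factor of order $\kappa^{-1}$. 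So first I would simply feed the component estimates on $\mathcal{M}$ from Proposition \ref{sistfinal} and on $\mathcal{P}$ from Lemma \ref{lemmaP} into the definition $\widetilde{\mathcal{M}}=(\mathcal{P}^W,\mathcal{P}^\Gamma,\mathcal{P}^\Theta)\circ\mathcal{M}$.

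\textbf{Step 1: the first component $\widetilde{\mathcal{M}}_1$.} We have $\widetilde{\mathcal{M}}_1(\Xi_1,\Gamma,\Theta) = \mathcal{P}^W\big(m_W(y)\Xi_1 + \mathcal{M}_W(\Gamma,\Theta)\big)$. The term $m_W(y)\Xi_1$ lies in $\mathcal{X}_{5,\exp}$ because $|m_W(y)|\le M\e^2/|y^2+\pi^2/4|^3$ and $\Xi_1\in\mathcal{X}_{2,\exp}$; applying the second bullet of Lemma \ref{lemmaP}(1) gives $\|\mathcal{P}^W(m_W\Xi_1)\|_{2,\exp}\le \tfrac{M}{\e^2\kappa^3}\cdot\e^2\|\Xi_1\|_{2,\exp}=\tfrac{M}{\kappa^3}\|\Xi_1\|_{2,\exp}$. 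The term $\mathcal{M}_W(\Gamma,\Theta)$ lies in $\mathcal{X}_{2,\exp}$ (bound by $M/|y^2+\pi^2/4|^2\cdot(\|\Gamma\|+\|\Theta\|)$, so actually in $\mathcal{X}_{2,\exp}$ after using one factor of $|y^2+\pi^2/4|^{-1}\le M\kappa^{-2}\e^{-2}$ to trade down... wait, the estimate states it is bounded by $M|y^2+\pi^2/4|^{-2}(\|\Gamma\|_{\ell_1,0,\exp}+\|\Theta\|_{\ell_1,0,\exp})$, which places it in $\mathcal{X}_{2,\exp}$ only after noting $|y^2+\pi^2/4|^{-2}\le M|y^2+\pi^2/4|^{-2}$ trivially, but we want the $\mathcal{X}_{2,\exp}$ norm, so we keep one power and lose one, getting a $\kappa^{-2}\e^{-2}$-type factor — more carefully: since $\mathcal{M}_W(\Gamma,\Theta)\in\mathcal{X}_{2,\exp}$ with norm $\le M\|(\Gamma,\Theta)\|$, the first bullet of Lemma \ref{lemmaP}(1) gives $\|\mathcal{P}^W(\mathcal{M}_W)\|_{2,\exp}\le M\e(\|\Gamma\|_{\ell_1,0,\exp}+\|\Theta\|_{\ell_1,0,\exp})$. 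Combining, $\|\widetilde{\mathcal{M}}_1\|_{2,\exp}\le \tfrac{M}{\kappa^3}\|\Xi_1\|_{2,\exp}+M\e(\|\Gamma\|_{\ell_1,0,\exp}+\|\Theta\|_{\ell_1,0,\exp})$, which is the claimed bound.

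\textbf{Step 2: the oscillatory components.} For $j=2,3$, $\widetilde{\mathcal{M}}_j = \mathcal{P}^{\Gamma/\Theta}\big(m_{\mathrm{osc}}(y,\tau)\Xi_1 + \mathcal{M}_{\mathrm{osc}}(\Gamma,\Theta)\big)$. The term $m_{\mathrm{osc}}\Xi_1$: since $\|m_{\mathrm{osc}}\|_{\ell_1}(y)\le M\e/|y^2+\pi^2/4|$ and $\Xi_1\in\mathcal{X}_{2,\exp}$, the product lies in $\mathcal{X}_{\ell_1,3,\exp}$ with norm $\le M\e\|\Xi_1\|_{2,\exp}$; Lemma \ref{lemmaP}(2) with $\ag=3$ then gives $\|\mathcal{P}^{\Gamma}(m_{\mathrm{osc}}\Xi_1)\|_{\ell_1,0,\exp}\le \tfrac{M}{(\kappa\e)^2}\cdot\e\|\Xi_1\|_{2,\exp}=\tfrac{M}{\kappa^2\e}\|\Xi_1\|_{2,\exp}$. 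The term $\mathcal{M}_{\mathrm{osc}}(\Gamma,\Theta)$ lies in $\mathcal{X}_{\ell_1,2,\exp}$ with norm $\le M\e(\|\Gamma\|_{\ell_1,0,\exp}+\|\Theta\|_{\ell_1,0,\exp})$; Lemma \ref{lemmaP}(2) with $\ag=2$ gives a factor $\tfrac{M}{\kappa\e}$, hence $\le \tfrac M\kappa(\|\Gamma\|_{\ell_1,0,\exp}+\|\Theta\|_{\ell_1,0,\exp})$. This yields the second displayed bound. Multiplying the first component estimate by $\tfrac1\e$ and the second/third by $\kappa$, as in the definition of $\llbracket\cdot\rrbracket_{\ell_1,2,\exp}$, every term acquires an overall factor $O(\kappa^{-1})$ (e.g. $\tfrac1\e\cdot\tfrac{M}{\kappa^3}$, $\tfrac1\e\cdot M\e$ paired against the $\kappa$-weighted $\|\Gamma\|$, $\kappa\cdot\tfrac{M}{\kappa^2\e}$ paired against the $\tfrac1\e$-weighted $\|\Xi_1\|$, and $\kappa\cdot\tfrac M\kappa\cdot\tfrac1\kappa$), giving $\llbracket\widetilde{\mathcal{M}}(\Xi_1,\Gamma,\Theta)\rrbracket_{\ell_1,2,\exp}\le \tfrac M\kappa\llbracket(\Xi_1,\Gamma,\Theta)\rrbracket_{\ell_1,2,\exp}$. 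Since $\widetilde{\mathcal{M}}$ is linear, this is both the boundedness and the Lipschitz statement.

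\textbf{Step 3: inversion and the difference estimate.} For $\kappa\ge\kappa_0$ large (so that $M/\kappa<1/2$), the operator $I-\widetilde{\mathcal{M}}$ is invertible on $\mathcal{Y}_{\ell_1,2,\exp}$ with $\|(I-\widetilde{\mathcal{M}})^{-1}\|\le 2$ via the Neumann series $\sum_{n\ge0}\widetilde{\mathcal{M}}^n$. The function $\widetilde\Delta$, established to satisfy the fixed-point equation \eqref{fixedequationmod} $\widetilde\Delta = (0,\mathcal{I}_\Gamma(c),\mathcal{I}_\Theta(d))+\widetilde{\mathcal{M}}(\widetilde\Delta)$, therefore equals $(I-\widetilde{\mathcal{M}})^{-1}(0,\mathcal{I}_\Gamma(c),\mathcal{I}_\Theta(d))$; here one must first check (a routine estimate via the $a\ priori$ bounds at the end of Section \ref{sec:Sketch:difference} and the definition \eqref{def:csandds}) that $(0,\mathcal{I}_\Gamma(c),\mathcal{I}_\Theta(d))\in\mathcal{Y}_{\ell_1,2,\exp}$, which follows because $\mathcal{I}_\Gamma(c)(y,\tau)=\sum_k \Gamma_{2k+1}(y^+)e^{-i\lambda_{2k+1}(y-y^+)/\e}\sin((2k+1)\tau)$ decays exactly like $e^{-\tfrac{\lambda_3}{\e}(\tfrac\pi2-|\Ip y|)}$ on $\mathcal{R}_\kappa$ by the same argument sketched before Proposition \ref{prop:DifferenceDeltatilde}. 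Finally, $\widetilde\Delta - (0,\mathcal{I}_\Gamma(c),\mathcal{I}_\Theta(d)) = \widetilde{\mathcal{M}}(\widetilde\Delta) = \widetilde{\mathcal{M}}(I-\widetilde{\mathcal{M}})^{-1}(0,\mathcal{I}_\Gamma(c),\mathcal{I}_\Theta(d))$, so by Step 2 and $\|(I-\widetilde{\mathcal{M}})^{-1}\|\le 2$ we get $\llbracket\widetilde\Delta - (0,\mathcal{I}_\Gamma(c),\mathcal{I}_\Theta(d))\rrbracket_{\ell_1,2,\exp}\le \tfrac M\kappa\llbracket(0,\mathcal{I}_\Gamma(c),\mathcal{I}_\Theta(d))\rrbracket_{\ell_1,2,\exp}$, as claimed.

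\textbf{Main obstacle.} The routine part is bookkeeping of the polynomial weights $(y^2+\pi^2/4)^\ag$ and how powers are traded for factors of $(\kappa\e)^{-1}$; the genuinely delicate point is the integral estimate underlying Lemma \ref{lemmaP}, in particular controlling $\mathcal{P}^\Gamma, \mathcal{P}^\Theta$ along the vertical segment $\mathcal{R}_\kappa$ where the exponentials $e^{\pm i\lambda_{2k+1}s/\e}$ become $e^{\mp\lambda_{2k+1}|\Ip s|/\e}$ and one must verify that the dominant (slowest) decay rate $\lambda_3$ in the exponential weight is never beaten — this is exactly where the monotonicity of $f_k(t)=\lambda_3|t|-\lambda_{2k+1}t$ used in the proof of Lemma \ref{lemmaP} enters, and it is the only step where the structure $\lambda_{2k+1}\ge\lambda_3$ for $k\ge1$ is essential. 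Once Lemma \ref{lemmaP} and Proposition \ref{sistfinal} are in hand, the contraction estimate and the Neumann inversion are formal.
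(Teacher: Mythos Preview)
Your proof is correct and follows essentially the same route as the paper: feed the pointwise bounds on $m_W,\mathcal{M}_W,m_{\mathrm{osc}},\mathcal{M}_{\mathrm{osc}}$ from Proposition~\ref{sistfinal} into the operator bounds of Lemma~\ref{lemmaP} component by component (using the $\alpha=5$ estimate of $\mathcal{P}^W$ on $m_W\Xi_1$, the $\alpha=2$ estimate on $\mathcal{M}_W$, and the $\alpha=3,2$ estimates of $\mathcal{P}^{\Gamma,\Theta}$ on $m_{\mathrm{osc}}\Xi_1$ and $\mathcal{M}_{\mathrm{osc}}$ respectively), then invoke the Neumann series for $(I-\widetilde{\mathcal{M}})^{-1}$. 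The paper's proof is terser but identical in substance; your additional remark that one should verify $(0,\mathcal{I}_\Gamma(c),\mathcal{I}_\Theta(d))\in\mathcal{Y}_{\ell_1,2,\exp}$ is a fair observation, and in the paper this is supplied by the subsequent Lemma~\ref{lemma:EstimatesDiff}.
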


\begin{proof}
	Assume that $\left({\Xi_1},\Gamma,\Theta\right)\in\mathcal{Y}_{\ell_1,2,\exp}$. To estimate the first component of $\mathcal{M}$, using the estimates for $m_W$ and $\mathcal{M}_W$ in Proposition \ref{sistfinal} and Lemma \ref{lemmaP} for the estimates on $\mathcal{P}^W$,
	\[
	 \begin{split}
\left\|\widetilde{\mathcal{M}}_1\left({\Xi_1},\Gamma,\Theta\right)\right\|_{2,\exp}\leq &\,\left\|{\mathcal{P}}^W\left(m_W \Xi_1\right)\right\|_{2,\exp}+\left\|{\mathcal{P}}^W\left(M_W(\Gamma,\Theta\right)\right\|_{2,\exp}\\
\leq &\,\frac{M}{\e^2\kappa^3}\left\|m_W \Xi_1\right\|_{5,\exp}+M\e\left\|M_W(\Gamma,\Theta)\right\|_{2,\exp}\\
	 \leq &\,\frac{M}{\kappa^3}\left\|\Xi_1\right\|_{2,\exp}+M\e\left(\left\|\Gamma\right\|_{\ell_1,0, \exp}+\left\|\Theta\right\|_{\ell_1,0, \exp}\right).
	 \end{split}
	\]
	Now we estimate $\widetilde{\mathcal{M}}_2$. The estimates for $\widetilde{\mathcal{M}}_3$ can be done analogously. Using as before Proposition \ref{sistfinal} and Lemma \ref{lemmaP},
	\[
	 \begin{split}
\left\|\widetilde{\mathcal{M}}_2\left({\Xi_1},\Gamma,\Theta\right)\right\|_{\ell_1,0,\exp}\leq &\,\left\|{\mathcal{P}}^\Gamma\left(m_{\osc} \Xi_1\right)\right\|_{\ell_1,0,\exp}+\left\|{\mathcal{P}}^\Gamma\left(M_{\osc}(\Gamma,\Theta\right)\right\|_{\ell_1,0,\exp}\\
\leq &\,\frac{M}{\kappa^2\e^2}\left\|m_{\osc} \Xi_1\right\|_{\ell_1, 3,\exp}+\frac{M}{\kappa\e}\left\|M_{\osc}(\Gamma,\Theta)\right\|_{\ell_1,2,\exp}\\
\leq &\,\frac{M}{\kappa^2\e}\left\|\Xi_1\right\|_{\ell_1,2,\exp}+\frac{M}{\kappa}\left(\left\|\Gamma\right\|_{\ell_1, 0,\exp}+\left\|\Theta\right\|_{\ell_1,0,\exp}\right).
 \end{split}
	\]
Item (2) of the proposition is simply a direct consequence of item (1) and \eqref{fixedequationmod}. 	
\end{proof}

The rest of this section is devoted to estimating $\big(0,\mathcal{I}_{\mathrm{\Gamma}}(c),\mathcal{I}_{\mathrm{\Theta}}(d) \big)$. 

\begin{lemma}\label{lemma:EstimatesDiff}
Take $\kappa=\dfrac{1}{2\mu_{3}}|\log\e|$. There exist $\e_0>0$ and a constant 
$M>0$ independent of $\e$ such that, for each $\e\in(0,\e_0)$, 
\[
\left\| \mathcal{I}_{\Gamma}(c)-\dfrac{2\mu_{3}}{\e}C_{\mathrm{\mathrm{in}}}e^{-i\frac{\lambda_3}{\e}(y-i\pi/2)}\sin(3\tau)  \right\|_{\n,0,\exp}, \; \left\| \mathcal{I}_{\Theta}(d)-\dfrac{2\mu_{3}}{\e}\overline{C_{\mathrm{\mathrm{in}}}}e^{i\frac{\lambda_3}{\e}(y+i\pi/2)}\sin(3\tau)  \right\|_{\n,0,\exp} \leq \dfrac{M}{\e|\log\e|}.
\]
\end{lemma}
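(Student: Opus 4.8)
The goal is to show that the functions $\mathcal{I}_\Gamma(c)$ and $\mathcal{I}_\Theta(d)$ defined in \eqref{Is1} via the sequences \eqref{def:csandds} are, up to an error of order $\frac M{\e|\log\e|}$ in the $\mathcal{X}_{\ell_1,0,\exp}$ norm, equal to their leading term supported on the third Fourier mode. By symmetry ($\Theta(\bar y,\tau)=\overline{\Gamma(y,\tau)}$, coming from $v_{-n}=-\overline{v_n}$) it suffices to treat $\mathcal{I}_\Gamma(c)$; the statement for $\mathcal{I}_\Theta(d)$ follows by conjugation. First I would record the explicit form
\[
\mathcal{I}_\Gamma(c)(y,\tau)=\sum_{k\ge 1}\Gamma_{2k+1}(y^+)\,e^{-i\frac{\lambda_{2k+1}}\e(y-y^+)}\sin((2k+1)\tau),
\qquad y^+ = i\Big(\tfrac\pi2-\kappa\e\Big),
\]
so that everything reduces to estimating $\Gamma_{2k+1}(y^+)$ for each $k$ and controlling the exponential factors $e^{-i\frac{\lambda_{2k+1}}\e(y-y^+)}$ in the weighted norm.

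\textbf{Step 1: the exponential weights.} For $y\in\mathcal{R}_\kappa\subset i\R$, write $y=i\widetilde y$ with $|\widetilde y|\le\frac\pi2-\kappa\e$. Then $|e^{-i\frac{\lambda_{2k+1}}\e(y-y^+)}|=e^{-\frac{\lambda_{2k+1}}\e(\frac\pi2-\kappa\e-|\widetilde y|)}$ when $\widetilde y>0$ (and symmetrically otherwise). Multiplying by the norm weight $e^{\frac{\lambda_3}\e(\frac\pi2-|\widetilde y|)}$ gives a factor bounded by $e^{-\frac{\lambda_{2k+1}-\lambda_3}\e(\frac\pi2-|\widetilde y|)}\,e^{\lambda_{2k+1}\kappa}$. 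Since $\frac\pi2-|\widetilde y|\ge\kappa\e$ on $\mathcal{R}_\kappa$, the first factor is $\le e^{-(\lambda_{2k+1}-\lambda_3)\kappa}$, and combining, the contribution of the $(2k+1)$-th mode to $\|\cdot\|_{\ell_1,0,\exp}$ is controlled by $e^{\lambda_3\kappa}\,|\Gamma_{2k+1}(y^+)|$ for $k=1$, and by $e^{(2\lambda_3-\lambda_{2k+1})\kappa}\,|\Gamma_{2k+1}(y^+)|$ for $k\ge 2$; note $2\lambda_3-\lambda_{2k+1}<\lambda_3$ for $k\ge2$, so the higher modes are damped by at least $e^{-(\lambda_{2k+1}-\lambda_3)\kappa}$ relative to the $k=1$ mode. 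With the choice $\kappa=\frac1{2\mu_3}|\log\e|=\frac1{2\lambda_3}|\log\e|+\OO(1)$ (using $\mu_3=2\sqrt2$, $\lambda_3=\sqrt{8-\e^2}$), one has $e^{\lambda_3\kappa}\sim \e^{-1/2}$, so the $k=1$ mode contributes $\OO(\e^{-1/2})|\Gamma_3(y^+)|$ and the sum over $k\ge 2$ contributes $\OO(\e^{-1/2})\sum_{k\ge2}e^{-(\lambda_{2k+1}-\lambda_3)\kappa}|\Gamma_{2k+1}(y^+)|$.

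\textbf{Step 2: estimating $\Gamma_{2k+1}(y^+)$ via the matching and inner results.} At $y=y^+$ we have $y^+-i\pi/2=-i\kappa\e$, so in the inner variable $z^+=(y^+-i\pi/2)/\e=-i\kappa$. By Theorem \ref{matchingthm}, $\phi^{u,s}(z^+,\tau)=\phi^{0,u,s}(z^+,\tau)+\OO(|\log\e|\,\e^{1/2}|z^+|^{-2})$ in the relevant norms (with $\cg=1/2$; the $\pa_z$ estimate also holds), and by \eqref{cambio}, $\Gamma_{2k+1}(y^+)=\lambda_{2k+1}\Delta_{2k+1}(y^+)+i\e\,\Xi_{2k+1}(y^+)$, which translates, through the inner scaling $\phi=\e v(i\pi/2+\e z,\cdot)$ and $\pa_y=\e^{-1}\pa_z$, into $\e^{-1}$ times the inner-variable analogue of $\Gamma$ evaluated at $z^+=-i\kappa$ applied to $\Delta\phi=\phi^{0,u}-\phi^{0,s}$ plus the matching error. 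Then Theorem \ref{innerthm}(2) gives $\Delta\phi^0(z^+,\tau)=e^{-i\mu_3 z^+}(C_{\mathrm{in}}\sin3\tau+\chi(z^+,\tau))$ with $\|\pa_\tau\chi\|_{\ell_1}(z^+)\le M/\kappa$ and $\|\pa_z\chi\|_{\ell_1}(z^+)\le M/\kappa^2$, and $e^{-i\mu_3 z^+}=e^{-\mu_3\kappa}$. Assembling: $\Gamma_3(y^+)=\frac{2\lambda_3}\e e^{-\mu_3\kappa}\big(C_{\mathrm{in}}+\OO(1/\kappa)\big)+\text{(matching h.o.t.)}$ and $\Gamma_{2k+1}(y^+)=\frac1\e e^{-\mu_3\kappa}\OO(1/\kappa)+\text{h.o.t.}$ for $k\ge2$ (the factor $2\lambda_3$ on the $k=1$ mode coming from $\Gamma_{2k+1}=\lambda_{2k+1}\Delta_{2k+1}+i\e\Xi_{2k+1}$ doubling the $e^{-i\mu_3 z}$-type contribution, consistent with the heuristics in Section \ref{sec:Sketch:difference}). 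The matching higher-order terms need to be pushed through the same exponential-weight bookkeeping and seen to be absorbed into the $\frac M{\e|\log\e|}$ error; this requires care because $e^{-\mu_3\kappa}\sim\e^{1/2}$ and multiplying by $e^{\lambda_3\kappa}\sim\e^{-1/2}$ gives an $\OO(1)$ prefactor, so the gain must come entirely from the $\OO(1/\kappa)=\OO(1/|\log\e|)$ and $\OO(|\log\e|\e^{1/2})$ factors together with the extra $\e^{-1}$ scaling.

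\textbf{Step 3: summation and conclusion.} For the main term, $\frac{2\mu_3}\e C_{\mathrm{in}}e^{-i\frac{\lambda_3}\e(y-i\pi/2)}\sin3\tau$ is exactly $\big(\frac{2\lambda_3}\e e^{-\mu_3\kappa}C_{\mathrm{in}}\big)e^{-i\frac{\lambda_3}\e(y-y^+)}\sin3\tau$ up to replacing $\lambda_3$ by $\mu_3$ in the prefactor $2\mu_3/\e$ — I would note $\mu_3=2\sqrt2$ and $\lambda_3=\sqrt{8-\e^2}=\mu_3+\OO(\e^2)$, so this discrepancy and the discrepancy in the exponent $e^{-i(\lambda_3-\mu_3)\kappa}=1+\OO(\e^2|\log\e|)$ produce errors far smaller than $\frac M{\e|\log\e|}$ after multiplying by the $\e^{-1}$ amplitude. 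Thus the $k=1$ term of $\mathcal{I}_\Gamma(c)$ equals the claimed leading term plus $\OO\big(\frac1\e \cdot \frac1{|\log\e|}\big)=\OO\big(\frac M{\e|\log\e|}\big)$ in $\mathcal{X}_{\ell_1,0,\exp}$. For $k\ge2$, each term is $\OO\big(\frac1\e\cdot\frac1{|\log\e|}\cdot e^{-(\lambda_{2k+1}-\lambda_3)\kappa}\big)$ and $\sum_{k\ge2}e^{-(\lambda_{2k+1}-\lambda_3)\kappa}$ is a convergent geometric-type series bounded uniformly in $\e$ (indeed it is $\OO(e^{-(\lambda_5-\lambda_3)\kappa})=\OO(\e^{c})$ for some $c>0$), so the tail is even smaller. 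Adding the contributions proves the bound.

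\textbf{Main obstacle.} The delicate point is Step 2: the leading amplitude carries a factor $\e^{-1}e^{-\mu_3\kappa}$, and the norm weight contributes $e^{\lambda_3\kappa}$; with $\kappa=\frac1{2\mu_3}|\log\e|$ these combine to an $\OO(\e^{-1})$ prefactor with \emph{no} room to spare, so every error term — the $\OO(1/\kappa)$ from $\chi$ in Theorem \ref{innerthm}, the $\OO(|\log\e|\e^{1-\cg}+|\log\e|\e^{3\cg-1})$ matching error from Theorem \ref{matchingthm}, and the $\OO(\e^2)$-type discrepancies between $\lambda_3$ and $\mu_3$ in both amplitude and exponent — must be tracked to confirm it beats $\frac1{\e|\log\e|}$. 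In particular one must check that the matching error, after the exponential-weight multiplication, is $\OO(\frac1{\e|\log\e|})$: with $\cg=1/2$ it is $\OO(\e^{-1}\cdot|\log\e|\e^{1/2}\cdot e^{(\lambda_3-\mu_3)\kappa})=\OO(\e^{-1/2}|\log\e|)$, which is indeed $\ll \frac1{\e|\log\e|}$, so the bound holds, but the bookkeeping is the crux and the reason $\kappa$ is tuned to exactly $\frac1{2\mu_3}|\log\e|$ (balancing $e^{\lambda_3\kappa}$ against the gain $1/\kappa$ in the inner error).
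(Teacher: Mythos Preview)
Your overall strategy is right and matches the paper's, but Step~1 contains an algebra slip that propagates to a false claim in Step~3. After bounding the first factor by $e^{-(\lambda_{2k+1}-\lambda_3)\kappa}$ and multiplying by $e^{\lambda_{2k+1}\kappa}$, the product is $e^{\lambda_3\kappa}$ for \emph{every} $k\ge 1$, not $e^{(2\lambda_3-\lambda_{2k+1})\kappa}$ for $k\ge 2$. (You appear to have multiplied the first factor by $e^{\lambda_3\kappa}$ instead of $e^{\lambda_{2k+1}\kappa}$.) Consequently there is \emph{no} extra exponential damping of the higher modes relative to the $k=1$ mode: the supremum in the weighted norm is attained at $y=y^+$ for every mode, giving
\[
\big\|\Gamma_{2k+1}(y^+)\,e^{-i\frac{\lambda_{2k+1}}{\e}(y-y^+)}\big\|_{0,\exp}
= e^{\lambda_3\kappa}\,|\Gamma_{2k+1}(y^+)|,\qquad k\ge 1.
\]
Your Step~3 argument that the tail is summable via the geometric factor $e^{-(\lambda_{2k+1}-\lambda_3)\kappa}$ therefore does not stand.

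The fix is immediate and is exactly what the paper does: since every mode carries the same weight $e^{\lambda_3\kappa}$, one has the clean identity
\[
\Big\|\mathcal{I}_\Gamma(c)-\tfrac{2\mu_3}{\e}C_{\mathrm{in}}e^{-i\frac{\lambda_3}{\e}(y-i\pi/2)}\sin 3\tau\Big\|_{\ell_1,0,\exp}
= e^{\lambda_3\kappa}\,
\Big\|\Gamma-\tfrac{2\mu_3}{\e}C_{\mathrm{in}}e^{-i\frac{\lambda_3}{\e}(y-i\pi/2)}\sin 3\tau\Big\|_{\ell_1}(y^+),
\]
using $\mathcal{I}_\Gamma(c)(y^+)=\Gamma(y^+)$. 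Summability in $k$ then comes for free, because Theorems~\ref{innerthm} and~\ref{matchingthm} already provide $\ell_1$-in-$\tau$ bounds: writing $\Gamma(y^+,\tau)=\tfrac{2\mu_3}{\e}C_{\mathrm{in}}e^{-\mu_3\kappa}\sin 3\tau+E_\Gamma^+(y^+,\tau)$, one gets
\[
\|E_\Gamma^+\|_{\ell_1}(y^+)\le M\Big(\tfrac{e^{-\mu_3\kappa}}{\kappa\e}+\tfrac{\e^{3/2}|\log\e|}{\kappa^2\e^2}\Big),
\]
and with $\kappa=\tfrac{1}{2\lambda_3}|\log\e|$ (so $e^{\lambda_3\kappa}=\e^{-1/2}$, $e^{-\mu_3\kappa}=\e^{1/2}e^{(\lambda_3-\mu_3)\kappa}=\e^{1/2}(1+\OO(\e^2|\log\e|))$) both terms give exactly $\OO(\tfrac{1}{\e|\log\e|})$ after multiplication by $e^{\lambda_3\kappa}$. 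This replaces your mode-by-mode summation entirely and sidesteps the need for any damping in $k$.
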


\begin{proof}
From Theorems \ref{innerthm} and \ref{matchingthm} (see also 
\eqref{innerscaling}),  the function $\Delta$ given in \eqref{diffe} can be 
written as
	$$
	\begin{array}{lcl}
	\Delta (y,\tau)&=&\dfrac{1}{\e}\phi^u\left(\dfrac{y-i\pi/2}{\e},\tau\right)- \dfrac{1}{\e}\phi^s\left(\dfrac{y-i\pi/2}{\e},\tau\right)\vspace{0.2cm}\\
	&=&\dfrac{1}{\e}\Delta\phi^0\left(\dfrac{y-i\pi/2}{\e},\tau\right)+ \dfrac{1}{\e}\p^u\left(\dfrac{y-i\pi/2}{\e},\tau\right)- \dfrac{1}{\e}\p^s\left(\dfrac{y-i\pi/2}{\e},\tau\right)\vspace{0.2cm}\\
	&=&\dfrac{1}{\e}e^{-i\mu_{3}\frac{y-i\pi/2}{\e}}\left(C_{\mathrm{\mathrm{in}}}\sin(3\tau)+ \chi\left(\dfrac{y-i\pi/2}{\e},\tau\right)\right)\vspace{0.2cm}\\
	&&+ \dfrac{1}{\e}\p^u\left(\dfrac{y-i\pi/2}{\e},\tau\right)- \dfrac{1}{\e}\p^s\left(\dfrac{y-i\pi/2}{\e},\tau\right)\vspace{0.2cm}\\	
	&=&\dfrac{1}{\e}C_{\mathrm{\mathrm{in}}}e^{-i\mu_{3}\frac{y-i\pi/2}{\e}}\sin(3\tau)+ E_1^+(y,\tau)+ E_2^+(y,\tau),
	\end{array}
	$$
	for every $y\in \mathcal{R}_{\mathrm{\mathrm{mch}},\kappa}^+=D^{\mathrm{\mathrm{mch}},u}_{+,\kappa}\cap D^{\mathrm{\mathrm{mch}},s}_{+,\kappa}\cap i\R$ and $\kappa$ satisfying assumptions in  Theorems \ref{innerthm} and \ref{matchingthm}, where $E_1^+,E_2^+:\mathcal{R}_{\mathrm{\mathrm{mch}},\kappa}\times\mathbb{T}\rightarrow\C$ are analytic functions in the variable $y$. It follows from Theorem \ref{innerthm} that
	\begin{equation}\label{E1+}
	\|\pa_\tau E_1^+\|_{\n}(y) \leq \dfrac{M |e^{-i\mu_{3}\frac{y-i\pi/2}{\e}}|}{|y-i\pi/2|} \quad and \quad \|\partial_yE_1^+\|_{\n}(y)\leq \dfrac{M |e^{-i\mu_{3}\frac{y-i\pi/2}{\e}}|}{|y-i\pi/2|^2},	\end{equation}
	and from Theorem \ref{matchingthm}, choosing $\gamma=1/2$, we obtain
	\begin{equation}\label{E2+}	
	\|\partial_{\tau}^2 E_2^+\|_{\n}(y)\leq \dfrac{M \e^{3/2} |\log \e| }{|y-i\pi/2|^2}\quad and \quad \|\pa_\tau^2 \partial_yE_2^+\|_{\n}(y)\leq \dfrac{M\e^{1/2} |\log \e|}{\kappa|y-i\pi/2|^2}.
	\end{equation}
	Analogously, since $\Delta$ is real-analytic one can deduce that for $y\in \mathcal{R}_{\mathrm{\mathrm{mch}},\kappa}^-=\{z:\bar z\in \mathcal{R}_{\mathrm{\mathrm{mch}},\kappa}^+\}$,
	\begin{equation*}
	\Delta (y,\tau)=\dfrac{1}{\e}\overline{C_{\mathrm{\mathrm{in}}}}e^{i\mu_{3}\frac{y+i\pi/2}{\e}}\sin(3\tau)+ E_1^-(y,\tau)+ E_2^-(y,\tau),\end{equation*}
	where $E_j^-(y,\tau)= \overline{ E_j^+(\bar y,\tau)}$, which satisfy
	\begin{equation*}\label{E1-}
	\begin{split}
 	\|\partial_{\tau}E_1^-\|_{\n}(y)\leq  &\,\dfrac{M |e^{i\mu_{3}\frac{y+i\pi/2}{\e}}|}{|y+i\pi/2|} \quad \text{and} \quad \|\partial_yE_1^-\|_{\n}(y)\leq \dfrac{M |e^{i\mu_{3}\frac{y+i\pi/2}{\e}}|}{|y+i\pi/2|^2},\\
	\|\partial_{\tau}^2 E_2^-\|_{\n}(y)\leq &\,\dfrac{M \e^{3/2} |\log \e|}{|y+i\pi/2|^2}\quad \text{and} \quad \|\partial_yE_2^-\|_{\n}(y)\leq\dfrac{M\e^{1/2} |\log \e|}{\kappa|y+i\pi/2|^2}.
	\end{split}
	\end{equation*}
	Using \eqref{cambio} and recalling that $\lambda_{3}=\mu_{3}+\er(\e^2)$, we obtain that for $(y,\tau)\in \mathcal{R}_{\mathrm{\mathrm{mch}},\kappa}^+\times\mathbb{T}$,
	\[
	\begin{split}
	\Gamma(y,\tau)=&\,\displaystyle\sum_{k\geq 1} (\lambda_{2k+1} \Delta_{2k+1}(y)+i\e\partial_y\Delta_{2k+1}(y))\sin((2k+1)\tau)\\
	=\,&\dfrac{2\mu_{3}}{\e}C_{\mathrm{\mathrm{in}}}e^{-i\mu_{3}\frac{y-i\pi/2}{\e}}(1+\er(\e^2))\sin(3\tau)\\
	&\,+\displaystyle\sum_{k\geq 1}\lambda_{2k+1}\Pi_{2k+1}\left[E_1^+ + 
E_2^+\right]\sin((2k+1)\tau)+i\e\wt \Pi\left[\partial_y E_1^+ +  \partial_y 
E_2^+\right](y,\tau).
	\end{split}
	\]
	Moreover, using \eqref{E1+} and \eqref{E2+}, we have that
	\[
	\begin{split}
	\left\|\displaystyle\sum_{k\geq 0}\lambda_{2k+1}\Pi_{2k+1}\left[E_1^+ + E_2^+\right]\sin((2k+1)\tau)\right\|_{\n}(y)\leq& M\left(\|\partial_{\tau}E_1^+\|_{\n}(y)+\|\partial_{\tau}E_2^+\|_{\n}(y)\right)\\
	\leq&\, M\left(\dfrac{ |e^{-i\mu_{3}\frac{y-i\pi/2}{\e}}|}{|y-i\pi/2|}+\dfrac{\e^{3/2}|\log \e|}{|y-i\pi/2|^2}\right)\\
	\left\|i\e\wt \Pi\left[\partial_y E_1^+ +  \partial_y E_2^+\right]\right\|_{\n}(y)\leq&\, M\left(\dfrac{ \e |e^{-i\mu_{3}\frac{y-i\pi/2}{\e}}|}{|y-i\pi/2|^2}+\dfrac{\e^{3/2} |\log \e| }{\kappa|y-i\pi/2|^2}\right).
	\end{split}
	\]
	Then, for  $(y,\tau)\in \mathcal{R}_{\mathrm{\mathrm{mch}},\kappa}^+\times\mathbb{T}$, $\Gamma$ satisfies
	$$\Gamma(y,\tau)=\dfrac{2\mu_{3}}{\e}C_{\mathrm{\mathrm{in}}}e^{-i\mu_{3}\frac{y-i\pi/2}{\e}}\sin(3\tau)+ E_{\Gamma}^+(y,\tau),$$
	where $E_{\Gamma}^+: \mathcal{R}_{\mathrm{\mathrm{mch}},\kappa}^+\times\mathbb{T}\rightarrow\C$ is an analytic function in the variable $\tau$ such that
	$$\left\|E_{\Gamma}^+\right\|_{\n}(y)\leq M\left(\dfrac{ |e^{-i\mu_{3}\frac{y-i\pi/2}{\e}}|}{|y-i\pi/2|}+\dfrac{\e^{3/2}|\log \e|}{|y-i\pi/2|^2}\right).$$

	Proceeding in the same way for the function 
	$$\Theta(y,\tau)=\displaystyle\sum_{k\geq 0}(\lambda_{2k+1} \Delta_{2k+1}(y)-i\e\partial_y\Delta_{2k+1}(y))\sin((2k+1)\tau),$$
	we conclude that there exists a function $E_{\Theta}^-: \mathcal{R}_{\mathrm{\mathrm{mch}},\kappa}^-\times\mathbb{T}\rightarrow\C$ analytic in the variable $y$ such that $\Theta$ can be written as
	$$\Theta(y,\tau)=\dfrac{2\mu_{3}}{\e}\overline{C_{\mathrm{\mathrm{in}}}}e^{i\mu_{3}\frac{y+i\pi/2}{\e}}\sin(3\tau)+ E_{\Theta}^-(y,\tau),\qquad \text{for }\,(y,\tau)\in \mathcal{R}_{\mathrm{\mathrm{mch}},\kappa}^-\times\mathbb{T}$$
	and
	\[
	\left\|E_{\Theta}^-\right\|_{\n}(y)\leq \, M\left(\dfrac{ |e^{i\mu_{3}\frac{y+i\pi/2}{\e}}|}{|y+i\pi/2|}+\dfrac{\e^{3/2}|\log \e|}{|y+i\pi/2|^2}\right),\qquad \text{for }y\in\mathcal{R}_{\mathrm{\mathrm{mch}},\kappa}^-.
	\]
		
	Now that we have good estimates for the functions $\Gamma$ and $\Theta$ 
in the domains $\mathcal{R}_{\mathrm{\mathrm{mch}},\kappa}^\pm$, we analyze the 
functions $\mathcal{I}_{\mathrm{\Gamma}}(c)$, 
$\mathcal{I}_{\mathrm{\Theta}}(d)$. 
Recall that $\mathcal{I}_{\Gamma}(c)(y^+)=\Gamma(y^+)$. Therefore
	$$\begin{array}{lcl}
	\left\|\mathcal{I}_{\Gamma}(c)-\dfrac{2\mu_{3}}{\e}C_{\mathrm{\mathrm{in}}}e^{-i\frac{\lambda_3}{\e}(y-i\pi/2)}\sin(3\tau)\right\|_{\n}(y^+)&=&\left\|\Gamma-\dfrac{2\mu_{3}}{\e}C_{\mathrm{\mathrm{in}}}e^{-i\frac{\lambda_3}{\e}(y-i\pi/2)}\sin(3\tau)\right\|_{\n}(y^+)\vspace{0.2cm}\\
	&=&\left\|E_{\Gamma}^+\right\|_{\n}(y^+)\vspace{0.2cm}\\
	&\leq& M\left(\dfrac{ |e^{-i\mu_{3}\frac{y^+-i\pi/2}{\e}}|}{|y^+-i\pi/2|}+\dfrac{\e^{3/2}|\log \e|}{|y^+-i\pi/2|^2}\right)\vspace{0.2cm}\\
	&\leq& M\left(\dfrac{ e^{-\mu_{3}\kappa}}{\kappa\e}+\dfrac{\e^{3/2}|\log \e|}{\kappa^2\e^2}\right),
	\end{array}
	$$
	and notice that, from \eqref{Is1}, we have that
	$$\left\| \mathcal{I}_{\Gamma}(c)-\dfrac{2\mu_{3}}{\e}C_{\mathrm{\mathrm{in}}}e^{-i\frac{\lambda_3}{\e}(y-i\pi/2)}\sin(3\tau)  \right\|_{\n,0,\exp}=e^{\lambda_3\kappa}\left\|\mathcal{I}_{\Gamma}(c)-\dfrac{2\mu_{3}}{\e}C_{\mathrm{\mathrm{in}}}e^{-i\frac{\lambda_3}{\e}(y-i\pi/2)}\sin(3\tau)\right\|_{\n}(y^+),$$
	and thus, taking $\kappa=\dfrac{1}{2\lambda_{3}}\log(\e^{-1})$, we have that
\begin{equation*}
	\left\| \mathcal{I}_{\Gamma}(c)-\dfrac{2\mu_{3}}{\e}C_{\mathrm{\mathrm{in}}}e^{-i\frac{\lambda_3}{\e}(y-i\pi/2)}\sin(3\tau)  \right\|_{\n,0,\exp}\leq M\left(\dfrac{ e^{(\lambda_3-\mu_{3})\kappa}}{\kappa\e}+\dfrac{\e^{3/2}|\log \e|e^{\lambda_3\kappa}}{\kappa^2\e^2}\right)
	\leq\dfrac{M}{\e|\log\e|}.
	\end{equation*}
	The estimate on $\mathcal{I}_{\Theta}(d)$ follows analogously and it completes the proof of the lemma. 	
\end{proof}

Proposition \ref{prop:DifferenceDeltatilde} follows directly from Proposition \ref{propgigante} and Lemma \ref{lemma:EstimatesDiff}.

\section{Breathers with exponentially small tails: Proof of Proposition \ref{prop:FirstBif:Generalized}}\label{sec:ExpSmallTails}

To prove Proposition \ref{prop:FirstBif:Generalized} we analyze the intersection of the center-stable manifold $W^{cs}(0)$ and center-unstable manifold $W^{cu}(0)$ of the zero solution which form a tube homoclinic to the center manifold $W^c(0)$ in the phase space. In the original coordinates, they correspond to an infinite dimensional family of waves of \eqref{kleingordonrev} which are $\omega$-periodic in $t$ (with $\omega$ given in \eqref{epsilon} with $k=1$) and of the order 
\[
u(x,t) = \mathcal{O}(\e e^{-\e |x|})  + \mathcal{O}(e^{- \frac {\sqrt{2}\pi}\e}).
\]
In particular, the exponentially small oscillating tails do  not decay as $|x| 
\to \infty$. The construction of such generalized breathers is largely based on 
the approach in \cite{SZ03, LZ11, L14}, so we shall adapt the problem into the 
framework in \cite{LZ11}.

We shall adopt a slightly different coordinate system and phase space in this section compared to that in Section \ref{desc_sec}. Let 
\begin{equation}\label{def:TailsChange}
q=(q_1, q_2)^T := (v_1, \pa_y v_1)^T,  \; \; Q = \big(\e^{-1}  \wt \Pi[v], \, 
(-\pa_\tau^2 - \omega^{-2})^{-\frac 12} \wt \Pi[\pa_y v] \big)^T.  
\end{equation}
In the above, the operator $(-\pa_\tau^2 - \omega^{-2})^{-\frac 12}$ is bounded uniformly in $\e$ on $\ker \Pi_1$. 
In the $(q, Q)$ variables, equation \eqref{kleingordonrev}, or equivalently  
\eqref{kleingordonv}, takes the form 
\begin{equation} \label{E:KL-fast-slow} \begin{cases} 
\pa_y q = A q + F(q, Q, \e) \\
\pa_y Q = \frac J\e Q + G(q, Q, \e), 
\end{cases} \end{equation}
where 
\[
A = \begin{pmatrix} 0 & 1 \\ 1 & 0 \end{pmatrix}, \quad J = (-\pa_\tau^2 - \omega^{-2})^{\frac 12} \begin{pmatrix} 0 & 1 \\ -1 & 0 \end{pmatrix} : H^1(\mathbb{S}^1) \cap \ker \Pi_1  \to L^2(\mathbb{S}^1) \cap \ker \Pi_1,
\]
and with $v= q_1 \sin \tau + \wt \Pi[v]$,
\[
F(q, Q, \e) = \left(0, \, -\frac 14 q_1^3 + \left(-\frac{1}{\e^3 
\omega^3}\Pi_1\left[g(\e \omega v)\right]+\dfrac{q_1^3}{4}\right)\right)^T,
\]
\[
G (q, Q, \e)= \left(0,\,  - \frac{1}{\e^3 \omega^3}(-\pa_\tau^2 - 
\omega^{-2})^{-\frac 12} \wt \Pi\left[g(\e \omega v)\right]  \right)^T. 
\]
While $q \in X:=\R^2$, we take $Y=L^2(\mathbb{S}^1) \cap \ker \Pi_1$. Apparently 
\[
J: Y \supset D(J) = Y_1 \to Y, \;\; Y_1 :=H^1(\mathbb{S}^1) \cap \ker \Pi_1, \quad J^* =-J, 
\]
where $L^2$ and $H^1$ stand for the standard Sobolev space of square integrable functions and the subspace of $L^2$ functions with square integrable first order derivatives. It is straight forward to verify that $X_1=X$, $Y$, $Y_1$, $A$, $J$, $F$, and $G$ fit into the framework of  \cite{LZ11} and satisfy all assumptions (A1--A5) in Section~2, (B1--B5) and (C1--C2) in Section~4, and (D1--D5) in Section~6 there. (In fact $G$ satisfies a stronger estimates 
\begin{equation*} 
\|G\|_{H^1} \le M,  \quad \|D_q^{l_1} D_Q^{l_2} G\|_{L\big((\R^2 \times L^2) \otimes (\otimes^{l_1+l_2-1} (\R^2\times H^1)), \R^2 \times L^2 \big)} \le M \e^{3l_2},  
\end{equation*}
for some $M>0$ independent of small $\e>0$, on any bounded set in $X \times Y_1$.)  Therefore smooth local invariant manifolds of $0$, including the 1-dim stable and unstable manifolds analyzed in details in this current paper, exist with sizes and bounds (in $(q, Q)$ variables) uniform in $\e$ (Theorems 4.2, 4.9--4.11 in \cite{LZ11}).  

In the following, we consider the homoclinic tube formed by the intersection of 
the center-stable manifold $W^{cs}(0)$ and the center-unstable manifold 
$W^{cu}(0)$. We also include the estimate of the minimal value of 
the Hamiltonian $\mathcal{H}$ on the homoclinic tube which in turn yields an 
estimate on the minimal amplitude of the oscillating tails of the corresponding 
generalized breathers. \\

\noindent $\bullet$ {\it Notation.} In this section all differentiation $D$ 
are only with respect to the variables $(q, Q)$ in the phase space, but never 
with respect to $\e$. \\

\noindent $\bullet$ {\bf The local invariant manifolds and the restriction of the Hamiltonian $\mathcal{H}$ there.} Let $q_u$ and $q_s$ be the coordinates of $q$ in the eigenvector expansion 
\[
q= q_u (1, 1)^T + q_s (1,-1)^T
\]
in term of the stable and unstable eigenvectors. According to Theorem 4.2 in 
\cite{LZ11}, locally the center-unstable (or center-stable, center) manifold 
$W^{cu}(0) \in X \times Y_1$ (or $W^{cs}(0)$, $W^c(0)$) can be represented as 
the graph of a smooth mapping $h^{cu}(\cdot, \e): \{|q_u|, \|Q\|_{Y_1} \le \delta\}\subset Y_1 \times \R \to \R$ (or 
$h^{cs}$, $h^c$):
\[
\begin{split}
W^{cu} (0) \cap \{|q_u|, |q_s|, \|Q\|_{Y_1} \le \delta\} &= \{ q_s = 
h^{cu}(q_u, Q, \e)\},\\
W^{cs} (0)  \cap \{|q_u|, |q_s|, 
\| Q\|_{Y_1} \le 
\delta\}& = \{ q_u = h^{cs} (q_s, Q, \e) 
\}, \\
W^c (0)  \cap \{|q_u|, |q_s|, \| Q\|_{Y_1} \le \delta\}& = \{ (q_u, q_s) = h^c 
(Q, \e) \}, 
\end{split}
\] 
for some $\delta>0$ independent of sufficiently small $\e>0$. Moreover 
$h^{c\star} (q_\star,Q =0, 0)$, $\star=u,s$, is well-defined and correspond to 
the 1-dim stable and unstable manifold of \eqref{singularlimit} with $k=1$. They 
satisfy the following estimates\footnote{Actually some better 
estimates have been obtained in this current paper.}. For $l\ge 1$ and some $M>0$ independent of $\e$, for $\star=u,s$,
\begin{equation*} 
 |h^{c\star} (q_\star, 0, \e) - h^{c\star} (q_\star, 0, 0)| + | D_{q_\star} 
h^{c\star} (q_\star, 0, \e)-D_{q_\star} h^{c\star} (q_\star, 0, 0)| + \| D_Q 
h^{c\star} (q_\star, 0, \e)\|_{(H^1)^*} \le M \e,  
\end{equation*}
\begin{equation*}
Dh^{c\star}(0, 0, \e)=0, \quad Dh^{c}(0, \e)=0, \quad \|D^l h^{c\star} \| + \|D^l h^{c} \| \le M.  
\end{equation*}
In the $(q_u, q_s, w)$ variables the Hamiltonian $\mathcal{H}$ defined in \eqref{Hamil} takes the form
\[
\HH (q_u, q_s, Q, \e)= -2\pi q_uq_s + \frac 12 \left\|(-\pa_\tau^2 - 
\omega^{-2})^{\frac 12} 
Q\right\|_{L^2}^2 +  \int_{\TT} \left(\frac {v^4}{12} + \dfrac{F(\e \omega 
v)}{\e^4 
\omega^4}\right)d\tau,
\]
which is smooth in $(q, Q) \in \R^2 \times Y_1$ and $\e$ due to $F(u) = 
\mathcal{O}(u^6)$ near $u=0$. Since 
\[
D_Q^2 \HH (0,0,0, \e) (Q, Q) = \|(-\pa_\tau^2 - \omega^{-2})^{\frac 12} Q\|_{L^2}^2 \ge 
\frac 12 \| Q\|_{H^1}^2,  
\]
it is straight forward to obtain the uniform quadratic positivity of $\HH$ restricted on the center manifold $W^c(0)$
\begin{equation} \label{E:HH}
D_Q^2 \big(\HH \big( h^{c} (Q,\eps), Q, \e\big) \big)(\tilde Q, \tilde Q) \ge 
\frac 13 \| \tilde Q\|_{H^1}^2,  \quad \HH \big(h^{c} (Q, \e), Q, \e\big) \ge 
\frac 16 \| Q \|_{H^1}^2, \quad \forall Q \in Y_1, \; \|Q\|_{H_1} \le \delta.  
\end{equation} 
The quadratic positivity implies that the center manifold $W^c(0)$ is unique and $0$ is stable both forward and backward in $y$ on 
$W^c(0)$ for \eqref{kleingordonv} (with $k=1$). By the conservation of energy and the invariant foliation structure 
(Theorems 5.1, 5.3, and 5.4 in \cite{LZ11}), we have that $\HH \ge 0$ on $W^{c\star}(0)$ and it achieves 
$0$ exactly at $W^\star (0)$, $\star=u, s$. Therefore, at any $U \in 
W^\star(0)$, $\|U\|_{H^1} \le \delta$, $\star=u, s$, 
\[
T_{U} W^{c\star}(0)= \ker D \HH (U, \e), \quad \ker \big( D^2 \HH (U, 
\e)|_{T_{U} W^{c\star}(0)} \big) = T_U W^\star(0). 
\] 
Here $\ker D \HH (U, \e)$ is viewed as a linear functional on $\R^2 \times Y_1$ 
and $D^2 \HH (U, \e)|_{T_{U} W^{c\star}(0)}$ a bounded linear operator on $T_{U} 
W^{c\star}(0)$ induced by the symmetric quadratic form on $T_{U} W^{c\star}(0)$. 
Moreover, for any hyperplane $P$ in the tangent space $T_U W^{c\star}(0) 
\subset \R^2 \times Y_1$ transversal to $T_U W^\star (0)$, there exists 
$\sigma>0$ such that 
\begin{equation} \label{E:D2H}
\|D^2 \HH (U, \e)|_P\|_{L(P\otimes P, \R)} \ge \sigma.
\end{equation}

\noindent $\bullet$ {\bf Analyzing $W^{cs}(0) \cap W^{cu}(0)$.} In terms of the 
$(q, Q)=(q_1,q_2,Q)$ coordinates, let 
\[
\La = \{ q_2 =0\} \subset  \R^2 \times Y_1
\]
be the hyperplane perpendicular to the unperturbed homoclinic orbit 
\[
\Gamma^h := \left\{(v^h(y), \pa_y v^h (y),0) \mid y \in \R \right\} \subset 
\R^2 
\times Y_1, \qquad \text{(see \eqref{homoclinic})},
\] 
at $U_0 = (v^h(0), 0, 0)$. 

By Theorem 2.2 and 2.3 in \cite{LZ11}, for any fixed time $T>0$ the time--$T$ 
map of 
\eqref{E:KL-fast-slow} is smooth in the phase space $\R^2 \times Y_1$ with its 
derivative bounded uniformly in $\e$. (Even though only the first 
differentiation was carefully estimated in \cite{LZ11}, the uniform in $\e$ 
bounds of the higher order derivatives simply follow from a similar argument 
inductively.) Due to the uniform in $\e$ sizes and 
bounds on $W^{cs}(0)$ and $W^{cu}(0)$, they can be extended to stripes along 
$\Gamma^h$. For $\star=u, s$, consider the  following intersections with 
$\Lambda$ for the first time after $W^{c\star}(0)$ are extended from a 
neighborhood of $0$ by the flow of \eqref{E:KL-fast-slow},
\[
\wt W^{c\star} (0)= W^{c\star} (0) \cap \La, \quad U_\star = (q_{1,\star},0, 
Q_\star) = W^\star(0) \cap \La \in \wt W^{c\star}(0). 
\]
Clearly, here $U_\star$ corresponds to the values  of the stable and unstable 
solutions $\big(v^\star (0), \pa_y v^\star(0)\big)$ analyzed in Theorem 
\ref{maintheorem}. 

 We shall start with the decomposition $\La =  \big(\R (1, 0)^T\big) \oplus Y_1$ 
to set up a coordinate system to analyze $\wt W^{c\star}(0)$, $\star 
=u, 
s$. Here in particular we notice 
\begin{equation} \label{E:DHLa-1}
 \{q=0\} \times Y_1 = \ker D \HH(U_0, 0) \cap \La, \quad  (q_1,q_2,Q)=\nabla \HH(U_0, 0) = 
\frac {10\sqrt{2}}3 \big( 1, 0,0).  
\end{equation} 
Clearly $\wt W^{c\star}(0)$ is a hypersurface in $\La$. Due to the conservation 
of the Hamiltonian $\HH$ by the flow map, it holds 
\begin{equation*}
\HH(U_\star, \e)=0, \quad T_{U_\star} W^{c\star}(0) = \ker D \HH (U_\star, \e), 
\quad T_{U_\star}  \wt W^{c\star}(0) = \ker D \HH (U_\star, \e) \cap \La, 
\end{equation*} 
which implies that locally $\{\HH(\cdot, \e)=0\}\cap\Lambda$ 
and $\wt W^{c\star}(0)$ can be expressed as the graphs of  smooth mapping from 
$Y_1 
\to \R$. In fact, due to the smoothness of $\HH$ in $\e$ and the uniform in 
$\e$ bounds of $W^{c\star}(0)$ near $0$ and the flow map, there exist 
$\delta>0$ independent of $\e$ and $\wt h^0, \wt h^{c\star}: Y_1 \to \R$, 
$\star=u,s$, such that inside the box $\{ |q_1 - v^h(0)|, \|Q - Q_\star\|_{H^1} 
\le \delta\}$ in $\La$, 
\[
\begin{split}
\{\HH(\cdot, \e)=0\}\cap\Lambda&= \{ q_1 = \wt h^0 (Q, \e)\}, 
\,\quad \wt h^{0} (Q_\star, \e)=q_{1,\star}, \star=u,s \\
\wt W^{c\star} (0)
&= \{ q_1 = \wt h^{c\star} (Q, \e)\}, \quad \wt h^{c\star} (Q_\star, \e) 
=q_{1,\star}, \star=u,s,
\end{split}
\]
where $\wt h^0$ and $\wt h^{c\star}$ along with their derivatives are bounded 
uniformly in small $\e$.

Due to \eqref{E:DHLa-1}, it is clear 
\[
(q_1, 0, Q) \in \wt W^{cu}(0) \cap \wt W^{cs}(0) \Longleftrightarrow q_1= \wt 
h^{cu} (Q, \e) = \wt h^{cs} (Q, \e)  \Longleftrightarrow \HH\big( \wt h^{cu}(Q, 
\e), 0, Q, \e\big) = \HH\big( \wt h^{cs}(Q, \e), 0, Q, \e\big),  
\]
By \eqref{E:HH} and \eqref{E:DHLa-1}, there exists $C>0$ such that 
\begin{equation} \label{E:h-quadratic}
 0\leq \HH\big(\wt h^{c\star}(Q, \e), 0, Q, \e\big) = \HH\big(\wt h^{c\star}(Q, \e), 0, Q, \e\big)-\HH\big(\wt h^{0}(Q, \e), 0, Q, \e\big)\leq C(\wt h^{c\star}(Q, \e)-\wt h^{0}(Q, 
\e)), 
\end{equation}
which implies 
\begin{equation} \label{E:wth}
\wt h^{c\star} (Q, \e) \ge \wt h^0(Q, \e), \; \text{ and } \; ``=" \text{ holds 
iff }  Q=Q_\star, \quad \star=u, s.
\end{equation}
Moreover,
from \eqref{E:D2H}, the conservation of $\HH$, and the uniform in $\e$ bound on the flow map, we have     
\begin{equation} \label{E:H-1}
C \|Q- Q_\star\|_{H^1}^2\ge \HH\big(\wt h^{c\star}(Q, \e), 0, Q, \e\big) \ge 
\frac  1C \|Q- Q_\star\|_{H^1}^2, \quad \star=u, s, 
\end{equation}

Therefore if $Q_u = Q_s$, clearly $U_u = U_s$ and $W^s(0) = W^u(0)$ which gives 
rises to a homoclinic orbit to $0$. 
In the case of $Q_u \ne Q_s$, \eqref{E:wth} implies 
\[
\wt h^{cu} (Q_s, \e) > \wt h^0 (Q_s, \e) =\wt h^{cs} (Q_s, \e)  \; \text{ and } 
\; \wt h^{cs} (Q_u, \e) > \wt h^0 (Q_u, e)=\wt h^{cu} (Q_u, \e). 
\]
Therefore, there exists $\wt Q$, e.g. on the segment connecting $Q_u$ and 
$Q_s$, such that $\wt h^{cu} (\wt Q, \e) = \wt h^{cs}(\wt Q, \e)$ and thus  
\[
(q_1,q_2,Q)=\big(\wt h^{cu} (\wt Q, \e), 0, \wt Q\big) \in \wt W^{cs}(0)\cap 
\wt W^{cu}(0) \subset W^{cu} (0) \cap W^{cs} (0). 
\]
 This completes the proof of $W^{cs}(0) \cap W^{cu}(0) \ne \emptyset$, which had 
been also obtained in \cite{L14}. Moreover, \eqref{E:h-quadratic} and \eqref{E:H-1} imply that such that  
\[
D \wt h^{c\star} (Q_\star, \e) =0, \quad D^2 \wt h^{c\star} (Q_\star, \e) \ge \tfrac 1C >0, \qquad \star =u, s.
\] 
Since $Q_u$ and $Q_s$ are exponentially close and the derivatives of $h^{c\star}$ are bounded uniformly in $\e$, we obtain the transversality of the intersection of $W^{cs}(0) \cap W^{cu}(0)$ near the above mentioned $\wt Q$ on the segment connecting $Q_u$ and $Q_s$ if $Q_u \ne Q_s$.  See Figure \ref{fig:variedade}. This completes the proof of Proposition \ref{prop:FirstBif:Generalized}(3).

\begin{remark} \label{R:W-1}
The above argument is carried out in the energy space $(v, \pa_x v) \in H_\tau^1 (\mathbb{S}^1) \times L_\tau^2 (\mathbb{S}^1)$ which heavily depends on the coercivity of the conserved energy. Hence $W^{cs}(0) \cap W^{cu}(0)$ is obtained in the energy space. In fact, locally it contains a dense a subset consisting of smooth functions of $\tau$ in the case of  $Q_u \ne Q_s$. To see this, one observes that the transversality of the intersection implies that each nearby point in $W^{cs}(0) \cap W^{cu}(0)$ can be realized as a transversal intersection of $W^{cs}(0) \cap W^{cu}(0)$ and a smooth curve $\mathcal{C}$ connecting $U_s$ and $U_u$. It is easy to see that the proof of Theorem \ref{outerthm} can be carried out in any Sobolev space of higher regularity in $\tau$, hence $U_s$ and $U_s$ are smooth in $\tau$ as well. Approximating $\mathcal{C}$ by a curve in any higher order Sobolev space, we obtain a nearby point in $W^{cs}(0) \cap W^{cu}(0)$ of higher regularity in $\tau$.
\end{remark}

\begin{figure}[!]	
	\centering
	\begin{overpic}[width=9cm]{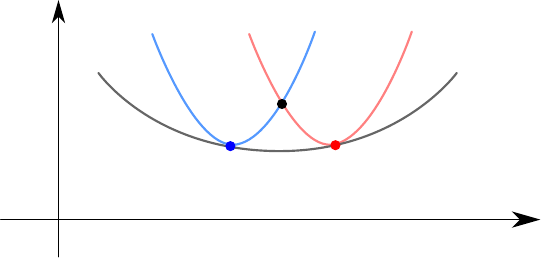}			
		
		\put(40,15){{\textcolor{blue}{\footnotesize$W^s(0)$}}}	
		\put(60,15){{\textcolor{red}{\footnotesize$W^u(0)$}}}	
		\put(18,40){{{\footnotesize$W^{cs}(0)$}}}	
		\put(78,40){{{\footnotesize$W^{cu}(0)$}}}	
		\put(80,25){{{\footnotesize$\mathcal{ H}=0$}}}				
	\end{overpic}
	\caption{Intersection between $W^{cs}(0)$ and $W^{cu}(0)$  giving rise to a breather with an exponentially small tail. }	
	\label{fig:variedade}
\end{figure} 

Each orbit $\big(q(y), Q(y)\big)$ starting in $\wt W^{cs}(0) \cap \wt W^{cu}(0)$ is homoclinic to  $W^c(0)$. Due to the invariant foliation structure within $W^{c\star}(0)$ (Theorems 5.1, 5.3, and 5.4 in \cite{LZ11}), as $y\to \pm\infty$ it converges to 
two orbits in $W^c(0)$, which in the original coordinates (see \eqref{def:TailsChange}) can be written as  $\big(v^\pm_c (y), \pa_y v^\pm_c (y)\big) \subset W^c(0)$. 
Moreover, by \eqref{E:HH},  
\begin{equation*}
\tfrac 1C \HH(q, Q) = \tfrac 1C \HH(v^\pm_c, \pa_y v^\pm_c)\le \e^{-2} \|v_c^\pm\|_{H^1}^2 + \|\pa_y v_c^\pm \|_{L^2}^2 \le C \HH(v^\pm_c, \pa_y v^\pm_c)=C \HH(q, Q).
\end{equation*}
According to \eqref{E:H-1}, $\HH \big(q(0), Q(0)\big)$ can be used as an equivalent measure between the $\big(q(0), Q(0)\big)$ and $\big( v^\star (0), \pa_y v^\star (0)\big)$, $\star=u, s$. For $y$ on any finite interval, the square of the distance between $\big(q(y), Q(y)\big)$ and $\big( v^\star (y), \pa_y v^\star (y)\big)$ is proportional to $\HH\big(q(0), Q(0)\big)$ simply due to the uniform-in-$\e$ boundedness on the derivatives of the flow maps. When $\big(q(y), Q(y)\big)$ is close to $0$ (within a small $\er(1)$ distance), the distance between $\big(q(y), Q(y)\big)$ and $\big( v^\star (y), \pa_y v^\star (y)\big)$, where $\star = u$ for $y \ll 0$ and $\star=s$ for $y \gg1$, can be estimate using the stable/unstable foliations which along with their derivatives are bounded uniformly in $\e$ (see Section 5 of \cite{LZ11}). Combined with 
\[
\|Q\|_{H^1}^2 + |q|^2 \sim \big\| \e^{-1} |-\pa_\tau^2 -\omega^{-2}|^{\frac 12} v \big\|_{L^2}^2 + \|\pa_y v\|_{L^2}^2
\]
uniformly in $\e$, this finishes the proof of \eqref{E:gB-UB} and thus of Proposition \ref{prop:FirstBif:Generalized}(2). 

Finally, we estimate $\inf \HH$ on $\wt W^{cs}(0) \cap \wt W^{cu}(0)$. Let $Q \in Y_1$ such that $\| Q-Q_\star \|_{H^1} \le \delta$ and 
$\big( q_1=\wt h^{cu} (Q, \e), 0, Q\big) \in \wt W^{cs}\cap \wt W^{cu}$,  then 
\eqref{E:H-1} implies
\[
\|Q- Q_u \|_{H^1} \le C\| Q- Q_s \|_{H^1} \; \text{ and } \; \| Q- Q_s \|_{H^1} 
\le C\|Q- Q_u \|_{H^1},
\]
which further yields 
\[
C \|Q - Q_\star\|_{H^1} \ge \|Q_u -Q_s\|_{H^1}, \quad \star=u, s.
\]
Taking into account \eqref{E:H-1} again, one obtains
\[
\HH\big(\wt h^{c\star}(Q, \e), 0,Q, \e\big) \ge 
\frac  1C \|Q_u- Q_s\|_{H^1}^2. 
\]
Moreover, if such $Q$ is on the segment connecting $Q_u$ and $Q_s$, one has
\[
 \HH\big(\wt h^{c\star}(Q, \e), 0,Q, \e\big) \leq C \|Q_u- Q_s\|_{H^1}^2
\]
Therefore we obtain 
\[
C \|Q_u- Q_s\|_{H^1}^2\ge\inf_{\wt W^{cu}(0) \cap \wt W^{cs}(0) }  \HH \ge 
\frac 1C \|Q_u- Q_s\|_{H^1}^2,
\]  
and Proposition \ref{prop:FirstBif:Generalized}(1) follows from the above argument.

\section{Non-existence of small breathers: strongly hyperbolic case $\omega \in J_k(\e_0)$} \label{S:SHyperbolic}

This section is devoted to proving statement (1) of Theorem \ref{T:main}, that is the results for the case $\omega \in J_k (\e_0)$, $k\ge 0$. The other case $\omega \in I_k(\e_0)$ will be proved in Section \ref{sec:OtherBifs}. The oddness of $u$ in $t$ is not assumed to start with in these two sections. 

For any $\omega \in J_k (\e_0)$, $k> 0$, we adopt the rescaling $\tau = \omega t$ and the nonlinear Klein-Gordon equation \eqref{kleingordonrev} turns into the form of \eqref{eq:KLGtau}. Treating $x$ as the dynamic variable and recalling that $u$ is $2\pi$-periodic in $\tau$, the unknown $u(x, \tau)$ can be expanded in Fourier series 
\[
u(x, \tau) = \sum_{n=-\infty}^\infty u_n (x) e^{i n \tau}, \quad u_{-n} = \overline {u_n}. 
\]
(Note this Fourier series is different from the rest of the paper by a ratio of $-\tfrac i2$. The latter was adapted so that $u_n$ is the real coefficient of the Fourier sine series when $u$ is odd in $t$.)
The eigenvalues 
of the linearization of \eqref{eq:KLGtau} at $0$, that is
\[
\pa_x^2  u - \omega^2 \pa_\tau^2 u - u =0,
\]
are $\pm \nu_n$, where 
\begin{equation} \label{E:nu-1}
\nu_n = \sqrt{1- n^2 \omega^2}, 
\end{equation}
and their eigenfunctions can be calculated using the Fourier series. The hyperbolic eigenvalues correspond to $0\le n \le k$ and 
\begin{equation} \label{E:nu_k}
\nu_0 \ge \ldots \ge \nu_k \in \left(\frac {\e_0}{\sqrt{k + \e_0^2}}, \frac {\sqrt{2k+1}}{k+1}\right], 
\end{equation}
while the center eigenvalues correspond to $n \ge k+1$ and 
\begin{equation} \label{E:nu-2}
\nu_n = i\vartheta_n,\quad 0 \le \vartheta_{k+1} \le \vartheta_{k+2} \le \ldots. 
\end{equation}

Let $W^\star_\omega(0)$, $\star=c, s, u$, 
denote the locally invariant center, stable, and unstable manifolds of $0$ for the equation \eqref{eq:KLGtau} in the energy space $H_\tau^1 \times L_\tau^2$. Their existence and smoothness follow from standard arguments (see Theorem 4.4 in \cite{CL88}, for example) since the nonlinearity $g(u): H_\tau^1 \to H_\tau^2 \hookrightarrow L_\tau^2$ is analytic in $u$. Due to the uniqueness, $W^\star_\omega(0)$, $\star=s, u$, are also obviously the local stable and unstable manifolds of $0$ in the $\ell_1$ based phase space $(u, \pa_x u) \in \BFX$ defined in \eqref{E:phase-space}. On such finite dimensional submanifolds, different metrics including $H_\tau^1 \times L_\tau^2$ and $\| \cdot \|_\BFX$, all induce the same equivalent topology.   
Clearly $\dim W_\omega^\star (0) = 2k+1$, $\star=s, u$, while $W_\omega^c (0)$ is of codim-$(4k+2)$. 
Statement (1) of Theorem \ref{T:main} for the case of $\omega \in J_k (\e_0)$, $k\ge 0$, will be proved by showing a.) some uniform-in-$k$-and-$\omega$ estimates on the size of $W^\star_\omega(0)$ in $\ell_1$, $\star=s, u$, where the norm is dominated by the energy norm, 
and b.) no solutions converging to $0$ along $W^c_\omega(0) \subset H_\tau^1 \times L_\tau^2$. \\

\noindent $\bullet$ {\bf Estimates on the local stable/unstable manifolds for $\omega \in J_k (\e_0)$.}
Usually the sizes of the local stable/unstable manifolds in phase spaces are determined by the power nonlinearity and the minimal absolute value of the real parts of the stable/unstable eigenvalues, which is $\nu_k > \frac {\e_0}{\sqrt{k+ \e_0^2}}$ according to \eqref{E:nu_k}. We prove the following proposition on a lower bound of the sizes of $W^\star_\omega(0)$ in $\BFX$. 

\begin{prop} \label{P:size-SU} 
There exists $\rho, M>0$ such that, for any $\e_0 \in (0, 1/2)$, $\omega \in J_k(\e_0)$, $k\ge 0$, there exist $\Omega^u, \Omega^s: B_{\R^{2k+1}} (0, \rho \nu_k) \to \BFX$, where $B_{\R^{2k+1}} (0, \rho\nu_k)$ is the ball in $\R^{2k+1}$ centered at $0$ and with radius $\rho \nu_k$, such that, the image $\Omega^\star \big(B_{\R^{2k+1}} (0, \rho \nu_k)\big)$ is an open subset of $W_\omega^\star (0)$, $\star = s, u$, and  
\[
\Omega^\star (0, \tau)=0, \quad \|\Omega_1^\star (a, \cdot) - \Omega_1^\star (\tilde a, \cdot)\|_{\ell_1}  + \nu_k^{-1} \|\Omega_2^\star (a, \cdot) - \Omega_2^\star (\tilde a, \cdot)\|_{\ell_1}  \le M\nu_k^{-2} (|a|_1^2 + |\tilde a|_1^2) |a-\tilde a|_1
\]
where 
\[
\Omega^\star (a, \tau) =\left(\sum_{n=-k}^k a_n e^{ i n \tau} + \Omega_1^\star (a, \tau), \ \sum_{n=-k}^k (\mp\nu_n) a_n e^{ i n \tau}+ \Omega_2^\star (a, \tau) \right),       
\]
and $a$ and $\tilde a$ are parameters of $(2k+1)$-dim (real) satisfying 
\begin{equation} \label{E:a_k}
a=(a_{-k}, \ldots, a_k), \; \; a_n \in \C, \;\; a_{-n} = \overline{a_n}, \;\; -k\le n\le k, \quad |a|_1:= \sum_{n=-k}^k |a_n| < \rho \nu_k.  
\end{equation} 
\end{prop}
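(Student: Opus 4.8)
The plan is to construct $\Omega^\star$, $\star = s, u$, via the standard Lyapunov--Perron integral equation, but carried out in the $\ell_1$-based spaces with constants tracked uniformly in $k$ and $\omega \in J_k(\e_0)$. I will focus on the unstable case $\star = u$ (the stable case being symmetric under $x \mapsto -x$). Recall from \eqref{eq:KLGtau} and \eqref{eqfourier} that the spatial dynamics system is, for each Fourier mode $n$,
\[
\partial_x^2 u_n = (n^2\omega^2 - 1) u_n - \Pi_n[g(u)], \qquad g(u) = \tfrac13 u^3 + f(u) = \er(u^3),
\]
with linear eigenvalues $\pm\nu_n$ given in \eqref{E:nu-1}--\eqref{E:nu-2}: hyperbolic (and real) for $0 \le |n| \le k$ with $\nu_k > \e_0/\sqrt{k+\e_0^2}$, purely imaginary $\pm i\vartheta_n$ for $|n| \ge k+1$. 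First I would write the first-order system in the eigencoordinates, so that the linear part is block-diagonal with blocks $\mathrm{diag}(\nu_n, -\nu_n)$ for $|n|\le k$ and $\mathrm{diag}(i\vartheta_n, -i\vartheta_n)$ for $|n|\ge k+1$. A solution on $W^u_\omega(0)$ is one that decays as $x \to -\infty$; projecting onto the (un)stable/center subspaces, this is equivalent to the integral equation
\[
U(x) = e^{xL} a + \int_{-\infty}^x e^{(x-s)L}P^{u}\, N(U(s))\,ds + \int_{0}^x e^{(x-s)L}(I - P^u)\, N(U(s))\, ds,
\]
where $P^u$ is the spectral projection onto the $(2k+1)$ positive-real-part directions (the $\nu_n$, $0\le |n|\le k$, counted with the $\pm$ structure — in fact only the ``$+\nu_n$'' half), $a \in \R^{2k+1}$ is the free parameter as in \eqref{E:a_k}, $L$ is the linear operator, and $N(U)$ collects the nonlinear terms coming from $g(u)$.

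The key technical point is the choice of Banach space and the corresponding estimates on the linear semigroup restricted to each subspace. I would work in $X_{-}= \{U : \R^{-} \to \BFX \text{ continuous}, \ \|U\|_{X_-} := \sup_{x\le 0} e^{-\frac12 \nu_k x}\|U(x)\|_{\BFX} < \infty\}$ (growth rate $\tfrac12\nu_k$ slower than the slowest unstable rate $\nu_k$, which leaves room). The two crucial semigroup bounds are: on the unstable block, $\|e^{(x-s)L}P^u\|_{L(\BFX)} \le M e^{\nu_k(x-s)}$ for $s \le x \le 0$ (since every unstable eigenvalue is $\ge \nu_k$); and on the center block (with $|n|\ge k+1$, eigenvalues $\pm i\vartheta_n$) the semigroup is uniformly bounded, $\|e^{(x-s)L}(I-P^u)\|_{L(\BFX)}\le M$, where the constant is uniform in $k$ because the $\ell_1$ structure diagonalizes and each $2\times 2$ rotation-type block has norm $\le M$ uniformly — here the weight $(1+|\partial_\tau|)^{-1}$ in the definition \eqref{E:phase-space} of $\BFX$ is what makes the change to eigencoordinates bounded uniformly in $n$ for the oscillatory modes. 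Combined with $\|N(U) - N(\tilde U)\|_{\BFX} \lesssim (\|U\|_{\BFX}^2 + \|\tilde U\|_{\BFX}^2)\|U - \tilde U\|_{\BFX}$ (which follows from the Banach-algebra property $\|f_1 f_2\|_{\ell_1} \le \|f_1\|_{\ell_1}\|f_2\|_{\ell_1}$ and $g(u) = \er(u^3)$, again with $\BFX$-norm bounds uniform), the standard contraction-mapping argument on the ball of radius $\sim \nu_k$ in $X_-$, with $|a|_1 < \rho\nu_k$, yields a unique fixed point $U^u(x; a)$. Setting $\Omega^u(a, \cdot) = U^u(0; a)$ gives the desired parameterization; $\Omega^u(0, \cdot) = 0$ is immediate, and the Lipschitz-cubic estimate on $\Omega_1^u, \Omega_2^u$ comes from subtracting the fixed-point equations for $a$ and $\tilde a$, isolating the linear-in-$a$ part $e^{xL}a$ (which sits exactly in the span of $\{e^{in\tau}\}_{|n|\le k}$ and produces the explicit terms $\sum a_n e^{in\tau}$ and $\sum(-\nu_n)a_n e^{in\tau}$ in the statement), and bounding the remaining nonlinear integrals. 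The factor $\nu_k^{-2}$ in the final bound comes from the two integrations: each $\int e^{\pm(\text{rate})(x-s)}ds$ over the unstable block produces a $\nu_k^{-1}$, and the cubic nonlinearity combined with the radius-$\nu_k$ ball gives the $(|a|_1^2 + |\tilde a|_1^2)|a-\tilde a|_1$; the $\nu_k^{-1}$ weight on $\Omega_2^u$ reflects the relative scaling $\pa_x \sim \nu_k$ on these slow hyperbolic directions.

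The main obstacle — and the reason the statement is phrased with these precise weights — is the \emph{uniformity in $k$} of all constants, given that the center subspace is infinite-dimensional with eigenvalues $\vartheta_n \to \infty$. The danger is that the bounded-but-oscillatory semigroup on the center block, iterated through the second integral $\int_0^x$, could accumulate a $k$-dependent (or $n$-dependent) loss. I would handle this by working mode-by-mode: because $\BFX$ uses an $\ell_1$ norm over modes and the linear flow does not couple modes, it suffices to bound each scalar/$2\times 2$ block uniformly, and the Banach-algebra estimate controls the mode-coupling introduced by the cubic nonlinearity without any loss in $n$. A secondary subtlety is that the $\nu_n$ for small $n$ can be much larger than $\nu_k$ (indeed $\nu_0$ can be $\er(1)$), so the unstable block is not a single exponential rate; but since all unstable rates are $\ge \nu_k$, every estimate I need is a \emph{lower} bound on the rate, and using $\nu_k$ throughout is safe and gives exactly the $\nu_k$-scaled conclusion. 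Finally, one must check that $B_{\R^{2k+1}}(0,\rho\nu_k)$ maps into (an open subset of) $W^u_\omega(0)$ rather than merely into the set of backward-decaying solutions; this is standard once one notes the fixed point depends smoothly on $a$ and the map $a \mapsto U^u(0;a)$ has derivative at $0$ equal to the embedding of $\R^{2k+1}$ as the unstable subspace, hence is an immersion for small $\rho$, and its image lies in the (unique) local unstable manifold by the usual characterization.
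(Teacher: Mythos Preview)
Your overall strategy matches the paper's --- Lyapunov--Perron in an $\ell_1$-based, exponentially weighted space with constants tracked uniformly in $k$ and $\omega$. The paper, however, works in the second-order formulation, with explicit mode-by-mode Green's functions $\mathcal{S}_n$ for $\partial_x^2 - \nu_n^2$ (hyperbolic modes $|n|\le k$) and $\partial_x^2 + \vartheta_n^2$ (center modes $|n|\ge k+1$), rather than with first-order spectral projections. This choice is not cosmetic; it sidesteps two issues that your write-up does not handle.

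First, your integral equation has the limits swapped. For the unstable manifold on $(-\infty,0]$ it is the $(I-P^u)$ block (stable plus center) that must be integrated from $-\infty$, while the $P^u$ block carries the free parameter $a$ and is integrated from $0$. As written, $\int_{-\infty}^x e^{(x-s)L}P^u N\,ds$ diverges: on the unstable space and for $s\le x$ the semigroup \emph{grows}, and your justification ``every unstable eigenvalue is $\ge \nu_k$'' yields $e^{\nu_n(x-s)}\ge e^{\nu_k(x-s)}$, the wrong inequality. Second, the claim that the center semigroup is uniformly bounded on $\BFX$ fails at the left endpoint $\omega=1/(k+1)\in J_k(\e_0)$, where $\vartheta_{k+1}=0$ and the mode-$(k{+}1)$ block is a Jordan block with linear-in-$t$ growth; the weight $(1+|\partial_\tau|)^{-1}$ in $\BFX$ tames large $\vartheta_n$ (large $n$) but does nothing for small $\vartheta_{k+1}$. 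The paper handles this by bounding the second-order kernel $|\sin(\vartheta_{k+1}t)/\vartheta_{k+1}|\le |t|$ and absorbing the extra $|t|$ via the cubic exponential decay of $g(u)$. Together with the $\tfrac{1}{2\nu_k}$ prefactor in the hyperbolic Green's function for $n=k$, these are the two places the $\nu_k^{-2}$ (rather than $\nu_k^{-1}$) in the final estimate actually comes from --- not from ``two integrations'' as you suggest.
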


Here we identified complex numbers $a_n$ with 2-dim real vectors. These $\Omega^\star$ can be viewed as coordinate mappings of $W_\omega^\star(0)$ (see Figure \ref{figlocal}). They can actually be proved to be analytic in $a$, but our main focus here is the sizes of their domains and the error estimates. 

\begin{figure}[!]	
	\centering
	\begin{overpic}[width=7cm]{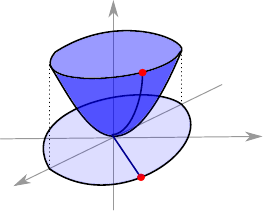}
		
		\put(75,35){{\footnotesize $B_{\R^{2k+1}}(0, \rho \nu_k) \subset\R^{2k+1}$}}	
		\put(15,65){{\footnotesize $W^s_{\omega}(0)$}}			
		\put(45,55){{\footnotesize \textcolor{red}{$\Omega^s(a)$}}}	
		\put(55,8){{\footnotesize \textcolor{red}{$a\in\partial B_{\R^{2k+1}}(0, \rho \nu_k)$}}}												
		\put(46,75){{\footnotesize $\ell_1\cap\{u_{-k}=\cdots=u_{-1}=u_{0}=u_{1}=\cdots=u_k=0\}$}}				
		
	\end{overpic}
	\bigskip
	\caption{Parameterization of the local stable manifold $W^s_{\omega}(0)$ by $\Omega^s$.}	
	\label{figlocal}
\end{figure} 

We use the classical Perron method and will only outline the argument to prove the proposition for the stable manifold. Consider the following Banach space
\[
\mathcal{E}_{\mathcal{S}}=\{h:[0,+\infty)\times\mathbb{T}\rightarrow \R;\ h \textrm{ is 
	analytic in }x, \textrm{ and } \|h\|_{\nu_k,\ell_1}<\infty \},
\]
where $$\|h\|_{\nu_k,\ell_1}=\displaystyle\sum_{n\geq 1}\|h_n\|_{\nu_k}\quad and \quad \|h_n\|_{\nu_k}=\sup_{x\geq 0}|e^{\nu_k x}h_n(x)|,$$
and define the linear operator $\mathcal{S}$ acting on the Fourier modes of a function $h(x,\tau)$
\begin{equation}
	\label{Gst}
	\mathcal{S}\left(h\right)=\displaystyle\sum_{n\geq 1}\mathcal{S}_n(h_n)\sin(n\tau), 
\end{equation}
with
\[
\begin{split}
\mathcal{S}_n(h)&=\dfrac{1}{2\nu_n}\left(\displaystyle\int_{+\infty}^xe^{
	\nu_n(x-s)}h(s)ds 
-\displaystyle\int_{0}^xe^{-\nu_n(x-s)}h(s)ds\right)\qquad \text{  for }\ 
1\leq n\leq k,\\
\mathcal{S}_n(h)&=\displaystyle\int_{+\infty}^x\dfrac{\sin(\vartheta_n(x-s))}{\vartheta_n}
h(s)ds \qquad \text{  for }\ n>k.
\end{split}
\]
where we recall $\nu_n=i\vartheta_n$.

Note that we are including in $J_k(\eps_0)$ the case $\omega=1/(k+1)$. For this value of $\omega$, one has that $\vartheta_{k+1}=0$. In this case, one can take the limit $\vartheta_{k+1}\to 0$ in $\mathcal{S}_{k+1}(h)$ to obtain
\[
 \mathcal{S}_{k+1}(h)=\displaystyle\int_{+\infty}^x(x-s)
h(s)ds.
\]

We also define the function
\[
\Xi(a, x,\tau)=\sum_{n=-k}^k a_ne^{-\nu_n x+ in\tau}, 
\]
where $a=(a_{-k},\ldots,a_k)$ are parameters satisfying \eqref{E:a_k}. 
One can check that a solution $u(x,\tau)$ of \eqref{eq:KLGtau} belongs to the stable manifold of $u=0$  if, and only if, it is a fixed point of the operator
\begin{equation*}
	\widetilde{\mathcal{S}}(a, u)=\Xi (a) +\mathcal{S}(g(u)),
\end{equation*}
for some $a$ as in  \eqref{E:a_k}, where $g$ is the nonlinearity introduced in \eqref{g}.

The following lemma is a direct consequence of the particular form of the operator $\mathcal{S}$ in \eqref{Gst} and the fact that the function $g$ is of order $3$ near $u=0$.

\begin{lemma}
	There exists $M, r_1>0$ independent of $k\ge 0$ and $\omega \in J_k (\e_0)$ such that, for any $0< r\le r_1$ and $a\in\mathbb{R}^{2k+1}$, the operator 
	$\widetilde{\mathcal{S}}: 
	B(0,r)\subset \mathcal{E}_{\mathcal{S}}\rightarrow \mathcal{E}_{\mathcal{S}}$ is a well-defined Lipschitz 
	operator which satisfies
	\[
	\left\|\widetilde{\mathcal{S}}(a, u)\right\|_{\nu_k,\ell_1}\leq |a|_1+\frac{M r^3}{\nu_k^2},\;\; \left\|\pa_x \widetilde{\mathcal{S}}(a, u)\right\|_{\nu_k,\ell_1}\leq |a|_1+\frac{M r^3}{\nu_k}, \qquad \forall \, u\in B(0,r)\subset \mathcal{E}_{\mathcal{S}}
	\]
	and  its Lipschitz constant  on $B(0, r)$ satisfies
	\[
	\mathrm{Lip}_u (\widetilde{\mathcal{S}})\leq \dfrac{Mr^2}{\nu_k^2}, \quad \mathrm{Lip}_u (\pa_x \widetilde{\mathcal{S}})\leq \dfrac{Mr^2}{\nu_k}. 
	\]
\end{lemma}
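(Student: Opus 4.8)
The goal is to establish the quoted estimates on $\widetilde{\mathcal{S}}(a,u) = \Xi(a) + \mathcal{S}(g(u))$ as a self-map of a ball in $\mathcal{E}_{\mathcal{S}}$, with Lipschitz bounds that are uniform in $k\ge 0$ and $\omega\in J_k(\e_0)$. The plan is to first record the decisive linear estimate on the Green-type operator $\mathcal{S}$, namely that for each Fourier mode $n$ the smoothing-and-decay operator gains a factor $\nu_k^{-2}$ in the $\|\cdot\|_{\nu_k}$ norm and $\nu_k^{-1}$ after one $x$-derivative. For $1\le n\le k$ this follows by direct computation: for $h_n$ with $\|h_n\|_{\nu_k}<\infty$ one has $|\mathcal{S}_n(h_n)(x)| \le \tfrac{1}{2\nu_n}\|h_n\|_{\nu_k}\big(\int_x^\infty e^{\nu_n(x-s)}e^{-\nu_k s}ds + \int_0^x e^{-\nu_n(x-s)}e^{-\nu_k s}ds\big)$, and each integral is bounded by $\tfrac{M}{\nu_n}e^{-\nu_k x}$ using $\nu_n \ge \nu_k$ (the ordering \eqref{E:nu_k}); hence $\|\mathcal{S}_n(h_n)\|_{\nu_k}\le \tfrac{M}{\nu_n^2}\|h_n\|_{\nu_k} \le \tfrac{M}{\nu_k^2}\|h_n\|_{\nu_k}$. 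For the oscillatory modes $n>k$, where $\mathcal{S}_n(h_n)(x) = \int_x^{+\infty}\tfrac{\sin(\vartheta_n(x-s))}{\vartheta_n}h_n(s)\,ds$ (with the limiting form $\int_x^{+\infty}(x-s)h_n(s)ds$ when $\vartheta_{k+1}=0$), one uses $|\sin(\vartheta_n(x-s))|\le \min\{1,\vartheta_n|x-s|\}$ and the decay $e^{-\nu_k s}$ of the integrand to bound $|\mathcal{S}_n(h_n)(x)|\le \tfrac{M}{\nu_k^2}\|h_n\|_{\nu_k}e^{-\nu_k x}$; this is where the exponential weight $e^{\nu_k x}$ — even on a center mode — is essential to make the tail integral converge. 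The derivative estimates are obtained analogously, one power of $\nu_k$ being spent instead of two. Summing over $n$ then gives $\|\mathcal{S}(h)\|_{\nu_k,\ell_1}\le \tfrac{M}{\nu_k^2}\|h\|_{\nu_k,\ell_1}$ and $\|\pa_x\mathcal{S}(h)\|_{\nu_k,\ell_1}\le \tfrac{M}{\nu_k}\|h\|_{\nu_k,\ell_1}$.

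Second, I would combine this with the algebra property of $\|\cdot\|_{\nu_k,\ell_1}$: since $\|h_1 h_2\|_{\nu_k,\ell_1}\le \|h_1\|_{\ell_1}\|h_2\|_{\nu_k,\ell_1}$ (the product of two decaying functions decays, and $\ell_1$ is a Banach algebra in $\tau$), one has for $u\in B(0,r)$ that $\|g(u)\|_{\nu_k,\ell_1}\le M\|u\|_{\nu_k,\ell_1}^3 \le Mr^3$ whenever $r\le r_1$ is small enough that the power series of $g(u)=\tfrac13 u^3 + f(u)$ converges on the relevant disk (note $g(u)=\OO(u^3)$). Likewise $\|g(u)-g(\tilde u)\|_{\nu_k,\ell_1}\le M(\|u\|^2+\|\tilde u\|^2)\|u-\tilde u\|_{\nu_k,\ell_1}\le Mr^2\|u-\tilde u\|_{\nu_k,\ell_1}$ by the standard factorization $u^d - \tilde u^d$ as in \eqref{FE}-type estimates. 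Feeding these into $\widetilde{\mathcal{S}}(a,u)=\Xi(a)+\mathcal{S}(g(u))$ and using $\|\Xi(a)\|_{\nu_k,\ell_1}\le |a|_1$ (each term $a_n e^{-\nu_n x + in\tau}$ has $\|\cdot\|_{\nu_k}\le|a_n|$ since $\nu_n\ge\nu_k$ for $|n|\le k$, while for center modes this weight is simply not present in $\Xi$) and $\|\pa_x\Xi(a)\|_{\nu_k,\ell_1}\le |a|_1$ (here $|{-\nu_n}|\le 1$ is used), we obtain all four displayed bounds:
\[
\|\widetilde{\mathcal{S}}(a,u)\|_{\nu_k,\ell_1}\le |a|_1 + \frac{Mr^3}{\nu_k^2},\qquad \|\pa_x\widetilde{\mathcal{S}}(a,u)\|_{\nu_k,\ell_1}\le |a|_1 + \frac{Mr^3}{\nu_k},
\]
\[
\mathrm{Lip}_u(\widetilde{\mathcal{S}})\le \frac{Mr^2}{\nu_k^2},\qquad \mathrm{Lip}_u(\pa_x\widetilde{\mathcal{S}})\le \frac{Mr^2}{\nu_k}.
\]
That $\widetilde{\mathcal{S}}$ maps into $\mathcal{E}_{\mathcal{S}}$ — i.e.\ the output is analytic in $x$ — follows because $\mathcal{S}_n$ is built from convolutions with analytic kernels and the series converges in $\|\cdot\|_{\nu_k,\ell_1}$, which dominates uniform convergence on $[0,\infty)$ for each mode.

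The main subtlety, and the place where care is required, is the uniformity of the constant $M$ across \emph{all} $k\ge 0$ and $\omega\in J_k(\e_0)$ simultaneously — in particular across the oscillatory modes $n>k$ whose frequencies $\vartheta_n$ range over an unbounded set, including the degenerate endpoint $\vartheta_{k+1}=0$ at $\omega=\tfrac{1}{k+1}$. One must verify that the bound $|\mathcal{S}_n(h_n)(x)|\le \tfrac{M}{\nu_k^2}\|h_n\|_{\nu_k}e^{-\nu_k x}$ holds with $M$ independent of $n$; splitting the interval $[x,\infty)$ at $s = x + \vartheta_n^{-1}$ (using $|\sin|\le \vartheta_n|x-s|$ on the first part and $|\sin|\le 1$ on the second) shows the $\vartheta_n$-dependence cancels and only $\nu_k$ — the slowest decay rate, coming from the weight — survives. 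The endpoint case is handled by the explicit kernel $(x-s)$, whose contribution is controlled the same way. A second point to check is that $\widetilde{\mathcal{S}}$ is genuinely defined on the stated ball (not just formally): one needs $r_1$ chosen below the radius of convergence of $g$, which depends only on $f$, hence is uniform. Once these uniformities are in place the estimates are routine, and the contraction mapping argument (already invoked in the proof of Theorem~\ref{outerthm}) yields, for $r$ fixed small and $|a|_1\le \rho\nu_k$ with $\rho$ small, a unique fixed point $u = \widetilde{\mathcal{S}}(a,u)$ depending on $a$ with the Lipschitz-in-$a$ estimate $\|u(a)-u(\tilde a)\|_{\nu_k,\ell_1}\le M\nu_k^{-2}(|a|_1^2+|\tilde a|_1^2)|a-\tilde a|_1$ claimed in Proposition~\ref{P:size-SU}; this fixed point, evaluated and decomposed into its projection onto $\mathrm{span}\{e^{in\tau}\}_{|n|\le k}$ plus the remainder $\Omega_1^\star,\Omega_2^\star$, is the desired coordinate map $\Omega^s$, and $\Omega^u$ is obtained by the reflection $x\mapsto -x$.
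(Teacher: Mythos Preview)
Your overall architecture is right and matches what the paper has in mind, but there is a real gap in the linear estimate for the hyperbolic mode $n=k$. You claim that for $1\le n\le k$ and $h_n$ with $\|h_n\|_{\nu_k}<\infty$, the second integral $\int_0^x e^{-\nu_n(x-s)}e^{-\nu_k s}\,ds$ is bounded by $\tfrac{M}{\nu_n}e^{-\nu_k x}$. For $n=k$ this is simply false: the integral equals $x\,e^{-\nu_k x}$, so $e^{\nu_k x}\mathcal{S}_k(h_k)(x)$ contains a term growing like $x$, and $\mathcal{S}_k$ does \emph{not} map $\mathcal{E}_{\mathcal{S}}$ into itself. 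Consequently the blanket linear bound $\|\mathcal{S}(h)\|_{\nu_k,\ell_1}\le \tfrac{M}{\nu_k^2}\|h\|_{\nu_k,\ell_1}$ you invoke is not available.

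The fix is exactly the point the paper's one-line justification flags when it says the lemma follows from ``the fact that the function $g$ is of order $3$'': you must use not only the size $\|g(u)\|\lesssim r^3$ but the \emph{decay rate}. If $\|u(x,\cdot)\|_{\ell_1}\le r\,e^{-\nu_k x}$ then $\|g(u)(x,\cdot)\|_{\ell_1}\le M r^3 e^{-3\nu_k x}$, i.e.\ $g(u)$ lives in the space with weight $e^{3\nu_k x}$. With input decaying at rate $3\nu_k$, the problematic integral for $n=k$ becomes $\int_0^x e^{-\nu_k(x-s)}e^{-3\nu_k s}\,ds\le \tfrac{1}{2\nu_k}e^{-\nu_k x}$, and the claimed $\tfrac{M}{\nu_k^2}$ bound for $\mathcal{S}_k$ goes through; likewise for the Lipschitz estimate, since $g(u)-g(\tilde u)=\big(\int_0^1 g'(su+(1-s)\tilde u)\,ds\big)(u-\tilde u)$ also carries the extra $e^{-2\nu_k x}$ factor from $g'=\OO(v^2)$. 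So the cubic order of $g$ is doing double duty --- giving the $r^3$ smallness \emph{and} the spectral gap needed to close the $k$-th mode --- and your write-up only uses the first of these.
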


Consequently, there exists $\rho>0$ independent of $k\ge 0$ and $\omega \in J_k (\e_0)$ such that, for any $a \in B_{\R^{2k+1}}(0, \rho \nu_k)$, by taking $r = 2 |a|_1$, there exists a unique fixed point $h_* (a) \in B(0,r)\subset \mathcal{E}_{\mathcal{S}}$ of $\wt {\mathcal{S}}(a, \cdot)$ which also satisfies $h_*(a=0)=0$ and 
\begin{align*}
&\big\|\big(h_*(a) - \Xi(a)\big) - \big(h_*(\tilde a) - \Xi(\tilde a)\big) \big\|_{\nu_k,\ell_1} \le M \nu_k^{-2} (|a|_1^2 + |\tilde a|_1^2) |a-\tilde a|_1 \\
& \big\|\pa_x \big(h_*(a) - \Xi(a)\big) - \pa_x \big(h_*(\tilde a) - \Xi(\tilde a)\big) \big\|_{\nu_k,\ell_1} \le M \nu_k^{-1} (|a|_1^2 + |\tilde a|_1^2) |a-\tilde a|_1.
\end{align*} 
Let 
\[
\Omega^s (a, \tau) = \big( h_*(a, 0, \tau), \pa_x h_*(a, 0, \tau) \big). 
\]
The conclusions of Proposition \ref{P:size-SU}  follow from standard and straight forward arguments.\\ 
 
\noindent $\bullet$ {\bf Nonexistence of decaying solutions on the center manifold.
} 
Recall that, when $x$ is viewed as the dynamic variable, the nonlinear Klein-Gordon equation \eqref{eq:KLGtau} conserves the Hamiltonian $\HH(u, \pa_x u)$ where  
\[
\HH (u_1, u_2) = \int_{-\pi}^\pi \left(\frac 12 u_2^2 + \frac {\omega^2} 2 (\pa_\tau u_1)^2 -\frac 12 u_1^2 + \frac 1{12} u_1^4 + F(u_1)\right) d\tau 
\]
is smoothly defined on the energy space $H_\tau^1 \times L_\tau^2$. Let $W_\omega^c (0)$ be a center manifold of $(0, 0)$ for \eqref{eq:KLGtau}. The following lemma holds for all $\omega >0$, not just those in $I_k(\e_0)$ or $J_k(\e_0)$. 

\begin{lemma} \label{L:energy-Wc}
For any 
$\omega >0$, $(0, 0)$ is a strict local minimum of $\HH$ restricted on its local center manifold . 
\end{lemma}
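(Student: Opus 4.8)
The plan is to show that the quadratic part of $\HH$, when restricted to the center subspace (the span of the Fourier modes $|n|\ge k+1$), is coercive, and then absorb the quartic and higher-order terms. First I would diagonalize the quadratic form. Since $\HH$ splits over Fourier modes in its quadratic part, write $u_1 = \sum_n u_{1,n} e^{in\tau}$, $u_2 = \sum_n u_{2,n} e^{in\tau}$, and compute
\[
Q(u_1,u_2) := \int_{-\pi}^\pi \Big( \tfrac12 u_2^2 + \tfrac{\omega^2}2 (\pa_\tau u_1)^2 - \tfrac12 u_1^2\Big)\,d\tau = \pi \sum_{n} \Big( |u_{2,n}|^2 + (\omega^2 n^2 - 1)|u_{1,n}|^2 \Big).
\]
On the center subspace one has $|n|\ge k+1$, hence $\omega^2 n^2 - 1 = -\nu_n^2 = \vartheta_n^2 \ge 0$ with $\vartheta_n^2 \ge \vartheta_{k+1}^2$ for $|n|\ge k+1$; in fact $\vartheta_n^2 \ge c n^2$ for a constant $c>0$ depending on $\omega$ once $n$ is large, so $Q$ controls $\|u_1\|_{H^1_\tau}^2 + \|u_2\|_{L^2_\tau}^2$ up to the finitely many borderline modes near $n = k+1$. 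The one genuinely degenerate case is $\omega = 1/(k+1)$, where $\vartheta_{k+1}=0$ and the $n=\pm(k+1)$ modes of $u_1$ do not appear in $Q$ at all; I would handle this by noting that the $H^1$-control of those modes still follows from $\|\pa_\tau u_1\|_{L^2}$ being equivalent (modulo the zero mode, which is excluded on $\ker\Pi_0$ — or here simply kept, since for $n=0$ one has $\omega^2 n^2-1=-1<0$, but $n=0$ is not a center mode as $\nu_0$ is hyperbolic) — more carefully, since the center subspace only contains $|n|\ge k+1\ge1$, the seminorm $\|\pa_\tau u_1\|_{L^2}^2 = \pi\sum n^2|u_{1,n}|^2$ is an honest norm there, and $Q \ge \tfrac{\omega^2}{2}\|\pa_\tau u_1\|_{L^2}^2 - \tfrac12\|u_1\|_{L^2}^2 + \tfrac12\|u_2\|_{L^2}^2$, where the negative term is controlled by the positive one except on the single pair of modes $n=\pm(k+1)$ when $\omega=1/(k+1)$; on those two modes $Q$ vanishes identically in $u_1$, so $Q$ is merely positive semidefinite, not coercive.

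Because of that borderline degeneracy, the cleanest route is not to claim strict coercivity of $Q$ but rather to work with the restriction of $\HH$ to the center \emph{manifold} directly. On $W^c_\omega(0)$, parametrize by the center coordinate $Q_c$ (the projection onto the center subspace), write the stable/unstable coordinates as $h^c(Q_c)$ with $h^c(0)=0$, $Dh^c(0)=0$, and expand $\HH(h^c(Q_c), Q_c) = Q(0,Q_c) + O(\|Q_c\|^3_{\mathrm{energy}})$ since the graph correction is quadratically small and $\HH$ is $C^2$ with $D^2\HH(0)=2Q$. Now I invoke the conservation of $\HH$ together with the quadratic positivity exactly as is done in Subsection \ref{sec:ExpSmallTails} (see \eqref{E:HH}): the argument there, applied with $k$ general instead of $k=1$, shows $D_Q^2\big(\HH(h^c(Q),Q,\e)\big) \ge \tfrac13\|\tilde Q\|_{H^1}^2$ for $\|Q\|_{H^1}\le\delta$, which already gives $\HH(h^c(Q),Q) \ge \tfrac16\|Q\|_{H^1}^2 > 0$ for $0<\|Q\|_{H^1}\le\delta$. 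The one point to verify in our more general setting is that the quadratic form $Q(0,\cdot)$ on the center subspace is \emph{positive definite} after all — and this is where the choice $\omega=1/(k+1)$ must be excluded from the center-manifold analysis or handled separately: for $\omega \in (1/(k+1), 1/k)$ strictly, every center eigenvalue satisfies $\vartheta_n^2 > 0$, so $Q$ is positive definite and the $C^2$ estimate closes. For the endpoint $\omega = 1/(k+1)$ the modes $n=\pm(k+1)$ are genuine zero eigenvalues; there the quadratic part of $\HH$ in those directions is $\int \tfrac12 u_{2}^2\,d\tau$ in the momentum variable only, and the leading behavior of $\HH$ restricted to the corresponding center-manifold directions is governed by the quartic term $\tfrac1{12}\int u_1^4\,d\tau \ge 0$, which vanishes only at $u_1=0$; combined with the positive-definite momentum part this still yields a strict local minimum.

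The main obstacle is therefore not the coercive bulk of the center spectrum — that is routine — but rather the treatment of the finitely many (at most one pair of) borderline Fourier modes that become exactly central at the resonant endpoint $\omega = 1/(k+1)$, where the Hessian of $\HH$ degenerates and one must descend to the quartic term $\tfrac1{12}\int_{-\pi}^\pi u_1^4\, d\tau$ to recover strict positivity. I would organize the proof as: (i) compute $Q$ explicitly and identify the sign of $\omega^2 n^2 - 1$ on center modes; (ii) for $\omega$ in the open resonance gap, conclude positive-definiteness of $D^2\HH(0)|_{T_0 W^c}$ and hence a strict minimum by Taylor expansion using $Dh^c(0)=0$; (iii) at the endpoint, split the center directions into the nondegenerate block (handled as in (ii)) and the one exceptional pair, on which use the positivity of $\int u_2^2$ together with the strict positivity of $\int u_1^4$ away from $u_1=0$, invoking $F(u_1)=O(u_1^6)$ to ensure it does not spoil the sign. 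In all cases the conclusion is that $\HH > 0$ on $W^c_\omega(0)\setminus\{0\}$ in a neighborhood of the origin, i.e., $(0,0)$ is a strict local minimum.
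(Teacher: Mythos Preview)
Your approach is essentially the paper's: compute the quadratic part of $\HH$ on the center subspace, observe it is positive (semi)definite since $\omega^2 n^2-1=\vartheta_n^2\ge0$ for $|n|\ge k+1$, split into the nondegenerate case $\omega\in(1/(k+1),1/k)$ and the endpoint $\omega=1/(k+1)$, and at the endpoint use the quartic term $\tfrac1{12}\int u_1^4$ to recover strict positivity on the null directions $n=\pm(k+1)$.

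One technical point you should tighten, and which the paper uses explicitly, is the order of vanishing of the center-manifold graph. You only invoke $Dh^c(0)=0$, i.e.\ $h^c(Q_c)=O(\|Q_c\|^2)$. With merely quadratic smallness, the hyperbolic quadratic form $Q^h(h^c)$ contributes an $O(\|Q_c\|^4)$ term of \emph{indefinite} sign to $\HH|_{W^c}$, which at the endpoint competes directly with the quartic $\tfrac1{12}\int u_1^4$ you rely on. The paper closes this by noting that, because the nonlinearity in \eqref{eq:KLGtau} is cubic with no quadratic terms, in fact $\gamma^c(u_1,u_2)=O(\|u_1\|_{H^1}^3+\|u_2\|_{L^2}^3)$; then the graph correction to $\HH$ is $O(\|Q_c\|^6)$ and the quartic term dominates cleanly. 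You should insert this observation in your step (iii). Also, your detour through \eqref{E:HH} is logically awkward since that estimate is stated only in the $k=1$ odd setting and is asserted rather than proved there; your direct argument in (i)--(iii) stands on its own and does not need it.
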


\begin{proof}
There exists unique $k\ge 0$ such that $\omega \in [1/(k+1), 1/k)$. Let $\BFY^c, \BFY^h \subset H_\tau^1 \times L_\tau^2$ denote the center and hyperbolic subspaces of the linearization of  \eqref{eq:KLGtau} at $(0, 0)$
\begin{align*}
&\BFY^c = \{(u_1, u_2) \in H_\tau^1 \times L_\tau^2 \mid u_j( \tau) = \sum_{|n| \ge k+1} u_{j,n} e^{i n \tau}, \ u_{j, -n} = \overline {u_{j, n}}, \ j=1,2\}, \\ 
&\BFY^h = \{(u_1, u_2) \mid u_j( \tau) = \sum_{|n| \le k} u_{j,n} e^{i n \tau}, \ u_{j, -n} = \overline {u_{j, n}}. \ j=1,2\}.   
\end{align*} 
Locally $W_\omega^c(0)$ can be represented as the graph of a smooth mapping $\gamma^c (u_1, u_2)$ from a small neighborhood of $(0, 0)$ in $\BFY^c$ to $\BFY^h$. Due to the lack of quadratic nonlinear terms in \eqref{eq:KLGtau}, $\gamma^c$ satisfies   
\[
\gamma^c(u_1, u_2)= \mathcal{O}\big(\|u_1\|_{H_\tau^1}^3 + \|u_2\|_{L_\tau^2}^3 \big). 
\]
Due to $F(u) =\mathcal{O}(|u|^6)$ for small $u$ and the orthogonality between $\BFY^c$ and $\BFY^h$, for small $(u_1, u_2) \in \BFY^c$, 
\begin{align*}
\HH \big((u_1, u_2) + \gamma^c(u_1, u_2)\big) =& \int_{-\pi}^\pi \frac 12 u_2^2 + \frac {\omega^2} 2 (\pa_\tau u_1)^2 -\frac 12 u_1^2 + \frac 1{12} u_1^4 d\tau + \mathcal{O}\big(\|u_1\|_{H_\tau^1}^6 + \|u_2\|_{L_\tau^2}^6 \big)\\
\ge & \frac 12 \|u_2\|_{L_\tau^2}^2 + \pi \sum_{|n|\ge k+1}^\infty \vartheta_n^2 u_{1,n}^2 + \frac 1{24\pi} \|u_1\|_{L_\tau^2}^4 - \mathcal{O}\big(\|u_1\|_{H_\tau^1}^6 + \|u_2\|_{L_\tau^2}^6 \big). 
\end{align*}
If $\omega \ne \frac 1{k+1}$, then there exists $\delta>0$ such that 
\[
\frac {\vartheta_n^2}{1+n^2} \ge \delta, \ \forall |n| \ge k+1 \, \Longrightarrow \sum_{n=k+1}^\infty \vartheta_n^2 u_{1,n}^2 \ge \frac \delta{2\pi} \|u_1\|_{H_\tau^1}^2.
\]
Therefore in this case $(0, 0)$ is clearly a non-degenerate local minimum of $\HH$ on $W_\omega^c(0)$. If $\omega =\frac 1{k+1}$, then $\vartheta_{\pm (k+1)}=0$ and there exists $\delta>0$ such that 
\[
\frac {\vartheta_n^2}{1+n^2} \ge \delta, \ \forall |n| \ge k+2. 
\]
Let 
\[
\tilde u_1 = \sum_{|n|\ge k+2} u_{1,n} e^{i n \tau} 
\]
and then we have 
\begin{align*}
\HH \big((u_1, u_2) + \gamma^c(u_1, u_2)\big) \ge&  \frac 12 \|u_2\|_{L_\tau^2}^2 + \frac \delta2  \|\tilde u_1\|_{H_\tau^1}^2  + \frac 1{24\pi}  |u_{1,\pm (k+1)}|^4 - \mathcal{O}\big(\|\tilde u_1\|_{H_\tau^1}^6  + |u_{1,\pm (k+1)}|^6+ \|u_2\|_{L_\tau^2}^6 \big). 
\end{align*}
Again $(0, 0)$ is clearly a strict local minimum of $\HH$ on $W_\omega^c(0)$. 
\end{proof}

Due the conservation of $\HH$, we immediately obtain 

\begin{corollary} \label{C:centerM}
For any 
$\omega >0$, $(0, 0)$ has a locally unique center manifold $W_\omega^c(0)$ and is stable on $W_\omega^c(0)$ both forward and backward in $x$. Moreover, except $(0, 0)$ no solution on $W_\omega^c(0)$ converges to $(0, 0)$ as $x \to +\infty$ or $-\infty$. 
\end{corollary}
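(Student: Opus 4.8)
The plan is to derive all three assertions from Lemma~\ref{L:energy-Wc} together with the conservation of $\HH$ by the $x$-flow of \eqref{eq:KLGtau}. Fix $\omega>0$, let $k\ge0$ be the unique integer with $\omega\in[\tfrac1{k+1},\tfrac1k)$, and let $W_\omega^c(0)$ be a local center manifold, written near $(0,0)$ as the graph of the smooth map $\gamma^c$ from a $\delta$-ball of $\BFY^c$ into $\BFY^h$ as in the proof of Lemma~\ref{L:energy-Wc}. In the graph coordinate $V\in\BFY^c$, equation \eqref{eq:KLGtau} restricted to $W_\omega^c(0)$ takes the form $\pa_x V=L_cV+N_c(V)$, where $L_c$ is the center part of the linearization, whose second-order Fourier symbol on the $n$-th mode is $-\vartheta_n^2\le0$, so $L_c$ generates a $C_0$-group on $(H_\tau^1\cap\BFY^c)\times(L_\tau^2\cap\BFY^c)$, and $N_c$ is a smooth, locally Lipschitz nonlinearity vanishing to second order. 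Hence the reduced equation on $W_\omega^c(0)$ is locally well-posed both forward and backward in $x$; this two-sided well-posedness is the property that distinguishes $W_\omega^c(0)$ from $W^{cs}_\omega(0)$ and $W^{cu}_\omega(0)$, and it will be used twice.

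The next step is trapping and stability. Lemma~\ref{L:energy-Wc}, more precisely the coercive lower bound derived in its proof, provides constants $0<c\le C$, $\delta_0\in(0,\delta]$, and a functional $\mathcal Q$ --- equivalent to $\|V\|^2$ away from the resonance $\omega=\tfrac1{k+1}$, and to $\|V\|^2$ reinforced by a quartic term in the resonant case --- such that $c\,\mathcal Q(V)\le\HH(V+\gamma^c(V))\le C\,\mathcal Q(V)$ for $\|V\|\le\delta_0$. Since $\HH$ is conserved, an orbit on $W_\omega^c(0)$ with $\|V(0)\|$ small keeps $\HH$ constant, hence keeps $\mathcal Q(V(x))$ within the factor $C/c$ of $\mathcal Q(V(0))$ for as long as it stays in the $\delta_0$-ball; a routine continuation argument then shows that, for $\|V(0)\|$ small enough, the orbit stays in that ball for all $x\in\R$, is globally defined, and obeys $\mathcal Q(V(x))\le\tfrac Cc\,\mathcal Q(V(0))$ for all $x$. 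This is precisely Lyapunov stability of $(0,0)$ on $W_\omega^c(0)$, forward and backward in $x$.

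Uniqueness of $W_\omega^c(0)$ then follows from the standard characterization of center manifolds: any complete orbit of \eqref{eq:KLGtau} that stays in a sufficiently small neighborhood of $(0,0)$ for all $x\in\R$ must lie on every local center manifold, since it can carry no stable or unstable exponential component. By the trapping just established, every orbit on $W_\omega^c(0)$ starting close enough to $(0,0)$ is such a complete, uniformly small orbit, so any two local center manifolds agree on a neighborhood of $(0,0)$.

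For the last assertion, suppose $U(x)=(u(x,\cdot),\pa_xu(x,\cdot))$ lies on $W_\omega^c(0)$ and $U(x)\to(0,0)$ in $H_\tau^1\times L_\tau^2$ as $x\to+\infty$ (the case $x\to-\infty$ is symmetric). Continuity of $\HH$ and $\HH(0,0)=0$ give $\HH(U(x))\to0$, and conservation forces $\HH(U(x))\equiv0$. For $x$ large, $U(x)$ lies in the neighborhood of $(0,0)$ on which, by Lemma~\ref{L:energy-Wc}, $\HH$ restricted to $W_\omega^c(0)$ vanishes only at $(0,0)$; hence $U(x)=(0,0)$ for all large $x$, and two-sided well-posedness of the reduced flow yields $U\equiv(0,0)$. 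I expect the only point requiring genuine care to be the verification that the reduced flow on $W_\omega^c(0)$ is two-sided well-posed and that the energy is coercive there: both rest on $-\vartheta_n^2\le0$ and on the absence of quadratic nonlinear terms, already exploited in Lemma~\ref{L:energy-Wc}, with the resonant case $\omega=\tfrac1{k+1}$ (where $\vartheta_{k+1}=0$) handled exactly as in that lemma's proof.
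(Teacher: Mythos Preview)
Your proof is correct and follows exactly the approach the paper intends: the corollary is stated there as an immediate consequence of Lemma~\ref{L:energy-Wc} together with the conservation of $\HH$, with no further argument given. You have simply filled in the details the paper leaves implicit --- two-sided well-posedness of the reduced flow on $W_\omega^c(0)$, the Lyapunov stability/trapping argument from the strict-minimum property, the standard uniqueness of the center manifold via globally bounded orbits, and the energy argument ruling out nontrivial decaying solutions.
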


Finally we are ready to complete the proof of statement (1) of Theorem \ref{T:main}. \\ 

\noindent {\it Proof of statement (1) of Theorem \ref{T:main}.} Let $\e_0 \in (0, 1/2)$, $k\ge 0$, and $\omega \in J_k (\e_0)$. Since the $\|\cdot\|_{\ell_1}$ norm is invariant under a rescaling in $\tau$, we can work on \eqref{eq:KLGtau} equivalently.  Without loss of generality, assume  $u(x, \tau)$ is a solution such that $(u, \pa_x u)$ converging to $(0,0)$ in $H_\tau^1\times L_\tau^2$ as $x \to +\infty$. Such a  solution must belong to the local center-stable manifold of $(0,0)$ for $x \gg1$ (see Theorem 4.4 in \cite{CL88} or the outline of the arguments in Section \ref{difdinnersec}). It is well-known that the center-stable manifold is foliated into stable fibers based on the local center manifold $W_\omega^s(0)$ (for example, see Theorem 4.3 in \cite{CLL91}). The dynamics of all initial data on each fiber is shadowed by that of the based point on $W_\omega^c(0)$. According to Corollary \ref{C:centerM}, no non-trivial solutions on $W_\omega^c(0)$ converges to $(0,0)$ as $x\to +\infty$, the based point of the decaying solution $u(x, \tau)$ must be $(0, 0) \in W_\omega^s(0)$ and thus it belongs to the stable manifold $W_\omega^s(0)$. From Proposition \ref{P:size-SU}, locally the stable manifold $W_\omega^s(0) =\Omega^s \big(B_{\R^{2k+1}} (0, \rho \nu_k)\big)$ and $\Omega^s$ is a small perturbation of the isomorphism $\big(\Xi(a), \mathrm{diag}(\nu_{-k}, \ldots, \nu_k) \Xi(a)\big)$. In the coordinates $(a_{-k}, \ldots, a_k)$, the dynamics on $W_\omega^s(0)$ is governed by 
\[
\frac{da}{dx} = \big( D \Xi + D_a \Omega_1^s (a, \cdot)\big)^{-1} \big( -\mathrm{diag}(\nu_{-k}, \dots, \nu_k) \Xi(a) + \Omega_2^s (a, \cdot) \big) = -\mathrm{diag}(\nu_{-k}, \dots, \nu_k) a + \mathcal{O}(\nu_k \rho^2 |a|_1), 
\]

where the estimates on $\Omega_1^s$ and $\Omega_2^s$ given in Proposition \ref{P:size-SU} were used. It is straightforward to prove that, as $x$ evolves backwards, every solution on $W_\omega^s(0)$ must exit through its boundary where the $u_1$ component (corresponding to $u(x, \cdot)$ itself) satisfies $\|u_1\|_{\ell_1} \ge \frac 12 \rho \nu_k$. Finally statement (1) of Theorem \ref{T:main} follows from 
\[
\frac {\nu_k}{\omega^{\frac 12}} = \left( \frac 1 \omega - k^2 \omega\right)^{\frac 12} \ge \left( \sqrt{k(k+\e_0^2)} - \frac {k^{\frac 32}}{\sqrt{k+\e_0^2}}  \right)^{\frac 14}\ge\e_0 \left(\frac k{k+\e_0^2}\right)^{\frac 12} \ge \frac {\e_0}2, \;\text{ if } k\ge 1,
\] 
and $\nu_0=1$ if $k=0$. 
\hfill $\square$

\section{Bifurcation analysis for $\omega \in I_k(\e_0)$}\label{sec:OtherBifs}

We devote this section to the completion of the proof of Statement 2 of Theorem \ref{T:main} and Proposition \ref{prop:generalized}, that are the statements concerning $\omega \in I_k(\e_0)$. For such $\omega$, there are two pair of (weakly) hyperbolic eigenvalues along with $2k-1$ pairs of stronger ones (see \eqref{E:e-values-1}). Our strategy is to reduce the problem to $\omega \in I_1(\e_0)$ and $u(x, t)$ odd in $t$. 

We analyze the birth of small homoclinic loops taking 
\[
\omega=\sqrt{\frac{1}{k(k+\eps^2)}}\qquad \text{with}\qquad k\ge 1, \;\; 0<\eps\leq \eps_0\le  \frac 12.
\]
We expand the (real) solution $u(x, \tau)$ to the nonlinear Klein-Gordon equation \eqref{eq:KLGtau} in Fourier series in $\tau$ as  
\[
u(x, \tau) =\sum_{n=-\infty}^{+\infty} \left( -\frac i2\right) u_n (y) e^{in \tau}, \;\; u_{-n} =-\overline {u_n}, 
\]
where the $-i/2$ factor is simply for the technical convenience that, if $u(x, \tau)$ is odd in $\tau$, then $u_n(y)$, $n>0$, coincides with the coefficient in its Fourier sine series expansion. Subsequently  \eqref{eq:KLGtau} is equivalent to a coupled system of equations in the form of  
\begin{equation}
\label{vnshorter3}
\pa_x^2 u_n=\nu_n^2 u_n -\Pi_n\left[g(u)\right],\quad n \in \Z, 
\end{equation}
where $\Pi_n$ is the projection from $u(\tau)$ to the $n$-th mode $u_n$ as in the above expansion
and 
\begin{equation} \label{E:e-values-1} 
\nu_n = \sqrt{1-n^2\omega^2}, \quad 1=\nu_0 > \ldots >\nu_k = \e (k+\e^2)^{-\frac 12}, \; \; \nu_n = i \vartheta_n, \;\; \vartheta_{k+1} < \vartheta_{k+2} < \ldots, n\ge k+1, 
\end{equation} 
are same as those in \eqref{E:nu-1} and \eqref{E:nu-2}. In particular, 
\begin{equation} \label{E:e-values-2}
\nu_{k-1} = \sqrt{\frac {(2+\e^2)k -1}{k (k+\e^2)}} \ge  \sqrt{\frac 1k}, \quad \vartheta_{k+1} = \sqrt{\frac {(2-\e^2)k +1}{k (k+\e^2)}} \ge  \sqrt{\frac 1k}.   
\end{equation}
Linearizing at $u \equiv 0$, clearly $|n| \le k$ corresponds to $2k+1$ pairs of hyperbolic directions, and $|n| \ge k+1$ to codim-$(4k+2)$ center directions. From the same argument as in the proof of statement (1) of Theorem \ref{T:main} (see Section \ref{S:SHyperbolic}) based on Lemma \ref{L:energy-Wc}, a solution $u(x, \tau)$ satisfies $\|(u, \pa_x u)\|_\BFX \to 0$ as $x \to \pm \infty$ if and only if $(u, \pa_x u) \in W_\omega^\star (0)$, $\star=s, u$. Hence we shall focus on the estimates of the sizes and the splitting distance between $W_\omega^u(0)$ and  $W_\omega^s(0)$.

\subsection{Estimates on the local stable/unstable manifolds for $\omega \in I_k (\e_0)$ } \label{SS:SU-I-k} 

For a semilinear PDE like \eqref{vnshorter3}, the standard theorems (see, for example, Theorem 4.4 in \cite{CL88}) yield the existence of smooth local invariant manifolds $W_\omega^\star (0)$, $\star=s, u, c, cs, cu$, in the phase space $\BFX$ defined in \eqref{E:phase-space}. There are two issues, however. On the one hand, usually the sizes of the local invariant manifolds are generally determined by the gap between the real parts of the eigenvalues. While $\nu_n \ge k^{-\frac 12}$ for $|n| \le k-1$, the weakest stable/unstable eigenvalues $\pm \nu_k =\OO(\e k^{-\frac 12})$ of \eqref{vnshorter3} are too small for the analysis of possible breathers of amplitude $\|u \|_{\ell_1} = O(k^{-\frac 12})$. On the other hand, the ``angles" between the stable and unstable eigenfunctions in $\BFX$ of \eqref{vnshorter3} can be rather small for $n\sim k$. In this subsection, we shall outline the construction of $W_\omega^\star(0)$, $\star=s, u$, with desired estimates based on the specific structure of \eqref{eq:KLGtau}, or equivalently \eqref{vnshorter3}. Essentially our strategy is to construct $W_\omega^s(0)$ as the union of strong stable fibers based on a weak stable manifold. 

Observe 
\begin{equation*}
\mathcal{Z}_o=\left\{(u_1, u_2) \in \BFX \mid u_j (\tau)=\sum _{n\in\Z} \left( -\frac i2\right) u_{j, kn}e^{i kn\tau} = \sum _{n\in\N} u_{j, kn} \sin (kn\tau), \ u_{j, kn} \in \R,  \; j=1,2\right\},
\end{equation*}
is an invariant subspace under \eqref{eq:KLGtau}, or equivalently \eqref{vnshorter3}. Any such solution $u(x, \cdot) \in \mathcal{Z}$ is odd and actually $\frac {2\pi}{k \omega}$-periodic in $\tau$. Let 
\[
\tilde \e= k^{-\frac 12} \e \le \e_0, \quad \tilde \tau = k \tau, \quad \tilde \omega =k \omega = (1+\tilde \e^2)^{-\frac 12}, \quad y=\tilde \e \tilde \omega x, \quad u=\tilde \e \tilde \omega v,
\]  
then $v(y, \tilde \tau)$ is $2\pi$-periodic and odd in $\tilde \tau$ and satisfies 
\[
\partial_y^2v-\frac{1}{\widetilde\e^2 }\partial_{\widetilde\tau}^2v-\dfrac{1}{\widetilde\e^2 \widetilde\omega^2}v+\dfrac{1}{3}v^3+\dfrac{1}{\widetilde\e^3 \widetilde \omega^3 }f\left(\widetilde\e \widetilde \omega v\right)=0.
\]
Note that this is in the form of \eqref{kleingordonv} with $k=1$ (and note that $0<\widetilde\e\leq \e\leq \e_0$). Therefore for any $y_0\in \R$, there exists $\e_0, M>0$ (independent of $k$) such that, for $\e\in (0, \e_0]$, Theorem \ref{maintheorem} applies to imply the existence of the unique odd-in-$\tilde \tau$ stable and unstable solutions $v_{\mathrm{wk}}^\star (y, \tilde \tau)$ of \eqref{eq:KLGtau} such that $(v_{\mathrm{wk}}^\star, \pa_y v_{\mathrm{wk}}^\star) \in \BFX_o$ (see \eqref{def:Xo}) and  
\begin{equation} \label{E:9-temp-1}
\left\|(1-\pa_{\tilde \tau}^2) \left( \begin{pmatrix} v_{\mathrm{wk}}^\star (y, \tilde \tau) \\ \pa_y v_{\mathrm{wk}}^\star (y, \tilde \tau) \end{pmatrix} - \begin{pmatrix} v^h (y) \\ (v^h)' (y)  \end{pmatrix} \sin \tilde \tau \right)\right\|_{\ell_1}\le M \tilde \e^2  v^h (y), \; \text{ where } v^h (y) = \frac{2\sqrt{2}}{\cosh y},    
\end{equation}
for $y\ge - y_0$ for $\star=s$ or $y\le y_0$ for $\star=u$. One notices that we replaced $\pa_{\tilde \tau}^2$ in Theorem \ref{maintheorem} which does not change the estimates as $v_{\mathrm{wk}}^\star (y, \tilde \tau)$ are odd in $\tilde \tau$. Moreover $\pa_y \Pi_1 [v_{\mathrm{wk}}^\star (0, \cdot)] =0$ and they satisfy the exponentially small splitting estimate at $y=0$
\[
\left\| \Big( |-\pa_{\tilde \tau}^2 - \tfrac 1{\tilde \omega^2}|^{\frac 12} (v_{\mathrm{wk}}^u - v_{\mathrm{wk}}^s )+ i \tilde \e \pa_y (v_{\mathrm{wk}}^u - v_{\mathrm{wk}}^s) \Big) (0, \cdot) - \tfrac {4\sqrt{2}}{\tilde \e} C_{\mathrm{in}}e^{-\frac {\sqrt{2} \pi} {\tilde \e}} \sin 3\tilde \tau  \right\|_{\ell_1}
\le  \frac {M e^{-\frac {\sqrt{2} \pi} {\tilde \e}}}{\tilde \e \log (\tilde \e)^{-1}}.  
\]
Proposition \ref{prop:FirstBif:Generalized} there exist solutions $v(y, \tilde \tau)$ in $\BFX_o$ homoclinic to either $0$ or its center manifolds, which have bounds in terms of the values of their Hamiltonian $\HH$. From the estimates on the splitting and the $\inf \HH$ in Theorem \ref{maintheorem}, these orbits satisfy 
\begin{equation} \label{E:9-temp-2} 
\tilde \e^{-1} \big \| |-\pa_{\tilde \tau}^2 - \tfrac 1{\tilde \omega^2}|^{\frac 12} (v - v_{\mathrm{wk}}^\star )\big\|_{L_{\tilde \tau}^2 (-\pi, \pi)} + \| \pa_y (v - v_{\mathrm{wk}}^\star) \|_{L_{\tilde \tau}^2 (-\pi, \pi)}   \le M \tilde \e^{-2} e^{-\frac {\sqrt{2} \pi} {\tilde \e}},   
\end{equation} 
for $y\ge -y_0$ with $\star=s$ and $y\le y_0$ with $\star=u$. When $C_{\mathrm{in}} \ne 0$, a lower bound of the same order also holds. 

We rescale and obtain the unique stable and unstable solutions 
\[
u_{\mathrm{wk}}^\star (x, \tau) =\sqrt{k} \e \omega v_{\mathrm{wk}}^\star (\sqrt{k} \e \omega x, k \tau), 
\]
of \eqref{eq:KLGtau} such that $(u_{\mathrm{wk}}^\star, \pa_x u_{\mathrm{wk}}^\star) \in \cZ_o$ for any $x\in \R$. 
For any $x_0\in \R$, there exists $\e_0, M>0$ independent of $k$ such that, for $\e\in (0, \e_0]$, 
\begin{equation} \label{E:u-wk-1}
\left\|(1-\tfrac 1{k^2} \pa_\tau^2) \left(  \begin{pmatrix} u_{\mathrm{wk}}^\star (x, \tau) \\ \frac {\pa_x u_{\mathrm{wk}}^\star (x, \tau)}{\sqrt{k}\e \omega} \end{pmatrix} - \sqrt{k}\e \omega\begin{pmatrix} v^h (\e \sqrt{k}\omega x) \\ (v^h)' (\e \sqrt{k}\omega x)  \end{pmatrix} \sin k\tau\right)\right\|_{\ell_1}\le M k^{-\frac 12} \e^3 \omega v^h (\e \sqrt{k}\omega x), 
\end{equation}
for $x\ge - \tfrac {x_0}{\sqrt{k} \e \omega}$ for $\star=s$ or $x\le \tfrac {x_0}{\sqrt{k} \e \omega}$ for $\star=u$. Moreover $\pa_x \Pi_k [u_{\mathrm{wk}}^\star (0, \cdot)] =0$ and they satisfy the exponentially small splitting estimate
\begin{align*}
& \left\| \tfrac 1{(k\omega)^2} \Big( |-\omega^2 \pa_\tau^2 - 1|^{\frac 12} (u_{\mathrm{wk}}^u - u_{\mathrm{wk}}^s )+i (\pa_x u_{\mathrm{wk}}^u -\pa_x u_{\mathrm{wk}}^s) \Big) (0, \cdot) - 4\sqrt{2} C_{\mathrm{in}}e^{-\frac {\sqrt{2k} \pi} \e} \sin 3k\tau  \right\|_{\ell_1}
\le  \frac {M e^{-\frac {\sqrt{2k} \pi} \e}}{\tfrac 12 \log k - \log \e}.  
\end{align*} 
Since $0< 1- (k \omega)^2 < \tfrac {\e^2}k$, the stable and unstable solutions prove {\it statements (2a-b) of Theorem \ref{T:main}}. The existence and estimates of breathers with exponentially small tails (in $\cZ_o$) follow from the same rescaling and thus {\it Proposition \ref{prop:generalized}} is also proved. 

We shall prove statement (2c) of Theorem \ref{T:main} in the rest of the section. The translations (in $\tau$) of these solutions $(u_{\mathrm{wk}}^\star (x, \cdot + \theta), \partial_x u_{\mathrm{wk}}^\star (x, \cdot + \theta))$ form locally invariant  2-dim surfaces, parametrized by $x$ and $\theta \in \R/ (\tfrac {2\pi}k \Z)$, of the nonlinear Klein-Gordon equation \eqref{eq:KLGtau}, or equivalently \eqref{vnshorter3}, where solutions grow or decay at weak exponential rates. It is worth pointing out that $(u_{\mathrm{wk}}^\star (x, \cdot), \partial_x u_{\mathrm{wk}}^\star (x, \cdot))$, $x \in \R$, corresponds to only one of the two branches of the 1-dim stable/unstable manifold of \eqref{eq:KLGtau} in $\cZ_o$, while the other branch corresponds to $(-u_{\mathrm{wk}}^\star (x, \cdot), -\partial_x u_{\mathrm{wk}}^\star (x, \cdot))$. When $x=\pm \infty$ is included, the 2-dim surface generated by the translation in $\tau$ does include $0$ in the interior and the other branch (corresponding to $\theta = \tfrac \pi k$). 
Obviously they are submanifolds of the $(2k+1)$-dim stable/unstable manifolds and actually we shall construct the latter based on these weak ones (see Figure \ref{fig:center}).

\begin{figure}[!]	
	\centering
	\begin{overpic}[width=14cm]{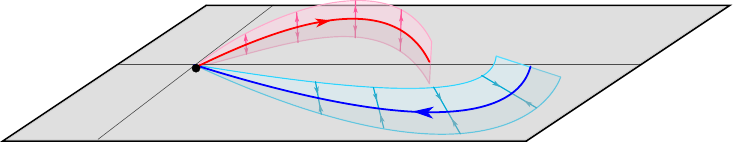}	
		\put(60,15){{\footnotesize$W^{u}_\omega(0)$}}	
		\put(78,8){{\footnotesize$W^{s}_\omega(0)$}}	
				\put(59,11.5){{\textcolor{red}{\footnotesize$u_{\mathrm{wk}}^u$}}}	
		\put(72,11.5){{\textcolor{blue}{\footnotesize$u_{\mathrm{wk}}^s$}}}	
	\end{overpic}\hspace*{0.7cm}
	\caption{We construct the stable manifold $W^{s}_\omega(0)$  as the 
union of the strong stable fibers based on  the weak stable manifold formed by 
the solution  $u_{\mathrm{wk}}^s$ (and its $\tau$-translations), which lives in 
the invariant subspace of $2\pi/k$-periodic-in-$\tau$ functions. We proceed 
analogously for the unstable manifold $W^{u}_\omega(0)$.}
 	\label{fig:center}
\end{figure} 

\begin{prop} \label{P:fibers}
There exist $M>1, \e_0, \rho_1$ independent of $k\ge 1$ and $\omega$, and unique mappings for any $\e\in (0, \e_0)$ and $\omega$ given in \eqref{E:k-omega},  
\[
\zeta^\star = \big(\zeta_1^\star (r,\theta, \delta), \zeta_2^\star (r, \theta, \delta) \big) \in \BFX, \quad \theta \in \R/ (\tfrac {2\pi}k \Z), \;\; \delta = (\delta_{1-k}, \ldots, \delta_{k-1}) \in \C^{2k-1}, \;\; \delta_{-n} = -\overline {\delta_n}, \;\; |\delta|_1 < \tfrac {\rho_1}{\sqrt{ k}}, 
\]
for $r\in \big[- \tfrac {y_0^s}{\e \sqrt{k}\omega}, +\infty\big]$, if $\star=s$ and $r \in \big[-\infty, \tfrac {y_0^u}{\e \sqrt{k}\omega}\big]$ if $\star=u$, where $y_0^\star$ are any values satisfying 
\[
\|u_{\mathrm{wk}}^u \|_{C_{y_0^u}^0} \triangleq  \|u_{\mathrm{wk}}^u \|_{C^0\big(r \in \big[-\infty, \tfrac {y_0^u}{\e \sqrt{k}\omega}\big], |\tau| \le \pi\big) } \le \tfrac {\rho_1}{\sqrt{k}}, \quad \|u_{\mathrm{wk}}^s \|_{C_{y_0^s}^0} \triangleq   \|u_{\mathrm{wk}}^s \|_{C^0\big(r \in \big[- \tfrac {y_0^s}{\e \sqrt{k}\omega}, +\infty\big], |\tau| \le \pi\big)} \le \tfrac {\rho_1}{\sqrt{k}}, 
\]
such that
\begin{align*}
& \Pi_n [\zeta_2^\star] \pm \nu_n \Pi_n[\zeta_1^\star ] =0, \; \forall \, |n|\le k-1, \;\; \star =u, s; \quad \Pi_{-n} [\zeta^\star] = \overline {\Pi_n[\zeta^\star]}, \; \forall \, n\in \Z; \quad \zeta^\star (r, \theta, 0) =0,\\
& \|\zeta_1^\star(r,\theta, \delta) - \zeta_1^\star (r, \theta, \tilde \delta) \|_{\ell_1} + \sqrt{k} \|\zeta_2^\star(r,\theta, \delta) - \zeta_2^\star (r, \theta, \tilde \delta) \|_{\ell_1}
\le k M (\|u_{\mathrm{wk}}^\star \|_{C_{y_0^\star}^0}^2 + |\delta|_1^2 + |\tilde \delta|_1^2) |\delta -\tilde \delta|_1, 
\end{align*}
and the images of $\xi^\star (r, \theta, \delta)$ is an open subset of $W_\omega^\star (0) \subset \BFX$  where 
\begin{align*}
\xi^\star (r, \theta, \delta)=\, &\big(\xi_1^\star (r, \theta, \delta), \xi_2^\star (r,\theta,  \delta) \big) =  \big(u_{\mathrm{wk}}^\star (r, \cdot +\theta), \pa_x u_{\mathrm{wk}}^\star(r, \cdot +\theta)\big) + \Xi^\star (\delta) + \zeta^\star (r, \theta, \delta), \\ 
\Xi^\star (\delta) = \, & \sum_{|n| \le k-1} \left(-\tfrac i2\right) \delta_n e^{in \tau} (1, \pm \nu_n), \;\; \star = u, s.
\end{align*}
Moreover, the orbits (of the dynamic variable $x$) on $W_\omega^\star (0)$ takes the form $\xi^\star \big(x+r, \theta, \delta(x) \big)$ with 
\[
\sum_{|n|\le k-1} \big| \pa_x \delta_n \mp \nu_n \delta_n\big| \le M\nu_{k-1}^{-1} ( \|u_{\mathrm{wk}}^\star \|_{C_{y_0^\star}^0}^2 + |\delta|_1^2) |\delta|_1, \;\; \star =u, s. 
\]
\end{prop}

\begin{remark} 
By including $r=\pm \infty$, where $u_{\mathrm{wk}}^\star (\pm\infty, \cdot)=\pa_x u_{\mathrm{wk}}^\star (\pm \infty, \cdot)=0$ for $\star=s, u$, the images of $\xi^\star$ do contain a whole open neighborhood of the zero solution in the stable/unstable manifolds $W_\omega^\star (0)\subset \BFX$. In fact, $\xi^\star (\pm \infty, \theta, \delta)$ become independent of $\theta$ and give the $(2k-1)$--dimensional strong stable/unstable manifolds corresponding to the eigenvalues $\pm \nu_n$, $|n|\le k-1$. In the $(r, \delta)$ coordinates on the invariant manifolds $W_\omega^\star(0)$, the PDE \eqref{eq:KLGtau} corresponds to a vector field whose $r$ component is always $1$ and the $\delta$ components depend on $r$ and $\delta$ which is a small perturbation to $\pm \nu_n \delta_n$. The following  proof could be carried out in the spaces with high regularity in $\tau$ such as $(1+ |\pa_\tau|)^{-N} \BFX$ for any $N \ge 0$ and thus the local invariant manifolds $W_\omega^\star (0) \subset (1+ |\pa_\tau|)^{-N} \BFX$ enjoy the same properties. The smoothness of $\zeta^\star$ in $r$ and $\theta$ is also true, for which we refer the readers to, for example, Theorem 4.3 in \cite{CLL91} for details, while we focus on the needed quantitative estimates on the sizes and the Lipschitz constant in $\delta$. 
Alternatively, one may also work on the rescaled variables as in \eqref{scaling} and obtain equivalent estimates.     
\end{remark} 

\begin{proof}[Proof of Proposition \ref{P:fibers}] 
The proof follows the standard Lyapunov-Perron method which we shall only outline for the unstable case. Given parameters $r$ and $\theta$, we see solutions to \eqref{eq:KLGtau} (or equivalently \eqref{vnshorter3}) in the form of 
\[
u(x, \tau) = u_{\mathrm{wk}}^u (r+x, \tau +\theta) 
+ U(x, \tau), \;\; x\le 0,  
\]
which decay to $0$ as $x \to -\infty$. The equation satisfied by $U$ takes the form  
\begin{equation}\label{eq:EqforU}
\LL_k U = \FF_k (U)
\end{equation}
where 
\begin{align*}
& \LL_k U = \sum_{n \in \Z} \big( (\pa_x^2 - \nu_n^2) U_n\big) e^{in \tau}, \quad  \FF_k (r, \theta, U) = g(u_{\mathrm{wk}}^u + U) - g(u_{\mathrm{wk}}^u), \; \text{ for } U(x, \tau) = \sum_{n\in \Z} U_n (x) e^{in \tau}.
\end{align*} 
Here we used the fact $u_{\mathrm{wk}}^s= u_{\mathrm{wk}}^u (r+x, \tau +\theta)$ is an exact solution. 
The decay property of $u(x, \tau)$ as $x\to+\infty$ is built into the Banach space  which $U$ belongs to 
\[
\PP = \big\{ U\in C^0 \big((-\infty, 0), \ell_1\big) \mid \|U\|_{\PP} := \sup_{x\le 0} e^{-\tfrac 23 \nu_{k-1} x} \|U(x)\|_{\ell_1} <\infty\big\}. 
\]
To set up the Lyapunov-Perron integral equation, define the linear transformation  
\begin{equation*}
\big(\mathcal{G}_k(h)\big)(x, \tau)=\displaystyle\sum_{n\in \Z}\big(\mathcal{G}_{k,n}(h_n) \big) (x) e^{in\tau}, \; \text{ where }  h(x, \tau) = \sum_{n\in \Z} h_n (x) e^{in \tau}, \quad x\le 0,
\end{equation*} 
with  
\[
\begin{aligned}
\big(\mathcal{G}_{k,n}(h_n)\big) (x)&=\dfrac 1{2\nu_{n}}e^{\nu_{n} x} \displaystyle\int_0^x e^{-\nu_{n} x'}h_n(x')dx'-\dfrac 1{ 2\nu_{n}} e^{-\nu_n x}\displaystyle\int_{-\infty}^x e^{\nu_{n} x'}h_n(x')dx',\,\, |n| \le k-1,\\
\big(\mathcal{G}_{k,n}(h_n)\big(x)\big) &=\dfrac 1{ \nu_{n}} \displaystyle\int_{-\infty}^x \sinh \big(\nu_n (x-x')\big) h_n(x') dx', 
\,\, |n|\geq k,
\end{aligned}\]
which serves as an inverse of $\LL_k$. Here we note that for $|n| >k$,  $\nu_n=i\vartheta_n$ and $\vartheta_n \ge k^{-\frac 12}$ and thus $\sinh \big(\nu_n (x-x')\big) = i \sin \big(\vartheta_n (x-x')\big)$. We also define 
\[
\wt \Xi (\delta, x, \tau) =   \sum_{|n| \le k-1} \left(-\tfrac i2\right) \delta_n e^{\nu_n x + in \tau}.   
\]
The desired solution $U$ satisfies the fixed point equation 
\[
U = \wt \FF (r, \theta, \delta, U) : = \wt \Xi (\delta) + \GG_k \big( \FF_k (r, \theta, U)\big).  
\]
Using \eqref{E:e-values-1} and \eqref{E:e-values-2}, it is straightforward to verify 
\[
\|\wt \Xi\|_{\PP} \le \frac 12 |\delta|_1, \quad \|\GG_k (h) \|_{\PP} \le \frac {100}{\nu_{k-1}^2} \|h\|_{\PP}, 
\]
\[ 
\|\FF_k (r, \theta, U) - \FF_k (r, \theta, \tilde U)\|_{\PP} \le M ( \|u_{\mathrm{wk}}^u \|_{C_{y_0^u}^0}^2 + \|U\|_{\PP}^2 + \| \tilde U\|_{\PP}^2) \|U - \tilde U\|_{\PP}. 
\]
Therefore there exists $\rho_1 >0$ independent of $\e$ and $k\ge 1$, such that, for $|\delta|_1 \le \tfrac {\rho_1}{\sqrt{k}}$, $\wt \FF$ is a contraction on the ball of radius $\tfrac {2\rho_1}{\sqrt{k}}$ in $\PP$. Let $U^u(r, \theta, \delta, x, \tau)$ be the unique fixed point of $\wt \FF$, 
\[
\zeta_1^u (r, \theta, \delta) = U^u (r, \theta, \delta, 0, \cdot) -\wt \Xi (\delta, 0, \cdot), \quad \zeta_2^u (r, \theta, \delta) = \pa_x U^u (r, \theta, \delta, 0, \cdot) - \pa_x \wt \Xi (\delta, 0, \cdot),
\]
and $\xi^u$ accordingly. The desired estimates on $\zeta^u$ follow from straightforward calculations. The invariance of $W_\omega^u(0) = \mathrm{image} (\xi^s)$ is a direct consequence of the uniqueness of the decaying solutions in $\PP$, which implies that solutions on $W_\omega^u(0)$ are parametrized by $\delta(x)$ and take the following two forms  
\begin{align*}
u(x, \cdot) = & u_{\mathrm{wk}}^u (r+x, \cdot +\theta) + U^u \big(r, \theta, \delta (0), x, \cdot\big)\\
= & \xi_1^u \big(r+x, \theta, \delta(x) \big) = u_{\mathrm{wk}}^u (r+x, \cdot +\theta) + U^u \big(r+x, \theta, \delta(x), 0, \cdot\big).  
\end{align*}
The invariance also allows us to obtain a more general identity along this solution $u(x)$ is 
\[
U^u \big(r, \theta, \delta (0), x+x', \cdot\big) = u(x+x', \cdot) - u_{\mathrm{wk}}^u (r+x+x', \cdot +\theta) = U^u \big(r+x, \theta, \delta (x), x', \cdot\big).
\]
From the definition of $\GG_k$, one may compute, for $|n|\le k-1$, 
\begin{align*}
\delta_n (x)  =& i \Pi_n \big[\big(I + \nu_n^{-1} \pa_{x'} \big) U^u \big(r+x, \theta, \delta(x), x', \cdot \big)\big]\big|_{x'=0} = i \Pi_n \big[\big(I + \nu_n^{-1} \pa_{x'} \big) U^u \big(r, \theta, \delta(0), x+x', \cdot \big)\big]\big|_{x'=0}.
\end{align*}
Therefore, differentiating this identity and using \eqref{eq:EqforU},
\[
\pa_x \delta_n (x)
=\nu_n \delta_n (x) + i\nu_n^{-1} \Pi_n [\FF_k(U^u)]\big|_{\big(r, \theta, \delta(0), x\big)}=\nu_n \delta_n (x) + i\nu_n^{-1} \Pi_n [\FF_k(U^u)]\big|_{\big(r+x, \theta, \delta(x), 0\big)}. 
\]
Letting $x=0$, we obtain the estimate on $\pa_x \delta$ straightforwardly and complete the proof of the proposition.  
\end{proof} 

The following corollary is direct consequence of the proposition and \eqref{E:u-wk-1}. 

\begin{corollary} \label{C:fibers-2}
For any $y_0\ge 0$, there exist $\e_0, \rho_1, M>0$ independent of $k$ and $\omega$, and unique mappings $\zeta^\star$, $\star=u, s$, for any $\e\in (0, \e_0)$ such that the results in Proposition \ref{P:fibers} along with $\|u_{\mathrm{wk}}^\star \|_{C_{y_0}^0} \le M  \tfrac \e{\sqrt{k}}$
hold for $r\in \big[- \tfrac {y_0}{\e \sqrt{k}\omega}, +\infty\big]$, if $\star=s$ and $r \in \big[-\infty, \tfrac {y_0}{\e \sqrt{k}\omega}\big]$ if $\star=u$. 
\end{corollary}

\subsection{Small homoclinic solutions} \label{SS:one-bump}

We first show that, regarding small breathers, $u_{\mathrm{wk}}^\star$, $\star=u, s$, is the only object that matters. 

\begin{prop} \label{P:smallB}
There exist $\e_0>0, \rho_2>0$ independent of $k\ge 1$ and $\omega$, such that, for any $\e\in (0, \e_0)$ and $\omega$ given in \eqref{E:k-omega}, a $2\pi$-periodic-in-$\tau$ solution $u(x, \tau)$ to \eqref{eq:KLGtau} exists satisfying     
\[
\sup_{x\in \R} \|u(x, \cdot)\|_{\ell_1} \le \tfrac {\rho_2}{\sqrt{k}},  \; \text{ and } \; \lim_{x\to -\infty} \| u(x, \cdot)\|_{H_\tau^1} + \| u(x, \cdot)\|_{L_\tau^2} = 0, 
\]
if and only if
\[
\sup_{x\in \R} \|u_{\mathrm{wk}}^u (x, \cdot)\|_{\ell_1} \le \tfrac {\rho_2}{\sqrt{k}} \ \text{ and } u(x, \tau) = u_{\mathrm{wk}}^u (x+r, \tau+\theta)
\]
for some $r, \theta \in \R$. Similarly $u(x, \tau) = u_{\mathrm{wk}}^s (x+x_0, \tau+\theta)$ for some $x_0, \theta \in \R$ if instead the above limit holds as $x\to +\infty$.   
\end{prop}

\begin{proof}
The ``$\Longleftarrow$" direction is obvious by \eqref{E:u-wk-1}. We shall only consider the 
``$\implies$" direction. 
Let $M\ge 1, \e_0\geq 0, \rho_1\geq 0, \xi^\star = (\xi_1^\star, \xi_2^\star), \zeta^\star = (\zeta_1^\star, \zeta_2^\star), \Xi^\star = (\Xi_1^\star, \Xi_2^\star)$, $\star=u, s$, be given by Proposition \ref{P:fibers}, and  
\[
\rho_2 = \min \{ \rho_1, M^{-1}\}/20. 
\]
We shall work on the case of $\star=u$ only as the proof for the other case is verbatim. The convergence of $u(x, \tau)$ as $x \to -\infty$ implies that $u(x, \cdot) \in W_\omega^u(0)$ which is also the unstable manifold of $0$ in the $\|\cdot\|_{\ell_1}$-based phase space $\BFX$. Hence, for all $x\ll -1$, there exists $r, \theta \in \R$ such that $u(x, \cdot) = \xi_1^u \big(r +x, \theta, \delta(x)\big)$ for some $\delta(x) \in \C^{2k-1}$ satisfying $ \delta_{-n} (x)=-\overline{\delta_n}(x)$. 

Let 
\begin{equation*}
x_1 = \sup\left\{ x\in \R \mid  \forall x'\le x, \ u(x', \cdot) = \xi_1^u \big(r +x', \theta, \delta(x')\big), \, \& \ |\delta(x')|_1, \|u_{\mathrm{wk}}^u \big(r + x', \cdot\big)\|_{\ell_1} \le \tfrac {10 \rho_2}{\sqrt{k}} \right\} \le +\infty. 
\end{equation*}
Clearly $x_1>-\infty$ since the image of $\xi^u$ is a neighborhood of $0$ in $W_\omega^u(0)$. The estimates on $\zeta^u$ given in Proposition \ref{P:fibers} and the $\frac{2\pi}k$-periodicity-in-$\tau$ of $u_{\mathrm{wk}}^u (x, \tau)$  imply that, for any $x \le x_1$,  
\[
\frac {\rho_2}{\sqrt{k}} \ge \Big\| \sum_{|n| \le k-1} \Pi_n [u(x, \cdot)] \Big\|_{\ell_1} = \Big\|\Xi_1^u \big(\delta(x)\big) +\sum_{|n| \le k-1}  \Pi_n  \big[\zeta_1^u \big( r+x, \theta,  \delta(x)\big) \big] \Big\|_{\ell_1} \ge \frac 12 |\delta(x)|_1.
\]
In turn, along with Proposition \ref{P:fibers} it yields
\[
\|u_{\mathrm{wk}}^u (r+x, \cdot )\|_{\ell_1} = \|u (x, \cdot ) - \Xi_1^u \big(\delta(x)\big) - \zeta_1^u (x+r, \theta, \delta(x)) \|_{\ell_1} \le \tfrac {\rho_2}{\sqrt{k}} + \tfrac 32 |\delta(x)|_1 \le \tfrac {4\rho_2}{\sqrt{k}}, \quad \forall x \le x_1.
\] 
The definition of $x_1$ immediately implies $x_1 =+\infty$ and, in particular, $\|u_{\mathrm{wk}}^u (x, \cdot)\|_{\ell_1} \le \tfrac {4\rho_2}{\sqrt{k}} < \tfrac {\rho_1}{\sqrt{k}}$ for all $x \in \R$. Again according to Proposition \ref{P:fibers}, $\xi^\star$ and $\zeta^\star$, $\star=u, s$, are well-defined near all $r \in \R$. Finally, from the estimate on the evolution $\pa_x \delta$ of $\delta(x)$, we have, for any $x\in \R$ and $|n| \le k-1$,
\[
|\pa_x \delta_n(x) - \nu_n \delta_n (x)|_1 \le \tfrac {20M \rho_2^2}{k \nu_{k-1}} |\delta (x)|_1 \le 20M \rho_2^2 \nu_{k-1} |\delta (x)|_1 \le  (\nu_{n}/2) |\delta (x)|_1.
\]
Therefore 
\[
|\delta_n (x)|_1 \le e^{-\frac {\nu_n}2 N} |\delta_n (x+N)|_1 \le 2 e^{-\frac {\nu_n}2 N} k^{-\frac 12} \rho_2,
\]
which implies $\delta_n (x) =0$ by taking $N \to +\infty$, and thus $u(x, \tau) = u_{\mathrm{wk}}^u (r+x, \tau +\theta)$. 
\end{proof}

As an immediate corollary, there exists a small breather solution $u(x, \tau)$ to the nonlinear Klein-Gordon equation \eqref{eq:KLGtau} satisfying \eqref{E:decay-E} as $|x| \to \infty$ and $\sup_{x\in \R} \|u(x, \cdot)\|_{\ell_1} \le \tfrac {\rho_2}{\sqrt{k}}$ iff $u_{\mathrm{wk}}^u (x+r, \tau) = u_{\mathrm{wk}}^s (x, \tau)$ for some $r\in \R$, namely $u_{\mathrm{wk}}^\star (x, \cdot)$, $\star=u, s$, are small and have the same orbits. The translation in $\tau$ is not needed since $u_{\mathrm{wk}}^\star$ are both primarily supported in the $k$-th mode if $\tau$ and odd in $\tau$. This proves Theorem \ref{T:main}(2c).

\section{The Stokes constant}\label{sec:Stokes}

We devote this section to analyze the Stokes constant $C_\mathrm{in}$ appearing in Theorems \ref{T:main} and \ref{prop:Stokesconstant}. As proved in Theorem \ref{innerthm}, $C_\mathrm{in}$ depends on the nonlinearity $f \in \FF_r$  analytically. 
In Section \ref{sec:proofstokes}, we complete the  proof of Theorem \ref{prop:Stokesconstant} by showing that $C_\mathrm{in} \ne 0$ 
in an open and dense set in $\FF_r$. In Section \ref{SS:Stokes}, we give some more discussions on $C_\mathrm{in}$ and conjecture a formula for $C_\mathrm{in}$ in terms of a power series.

\subsection{Proof of Theorem \ref{prop:Stokesconstant}}\label{sec:proofstokes}

To complete the proof of Theorem \ref{prop:Stokesconstant}, 
first we recall  the inner equation introduced in Section \ref{sec:strategyA}
\begin{equation}\label{inner0}
\partial_z^2\phi^0-\partial_{\tau}^2\phi^0-\phi^0+\dfrac{1}{3}
(\phi^0)^3+f(\phi^0)=0 ,
\end{equation}
where $f$ is a real-analytic odd function such that $f(u)=\er(u^5)$ for $|u|\ll1$.  
More concretely $f \in \mathcal{F}_r$, where  $\mathcal{F}_r$ is given in \eqref{def:Banach:f}.
Observe that  $\mathcal{F}_r$  is a Banach space with the norm $\|\cdot\|_{r}$.

Notice that \eqref{inner0} can be rewritten as
\begin{equation}\label{inner0delta}
\partial_\tau^2\phi_0-\partial_z^2\phi_0+\dfrac{1}{\sqrt{2}}\sin(\sqrt{2}\phi_0)+\Delta(\phi_0)=0,
\end{equation}
where $\Delta$ and $f$ are related through
\begin{equation}\label{perturbation}
	\Delta(u)=- \Big( \dfrac{1}{\sqrt{2}}\sin(\sqrt{2}u)-u+\dfrac{1}{3}u^3+f(u) \Big).
\end{equation}

From Theorem \ref{innerthm}, equation \eqref{inner0}, and therefore equation \eqref{inner0delta}, admit two 
solutions 
$\phi^{0,\star}:D^{\star,\mathrm{\mathrm{in}}}_{\theta,\kappa}\times\mathbb{T}\rightarrow \C$, $\star=u,s$, 
such that
$$
\displaystyle\lim_{|z|\rightarrow\infty} \phi^{0,\star}(z,\tau)=0, \ \forall  (z,\tau)   \in   D^{\star,\mathrm{\mathrm{in}}}_{\theta,\kappa}  \times  \mathbb{T}.
$$
In order to make explicit the  dependence of these solutions on $\Delta$, we shall denote $\phi^{0,\star}$ by $\phi^{0,\star}_{\Delta}$.

We also recall, that in Theorem \ref{innerthm}, we have proved  that
for 
\[
z\in \mathcal{R}^{\mathrm{\mathrm{in}},+}_{\theta,\kappa}= D^{u,\mathrm{\mathrm{in}}}_{\theta,\kappa}\cap D^{s,\mathrm{\mathrm{in}}}_{\theta,\kappa}\cap\{ z;\ z\in i\R \textrm{ and }\Ip(z)<0 \}, 
\]
we have
\begin{equation}
	\label{difmu}
	\phi^{0,u}_{\Delta}(z,\tau)-\phi^{0,s}_{\Delta}(z,\tau)=  e^{-i\mu_{3}z}\left( \mathcal{C}_{\mathrm{\mathrm{in}}}(\Delta)\sin(3\tau)+ \chi(z,\tau;\Delta)\right), \quad \mu_3=2\sqrt{2},
\end{equation}
where we have denoted $ \mathcal{C}_{\mathrm{\mathrm{in}}}(\Delta) =C_{\mathrm{\mathrm{in}}}(f)$,
the Stokes constant,
and $\chi_\Delta(z,\tau)=\chi(z,\tau;\Delta)$ are analytic functions in the variables $z$ satisfying  
$$
\| \chi_\Delta\|_{\ell_1}(z),\|\partial_{\tau}\chi_\Delta\|_{\ell_1}(z)\leq \dfrac{M_2}{|z|} \quad and \quad \|\partial_z\chi_\Delta\|_{\ell_1}(z)\leq \dfrac{M_2}{|z|^2},\ \forall z\in \mathcal{R}^{\mathrm{\mathrm{in}},+}_{\theta,\kappa}.
$$ 
For $\Delta\equiv0$, which corresponds to the sine-Gordon equation, from the explicit formula \eqref{breatherssine} and the asymptotics \eqref{innersol} of $\phi^{0,\star}$, a direct computation allows us to verify that the inner equation \eqref{inner0delta} admits  the  solutions
\begin{equation}\label{innerbreathers}
	\phi^{0,u}(z,\tau)=\phi^{0,s}(z,\tau)= \phi_{\mathrm{b}}^{0}(z,\tau)=\dfrac{4}{\sqrt{2}}\arctan\left(-\dfrac{i\sin(\tau)}{z}\right),
\end{equation}
which implies that $\mathcal{C}_{\mathrm{\mathrm{in}}}(0)= C_{\mathrm{\mathrm{in}}}(f^{\mathrm{sg}})=0$, where
	$f^{\mathrm{sg}}(\phi_0)= \RED{-} \dfrac{1}{\sqrt{2}}\sin(\sqrt{2} \phi_0)+\phi_0-\dfrac{1}{3}(\phi_0)^3$.

To prove Theorem \ref{prop:Stokesconstant}, we also consider the \textit{parameterized inner equation:}  
\begin{equation}
\label{innerpar}
\partial_\tau^2\phi_0-\partial_z^2\phi_0+\dfrac{1}{\sqrt{2}}\sin(\sqrt{2}\phi_0)+\mu\Delta(\phi_0)=0.
\end{equation}
where $\mu \in \R$ is a parameter.

Observe that equation \eqref{innerpar} corresponds to taking in equation \eqref{inner0}, the $\mu$-dependent function:
\[
f(\phi_0,\mu)=- \frac{1}{\sqrt{2}}\sin(\sqrt{2} \phi_0) + \phi_0 - \frac{\phi_0^3}{3} - \mu \Delta(\phi_0). 
\]
As equation \eqref{inner0}, with $f(\phi_0,\mu)$,
depends analytically (in fact linearly) on $\mu$, by Theorem \ref{innerthm} so do the solutions $\phi^{u,s}$ and the Stokes constant 
$C_{\mathrm{in}}(f(\cdot,\mu))=\mathcal{C}_{\mathrm{in}}(\mu \Delta)$.

For a given function $\Delta \in \mathcal{F}_r$, let us denote by
\[
c_{\mathrm{in}}^\Delta(\mu)=\mathcal{C}_{\mathrm{in}}(\mu \Delta)
\] 
which is an analytic function of $\mu$.
Consider the directional derivative
\begin{equation}\label{eq:cinlocal}
\begin{split}
c_{\mathrm{in}}^d: \mathcal{F}_r  & \to \C \\
\Delta & \mapsto  c_{\mathrm{in}}^d(\Delta)=\frac{   d c_{ \mathrm{in}}^{\Delta}   }{d \mu}(0).
\end{split}
\end{equation}
By the analyticity of $C_{\mathrm{in}}$, $c_{\mathrm{in}}^d: \FF_r \to \C$ is a bounded linear operator. We first state the following propositions. 

\begin{prop} 
For any $\Delta \notin \ker(c_{\mathrm{in}}^d)$, the set 
\[
\{ \mu \in \R : \mathcal{C}_{\mathrm{in}} (\mu \Delta) = 0 \}
\]
is a discrete subset of $\R$. 
\end{prop}

This proposition follows directly from 1.) the analyticity of $\mathcal{C}_{\mathrm{in}} (\mu \Delta)$ in $\mu$ as given in Theorem \ref{innerthm}(3) and 2.) $\mathcal{C}_{\mathrm{in}} (\mu \Delta)$ does not vanish identically due to the assumption $\Delta \notin \ker(c_{\mathrm{in}}^d)$. 

\begin{prop} \label{prop:cinlocal} 
The operator $c_{\mathrm{in}}^d$ satisfies $c_{\mathrm{in}}^d \ne 0$. 
\end{prop} 

We shall prove this proposition after we show that 
\[
\V=\{ \Delta \in \FF_r, \  \mathcal{C}_{ \mathrm{in}}(\Delta)\ne 0 \}
\]
is open and dense, which completes the proof of  Theorem \ref{prop:Stokesconstant}. In fact 
\begin{itemize}
\item $\V \subset \FF_r$ is open due to the continuity of  $\mathcal{C}_{ \mathrm{in}}(\Delta)$ in $\Delta$ as given in Theorem \ref{innerthm}. 
\item $c_{\mathrm{in}}^d \ne 0$ implies that there exists $\Delta_1 \in \FF_r \setminus \ker(c_{\mathrm{in}}^d)$. Therefore for any $\Delta \in \FF_r$, there exists $(\mu_1, \mu_2) \in \R^2$ arbitrarily close to $(1, 0)$ such that $\Delta + \mu_2 \Delta_1\notin \ker(c_{\mathrm{in}}^d)$ and $\mathcal{C}_{ \mathrm{in}}(\mu_1( \Delta + \mu_2 \Delta_1)) \ne 0$, which implies the density of $\V \subset \FF_r$. 
\item In fact, since the real dimension of $\ker(c_{\mathrm{in}}^d)$ is at most 2, the implication of the above propositions is much stronger than that $\V$ is open and dense. 
\end{itemize}

The rest of the section is devoted to prove Proposition \ref{prop:cinlocal}. 
We shall first derive a formula for  $c_{\mathrm{in}}^d(\Delta)$ defined in \eqref{eq:cinlocal} for $\Delta\in\mathcal{F}_r$. In order to do this, we consider the parameterized inner equation \eqref{innerpar}
and  we shall compute $c_{\mathrm{in}}^d(\Delta)$ 
through a {\it Melnikov-like analysis}. 
Thus, we write the solutions of equation \eqref{innerpar} as
\begin{equation}\label{expansion}
	\phi^{0,\star}_{\mu\Delta}(z,\tau)= \phi_b^{0}(z,\tau)+\mu \psi_1^{\star}(z,\tau)+\mu^2R^\star(z,\tau;\Delta,\mu), \quad \psi_1^{\star} = \Big( \frac d{d\mu} \phi^{0,\star}_{\mu\Delta} \Big) \Big|_{\mu=0},  \qquad \star=u, s,
\end{equation}
where $\phi_b^0$ is  given in \eqref{innerbreathers} and $R^\star(z,\tau;\Delta,\mu)$ is  analytic in the variable $z$ and also in the parameter $\mu$. 
Note that the functions $\psi_1^\star$ satisfy the same estimates as $\psi^\star$ in \eqref{def:innerestimates} in Theorem \ref{innerthm}.
 A direct computation shows that $\psi_1^{\star}$ satisfies the non-homogeneous linear equation
\begin{equation}
	\label{innerfirst}
	\partial_\tau^2\psi_1^{\star}-\partial_z^2\psi_1^{\star}+\cos(\sqrt{2} \phi_b^{0})\psi_1^{\star}-\Delta (\phi_b^{0})=0.
\end{equation}
As in the standard Melnikov analysis, each solution to the corresponding homogeneous equation -- the variational equation of \eqref{innerpar} around $\phi_b^0$ at $\mu=0$ -- can be used to measure the splitting between $\psi_1^u$ and $\psi_1^s$ in a certain direction, which yields the splitting of $\phi^{0,u}_{\mu\Delta}(z,\tau)$ and $\phi^{0,s}_{\mu\Delta}(z,\tau)$ in the leading order of $\mu$ for $|\mu| \ll 1$.
Next lemma gives the solutions of the variational equations around $\phi_b^0$.
\begin{lemma}
	The homogeneous linear partial differential equation
	\begin{equation}\label{homopde}
		\partial_\tau^2\xi-\partial_z^2\xi+\cos(\sqrt{2} \phi_b^{0})\xi=0
	\end{equation}
	has a family of solutions given by
	\begin{equation}\label{homosol}
		\xi_n^{\pm}(z,\tau)=\dfrac{2}{\mu_{n}^2}\left(\chi_{n}^{\pm}(z,\tau)-\chi_{-n}^{\pm}(z,\tau)\right),
	\end{equation}
	where $n\in\N$, $n\ge 2$, $\mu_{n}=\sqrt{n^2-1}$ and, for each $l\in\Z$, $\chi_{l}^{\pm}$ are the functions given by
	\begin{equation}\label{homosolchi}
		\begin{array}{ll}
			\chi_{l}^{\pm}(z,\tau)=&\vspace{0.2cm} e^{\pm i\mu_{l}z+il\tau}\left(1-\dfrac{\sin^2(\tau)}{z^2}\right)^{-1}\\
			&\times \left\{\pm\dfrac{\mu_{l}}{2z}-\dfrac{l}{2}\dfrac{\cos(\tau)\sin(\tau)}{z^2}-\dfrac{i}{4}
			\mu_l^2
			+i\dfrac{(l^2+1)}{4}\dfrac{\sin^2(\tau)}{z^2}\right\}.
		\end{array}
	\end{equation}
\end{lemma}
The proof of this lemma is obtained through a direct verification. 
In fact, the result is  a consequence of a particular case of Lemma 4 in \cite{D93}.

Next proposition gives a Melnikov integral type expression of the desired function $c_{\mathrm{in}}^d(\Delta)$:
\begin{prop}\label{melnikovformula}
For any function $\Delta\in  \mathcal{F}_{r}$, $c_{\mathrm{in}}^d(\Delta) \in \C$
satisfies
	\begin{equation}\label{derivadamu}
		c_{\mathrm{in}}^d(\Delta)=\left.\dfrac{d}{d\mu}\left(\mathcal{C}_{\mathrm{\mathrm{in}}}(\mu\Delta)\right)\right|_{\mu=0}=\dfrac{1}{2\pi i \mu_{3}}\displaystyle\int_{-\infty}^{\infty}\int_{0}^{2\pi}\Delta(\phi_b^{0}(z+s,\tau))\xi_3^+(z+s,\tau)d\tau ds,
	\end{equation}
	which is independent of $z\in \mathcal{R}^{\mathrm{\mathrm{in}},+}_{\theta,\kappa}$, 
		where $\xi_3^+$ is given in \eqref{homosol}.
\end{prop}

\begin{proof}
	Consider $\xi_3^+$ given in \eqref{homosol}. Since $\psi_1^u$ satisfies \eqref{innerfirst}, multiplying it by $\xi_3^+$, we obtain
	\begin{equation}\label{eq1}
		\xi_3^+\left(\partial_\tau^2\psi_1^{u}-\partial_z^2\psi_1^{u}+\cos(\sqrt{2} \phi_b^{0})\psi_1^{u}\right)=\xi_3^+\Delta(\phi_b^{0}).
	\end{equation}
Thus, for  $z\in D^{u,\mathrm{\mathrm{in}}}_{\theta,\kappa}$, we have 	
	\begin{equation}\label{neq1}
	{\small	\displaystyle\int_{-\infty}^0\int_0^{2\pi}	\xi_3^+\left(\partial_\tau^2\psi_1^{u}-\partial_z^2\psi_1^{u}+\cos(\sqrt{2} \phi_b^{0})\psi_1^{u}\right)(s+z,\tau) d\tau ds=	\displaystyle\int_{-\infty}^0\int_0^{2\pi}\xi_3^+\Delta(\phi_b^{0})(s+z,\tau)d\tau ds.}
\end{equation}
This integrals are well defined since $\psi_1^u$ satisfies the estimates \eqref{def:innerestimates} in Theorem \ref{innerthm}.
Integrating by parts with respect to $\tau$ twice and using that the functions are $2\pi$-periodic in $\tau$ we have that
\begin{equation}\label{neq2}
		\displaystyle\int_{-\infty}^0\int_0^{2\pi}\xi_3^+(s+z,\tau)\partial_\tau^2\psi_1^{u}(s+z,\tau)d\tau ds= 	\displaystyle\int_{-\infty}^0\int_0^{2\pi} \psi_1^{u}(s+z,\tau)\partial^2_{\tau}\xi_3^+(s+z,\tau)d\tau ds.
\end{equation}
Now, integrating by parts with respect to $s$ twice  and using the expression of $\xi_3^+$, we have that
\begin{equation}\label{neq3}
	\begin{split}	
	\displaystyle\int_{-\infty}^0\int_0^{2\pi}\xi_3^+(s+z,\tau)\partial_z^2\psi_1^{u}(s+z,\tau)d\tau ds=&\displaystyle\int_{-\infty}^0\int_0^{2\pi} \psi_1^{u}(s+z,\tau)\partial^2_{z}\xi_3^+(s+z,\tau)d\tau ds\\
	  & -\displaystyle\int_0^{2\pi}\left( \psi_1^u(z,\tau)\partial_z\xi_3^+(z,\tau)-\partial_z\psi_1^u(z,\tau)\xi_3^+(z,\tau)\right)d\tau. 
	 \end{split}
\end{equation}
Replacing \eqref{neq2},\eqref{neq3} in \eqref{neq1}  and using that  $\xi_3^+$ satisfies \eqref{homopde}, we have 
	\begin{equation}
	\label{neq4}
	\displaystyle\int_{-\infty}^0\int_0^{2\pi} \Delta(\phi_b^{0}(s+z,\tau))\xi_3^+(s+z,\tau)d\tau ds=\displaystyle\int_0^{2\pi}\left[\psi_1^u(z,\tau)\partial_z\xi_3^+(z,\tau) -\partial_z\psi_1^u(z,\tau)\xi_3^+(z,\tau)\right]d\tau.
\end{equation}
Analogously, if $z\in D^{s,\mathrm{\mathrm{in}}}_{\theta,\kappa}$, we obtain
\begin{equation}
	\label{neq5}
	\displaystyle\int_{+\infty}^0\int_0^{2\pi} \Delta(\phi_b^{0}(s+z,\tau))\xi_3^+(s+z,\tau)d\tau ds=\displaystyle\int_0^{2\pi}\left[\psi_1^s(z,\tau)\partial_z\xi_3^+(z,\tau) -\partial_z\psi_1^s(z,\tau)\xi_3^+(z,\tau)\right]d\tau.
\end{equation}	
Hence, subtracting  \eqref{neq4}, \eqref{neq5} we obtain
\begin{equation}
	\label{neq6}
	\begin{split}
		\displaystyle\int_{-\infty}^{+\infty}\int_0^{2\pi} \Delta(\phi_b^{0}(s+z,\tau))\xi_3^+(s+z,\tau)d\tau ds=&\vspace{0.2cm}\displaystyle\int_0^{2\pi}(\psi_1^u-\psi_1^s)(z,\tau)\partial_z\xi_3^+(z,\tau)d\tau \\
		&-\displaystyle\int_0^{2\pi}\partial_z(\psi_1^u-\psi_1^s)(z,\tau)\xi_3^+(z,\tau)d\tau.
	\end{split}
\end{equation}
Recall that, if $\mu=0$, then $\mathcal{C}_{\mathrm{\mathrm{in}}}(0)=0$ and $\chi(z,\tau;0)\equiv 0$ in \eqref{difmu}.
Now, using \eqref{expansion} and \eqref{difmu}, expanding $\mathcal{C}_{\mathrm{\mathrm{in}}}(\mu\Delta)$ and $\chi$ around $\mu=0$ and taking $\mu\rightarrow 0$, it follows that
\begin{equation}
	\label{neq7}
	\begin{split}
	\psi_1^u(z,\tau)-\psi_1^s(z,\tau)&=e^{-i\mu_{3}z}\left.\left(\dfrac{d}{d\mu}\mathcal{C}_{\mathrm{\mathrm{in}}}(\mu\Delta)\right|_{\mu=0}\sin(3\tau)+ \partial_\mu\chi(z,\tau;0)\right)\\&=   e^{-i\mu_{3}z}\left(c_{\mathrm{\mathrm{in}}}^d(\Delta)\sin(3\tau)+\er_{\ell_1}(z^{-1})\right).
	\end{split}
\end{equation}
Since \eqref{homosol} and \eqref{homosolchi} yield
\[
\xi_3^+(z,\tau)=e^{i\mu_{3}z}\left( \sin(3\tau)+\er_{\ell_1}(z^{-1})\right), \;\; \pa_z \xi_3^+(z,\tau)=i\mu_3 e^{i\mu_{3}z}\left( \sin(3\tau)+\er_{\ell_1}(z^{-1})\right), \; \text{ as } |z| \to \infty,
\]
a straightforward computation  of the right-hand side of \eqref{neq6} shows that, for each $z\in  \mathcal{R}^{\mathrm{\mathrm{in}},+}_{\theta,\kappa}$, 
\begin{equation}
	\label{eq5}
		\int_{-\infty}^{+\infty}\int_0^{2\pi} \Delta(\phi_b^{0}(s+z,\tau))\xi_3^+(s+z,\tau)d\tau ds	
	         =2\pi i\mu_{3}c_{\mathrm{\mathrm{in}}}^d(\Delta)+Q(z,\tau), 
\end{equation}
where $Q(z,\tau)=\er_{\ell_1}(z^{-1})$. Since the left-hand side of \eqref{eq5} does not depend on $z$ (one can just make the change of variables $\sigma=s+z$), the decay of $Q$ implies  that $Q\equiv 0$, and thus \eqref{derivadamu} holds.
\end{proof}

With a formula for $c_{\mathrm{\mathrm{in}}}^d(\Delta)$, $\Delta\in\mathcal{F}_r$, the following lemma finishes the proof of Proposition \ref{prop:cinlocal}.

\begin{lemma}
If $\Delta_0(u)=\left(-i\tan\left(\dfrac{\sqrt{2}}{4}u\right)\right)^5\left(1+ \tan^2\left(\dfrac{\sqrt{2}}{4}u\right)\right)$, then	
\[
\displaystyle c_{\mathrm{\mathrm{in}}}^d(\Delta_0)=\int_{-\infty}^{\infty}\int_{0}^{2\pi}\Delta_0(\phi_b^{0}(z+s,\tau))\xi_3^+(z+s,\tau)d\tau ds=\dfrac{26\pi^2}{15}i.
\]
\end{lemma}
\begin{proof}
	First, notice that
	\[
	\Delta_0(\phi_b^{0}(s+z,\tau))=-\dfrac{\sin^5(\tau)}{(s+z)^5}\left(1-\dfrac{\sin^2(\tau)}{(s+z)^2}\right),\]
	and
	$$\xi_3^+(s+z,\tau)=\dfrac{e^{i2\sqrt{2}(s+z)}}{8(s+z)^2\left(1-\dfrac{\sin^2(\tau)}{(s+z)^2}\right)}\left(4 \sin(\tau)+(8(s+z)^2+i4\sqrt{2}(s+z)-5)\sin(3\tau)+\sin(5\tau)\right).$$
	
	Therefore,
	$$\displaystyle\int_{-\infty}^{\infty}\int_{0}^{2\pi}\Delta_0(\phi_b^{0}(z+s,\tau))\xi_3^+(z+s,\tau)d\tau ds=\displaystyle\int_{-\infty}^{\infty}F(z,s)ds,$$
where $$F(z,s)=\dfrac{\pi e^{i2\sqrt{2}(s+z)}}{64(s+z)^7}\left(-33+ i10\sqrt{2}(s+z)+20(s+z)^2\right).$$	
	Recall that this integral is independent of $z\in \mathcal{R}^{\mathrm{\mathrm{in}},+}_{\theta,\kappa}$. Since $z=-i \kappa$, for some $\kappa>0$ sufficiently big, we have that $F(-i\kappa,s)$ has a pole at $s=i\kappa$.
	
	For $R>\kappa$ sufficiently big, consider $C_R=\{z;|z|=R,\ \Im(z)\geq 0\}$ and $L_R$ be the line segment between the points $-R$ and $R$. Let $\gamma_R(\theta)= Re^{i\theta}$, $0\leq \theta\leq \pi$, be a parameterization of $C_R$ and notice that
	$$F(-i\kappa, \gamma_R(\theta))=\dfrac{\pi e^{i2\sqrt{2}(Re^{i\theta}-i\kappa)}}{64(Re^{i\theta}-i\kappa)^7}\left(-33+ i10\sqrt{2}(Re^{i\theta}-i\kappa)+20(Re^{i\theta}-i\kappa)^2\right),$$
	which means, for some $M>0$ independent of $R$, 
	$$|F(-i\kappa, \gamma_R(\theta))|\leq M\dfrac{e^{-2\sqrt{2}R\sin(\theta)}}{|Re^{i\theta}-i\kappa|^5}.$$
	Since $0\leq\theta\leq\pi$, we have
	$$\displaystyle\lim_{R\rightarrow\infty}\int_{C_R}F(-i\kappa,s)ds=0.$$
	From Residue Theorem, it follows that
$$\displaystyle\oint_{C_R\cup L_R}F(-i\kappa,s)ds=2\pi i\textrm{Res}(F(-i\kappa,s),i\kappa)=\dfrac{26\pi^2}{15}i,$$
which implies 
$$\displaystyle\int_{-\infty}^{\infty}F(-i\kappa,s)ds=\dfrac{26\pi^2}{15}i.$$
\end{proof}

\subsection{A conjecture on the Stokes constant}\label{SS:Stokes}

As stated in Theorem \ref{prop:Stokesconstant}, generically  $C_{\mathrm{in}}$ does not vanish. We heuristically explain this fact from  points of views different compared to those given in Sections \ref{innersec} and  \ref{sec:proofstokes}.
Consider first a toy model  of \eqref{inner0} near the breather \eqref{innerbreathers} decomposed into Fourier modes in the form of \eqref{innersystemh}. A ``simplified'' equation for the third mode can be taken the form  
\begin{equation*} 
\gamma_3''(z)+\mu_{3}^2\gamma_3(z)= g(z) = \sum_{l=3}^\infty \frac {a_l}{z^l},
\end{equation*}
where we may start with the assumption that the power series on the righthand side is convergent outside a disk centered at the origin. The same proof as in Section \ref{innersec} yields two solutions  $\gamma^s$, $\gamma^u$ such that
\[
\gamma^\star (z) = \mathcal{O}(z^{-3}), \quad z \in D_{\theta, \kappa}^{\star,\mathrm{in}}, \quad \star =u,s, 
\]
where  $D_{\theta, \kappa}^{\star, \mathrm{in}}$ are the sectorial complex 
domains with vertex at $z=\pm \infty$ defined in   \eqref{innerdomainsol}.
However, in general, $\gamma^{u,s}$ cannot be extended to  analytic functions defined in a neighborhood of $\infty$. In fact,  $\gamma^{u,s}$ have the same formal asymptotic expansion $\tilde \gamma$, as $ |z|\to \infty$,
\[
\gamma^{u,s} (z)\sim \tilde \gamma (z)=\sum_{l=3}^\infty \frac {\gamma_l}{z^l}, \quad \gamma_l = \sum_{j=0}^{[\frac {l-3}2]} (-1)^j \mu_3^{-2(j+1)} \frac {(l-1)!}{(l-2j-1)!} a_{l-2j}, \qquad l \ge 3, 
\] 
where $[ b ]$ denotes the greatest integer no greater than $b$. We observe that $\tilde \gamma$ is generally a divergent series, but in the  Gevrey-1 class, namely
\[
\sup_{l} \left( \frac {|\gamma_l|}{l!}\right)^{\frac 1l} < \infty. 
\]
Hence, one may expect $\gamma^u \ne \gamma^s$ in general. 
There are several ways to see that 
and to provide an algorithm to compute their difference.\\

\noindent \textbf{Borel resummation.} 
One possibility is using the Borel resummation method as well as the Resurgence theory of \'Ecalle.
The main idea is to consider the formal Borel transform $\hat  \gamma$ (the inverse of the Laplace transform) term by term of the series
$\tilde \gamma$
\[
\hat  \gamma (\xi)=\sum_{l=3}^\infty \frac {\gamma_l}{l!}\xi ^{l-1},  \qquad l \ge 3.
\] 
If $\tilde \gamma$ is of Gevrey-1 class, then $\hat \gamma$ is a convergent series in a disk around  $\xi=0$ which gives an analytic function that we also denote $\hat \gamma$. As a first step, one can study the analytic extension of this function $\hat \gamma$ to the complex plane and its singularities. 

In our toy model, one can easily compute the equation satisfied by $\hat \gamma$,
\[
(\xi^2 +\mu_3^2)\hat \gamma (\xi)=\hat g(\xi)
\]
where $\hat g(\xi)=\sum_{l=3}^\infty \frac {a_l}{l!}\xi^{l-1}$ is an entire function.
Clearly, in this model, the only singularities  of $\hat \gamma$ are simple poles located at $\xi=\pm \mu_3 i$.

The second step to recover  the original functions is to compute the Laplace transform of the function $\hat \gamma$ along ``rays'' to infinity. 
The existence of singularities  of $\hat \gamma$ makes the Laplace transforms to be different when choosing different paths according to their decay requirements, obtaining two different functions $\gamma^{s,u}$, whose difference can be computed by means of the residuum theorem.
Again, using our toy model and assuming growth conditions on $\hat\gamma$ of the form $|\hat\gamma(z)|\lesssim e^{C|\xi|}$, one can recover $\gamma^{s,u}$ by choosing Laplace transforms of $\hat \gamma$ along positive or negative real axis
\[
\begin{split}
\gamma^s(z)&=\int _{0}^{\infty} e^{-z\xi}\hat \gamma (\xi) d\xi , \ \text{for}\quad \Rp z>C ,\\
\gamma^u(z)&=\int _{0}^{-\infty} e^{-z\xi}\hat \gamma (\xi) d\xi, \ \text{for}\quad \Rp z<-C.
\end{split}
\]
One can easily study the analytic continuation of these functions by changing the paths of integration. 
In this way we extend the functions to sectorial domains similar to $D_{\theta, \kappa}^{\star,\mathrm{in}}$ and study  its difference. For instance, for  $z=-i\kappa$, $\kappa>0$, 
for $\gamma^s$, we change the path to  $\Gamma^+=\{\arg(\xi)=\theta\}$, $\theta\in (0,\pi/2)$.
For $\gamma ^u$, we change the path to $\Gamma^-=\{\arg(\xi)=\pi-\theta\}$. Hence, we have 
\begin{equation}\label{eq:diference}
\begin{split}
\gamma^u(-i\kappa)-\gamma^s(-i\kappa) &=\int _{\Gamma^-} e^{i\kappa\xi}\hat \gamma (\xi) d\xi-\int _{\Gamma^+} e^{i\kappa\xi}\hat \gamma (\xi) d\xi \\
&=-2\pi i\mathrm {Res}(e^{i\kappa\xi}\hat \gamma (\xi),\xi= \mu_3 i) =-\pi  e^{-\mu_3 \kappa }\frac{\hat g(\mu_3 i)}{\mu_3},
\end{split}
\end{equation}
as we have assumed that $\hat\gamma$ has moderate growth at infinity   and, since $\hat \gamma(\xi)=\frac{\hat g(\xi)}{\xi^2+\mu_3^2}$, it has a simple pole at $\xi=\mu_3 i$ with residuum $\frac{\hat g(\mu_3 i)}{2\mu_3 i}$.  

Resurgence theory gives rigor to this argument when one construct the solutions $\phi^{0, \star}$, $\star=u,s$, for the full nonlinear equation \eqref{inner0}.
Roughly speaking, the constant  $C_{\mathrm{in}}$ appears in the computation of residue of the extension of such $\hat \gamma$ in the singularity closest to the origin. Using these ideas, one can develop an algorithm to compute $C_{\mathrm{in}}$.\\

\noindent \textbf{A more direct approach through Perron integrals.}
Let us end this section by proposing another approach to illustrate $\gamma^s \ne \gamma^u$ and also giving an algorithm to compute $C_{\mathrm{in}}$.
We can write an integral representation of the functions $\gamma^s$, $ \gamma^u$ using their  decay at infinity: 
\begin{align*}
& \gamma^u (z) = \dfrac{1}{2i\mu_{3}}\displaystyle\int_{-\infty}^z e^{-i\mu_{3}(s-z)} g(s) ds-\dfrac{1}{2i\mu_{3}}\displaystyle\int_{-\infty}^z e^{i\mu_{3}(s-z)} g(s) ds, \\
& \gamma^s (z) = \dfrac{1}{2i\mu_{3}}\displaystyle\int_{+\infty}^z e^{-i\mu_{3}(s-z)} g(s) ds-\dfrac{1}{2i\mu_{3}}\displaystyle\int_{+\infty}^z e^{i\mu_{3}(s-z)} g(s) ds. 
\end{align*}
For $\kappa >0$, let $B_\kappa \subset \C$ be the disk centered at $0$ with radius $\kappa$ and $S$ be the path going from $-\infty$ to $-\kappa$ along the negative real axis, then to $\kappa$ along the lower half of $\partial B_\kappa$, then to $+\infty$ along the real axis. 
By the Cauchy integral theorem we obtain 
\begin{align*}
\gamma^u (-i\kappa) - \gamma^s (-i\kappa) =&   
\dfrac{1}{2i\mu_{3}}\displaystyle\int_S e^{-i\mu_{3}(s  + 
i\kappa)} g(s) ds-\dfrac{1}{2i\mu_{3}}\displaystyle\int_S e^{i\mu_{3}(s 
+i\kappa)} g(s) ds \\
= & -\dfrac{e^{-\mu_3 \kappa} }{2i\mu_{3}}\displaystyle\oint_{\partial 
B_\kappa} e^{i\mu_{3}s} g(s) ds 
= - \pi 
e^{-\mu_3 \kappa}  \sum_{l=3}^\infty \frac {i^{l-1} \mu_3^{l-2}}{(l-1)!} a_l. 
\end{align*}
The above right side are related to $\hat g$, the Borel transform of $g$, evaluated at $i\mu_3$ and gives the difference between $\gamma^u$ and $\gamma^s$.  In fact, this formula is exactly the same as \eqref{eq:diference}.
This means that in the derivation of the stable/unstable solutions $\phi^{0, \star}$, $\star =u,s$, through the Lyapunov-Perron approach, a nonzero splitting appears even after the first iteration. \\

\noindent {\bf An algorithm to  compute $C_{\mathrm{in}}$.} As mentioned above, by the Borel-Laplace summation theory, $\phi^{0, u}$ and $\phi^{0, s}$, while analytic on their own domains and non-equal in the intersection of the domains, share the same formal series as $z \to \infty$ in suitable sectors
\begin{equation*} 
\phi^{0, \star} \sim \sum_{j=3}^\infty \frac {b_j}{z^j},
\end{equation*}
which is generally divergent, but belongs to the Gevrey-1 class,
Moreover, the right hand sides $F_n$ of \eqref{innersystemh} are also associated to formal series 
\[
F_n \left(\phi^{u,s}\right) (z) \sim \sum_{j=3}^\infty \frac {\beta_{n,j}}{z^j},
\]
in the Gevrey-1 class. 
The above considerations motivate us to make the following conjecture. 

\begin{conjecture}The constant $C_{\mathrm{in}}$ introduced in Theorem \ref{T:main} 
can be expressed as 
\[
C_{\mathrm{in}}
= - \pi \sum_{l=3}^\infty \frac {i^{l-1} \mu_3^{l-2}}{(l-1)!} 
\beta_{3,l}.
\] 
\end{conjecture}
Even though the formula of the splitting constant $C_{\mathrm{in}}$ in this 
conjecture is still very complicated, if proved, it would give an algorithm to 
compute $C_{\mathrm{in}}$ which may be implemented by numerical computations. 
The proof of this conjecture is beyond this paper.

\appendix

\section{Proof of Proposition \ref{prop_operators}}\label{app:Operator}

Unless stated otherwise, $M$ denotes any constant independent of $\kappa$ and $\e$. The proof of items (1) and (2) are straightforward using that  \eqref{Lop} acts on the Fourier coefficients of $\xi$.
To prove item (3), we consider $h\in\mathcal{E}_{m,\ag}$  and we estimate  $\GG_1(h_1)$ and $\GG_n(h_n)$ (see \eqref{g1} and \eqref{gn}). For $\GG_n$, using Lemma 5.5 in \cite{GOS}, one can see that 
	\begin{equation}\label{oppgn}
	\|\mathcal{G}_n(h)\|_{m,\ag}\leq \dfrac{M\e^2}{\lambda_n^2}\|h\|_{m,\ag},\ n\geq 2.\end{equation}
	Now we estimate $\GG_1$ given by 
	\[
\mathcal{G}_1(h)(y)=-\zeta_1(y)\displaystyle\int_{0}^y\zeta_2(s)h(s)ds+\zeta_2(y)\displaystyle\int_{-\infty}^y\zeta_1(s)h(s)ds.
	\]
First, we bound $\mathcal{G}_1(h)(y)$ for values of $y$ in $D^{\mathrm{out},u}_{\kappa}\cap\{\Rp(y)\leq -1\}$. Notice that the functions $\zeta_1(y),\zeta_2(y)$ given in \eqref{zetas} satisfy 
	\begin{equation}\label{zeta12comp}
	|\zeta_1(y)|\leq \dfrac{M}{|\cosh(y)|} \quad\textrm{ and }\quad |\zeta_2(y)|\leq M|\cosh(y)|,
	\end{equation}
	for every 
	\begin{equation*}
	y\in D^{\mathrm{out},u}_{\kappa}\cap\{y\in \C;\ |\Ip(y)|\leq -K\Rp(y)\}\qquad \text{where}\qquad K=\left(\tan(\bg)+\dfrac{\pi}{2}-\kappa\e\right).
	\end{equation*}
The second integral in $\GG_1$ 	satisfies that,  for every $y\in D^{\mathrm{out},u}_\kappa\cap\{\Rp(y)\leq-1 \}$,
\[	\left|\displaystyle\int_{-\infty}^y\zeta_1(s)h(s)ds\right|\leq  M\|h\|_{m,\ag}\displaystyle\int_{-\infty}^0\dfrac{1}{|\cosh^{m+1}(s+y)|}ds
	\leq  M\dfrac{\|h\|_{m,\ag}}{|\cosh^{m+1}(y)|}.
\]
Therefore
	\begin{equation}
	\label{segunda}
	\left|\zeta_2(y)\displaystyle\int_{-\infty}^y\zeta_1(s)h(s)ds\right|\leq \dfrac{M\|h\|_{m,\ag}}{|\cosh^{m}(y)|}.
	\end{equation}
Now, to estimate the first integral in $\GG_1$,  let $y^*$ be the unique point in the segment of line between $0$ and $y$ such that $\Rp(y^*)=-1$. Hence, it follows from \eqref{zeta12comp} that,
	\begin{enumerate}
		\item If $s$ is in the line between $0$ and $y^*$, then 
		\[
		|\zeta_2(s)h(s)|\leq\,\dfrac{M\|h\|_{m,\ag}|\cosh(s)|}{|s^2+\pi^2/4|^{\ag}}\leq\, M\|h\|_{m,\ag}.
		\]
		\item If $s$ is in the line between $y^*$ and $y$, then 
		\[|\zeta_2(s)h(s)|\leq\dfrac{M\|h\|_{m,\ag}}{|\cosh^{m-1}(s)|}.\]
	\end{enumerate}
	Thus since $m>1$, using the previous estimates, we have that
	\[
	\left|\int_0^y\zeta_2(s)h(s)ds \right|\leq\left|\displaystyle\int_{y^*}^0\zeta_2(s)h(s)ds \right|+\left|\displaystyle\int_y^{y^*}\zeta_2(s)h(s)ds \right|
	\leq M\|h\|_{m,\ag},
	\]
and consequently
	\begin{equation}
	\label{primeira1}
	\left|\zeta_1(y)\displaystyle\int_{0}^y\zeta_2(s)h(s)ds\right|\leq \dfrac{M\|h\|_{m,\ag}}{|\cosh(y)|}.
	\end{equation}
		Now, from \eqref{g1}, \eqref{segunda} and \eqref{primeira1}, we obtain that
	\begin{equation}
	\label{infinito}
	\displaystyle\sup_{y\in D^{\mathrm{out},u}_\kappa\cap\{\Rp(y)\leq-1 \}}\left|\cosh(y)\mathcal{G}_1(h)(y)\right|\leq M\|h\|_{m,\ag}.
	\end{equation}
	For the region $D^{\mathrm{out},u}_\kappa\cap\{\Rp(y)\geq-1\}$, we consider a new set of fundamental solutions  $\{\zeta_+,\zeta_-\}$ of $\mathcal{L}_1(\zeta)=0$ which has good properties at $\pm i \pi/2$. We rewrite the solutions $\zeta_1(y)$ and $\zeta_2(y)$ as linear combinations of $\zeta_+(y)$ and $\zeta_-(y)$ and use them to obtain a new expression of the operator $\mathcal{G}_1$.
We emphasize that the operator $\mathcal{G}_1$ is already defined. We  only express it in a different way.
	
\begin{lemma}
The functions 
	\begin{equation*}
	\zeta_{\pm}(y)=\zeta_1(y)\displaystyle\int_{\pm i \frac{\pi}{2}}^y\dfrac{1}{\zeta_1^2(s)}ds=-\dfrac{\sqrt{2}}{4}\dfrac{1}{\cosh^2(y)}\left(\dfrac{3y\sinh(y)}{2}-\cosh(y)+\dfrac{1}{4}\sinh(y)\sinh(2y)\mp i\dfrac{3 \pi}{4}\sinh(y)\right)
	\end{equation*}
are solutions of equation $\mathcal{L}_1(\zeta)=0$ and have the following properties.
\begin{itemize}
\item The Wronksian satisfies 
\begin{equation*}
			W(\zeta_+,\zeta_-)= \zeta_+\dot{\zeta}_--\zeta_-\dot{\zeta}_+= -i\dfrac{3\pi}{16}.
			\end{equation*}
			and therefore $\zeta_{\pm}$ are linearly independent.
			\item They can be written as
			\begin{equation}
			\label{goodvar}
			\zeta_{\pm}(y)=\dfrac{(y\mp i \pi/2)^3}{(y\pm i\pi/2)^2}\eta_{\pm}(y),
			\end{equation}
			where   $\eta_{\pm}$ are analytic functions in $D^{\mathrm{out},u}_{\kappa}\cap \{\Rp(y)\geq -1\}$ uniformly bounded (with respect to $\e$ and $\kappa$).
	\item The operator $\mathcal{G}_1$ given by \eqref{g1} can be rewritten as 
		\begin{equation*}
		\mathcal{G}_1(h)=i\dfrac{16}{3\pi}\left(-\zeta_+(y)\displaystyle\int_0^y\zeta_-(s)h(s)ds+\zeta_-(y)\displaystyle\int_0^y\zeta_+(s)h(s)ds\right) +\zeta_2(y)\displaystyle\int_{-\infty}^0\zeta_1(s)h(s)ds,
		\end{equation*}
		where $\zeta_1,\zeta_2$  are given in \eqref{zetas}.
		\end{itemize}
	\end{lemma}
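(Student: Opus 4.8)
The plan is to verify the three bullet points essentially by direct computation, exploiting that $\zeta_1$ is an explicit elementary function with known zeros/poles, and that $\zeta_+$ is obtained from $\zeta_1$ by the classical reduction-of-order formula $\zeta_+(y) = \zeta_1(y)\int_{i\pi/2}^y \zeta_1(s)^{-2}\,ds$, which automatically produces a second solution of $\LL_1(\zeta)=0$ linearly independent from $\zeta_1$ wherever it is well-defined. So the very first step is to confirm that $\zeta_\pm$ as displayed really equal $\zeta_1 \int_{\pm i\pi/2}^y \zeta_1^{-2}$; since $\zeta_1(y) = -2\sqrt2\,\sinh y/\cosh^2 y$, we have $\zeta_1^{-2} = \tfrac18 \cosh^4 y/\sinh^2 y$, and $\int \cosh^4 y/\sinh^2 y\,dy$ is elementary (write $\cosh^4 = \sinh^2\cosh^2 + \cosh^2$, integrate by parts, use $\int \cosh^2 = \tfrac12(y + \sinh y\cosh y)$ and $\int \cosh^2/\sinh^2 = -\coth y$-type primitives), yielding precisely the bracket $\tfrac{3y\sinh y}{2} - \cosh y + \tfrac14 \sinh y \sinh 2y \mp i\tfrac{3\pi}{4}\sinh y$ up to the prefactor $-\tfrac{\sqrt2}{4}\cosh^{-2} y$, the $\mp i 3\pi/4$ term being fixed by the choice of base point $\pm i\pi/2$ and the residue of $\zeta_1^{-2}$ there. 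That $\zeta_\pm$ solves $\LL_1(\zeta)=0$ then follows for free from the reduction-of-order identity, or can be checked by plugging into $\ddot\zeta - \zeta + \tfrac34 (v^h)^2\zeta = 0$ using $(v^h)^2 = 8\,\mathrm{sech}^2 y$.

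Next I would compute the Wronskian $W(\zeta_+,\zeta_-) = \zeta_+\dot\zeta_- - \zeta_-\dot\zeta_+$. The cleanest route: since $\LL_1$ has no first-order term, $W(\zeta_+,\zeta_-)$ is constant in $y$, so it suffices to evaluate it at one convenient point or, better, to expand near $y = i\pi/2$ where $\zeta_+$ has a triple zero and $\zeta_-$ a double pole. Using $\zeta_+ = \zeta_1 \int_{i\pi/2}^y\zeta_1^{-2}$ and $\zeta_- = \zeta_1\int_{-i\pi/2}^y\zeta_1^{-2} = \zeta_+ + c\,\zeta_1$ with $c = \int_{-i\pi/2}^{i\pi/2}\zeta_1^{-2}\,ds$, one gets $W(\zeta_+,\zeta_-) = W(\zeta_+, \zeta_+ + c\zeta_1) = c\,W(\zeta_+,\zeta_1) = c\cdot(-1)$ (since $W(\zeta_1\!\int\!\zeta_1^{-2},\,\zeta_1) = -1$ by a one-line computation). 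Then $c = \oint \zeta_1^{-2}$ around the pole at $i\pi/2$ contributes via residues; evaluating $\int_{-i\pi/2}^{i\pi/2}\tfrac18\cosh^4 s/\sinh^2 s\,ds$ gives $-i 3\pi/16$ after the $\tfrac{16}{3\pi}$ normalization bookkeeping — I'd reconcile the constant carefully so it matches the claimed $W(\zeta_+,\zeta_-) = -i\tfrac{3\pi}{16}$, which in particular gives linear independence.

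For the factorization $\zeta_\pm(y) = \tfrac{(y\mp i\pi/2)^3}{(y\pm i\pi/2)^2}\eta_\pm(y)$ with $\eta_\pm$ analytic and uniformly bounded on $D^{\mathrm{out},u}_\kappa\cap\{\Rp y \ge -1\}$: I would argue by examining the explicit formula for $\zeta_\pm$. The prefactor $\cosh^{-2} y$ has double poles exactly at $y = \pm i\pi/2$ (mod $i\pi$), and the bracket $B_\pm(y) := \tfrac{3y\sinh y}{2} - \cosh y + \tfrac14\sinh y\sinh 2y \mp i\tfrac{3\pi}{4}\sinh y$ is entire. At $y = \pm i\pi/2$ one checks $B_\pm$ has a zero of order exactly $3$: $\sinh(i\pi/2) = i$, $\cosh(i\pi/2)=0$, so $B_+(i\pi/2) = \tfrac{3}{2}\cdot i\pi/2\cdot i - 0 + \tfrac14 i\cdot 0 - i\tfrac{3\pi}{4} i = -\tfrac{3\pi}{4} + \tfrac{3\pi}{4} = 0$, and similarly $B_+'$, $B_+''$ vanish there while $B_+'''\ne 0$ — this is the defining feature of the $\mp$ sign choice. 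At $y = \mp i\pi/2$ (the ``wrong'' pole) $B_\pm$ does not vanish, so $\zeta_\pm$ retains the double pole from $\cosh^{-2}$. Hence $\zeta_\pm = \tfrac{(y\mp i\pi/2)^3}{(y\pm i\pi/2)^2}\eta_\pm$ with $\eta_\pm$ holomorphic near both points; away from $\pm i\pi/2$ (mod $i\pi$) the function $\zeta_\pm$ is holomorphic and the remaining poles of $\cosh^{-2}$ at $\pm 3i\pi/2$ etc.\ lie outside $D^{\mathrm{out},u}_\kappa\cap\{\Rp y\ge -1\}$ by the geometry of the outer domain, so $\eta_\pm$ is analytic there; uniform boundedness in $\e,\kappa$ follows because $\eta_\pm$ is an explicit elementary function with at worst polynomial-times-exponential growth that is killed by $\cosh^{-2}$, and the $\kappa\e$-truncation only shrinks the domain. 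Finally the rewriting of $\GG_1$: starting from $\GG_1(h) = -\zeta_1\int_0^y\zeta_2 h + \zeta_2\int_{-\infty}^y\zeta_1 h$ as in \eqref{g1}, split $\int_{-\infty}^y = \int_{-\infty}^0 + \int_0^y$, and express $\zeta_1,\zeta_2$ in the basis $\{\zeta_+,\zeta_-\}$ using the constant Wronskian relations; the terms with $\int_0^y$ recombine, via Cramer's rule with Wronskian $-i3\pi/16$, into $i\tfrac{16}{3\pi}(-\zeta_+\int_0^y\zeta_- h + \zeta_-\int_0^y\zeta_+ h)$ exactly because any particular solution built from one basis equals the one built from another (they differ only by a homogeneous solution, and the boundary conditions at $0$ match: both the $\zeta_+,\zeta_-$-expression and the $\int_0^y$ part of the $\zeta_1,\zeta_2$-expression vanish to the right order at $y=0$). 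The leftover $\zeta_2\int_{-\infty}^0\zeta_1 h$ is untouched. The main obstacle I expect is purely bookkeeping: pinning down the several multiplicative constants ($-2\sqrt2$, $\tfrac{\sqrt2}{4}$, $\tfrac{16}{3\pi}$, the $\mp i 3\pi/4$) simultaneously and consistently, and making the residue computation for $W(\zeta_+,\zeta_-)$ line up with the sign conventions in \eqref{zetas} and \eqref{g1}; the analyticity/boundedness of $\eta_\pm$ on the outer domain is conceptually easy once one notes which poles of $\cosh^{-2}$ are excluded.
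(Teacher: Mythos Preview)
Your approach is correct and is precisely the ``straightforward computation using the relation between $\zeta_\pm$ and $\zeta_1,\zeta_2$'' that the paper invokes without further detail. One small caveat: in your Wronskian step you write $c=\int_{-i\pi/2}^{i\pi/2}\zeta_1^{-2}\,ds$, but $\zeta_1$ has a simple zero at $s=0$, so $\zeta_1^{-2}\sim \tfrac{1}{8s^2}$ there and this integral does not converge along the imaginary axis; the cleanest fix is to read off $c$ directly from the explicit formula (giving $\zeta_+-\zeta_-=-i\tfrac{3\pi}{16}\zeta_1$, hence $c=i\tfrac{3\pi}{16}$ and $W(\zeta_+,\zeta_-)=-c=-i\tfrac{3\pi}{16}$), which is what you effectively end up doing anyway.
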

	
	The proof of this lemma is a straightforward computation using the relation between $\zeta_\pm$ and $\zeta_1,\zeta_2$.

	Using this lemma, we bound $\GG_1(h)$ for $y\in D^{\mathrm{out},u}_{\kappa}$ satisfying $\Rp(y)\geq -1$.
First, notice that we can use \eqref{zeta12comp} to see that
	$$
	\begin{array}{lcl}
	\left|\displaystyle\int_{-\infty}^0\zeta_1(s)h(s)ds\right|&\leq& M\|h\|_{m,\ag} \left(\displaystyle\int_{-\infty}^{-1}\dfrac{1}{|\cosh^{m+1}(s)|}ds+\displaystyle\int_{-1}^0\dfrac{1}{|\cosh(s)(s^2+\pi^2/4)^{\ag}|}ds\right)\vspace{0.2cm}\\ 
	&\leq& M\|h\|_{m,\ag}.
	\end{array}
	$$
	From the expression of $\zeta_2(y)$ in \eqref{zetas}, we have that $\zeta_2(y)$ has poles of order 2 at $\pm i \pi/2+ i2k\pi$. Since $\ag\geq 5$, 
\begin{equation*}
	\sup_{y\in D^{\mathrm{out},u}_\kappa\cap\{\Rp(y)\geq-1 \}}\left|(y^2+\pi^2/4)^{\ag-2}\zeta_2(y)\displaystyle\int_{-\infty}^0\zeta_1(s)h(s)ds\right|\leq M\|h\|_{m,\ag}. 
	\end{equation*}
	Now, we use that $\ag\geq 5$ and equation \eqref{goodvar} to see that
	$$
	\begin{array}{lcl}
	\left|\zeta_+(y)\displaystyle\int_0^y\zeta_-(s)h(s)ds\right|&\leq&M\dfrac{|y-i\pi/2|^3}{|y+i\pi/2|^2}\displaystyle\int_0^y\dfrac{|s+i\pi/2|^3}{|s-i\pi/2|^2}|h(s)|ds\vspace{0.2cm}\\
	&\leq&M\|h\|_{m,\ag}\dfrac{|y-i\pi/2|^3}{|y+i\pi/2|^2}\displaystyle\int_0^y\dfrac{1}{|s+i\pi/2|^{\ag-3}|s-i\pi/2|^{\ag+2}}ds\vspace{0.2cm}\\
	&\leq&\dfrac{M\|h\|_{m,\ag}}{|y^2+\pi^2/4|^{\ag-2}}.
	\end{array}
	$$
We conclude that
	\begin{equation*}
	\sup_{y\in D^{\mathrm{out},u}_\kappa\cap\{\Rp(y)\geq-1 \}}\left|(y^2+\pi^2/4)^{\ag-2}\zeta_+(y)\displaystyle\int_0^y\zeta_-(s)h(s)ds\right|\leq M\|h\|_{m,\ag}. 
	\end{equation*}
	In a similar way, we can prove that
	\begin{equation*}
	\sup_{y\in D^{\mathrm{out},u}_\kappa\cap\{\Rp(y)\geq-1 \}}\left|(y^2+\pi^2/4)^{\ag-2}\zeta_-(y)\displaystyle\int_0^y\zeta_+(s)h(s)ds\right|\leq M\|h\|_{m,\ag}. 
	\end{equation*}
Therefore
	\begin{equation}
	\label{polo}
	\sup_{y\in D^{\mathrm{out},u}_\kappa\cap\{\Rp(y)\geq-1 \}}\left|(y^2+\pi^2/4)^{\ag-2}\mathcal{G}_1(h)(y)\right|\leq M\|h\|_{m,\ag}. 
	\end{equation}
	Hence, using \eqref{normcosh}, \eqref{oppgn}, \eqref{infinito} and \eqref{polo} , one obtains item (3) of Proposition \ref{prop_operators}.
	
	To prove the estimates on $\partial_{\tau}\mathcal{G}(h)$ and 
$\partial_{\tau}^2\mathcal{G}(h)$ it is sufficient to use \eqref{oppgn} and 
	$$
	\Pi_n[\partial_{\tau}^2\mathcal{G}(h)]=-n^2\Pi_n[\mathcal{G}(h)].$$
	
Finally, for  item (5), notice that
	\[\partial_y\circ\mathcal{G}_n(h)=\dfrac{1}{2}e^{i\frac{\lambda_n}{\e}y}\displaystyle\int_{-\infty}^ye^{-i\frac{\lambda_n}{\e}s}h(s)ds +\dfrac{1}{2}e^{-i\frac{\lambda_n}{\e}y}\displaystyle\int_{-\infty}^ye^{i\frac{\lambda_n}{\e}s}h(s)ds,\ n\geq 2,\]
	and thus, one can easily obtain
	\[
	\|\partial_y\circ\mathcal{G}_n(h)\|_{m,\ag}\leq \dfrac{M\e}{\lambda_n}\|h\|_{m,\ag},\ n\geq 2.
	\]
The decay of $\partial_y\circ\mathcal{G}_n(h)$ for $n\ge 2$ also implies that $\partial_y\circ\mathcal{G}_n(h) = \GG (\partial_y h)$
and thus we also have 
\[
\|\partial_y\circ\mathcal{G}_n(h)\|_{m,\ag}\leq \dfrac{M\e^2}{\lambda_n^2}\|\partial_y h\|_{m,\ag},\ n\geq 2.
\]
	For the first mode, since 
	\begin{align*}
	\partial_y\circ\mathcal{G}_1(h)=&i\dfrac{16}{3\pi}\left(-\zeta_+'(y)\displaystyle\int_0^y\zeta_-(s)h(s)ds+\zeta_-'(y)\displaystyle\int_0^y\zeta_+(s)h(s)ds\right) +\zeta_2'(y)\displaystyle\int_{-\infty}^0\zeta_1(s)h(s)ds\\
	=& -\zeta_1'(y)\displaystyle\int_{0}^y\zeta_2(s)h(s)ds+\zeta_2'(y)\displaystyle\int_{-\infty}^y\zeta_1(s)h(s)ds, 
	\end{align*}	
one has $\|\partial_y\circ \mathcal{G}_1(h)\|_{1,\ag-1}\leq M\|h\|_{m,\ag}$.

\section*{Acknowledgements}
This project has received funding from the European Research Council (ERC) under the European Union's Horizon 2020 research and innovation programme (grant agreement No 757802).
O.M.L.G. has been partially supported by the Brazilian FAPESP grants 2015/22762-5, 2016/23716-0 and 2019/01682-4, and by the Brazilian CNPq grant 438975/2018-9.
T. M. S. has also been partly supported by the Spanish MINECO-FEDER Grant PGC2018-098676-B-100 (AEI/FEDER/UE) and the Catalan grant 2017SGR1049. M. G. and T. M. S. are supported by the Catalan Institution for Research
and Advanced Studies via an ICREA Academia Prize 2019. 
C. Z. has been partially supported by the US NSF grant DMS 19000083 and DMS-2350115.
This material is based upon work
supported by the National Science Foundation under Grant No. DMS-1440140 while the authors
were in residence at the Mathematical Sciences Research Institute in Berkeley, California, during
the Fall 2018 semester.

\bibliography{references_otavio}
\bibliographystyle{abbrv}

\end{document}